\DeclareMathOperator{\loc}{loc}
\DeclareMathOperator{\dR}{dR}
\DeclareMathOperator{\cpt}{cpt}
\DeclareMathOperator{\sing}{sing}
\DeclareMathOperator{\str}{str}
\DeclareMathOperator{\even}{even}
\DeclareMathOperator{\rank}{rank}
\DeclareMathOperator{\Irr}{Irr}
\DeclareMathOperator{\codim}{codim}
\DeclareMathOperator{\Ker}{Ker}
\DeclareMathOperator{\Sym}{Sym}
\DeclareMathOperator{\Seg}{Seg}
\DeclareMathOperator{\Tr}{Tr}
\DeclareMathOperator{\an}{an}
\DeclareMathOperator{\ch}{ch}
\DeclareMathOperator{\Real}{Re}
\DeclareMathOperator{\Ex}{Exc}
\DeclareMathOperator{\ST}{ST}
\DeclareMathOperator{\TP}{TP}
\DeclareMathOperator{\Imag}{Im}
\DeclareMathOperator{\Lip}{Lip}
\DeclareMathOperator{\EBl}{EBl}
\DeclareMathOperator{\Tc}{Tc}
\DeclareMathOperator{\gen}{gen}
\newtheorem{prop}{Proposition}[section]
\newtheorem{theorem}[prop]{Theorem}
\newtheorem{lem}[prop]{Lemma}
\newtheorem{cor}[prop]{Corollary}
\theoremstyle{definition}
\newtheorem{definition}[prop]{Definition}
\newtheorem{rem}[prop]{Remark}
\newtheorem{example}[prop]{Example}
\newtheorem*{rem*}{Remark}
\newtheorem*{notation*}{Notation}
\numberwithin{equation}{section}
\newcommand{\bR}{\mathbb{R}}
\newcommand{\ra}{\rightarrow}
\newcommand{\bC}{\mathbb{C}}
\newcommand{\bA}{\mathbb{A}}
\newcommand{\bZ}{\mathbb{Z}}
\newcommand{\bP}{\mathbb{P}}
\newcommand{\End}{\mathrm{End}}
\DeclareMathOperator{\id}{id}
\DeclareMathOperator{\Bl}{Bl}
\DeclareMathOperator{\Div}{Div}
\DeclareMathOperator{\Pic}{Pic}
\DeclareMathOperator{\Gr}{Gr}
\DeclareMathOperator{\supp}{supp}
\DeclareMathOperator{\mer}{mer}
\DeclareMathOperator{\Proj}{Proj}
\newcommand{\Hom}{\mathrm{Hom}}
\begin{document}

\subjclass[2010]{58A25,32C30,53C56}
\title{Double transgressions and Bott-Chern Duality}
\author{Daniel Cibotaru}
\address{Universidade Federal do Cear\'a, Fortaleza, CE, Brasil}
\email{daniel@mat.ufc.br} 
\author{Vincent Grandjean} 
\address{Universidade Federal do Cear\'a, Fortaleza, CE, Brasil}
\email{vgrandjean@mat.ufc.br} 
\author{Blaine Lawson, Jr.}
\address{Stony Brook University, Stony Brook, NY, USA}
\email{blaine@math.stonybrook.edu}
\begin{abstract}
We present a general framework for obtaining currential double transgression formulas on complex manifolds which can be seen as manifestations of Bott-Chern Duality. These results complement on one hand the simple transgression formulas obtained by Harvey -Lawson and on the other hand the double transgression formulas of Bismut-Gillet-Soul\'e. 

Among the applications  we mention a Gysin isomorphism for Bott-Chern cohomology, an abstract Poincar\'e-Lelong formula for sections of holomorphic and Hermitian vector bundles implying Andersson's generalization  of the standard Poincar\'e-Lelong, a Bott-Chern duality formula for the Chern-Fulton classes of singular varieties or a refinement of the first author's simple transgression formula for the Chern character of a Quillen superconnection associated to a self-adjoint, odd endomorphism. The existence of a Bismut-Gillet-Soul\'e double transgression without the  hypothesis of degeneration along a submanifold stands out.  As a by-product   we  also obtain a statement about the \emph{pointwise localization}  of the Samuel multiplicity of an analytic subvariety of a complex manifold along an irreducible component. 
\end{abstract}

\maketitle
\tableofcontents

\section{Introduction}

Simple, currential transgression formulas are  manifestations of Poincar\'e Duality on a smooth manifold $M$. They typically take the following form
\begin{equation}\label{eq1} \omega-Z=dT
\end{equation}
where  $\omega$  a smooth closed differential form, $Z$ is a (flat) current with support along some (locally) rectifiable set  and $dT$ is an exact current. The naive idea  to obtain such formulas is to take the closed form $\omega$ and deform it via some continuous process $\omega_t$ such that $\omega_t$ converges weakly to $Z$.  For example, Laudenbach \cite{BZ} and Harvey-Lawson \cite{HL1,HL2, HL3}   used Morse gradient flows $\varphi:\bR\times M\ra M$ in order to construct a homotopy between the deRham complex and the Morse complex via the deformation $\omega\ra \varphi_t^*\omega$. When the Morse flows are considered along the fibers of a fiber bundle $P\ra M$  more applications can be given, like for example the Gauss-Bonnet formula, or an answer to Quillen's question about the the deformation of the Chern supercharacter associated to an odd self-adjoint endomorphism (see for example \cite{CiHl3}). 

The name  transgression  comes from standard Chern-Weil theory and the well-known relation 
\begin{equation}\label{eq2} P(\nabla_1)-P(\nabla_2)=d\TP(\nabla_1,\nabla_2)
\end{equation}
which proves the cohomological invariance of the characteristic forms $P(\nabla)$ associated to a fixed invariant polynomial $P$ and a connection $\nabla$. The holomorphic counterpart to this equality is the Bott-Chern formula \cite{BC} which we now recall. If $M$ is a complex manifold of dimension $n$ and $E\ra M$ is a holomorphic vector bundle endowed with a family of Hermitian metrics $(h_t)_{t\in[1,2]}$ and $\nabla_t$ is the corresponding family of unique (Chern) connections compatible with both  the metric and the holomorphic structure then  one can refine \eqref{eq2} to 
\[P(\nabla_1)-P(\nabla_2)=\partial\bar{\partial} T(\nabla_1,\nabla_2)
\]
for some smooth form $T(\nabla_1,\nabla_2)$.  

This parallel raises the question of whether one can adapt the deformation theoretic techniques to the complex setting in order to obtain currential double transgression formulas, i.e. expressions of type
\begin{equation}\label{eq20}\omega-Z=\partial\bar{\partial} T
\end{equation}
where again $\omega$ is smooth and $Z$ is flat. The homological context for such formulas is Bott-Chern duality. 

The Bott-Chern cohomology groups $H_{BC}^{p,q}:=\frac{\Ker \partial\cap \Ker{\bar{\partial}}}{\Imag \partial{\bar\partial}}$ and their Hodge $*$-duals, the Aeppli groups $H_{A}^{p,q}:=\frac{\Ker \partial{\bar\partial}}{\Imag \partial+ \Imag{\bar{\partial}}}$ were introduced independently \cite{A,BC} in the mid sixties and have become indispensable tools to complex geometers \cite{Ang,Dem}. On a compact K\"ahler manifold they are isomorphic with some older acquaintances, the Dolbeault groups, consequence of the $\partial\bar{\partial}$ Lemma (see \cite{Ang,DGMS}). On a general compact complex manifold this is not the case, but in recent years, substantial  progress has been made in understanding and computing these groups on certain classes of non-K\"ahler manifolds \cite{Ang,AngK,AngT}.

 Bismut's impressive work \cite{Bi00} on the  Grothendieck-Riemann-Roch Theorem in Bott-Chern cohomology for proper submersions under the condition that the (higher) direct image sheaf is  locally free stands out as a major achievement in this area. Smooth double transgression formulas appear in \cite{Bi00}, albeit with a different formalism than in the present article.

 In a separate direction, currential double transgressions and Bott-Chern currents  appeared in the vast work of Gillet-Soul\'e \cite{GiS} and Bismut-Gillet-Soul\'e \cite{BiGiS,BiGiS2} and indeed they are fundamental in the definition of Green currents \cite{GiS,Ha} and the arithmetic Chow groups.  One of the first motivations for this article was the introduction of new tools that eventually should produce  more  direct proofs of the transgression formulas of Bismut-Gillet-Soul\'e like Theorem 2.5 from  \cite{BiGiS} that fit within a  natural theory of double transgressions.\footnote{The word natural is used here  with an eye towards  the facility  to come up with such expressions, rather than  emphasizing certain algebraic properties.} The importance of such formulas cannot be overstated. For example, it is used by the same authors in \cite{BiGiS} to explicitly compute the arithmetic Chern character of a chain complex of vector bundles under certain technical hypothesis. More recently, Garcia and Sankaran  constructed in \cite{GaSan} natural Green forms on orthogonal and unitary Shimura varieties.

The influential Bismut (simple transgression) Theorem 3.2 from \cite{Bi} from which the above mentioned result from \cite{BiGiS} is derived  is based on H\"ormander's theory of wave front sets. One also gets as a bonus "rates" of convergence of the relevant currents in an appropriate topology,  something not  available with the standard tools of Geometric Measure Theory.  However, this approach  has  limitations when dealing with singularities as the "degeneration locus" is usually considered to be  a complex submanifold $M'$.  The techniques we introduce here, of a completely different nature, try to overcome these technical limitations. In fact, as one might expect, the analytic geometry language encodes enough information that allows  to deal with more singular situations. 
 
One of the main new results of this article is a proof in Theorem \ref{mainc10} of the existence  of a Bismut-Gillet-Soul\'e double transgression under very general hypothesis, removing for example the condition of degeneration along a submanifold.  Explicit computations can be performed, but in order to keep  the article to a reasonable length we defer them to  future work. \vspace{0.1cm}

Returning to more elementary considerations, a relation of type (\ref{eq20}) immediately brings to mind a standard result in complex geometry, the celebrated Poincar\'e-Lelong formula that was proved, in fact, by Griffiths and King in \cite{GK} (Proposition 1.14).

\begin{theorem} \label{t1} If $L\ra M$ is a Hermitian and holomorphic line bundle and $s:M\ra L$ is a non-identically zero, holomorphic section then
\[ [s^{-1}(0)]-c_1(\nabla)=\frac{i}{\pi}\partial\bar{\partial}[\log{|s|}]
\]
where $\nabla$ is the Chern connection of $L$ and $[s^{-1}(0)]$ is the current of integration over the regular part of $s^{-1}(0)$.
\end{theorem}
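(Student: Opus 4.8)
The plan is to reduce the global formula to the classical Lelong--Poincar\'e identity for holomorphic functions and then to prove that local identity by combining a structure theorem for positive closed currents with the one-variable computation $\tfrac{i}{\pi}\partial\bar\partial\log|z|=\delta_0$. First I would localize: cover $M$ by open sets $U$ on which $L$ is trivialized by a holomorphic frame $e$, write $s=f\cdot e$ with $f\in\cO(U)$ not identically zero, and write the metric as $|e|^2=e^{-\varphi}$ with $\varphi$ smooth and real on $U$. Then $\log|s|=\log|f|-\tfrac12\varphi$ on $U$, the curvature of the Chern connection $\nabla$ is $\partial\bar\partial\varphi$, and $c_1(\nabla)=\tfrac{i}{2\pi}\partial\bar\partial\varphi$; since $\tfrac{i}{\pi}\partial\bar\partial(-\tfrac12\varphi)=-c_1(\nabla)$ and $s^{-1}(0)\cap U$ is the zero set of $f$ with the same multiplicities, the restriction of the asserted equation to $U$ is equivalent to $\tfrac{i}{\pi}\,\partial\bar\partial[\log|f|]=[\Div(f)]$. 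For the left-hand side to make sense one records that $\log|f|$ is plurisubharmonic and not identically $-\infty$, hence locally integrable.

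Next I would establish this local identity. On $U\setminus\{f=0\}$ one may locally choose a branch of $\log f$, so $\log|f|=\Real\log f$ is pluriharmonic and $\partial\bar\partial[\log|f|]$ vanishes there. Plurisubharmonicity shows that $T:=\tfrac{i}{\pi}\partial\bar\partial[\log|f|]$ is a positive closed $(1,1)$-current, and by the previous sentence $\supp T\subseteq\{f=0\}$, which is of pure complex codimension one. The structure (support) theorem for positive closed currents (see e.g.\ \cite{Dem}) then forces $T=\sum_i c_i[V_i]$, where the $V_i$ are the irreducible components of $\{f=0\}$ and the $c_i$ are non-negative constants. To determine $c_i$ it suffices to evaluate $T$ near a generic smooth point of $V_i$, where local coordinates can be chosen so that $f=(\text{unit})\cdot z_1^{m_i}$ with $m_i=\operatorname{ord}_{V_i}(f)$; restricting to a complex line transverse to $V_i$ and using $\tfrac{i}{\pi}\partial\bar\partial\log|z|=\delta_0$ (equivalently $\Delta\log|z|=2\pi\delta_0$) gives $c_i=m_i$. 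Hence $T=[\Div(f)]$, and since $\log|s|$, $c_1(\nabla)$ and $[s^{-1}(0)]$ are globally defined, patching the local identities over the trivializing cover proves the theorem.

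I expect the main obstacle to be exactly this structure step: ruling out a diffuse contribution of $\partial\bar\partial[\log|f|]$ along $\operatorname{Sing}\{f=0\}$ is the content of the support theorem, and it is the reason the local integrability of $\log|f|$ (finiteness of Lelong numbers) is indispensable. An alternative better matched to the analytic-geometry methods used elsewhere in this paper would be to pull everything back along a log resolution $\pi\colon\widetilde{M}\to M$ of $\{s=0\}$: upstairs the zero divisor is simple normal crossings and the identity reduces to the one-variable model in each coordinate, and one then pushes forward using $\partial\bar\partial\,\pi_{*}=\pi_{*}\,\partial\bar\partial$ together with the fact that $\pi$ is biholomorphic off a lower-dimensional set; the only delicate bookkeeping is that of the exceptional divisors, which are annihilated by $\pi_{*}$ so that the strict transforms recover the $V_i$ with the correct multiplicities.
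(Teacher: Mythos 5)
The paper does not actually prove this statement: Theorem \ref{t1} is quoted from Griffiths--King \cite{GK} (Proposition 1.14) and then used as an input (e.g.\ in the proof of Theorem \ref{PL1}), so there is no internal argument to compare with. Your proof is the classical one and it is correct. The localization to $\tfrac{i}{\pi}\partial\bar{\partial}[\log|f|]=[\Div(f)]$ is right (with the curvature bookkeeping $c_1(\nabla)=\tfrac{i}{2\pi}\partial\bar{\partial}\varphi$ done correctly), and the combination of plurisubharmonicity, the support/structure theorem for closed positive $(1,1)$-currents carried by a hypersurface, and the generic local model $f=(\mathrm{unit})\cdot z_1^{m_i}$ does settle it. In fact, at a generic smooth point you can bypass the transverse-slice language: there $\log|f|=m_i\log|z_1|+(\text{pluriharmonic})$, and Fubini together with the one-variable identity $\tfrac{i}{\pi}\partial\bar{\partial}\log|z|=\delta_0$ gives the coefficient $m_i$ directly; the support theorem is then only needed to exclude a contribution along the singular locus, which has complex codimension at least $2$ (alternatively, the flat support theorem applied to the difference current suffices). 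Two points of bookkeeping to make your statement mesh with the paper's conventions: the current $[s^{-1}(0)]$ here carries the multiplicities of the components (the paper's convention for analytic currents, stated in the introduction), which your $m_i=\operatorname{ord}_{V_i}(f)$ correctly reproduces; and the paper fixes the sign convention $\partial\bar{\partial}T:=-\partial(\bar{\partial}T)$ for currents precisely so that the identity reads as stated for the non-smooth potential $\log|s|$ --- your computation is consistent with this. The log-resolution alternative you sketch is also viable and closer in spirit to the analytic-geometry machinery (strict transforms, push-forwards commuting with $\partial$ and $\bar{\partial}$) that the paper uses for its genuinely new transgression results.
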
 
The Poincar\'e-Lelong formula is  the holomorphic counterpart of the statement that the zero locus of a smooth section of a complex line bundle transverse to the zero section differs from the first Chern form by an exact current. This article says that this analogy can be extended much  further. In fact, if we replace the flow-deformation techniques of \cite{Ci2} with certain algebraic $\bC^*$ actions then the simple transgression formulas turn to double transgressions. Nevertheless, the techniques for proving such results are completely different. We use the idea of deformation to the normal cone (\cite{F}, Ch. 5)   together with Hardt's Slicing Theorem and the standard Poincar\'e-Lelong formula in order to reach the following conclusion.

\begin{theorem}\label{t3} Let $E\ra M$ be a Hermitian and holomorphic vector bundle and $s:M\ra E\subset\bP(\bC\oplus E) $ be a  non-identically zero, holomorphic section. Let
$\omega\in \Omega^*(\bP(\bC\oplus E))$ be a $\partial$ and $\bar{\partial}$ closed form. Then the following holds
\[ s^*\omega- s_{\infty}^*\omega - Z(s,\omega)=\partial\bar{\partial}T(s,\omega)
\] 
where $s_{\infty}: M\setminus s^{-1}(0)\ra \bP(E)$ is  $s_{\infty}(m)=[s(m)]$,  $Z(s,\omega)$ is an explicit flat current with support on $s^{-1}(0)$ and $T(s,\omega)$ is a current with  locally integrable coefficients.
\end{theorem}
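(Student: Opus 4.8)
The plan is to reduce the statement to the standard Poincaré--Lelong formula (Theorem~\ref{t1}) via the deformation to the normal cone applied to the graph of $s$ inside $\bP(\bC\oplus E)$. First I would set up the relevant geometry: form the projective completion $\bar E:=\bP(\bC\oplus E)\ra M$, which contains $E$ as the complement of the section at infinity $\bP(E)$, and on which the $\bC^*$-action scaling the fibers of $E$ (fixing $0$ and $\infty$) acts naturally. Composing $s$ with this action gives a one-parameter family $s_\lambda:=\lambda\cdot s:M\ra\bar E$ for $\lambda\in\bC^*$, with $s_1=s$ and, away from $s^{-1}(0)$, the limit $s_\lambda\to s_\infty$ as $\lambda\to\infty$ and $s_\lambda\to s_0$ (the zero section) as $\lambda\to 0$. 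The key object is the \emph{total} map $S:M\times\bP^1\ra\bar E\times\bP^1$, $(m,\lambda)\mapsto(s_\lambda(m),\lambda)$ (using a suitable chart near $\lambda=\infty$), whose closure of the graph is precisely the deformation-to-the-normal-cone space; its fiber over $\lambda=\infty$ carries the normal cone $C_{s^{-1}(0)}M$, whose projectivization maps to $\bP(E)$ and accounts for the current $Z(s,\omega)$.

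Next I would pull back the $\partial$- and $\bar\partial$-closed form $\omega$ under $S$ (or rather under the evaluation composed with projection) to get a form $\Omega:=S^*(\mathrm{pr}_{\bar E}^*\omega)$ on $M\times\bP^1$, and write the double transgression by integrating along the $\bP^1$-direction. Concretely, on $\bP^1$ with coordinate $\lambda$ one has the $(0,0)$-current $\log|\lambda|$ satisfying $\frac{i}{\pi}\partial\bar\partial[\log|\lambda|]=[\lambda=0]-[\infty]$ near the relevant points; so pushing $\Omega\wedge(\text{appropriate }\log\text{ kernel})$ forward along $\mathrm{pr}_M:M\times\bP^1\ra M$ and commuting $\partial\bar\partial$ with the (proper) pushforward produces an identity of the shape $s_1^*\omega-s_\infty^*\omega-(\text{boundary/slice terms})=\partial\bar\partial(\mathrm{pr}_{M*}(\Omega\,[\log|\lambda|]))$. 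The slice terms are handled by Hardt's Slicing Theorem: slicing the (suitably oriented, locally rectifiable) deformation cycle by the fibers of $\lambda$ gives well-defined currents for almost every $\lambda$, and the slice at $\lambda=1$ is $\mathrm{graph}(s)$ while the slice at $\lambda=\infty$ is $\mathrm{graph}(s_\infty)$ plus the contribution supported over $s^{-1}(0)$, which is the explicit flat current $Z(s,\omega)$; local integrability of the coefficients of $T(s,\omega)$ follows because $\log|\lambda|$ is locally integrable and $\Omega$ is smooth along the generic slice, with the only degeneration confined over $s^{-1}(0)$.

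I expect the main obstacle to be the analytic control of the pushforward near $\lambda=\infty$ over the zero locus $s^{-1}(0)$: one must verify that the closure of the graph family is a genuine (locally rectifiable, locally finite mass) analytic cycle, that slicing commutes with $\partial\bar\partial$ in the currential sense across the bad fiber, and that the blow-up/normal-cone limit of $s_\lambda^*\omega$ defines a current with locally integrable coefficients rather than something with nontrivial boundary hidden over $s^{-1}(0)$. This is precisely where the analytic-geometry language — resolution of the indeterminacy of $\lambda\cdot s$, and the structure of the deformation to the normal cone as in \cite{F}, Ch.~5 — does the work that wave-front-set methods cannot when the degeneration locus is singular. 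A secondary technical point is identifying $Z(s,\omega)$ explicitly: it should be the pushforward to $M$ of $\omega$ integrated against the (projectivized) normal cone cycle, i.e.\ a sum over the irreducible components of $s^{-1}(0)$ weighted by Samuel multiplicities, which is also how the multiplicity-localization by-product mentioned in the abstract emerges. Once these points are secured, the remaining manipulations — commuting $\partial\bar\partial$ past the proper pushforward, identifying the $\lambda=1$ slice with $s^*\omega$, and checking that $T(s,\omega)$ has $L^1_{\mathrm{loc}}$ coefficients — are routine.
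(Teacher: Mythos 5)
Your proposal is essentially the paper's own argument: the paper also forms the closure of the graph of the $\bC^*$-orbit of $s$ (the strict transform of $\bP^1\times s(M)$ in $\Bl_{\infty\times[0]}(\bP^1\times E)$, i.e.\ the deformation to the normal cone), applies the $\bP^1$-valued Poincar\'e--Lelong formula with the kernel $\log|h^{\pi_1}_0(\gamma)|$ (your ``appropriate log kernel,'' $\log|\lambda-\gamma|$ rather than $\log|\lambda|$), uses Hardt's Slicing Theorem for the slices at $\gamma$ and at $\infty$, identifies the fiber at infinity as $\Bl_X(s(M))\cup\bP(\bC\oplus C_XM)$ (Proposition \ref{Psp1}), and pushes forward to $M$ to obtain $Z(s,\omega)=\pi_*(\omega\wedge\bP(\bC\oplus C_XM))$ and the $L^1_{\loc}$ current $T$. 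The only caveat is that the construction must be carried out on the deformation space itself (not via a globally defined pullback on $M\times\bP^1$), which is exactly how you resolve it in your slicing step, so the approaches coincide.
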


As a consequence we obtain the next generalizations of Poincar\'e-Lelong.

\begin{theorem}\label{M1} Let $E$ be of rank $k$ and $s:M\ra E$ be a section such that $s^{-1}(0)$ is of pure complex codimension $l>0$, $l\leq k$. Let $\delta_j^{l}$ be the Kronecker symbol. The following  hold for all $j\leq l$

\begin{equation}\label{E10} c_j(\nabla^E) - c_{j}(\nabla^{Q_s}) - \delta_j^l[s^{-1}(0)] = -\frac{i}{\pi} \partial\bar{\partial}\left( \log{|s|}\left(c_{j-1}{(\nabla^{Q_s}})+ \partial \bar{\partial}\eta_1\right)\right)\end{equation}
\begin{equation}\label{E20}-c_1(\nabla^{L_s^*})^j -\delta_j^l [s^{-1}(0)]= -\frac{i}{\pi}\partial\bar{\partial}\left( \log{|s|}\left(c_1(\nabla^{L_s^*})^{j-1}+\partial \bar{\partial}\eta_2\right)\right)\end{equation}
where $\eta_1$ and $\eta_2$ are smooth forms on $U:=M\setminus s^{-1}(0)$, all equal to $0$ for $j=1$. Here, $L_s^*$ is the dual to $L_s:=\langle s\rangle$ and  $Q_s:=E/L_s$, all vector bundles over $U$.

If $s$ does not vanish, then the two equations are identities of smooth forms on $M$ and hold for all $j$. The analytic current $[s^{-1}(0)]$ is the sum of the irreducible components of $s^{-1}(0)$, each multiplied with its Samuel multiplicity. 
\end{theorem}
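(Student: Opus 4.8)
The plan is to derive Theorem \ref{M1} from Theorem \ref{t3} by specializing $\omega$ to universal Chern forms on the projective completion $\bP(\bC\oplus E)$ and carefully analyzing the three resulting pieces. First I would identify the relevant cohomology classes on $\bP(\bC\oplus E)$: the tautological line bundle $\cO(-1)$ restricts on $\bP(E)$ to $L_s$ pulled back, and there is a short exact sequence of bundles relating $\cO(1)$, the universal quotient $Q$, and the pullback of $E$. Choosing $\omega$ to be $c_j$ of the Chern connection of (the relevant bundle built from) $\cO(1)$ and $Q$ on $\bP(\bC\oplus E)$, I would compute $s^*\omega$: since $s$ lands in the zero section $E\hookrightarrow\bP(\bC\oplus E)$ followed by nothing — wait, more precisely $s:M\to E\subset\bP(\bC\oplus E)$ sends $M$ into the affine chart, and pulling back the universal connection there recovers $c_j(\nabla^E)$ (the $\cO(1)$ factor pulls back trivially on the zero section, or contributes a term one tracks). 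Meanwhile $s_\infty:U\to\bP(E)$ pulls back $c_j$ of the universal quotient to $c_j(\nabla^{Q_s})$ and $c_1(\cO(1))$ to $-c_1(\nabla^{L_s^*})$, which accounts for the two displayed forms on the left-hand sides of \eqref{E10} and \eqref{E20} once one expands $c_j$ of a bundle that splits off $L_s$.

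Next I would analyze the flat current $Z(s,\omega)$ from Theorem \ref{t3}. By construction (via deformation to the normal cone and Hardt slicing) it is supported on $s^{-1}(0)$ and built from slicing the deformation current against $\omega$; its degree-$j$ component should be supported on a set of codimension $\geq j$, hence vanishes for $j<l$ and, for $j=l$, reduces to the cycle $[s^{-1}(0)]$ with appropriate multiplicities. Verifying that the multiplicity attached to each irreducible component is exactly the Samuel multiplicity is where I expect the main work: one must compare the geometric multiplicity coming out of the slicing/normal-cone construction (essentially the degree of the projectivized normal cone, or the local intersection multiplicity of $s$ with the zero section) with the algebraic Samuel multiplicity $e(\mathfrak{m}, \cO_{s^{-1}(0),\xi})$ along a generic point $\xi$ of each component. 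This is precisely the "pointwise localization of the Samuel multiplicity" advertised in the abstract, so I anticipate it is either proved as a separate lemma earlier or extracted here from the standard identification of the Samuel multiplicity with the multiplicity of the normal cone (Whitney/Hironaka, or Fulton Ch.~4--5); the $\delta_j^l$ in the statement reflects that only the top-codimension piece survives.

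Then I would identify the transgression current $T(s,\omega)$. Theorem \ref{t3} gives it with locally integrable coefficients; I would show that for the chosen $\omega$ it takes the explicit form $-\tfrac{i}{\pi}\log|s|\,(c_{j-1}(\nabla^{Q_s}) + \partial\bar\partial\eta_1)$ (resp. the $L_s^*$ version). The mechanism: on $U = M\setminus s^{-1}(0)$ the section $s_\infty$ is defined and $s$ is homotopic through non-vanishing sections to a section of $L_s$, along which the universal transgression degenerates to the classical Bott-Chern secondary form of the metric on $L_s$, whose leading term is $\log|s|$ times the lower Chern form; the auxiliary smooth forms $\eta_1,\eta_2$ on $U$ absorb the difference between the naive secondary form and the one produced by the normal-cone deformation, and they vanish when $j=1$ because then $c_{j-1}=c_0=1$ and there is no lower-degree correction. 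Finally, when $s$ is nowhere zero, $s^{-1}(0)=\emptyset$, $U=M$, $Z(s,\omega)\equiv 0$, and every current in sight is smooth, so \eqref{E10}--\eqref{E20} become the ordinary Bott-Chern transgression identities valid for all $j$; I would note this falls out immediately by tracking that all the ``$[\log|s|]$'' currents become genuine smooth forms. The single genuine obstacle, to reiterate, is the multiplicity computation for $Z(s,\omega)$ in top codimension: pinning down that the slicing construction reproduces Samuel multiplicities (and not merely some intersection number that happens to agree in nice cases) is the crux, and I would handle it by reducing to the local analytic model at a generic point of each component where $s^{-1}(0)$ is smooth and the claim becomes the classical computation of the Lelong number / normal-cone degree.
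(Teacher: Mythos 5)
Your plan is, in outline, the paper's own proof of Theorem \ref{MT}: apply the abstract formula of Corollary \ref{cormt1} (with $\gamma=0$) to the two forms $\omega_1=c_j(\nabla^Q)$ and $\omega_2=c_1(\nabla^{\tau^*})^j$ on $\bP(\bC\oplus E)$, read off $s_0^*\omega_1=c_j(\nabla^E)$, $s_0^*\omega_2=0$, $s_\infty^*\omega_1=c_j(\nabla^{Q_s})$, $s_\infty^*\omega_2=c_1(\nabla^{L_s^*})^j$ from the restriction properties of $(\tau,Q)$ along the zero and infinity sections (Remark \ref{Imprem}), discard the term supported on $s^{-1}(0)$ when $j<l$, and for $j=l$ identify its density along each component with the degree of the generic fiber of $\bP(\bC\oplus C_XM)$, which is exactly Fulton's ``embedded''/Samuel multiplicity; the paper carries out this last identification via King's fibering theorem (Proposition \ref{cormt2}(ii) and Theorem \ref{mulT}), essentially the normal-cone comparison you anticipate, so the step you single out as the crux is in fact the one that goes through most cleanly. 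Two small inaccuracies: $s_\infty^*c_1(\nabla^{\tau^*})=+c_1(\nabla^{L_s^*})$, the minus sign in (\ref{E20}) coming only from $s_0^*(c_1(\nabla^{\tau^*})^j)=0$; and the algebraic invariant to match is $e(q,\mathcal{O}_{M,Z})$ for the primary ideal $q$ of $X$ in the localization of $M$ along $Z$, not $e(\mathfrak{m},\mathcal{O}_{s^{-1}(0),\xi})$.

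The genuine gap is in your treatment of the right-hand side. The current $T(s,\omega)$ of Theorem \ref{t3} is the specific current produced by the $\bC^*$-graph/strict-transform construction; it is not obtained by homotoping $s$ through non-vanishing sections to a section of $L_s$ (any such deformation changes $Z$ and $T$ simultaneously), and the assertion that it ``degenerates to the classical Bott--Chern secondary form with leading term $\log|s|$'' is exactly what has to be proved. The paper does this in three steps that your sketch omits: (a) Proposition \ref{spark1} represents $T(0,s,\omega)\bigr|_U$ as the fiber integral over the $\bP^1$-bundle $\bP(\bC\oplus L_s)\ra U$ of $\log|\tilde h^s_0|\,\iota^*\omega$; (b) by Remark \ref{Rema} one splits $\log|\tilde h^s_0|=\log\frac{|w|}{|\theta|}-\log|s|$, and the piece involving $\log\frac{|w|}{|\theta|}$ integrates to zero --- this is the key vanishing, proved by the fiberwise involution $[\theta:w]\mapsto[w:\theta]$ after a unitary trivialization (Lemma \ref{ndy}), and it is the reason the answer is exactly $\log|s|$ times a form and nothing else; (c) the remaining factor $\pi_*(\iota^*\omega)$ is evaluated, up to $\partial\bar\partial$ of a smooth form, as $c_1(\nabla^{L_s^*})^{j-1}+\partial\bar\partial\eta_2$, resp.\ $c_{j-1}(\nabla^{Q_s})+\partial\bar\partial\eta_1$, by the push-forward Lemma \ref{L4} and its refinement (\ref{sf3}), which also explain why $\eta_i=0$ for $j=1$. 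Without (a)--(c) your argument only asserts the displayed shape of the transgression term, so this part needs to be supplied rather than absorbed into ``auxiliary forms''; the rest of the proposal matches the paper's route.
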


Formula (\ref{E10})  was also obtained by M. Andersson in \cite{An}, using completely different techniques from Residue Theory. The computation of the right hand side of (\ref{E10}) (resp. (\ref{E20})) is new, albeit incomplete as it involves the presence of the form $\eta_1$ (resp. $\eta_2$). We believe this "leftover" can also be expressed as polynomials in the entries of the curvature tensors of $Q_s$ (resp. $L_s$).

We obtain Theorem \ref{M1} by using certain particular closed forms $\omega$ on $\bP(\bC\oplus E)$ in Theorem \ref{t3}. In fact, the choice of these forms is really natural in view of Fulton's "topological" description of the "embedded" multiplicities of a subscheme along an irreducible variety. The algebraic operation on  Chow groups given by a Chern class corresponding to a polynomial $P$ becomes naturally the operation of wedging with the Chern form associated to $P$ and the Chern connection  in the Bott-Chern groups. Naturally one can think of other applications here and we have selected two that are rather direct.

Fulton defines in \cite{F}, Example 4.2.6 the total Chern class $c^F_{ * }(X)$ of a scheme $X$  as a certain class in the Chow group $A_*(X)$ by using an embedding of $X$ into a regular variety $M$. These classes do not depend on the embedding. There exists a natural map
\[A_*(M)\ra H^{BC}_{*,*}(M)
\]
which of course can be composed with the pushforward $A_*(X)\ra A_*(M)$. If $X$  is as before the zero section of a holomorphic vector bundle over $M$,  we get the following statement.

\begin{theorem} Let $s:M\ra E$ be a holomorphic section such that  $s^{-1}(0)$ is pure  of codimension $c$ and dimension $d$. Then the  Chern-Fulton classes  satisfy in $H_{d-*,d-*}^{BC}(M)$:
\begin{equation}\label{CF0}  c^F_{ * }(s^{-1}(0))=-c_1({L_s}^*)^c(c_{ * }( {TM\otimes L_s^*})),
\end{equation}
where $L_s:=\Imag s\bigr|_{M\setminus s^{-1}(0)}$ is the line subbundle determined by $s$. The form on the right hand side is smooth over $M\setminus s^{-1}(0)$, with locally integrable coefficients on $M$.
\end{theorem}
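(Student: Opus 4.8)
The plan is to deduce this Chern-Fulton identity from Theorem \ref{M1}, specifically from the family of formulas \eqref{E20}, by matching up the two sides through Fulton's construction. First I would recall Fulton's definition: embedding $X := s^{-1}(0)$ into $M$ as the zero scheme of the section $s$ of $E$, one has $c^F_*(X) = c(TM|_X) \cap s(X,M)$ where $s(X,M)$ is the Segre class of $X$ in $M$. Since $X$ is the zero scheme of a section of the rank-$k$ bundle $E$, the normal cone is a subcone of $E|_X$, and the deformation-to-the-normal-cone computation (Fulton, Ch. 4 and Example 4.2.6) expresses $s(X,M)$ in terms of the Segre class of $E|_X$ twisted appropriately; concretely, capping with $c(E|_X)$ recovers $[X]$ in the expected codimension together with the excess classes, which is precisely the algebraic shadow of the equations \eqref{E20}. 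The key translation principle, already emphasized in the text, is that under $A_*(M) \to H^{BC}_{*,*}(M)$ the operation of capping with a Chern class $c_j(E)$ becomes wedging with the Bott-Chern representative $c_j(\nabla^E)$, so an identity in $A_*(M)$ lifts to the corresponding identity of Bott-Chern classes.

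The key steps, in order: (1) Write out Fulton's formula $c^F_*(s^{-1}(0)) = c(TM) \cap s(s^{-1}(0), M)$ and use the fact that, for a zero scheme of a section of $E$, the Segre class can be computed via the projective completion $\bP(\bC \oplus E)$ and the tautological subbundle — this is the same geometric picture ($s : M \to E \subset \bP(\bC \oplus E)$, with $s_\infty$ landing in $\bP(E)$) underlying Theorem \ref{t3}. (2) Identify the Bott-Chern class of $[s^{-1}(0)]$ appearing in \eqref{E20} with $\delta^l_j$ times the analytic cycle, noting (last sentence of Theorem \ref{M1}) that the multiplicities are exactly the Samuel multiplicities, which is precisely Fulton's prescription for the components of $[X]$ in $A_*(X)$ when the codimension is the expected one. (3) Sum the equations \eqref{E20} over $j$, weighted so that the left side assembles into $-c_1(\nabla^{L_s^*})^c \, c_*(\nabla^{TM \otimes L_s^*})$: here one uses the standard Whitney/twisting identity $c(TM \otimes L_s^*) = \sum_i c_i(TM)(1 + c_1(L_s^*))^{\dim M - i}$ together with the binomial bookkeeping in the powers $c_1(\nabla^{L_s^*})^{j}$ — this is exactly how Fulton's residual-intersection / excess formula produces $c(TM \otimes L_s^*)$ from the Segre class of $L_s^*$. (4) Observe that all the $\partial\bar\partial$-exact terms on the right of \eqref{E20}, and the leftover forms $\eta_2$, die in $H^{BC}_{*,*}(M)$ — which is the whole point of passing to Bott-Chern cohomology — so the currential identity descends to the claimed equality of cohomology classes, and the regularity statement (smooth off $s^{-1}(0)$, $L^1_{\loc}$ on $M$) is inherited directly from Theorem \ref{M1}.

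The main obstacle I expect is step (3): the purely combinatorial/algebraic matching between Fulton's Segre-class formula for $s(X,M)$ and the weighted sum of the transgression formulas \eqref{E20}. One has to be careful about (a) which line bundle appears — $L_s$ versus $L_s^*$ — and the sign/duality conventions, since the tautological subbundle on $\bP(\bC\oplus E)$ restricted along $s$ is $L_s$ but its Chern class enters Fulton's formula through the Segre class, producing $L_s^*$; (b) the grading shift from $A_*(M)$ (indexed by dimension) to $H^{BC}_{*,*}$ and to the cohomological Chern forms (indexed by codimension), which is where the $c$ and $d = \dim s^{-1}(0)$ in the statement $H^{BC}_{d-*,d-*}$ come from; and (c) ensuring that Fulton's classes, defined via an arbitrary regular embedding, really coincide with what the section $s$ sees — but this is exactly the embedding-independence Fulton proves, combined with the fact that the normal cone of the zero scheme of $s$ sits inside $E|_X$, so no generality is lost by using this particular presentation. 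A secondary, more technical point is justifying that the natural map $A_*(M) \to H^{BC}_{*,*}(M)$ is compatible with Chern-class cap products in the way asserted; this should follow from representing algebraic cycles by their currents of integration and applying Theorem \ref{M1} (or Theorem \ref{t3}) itself as the comparison tool, so that the compatibility is not an extra input but a corollary of the main transgression machinery.
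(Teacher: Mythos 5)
Your overall instinct---specialize the transgression machinery on $\bP(\bC\oplus E)$ to forms built out of $c_1(\nabla^{\tau^*})$ and $\pi^*TM$---points in the right direction, but the concrete route you propose does not close, and the failure is exactly at your step (3). The identities \eqref{E20} of Theorem \ref{M1} exist only for $j\leq c$, and the only normal-cone information they carry is the leading term $\delta_j^{c}[s^{-1}(0)]$, i.e.\ the top Segre class. The Chern--Fulton class $c^F_i(s^{-1}(0))$ for $i\geq 1$ involves the \emph{lower} Segre classes, which currentially are the pushforwards $q_*\bigl(c_1(\nabla^{\tau^*})^{c+j}\wedge \bP(\bC\oplus C_{s^{-1}(0)}M)\bigr)$ for $j\geq 1$ (cf.\ (\ref{Segeq1})); these occur nowhere in \eqref{E20}, so no weighted sum of those identities can produce them. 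Nor can you manufacture them afterwards by multiplying the $j=c$ case of \eqref{E20} by powers of $c_1(\nabla^{L_s^*})$: that form is defined only on $M\setminus s^{-1}(0)$ and cannot be wedged against the flat current $[s^{-1}(0)]$, and the assertion that $-c_1(L_s^*)^{c+j}$ extends as a locally integrable form on all of $M$ representing the $j$-th Segre class is genuinely new content for each $j$ (the paper even warns, after Lemma \ref{ndy}, that for exponents beyond the codimension the structure of the normal cone becomes essential). The paper's proof therefore does not reuse \eqref{E20} at all: it defines $\Seg_{d-j}$ currentially and applies Corollary \ref{cormt1} afresh to the closed forms $\omega=c_{i-j}(\pi^*TM)\,c_1(\nabla^{\tau^*})^{c+j}$, for which $s_0^*\omega=0$, $s_\infty^*\omega=c_{i-j}(TM)\,c_1(\nabla^{L_s^*})^{c+j}$, and $Z(s,\omega)=c_{i-j}(TM)\wedge\Seg_{d-j}$; summing over $j$ gives \eqref{CF0}, and the regularity statement comes from Proposition \ref{spark1}, not from Theorem \ref{M1}.

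A secondary problem is your ``translation principle'' through $\bA_*(M)\ra H^{BC}_{*,*}(M)$: the compatibility of capping with Chern classes and wedging with Chern forms (Proposition \ref{chbch}) is established in the paper only for \emph{projective} $M$, where meromorphic sections exist, whereas the theorem is stated for an arbitrary complex manifold; importing Fulton's identity $c^F_*=c_*(TM)\cap \Seg_*(s^{-1}(0),M)$ from the Chow group and transporting it would either restrict the statement or presuppose precisely the currential comparison being proved. The paper sidesteps this by taking the currential Segre classes as the \emph{definition} of the Chern--Fulton classes in Bott--Chern homology, so that the whole content is the transgression computation above. Finally, note that the sum which actually arises is $\sum_{j}c_{i-j}(TM)\,c_1(L_s^*)^{j}$, i.e.\ the expansion of $c_*(TM)$ against the geometric series in $c_1(L_s^*)$, so the binomially weighted twisting identity you invoke is not the bookkeeping that is needed.
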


We continue the applications inspired by \cite{F}. The generalized Poincar\'e-Lelong formula (\ref{E10}) contains on the left hand side a rather undesired correction term, namely $c_{j}(\nabla^{Q_s})$. One is then left wondering what exactly happened with the Poincar\'e duality between the zero locus of a section and the top Chern class. On one hand, the correction term is absent when the dimension of $s^{-1}(0)$ equals the expected dimension which is the rank of the vector bundle. We infer then that formula (\ref{E10})  provides a Bott-Chern (and implicitly a Poincar\'e) dual for $s^{-1}(0)$ when this generic condition is not met. 
On the other hand, one can separately obtain a Bott-Chern duality statement for the top Chern class $c_k(E)$, which we discuss next.

Let $C_{s^{-1}(0)}M$ be the affine normal cone of $s^{-1}(0)$. It is naturally a complex subspace of $E$. Let $\tau$ be a Thom form for $E\ra M$, i.e. a smooth form with compact support, such that $\int_{E/M}\tau =1$. 

Then $C_{s^{-1}(0)}M\wedge\tau$ makes perfect sense as a current on $E$. Let $\bZ(s):=\pi_*(C_{s^{-1}(0)}M\wedge\tau))$ be the corresponding current on $M$. Then

\begin{theorem} Let $k$ be the rank of $E$. The following holds in $H^{BC}_{n-k,n-k}(M)$:
\[ \bZ(s)=c_k(E).
\]
\end{theorem}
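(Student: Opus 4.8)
The plan is to deduce the formula from the double transgression of Theorem~\ref{t3}, applied to the section $s$ itself and to the closed form on $\bP(\bC\oplus E)$ obtained by extending by zero, across the hyperplane at infinity $\bP(E)$, a conveniently chosen Thom form of $E$. The whole point of the choice of Thom form is that its pullback by $s$ is \emph{manifestly} Bott--Chern cohomologous to $c_k(\nabla^E)$, while the localization current that Theorem~\ref{t3} produces is exactly $\bZ(s)$.

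\emph{Construction of the Thom form.} Apply the Bott--Chern Poincar\'e--Lelong formula \eqref{E10} of Theorem~\ref{M1}, in the expected-codimension case $j=l=k$, to the tautological section $\mathbf e$ of $\pi^*E$ over the total space $E$; its zero locus is the zero section $M\subset E$, of codimension $k=\rank\pi^*E$, and the corresponding quotient bundle has rank $k-1$, so \eqref{E10} becomes
\[
[M] \;=\; \pi^*c_k(\nabla^E) \;-\; \partial\bar\partial\beta
\]
on $E$, where $[M]$ denotes integration over the zero section and $\beta$ is a form with locally integrable coefficients, smooth off $M$. Fix a vertical cutoff $\chi$ ($\equiv 1$ near $M$, supported in a tube with bounded fibres) and put
\[
\tau \;:=\; [M] + \partial\bar\partial(\chi\beta) \;=\; \pi^*c_k(\nabla^E) - \partial\bar\partial\bigl((1-\chi)\beta\bigr).
\]
The second expression shows that $\tau$ is smooth and equals $\pi^*c_k(\nabla^E)$ near $M$; outside the tube $(1-\chi)\beta=\beta$, hence $\partial\bar\partial\bigl((1-\chi)\beta\bigr)=\partial\bar\partial\beta=\pi^*c_k(\nabla^E)$ there, so $\tau$ is compactly supported in the fibre directions; and $\int_{E/M}\tau=1$ from the first expression. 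Replacing $\tau$ by its bidegree $(k,k)$ component we may assume $\tau$ is of pure type $(k,k)$, hence $\partial$- and $\bar\partial$-closed, and it still satisfies $\tau = \pi^*c_k(\nabla^E) - \partial\bar\partial\gamma$ with $\gamma$ a smooth $(k-1,k-1)$-form on $E$. Let $\tilde\tau$ be the extension of $\tau$ by zero to $\bP(\bC\oplus E)$: a $\partial$- and $\bar\partial$-closed form vanishing on a neighbourhood of $\bP(E)$.

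\emph{Conclusion, and the main obstacle.} Apply Theorem~\ref{t3} to $s$ and $\omega=\tilde\tau$. Since $\tilde\tau$ vanishes near $\bP(E)$ and $s_\infty$ takes values in $\bP(E)$, the term $s_\infty^*\tilde\tau$ is zero; and, by the normal-cone construction of the localization current inside the proof of Theorem~\ref{t3}, wedging with $\tilde\tau$ annihilates the part lying over $\bP(E)$ of the relevant limit cycle, leaving $Z(s,\tilde\tau)=\pi_*\bigl(C_{s^{-1}(0)}M\wedge\tau\bigr)=\bZ(s)$ (it would suffice that these agree modulo $\Imag\partial\bar\partial$). Thus Theorem~\ref{t3} gives $s^*\tau - \bZ(s)\in\Imag\partial\bar\partial$. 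On the other hand, pulling back the identity $\tau=\pi^*c_k(\nabla^E)-\partial\bar\partial\gamma$ along $s$ and using $\pi\circ s=\id_M$ yields $s^*\tau = c_k(\nabla^E) - \partial\bar\partial(s^*\gamma)$ with $s^*\gamma$ smooth on $M$, so $s^*\tau\equiv c_k(\nabla^E)=c_k(E)$. Combining the two, $\bZ(s)\equiv s^*\tau\equiv c_k(E)$ in $H^{k,k}_{BC}(M)=H^{BC}_{n-k,n-k}(M)$, as asserted. The step I expect to require the most care is the identification $Z(s,\tilde\tau)=\bZ(s)$ (modulo $\Imag\partial\bar\partial$): it rests on the precise description of the localization current $Z(s,\omega)$ from the proof of Theorem~\ref{t3}, namely that for $\omega$ supported away from $\bP(E)$ the at-infinity contribution of the deformation to the normal cone reduces to the affine normal cone $C_{s^{-1}(0)}M$ wedged with $\omega$. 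Secondary points are the verification that the cut-off-and-purify procedure really produces a pure-bidegree Thom form of the stated shape, and the interpretation, built into Theorem~\ref{t3}, of $s_\infty^*$ and of the potential $T$ as currents with locally integrable coefficients.
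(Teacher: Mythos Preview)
Your proof is correct and takes a genuinely different route from the paper's.

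The paper argues currentially inside $E$: from the kernel transgression~(\ref{dtker}) it extracts the identities $s(M)=C_{s^{-1}(0)}M$ and $s(M)=[0]$ in $H^{BC}_{n,n}(E)$, wedges with an arbitrary Thom form, and pushes to $M$; the particular Thom form chosen (from \cite{CiAdvances}) makes $\iota^*\tau=c_k(\nabla^E)$ by inspection. Your argument instead stays on $M$ and uses the form-level Corollary~\ref{cormt1} directly: you first manufacture, via Corollary~\ref{corm1} applied to the tautological section of $\pi^*E$, a Thom form $\tau$ with $\tau-\pi^*c_k(\nabla^E)\in\Imag\partial\bar\partial$; then for $\omega=\tilde\tau$ the support condition kills $s_\infty^*\omega$ and, by~(\ref{eqZ}) together with $\bP(\bC\oplus C_XM)\cap E=C_XM$, reduces $Z(s,\tilde\tau)$ exactly to $Z(s)^{\tau}$. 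The ``main obstacle'' you flag is thus immediate from~(\ref{eqZ}).

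What each buys: the paper's three-line argument is cleaner and isolates the universal fact that $s(M)$, its normal cone, and the zero section are all Bott--Chern homologous in $E$ before ever choosing a Thom form. Your approach is more self-contained (the Thom form is produced from the paper's own Poincar\'e--Lelong rather than imported) and exhibits $\bZ(s)$ explicitly as the localized residue of the general transgression formula, at the cost of invoking the heavier form-level machinery of Corollary~\ref{cormt1} and the cutoff construction.
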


This formula led us to investigate a natural question about the Bott-Chern cohomology groups: the existence of a Gysin isomorphism. The natural integration of forms over the fiber is just the restriction of the operation of push-forward of currents to forms. Hence $\pi_*$ commutes with $\partial $ and $\bar{\partial}$ and we have a well-defined morphism
\begin{equation}\label{GysBC} \pi_*:H^{p,q}_{BC,\cpt}(E)\ra H^{p-k,q-k}_{BC}(M).
\end{equation}
 We  have the following.
\begin{theorem} Let $M$ be a compact complex manifold. Then the map $\pi_*$ in (\ref{GysBC}) is an isomorphism.
 \end{theorem}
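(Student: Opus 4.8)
The plan is to prove that $\pi_*\colon H^{p,q}_{BC,\cpt}(E)\ra H^{p-k,q-k}_{BC}(M)$ is an isomorphism by producing an explicit two-sided inverse built from a Thom form, in the spirit of the classical Thom isomorphism for de Rham cohomology with compact vertical support. Fix a Thom form $\tau\in\Omega^{k,k}_{\vcpt}(E)$ with $\int_{E/M}\tau=1$; since $\pi\colon E\ra M$ is a holomorphic vector bundle one can choose $\tau$ of pure Hodge type $(k,k)$ and $\partial$- and $\bar\partial$-closed (for instance, using a Hermitian metric, $\tau=\pi^*(\text{Euler/Chern-type form})\wedge(\text{bump})$ made closed by the standard Mathai–Quillen construction adapted to the Chern connection). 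The candidate inverse is
\[
\Phi\colon H^{p-k,q-k}_{BC}(M)\ra H^{p,q}_{BC,\cpt}(E),\qquad \Phi([\alpha])=[\pi^*\alpha\wedge\tau].
\]
One checks first that $\Phi$ is well defined: $\pi^*\alpha\wedge\tau$ has compact vertical support, is of type $(p,q)$, is $\partial$- and $\bar\partial$-closed when $\alpha$ is, and $\pi^*(\partial\bar\partial\eta)\wedge\tau=\partial\bar\partial(\pi^*\eta\wedge\tau)$ since $\tau$ is $\partial\bar\partial$-closed.

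Next, $\pi_*\circ\Phi=\id$ is immediate from the projection formula and $\int_{E/M}\tau=1$: $\pi_*(\pi^*\alpha\wedge\tau)=\alpha\wedge\pi_*\tau=\alpha$ on the nose, already at the level of forms. The substantive direction is $\Phi\circ\pi_*=\id$ on $H^{p,q}_{BC,\cpt}(E)$. Here the natural strategy is to use the zero-section homotopy: the scalar $\bC^*$-action on the fibers gives a deformation retraction of $E$ onto $M$, and fiberwise integration against this homotopy should yield, for a closed compactly-supported form $\beta$ on $E$,
\[
\beta-\pi^*\pi_*\beta\wedge\tau=\partial\bar\partial R\beta+(\partial S_1+\bar\partial S_2)\beta
\]
for suitable operators $R,S_1,S_2$ preserving compact vertical support. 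But a plain $\partial$- or $\bar\partial$-exact error is not enough in Bott–Chern cohomology — we need the error to be $\partial\bar\partial$-exact. This is exactly where the double transgression machinery of the paper enters: applying Theorem~\ref{t3} (or its proof) to the tautological section and the form $\beta$, together with the fact that the fiberwise $\bC^*$-action is the model case for deformation to the normal cone, should upgrade the mixed $\partial+\bar\partial$ error into a genuine $\partial\bar\partial$-transgression. Concretely, I would run the homotopy formula not for the full exterior derivative but simultaneously for $\partial$ and $\bar\partial$, exploiting that the fiber scaling is holomorphic so the homotopy operator is bihomogeneous and commutes appropriately with both operators; the cross terms then assemble into $\partial\bar\partial(\text{something})$.

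The main obstacle is precisely this last point: passing from "cohomologous in de Rham" to "cohomologous in Bott–Chern," i.e. showing the homotopy error is $\partial\bar\partial$-exact with compact vertical support, rather than merely $d$-exact. A clean way to handle it is to invoke the $\partial\bar\partial$-lemma relative to the retraction: since the fibers are $\bC^k$ (on which the $\partial\bar\partial$-lemma holds with supports, by an explicit Bochner–Martinelli / Dolbeault homotopy), one can patch a fiberwise $\partial\bar\partial$-primitive over a trivializing cover using a partition of unity on $M$ and the fact that the discrepancies live on $M$ where no support condition in the fiber direction obstructs anything. A secondary technical point is the existence of a closed $(k,k)$ Thom form with compact vertical support and its independence (up to $\partial\bar\partial$ and $\partial/\bar\partial$ with supports) from choices — this follows from the Mathai–Quillen formalism together with a Bott–Chern version of the transgression between two Thom forms, which is the compact-support analogue of the Bott–Chern transgression recalled in the introduction. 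Once these pieces are in place, $\Phi$ is the required inverse and compactness of $M$ guarantees all the cohomology groups and fiber integrations are finite-dimensional and well-behaved, completing the proof.
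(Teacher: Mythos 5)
Your setup coincides with the paper's: the candidate inverse $\psi(\alpha)=\pi^*\alpha\wedge\tau^E$ and the trivial direction $\pi_*\circ\psi=\id$ are exactly what appears in the proof of Proposition \ref{Gysin1}. The problem is the hard direction $\psi\circ\pi_*=\id$, which you correctly identify as the crux but do not actually prove. The two mechanisms you propose to upgrade the de Rham homotopy error to a $\partial\bar{\partial}$-exact one both fail as stated. First, a ``compactly supported $\partial\bar{\partial}$-lemma on the fibers $\bC^k$'' is not a standard fact and cannot be quoted: compactly supported Dolbeault cohomology of $\bC^k$ is nonzero in antiholomorphic degree $k$, so any such statement would need a careful proof adapted to the forms at hand, which you do not give. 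Second, and more seriously, patching fiberwise $\partial\bar{\partial}$-primitives by a partition of unity on $M$ does not work: $\partial\bar{\partial}(\rho\eta)\neq\rho\,\partial\bar{\partial}\eta$, the cutoff cross terms $\partial\rho\wedge\bar{\partial}\eta$, $\bar{\partial}\rho\wedge\partial\eta$ are not $\partial\bar{\partial}$-exact in any evident way, and Bott--Chern classes cannot be glued by partitions of unity precisely because $\partial\bar{\partial}$ is second order. A secondary gap is the Thom form itself: the Mathai--Quillen representative has Gaussian decay rather than compact support, and truncating it destroys $\partial$- and $\bar{\partial}$-closedness, so the existence of a compactly (vertically) supported, pure $(k,k)$, $\partial$- and $\bar{\partial}$-closed Thom form is asserted rather than established.

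The paper's route avoids all of this by never working with homotopy operators on forms. By Bigolin's duality with supports (Theorem \ref{T1}) it suffices to prove $\omega=\psi(\pi_*\omega)$ in the \emph{currential} Bott--Chern group $H^{BC}_{*,*,\cpt}(E)$, and there the universal double transgression on $\bP(\bC\oplus E)$ (Corollary \ref{core1}, itself a consequence of the classical Poincar\'e--Lelong formula on the blow-up $\Bl_{\infty\times[0]}(\bP^1\times E)$) gives an explicit global $\partial\bar{\partial}$-primitive of locally finite mass for $\omega-\pi_*(\omega)\wedge[M]$ when $\omega$ has compact support (the $\bP(E)$ term drops), and applied to $\tau^E$ it also gives $[\tau^E]=[M]$; combining the two yields $\omega=\pi^*(\pi_*\omega)\wedge\tau^E$ in the currential group, and duality transfers this back to smooth forms. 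You gesture at this when you mention invoking Theorem \ref{t3} for the tautological section, and that is indeed the direction in which your argument could be repaired, but as written the passage from $d$-exact to $\partial\bar{\partial}$-exact with the correct supports --- the whole content of the theorem --- is not established.
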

 
 The proof of this result goes through Bott-Chern Duality with supports proved by Bigolin \cite{Big} and a certain "universal" transgression formula that holds on $\bP(\bC\oplus E)$, namely if $\omega$ is a $\partial$ and $\bar{\partial}$ closed form of  $\bP(\bC\oplus E)$, then one has:
 \[  \omega\bigr|_{E}-\pi_*(\omega)\wedge[M]-(\pi^{E})^*(\omega\bigr|_{\bP(E)})=\partial\bar{\partial}[T(\omega)].
\]
where $[M]$ is the zero section, $T(\omega)$ is a current of finite mass (locally) and $\pi^E:E\setminus \{0\}\ra \bP(E)$ is the natural projection.

We mentioned earlier that the simple transgression formulas have a dynamical point of view associated to them. Double transgressions make no exception. One of the  motivating questions for this work was how can one  compute limits of smooth forms in the weak sense
\[  \lim_{\lambda \ra \infty}(\lambda s)^*\omega
\]
when $s$ is a section of a vector bundle $E\ra M$ and $\omega$ is a form on the total space. It turns out that it is technically convenient to work on a complex compactification of $E$ and $\bP(\bC\oplus E)$ is here the natural choice. As noticed by Harvey and Lawson \cite{HL1}, the Schwartz kernel of the pull-back operator $(\lambda s)^*$  is the graph of $\lambda s$. So we get confronted with the geometric question of computing the limits of the graphs of $\lambda s$, i.e. 
\begin{equation}\label{E001} \lim_{\lambda\ra \infty}\{(s(m),[1:\lambda s(m)])\in E\times_{M}\bP(\bC\oplus E))~|~m\in M\}
\end{equation}
As one might expect, this is where the deformation to the normal cone procedure comes in. It turns out that everything can be expressed in terms of the blow-up of the regular subspace $\infty\times[0]  \subset \bP^1\times E$ and strict transforms with respect to this blow-up. To make things more palpable we use a concrete description of this blow-up:
\[\Bl_{\infty\times [0]}(\bP^1\times E)=\{([\mu:\lambda],v,[\theta:w])\in \bP^1\times E\times_M\bP(\bC\oplus E)~|~(\mu,\lambda  v) \wedge(\theta,w)=0\}
\]

 We need to look at the strict transform $\tilde{S}$ of $\bP^1\times s(M)$ in order to read  the limit in (\ref{E001}). On a closer look we also see a different point of view here that points to a generalization. The blow-up $\Bl_{\infty\times [0]}(\bP^1\times E)$ is in fact the closure of the graph of the $\bC^*$-action  on $E$, i.e. the closure  in $\bP^1\times E\times \bP(\bC\oplus E)$ of
 \[\{ (\lambda,v,[1:\lambda v])\in \bC^*\times E\times \bP(\bC\oplus E) \}\]
 One can  consider weighted homogeneous actions as follows.
 
  Let $E=E_0\oplus E_1\ldots \oplus E_k$ be a splitting of $E$ into a direct sum and consider that $\bC^*$ acts by $\lambda^{\beta_i}$ on $E_i$ where $\beta_0=0$ and $\beta_i$ is an increasing sequence of non-negative integers. Then this induces an action of $\bC^*$ on $\bP(E)$. It turns out that the closure of the graph of the action in $\bP^1\times \bP(E)\times_M \bP(E)$ is the blow-up of an explicit sheaf of (non-reduced) ideals $\mathcal{I}$ on $\bP^1\times \bP(E)$.
  
  In this article we give explicit equations for the closure of the graph of the $\bC^*$ action (see Proposition \ref{Pn}).  
  
   Using the same theoretical set-up as in the homogeneous case we can show that limits of type
 \begin{align}\label{0eq1} \lim_{\lambda\ra \infty}\{(s(m),[\lambda*s(m)])\in \bP(E)\times_{M}\bP(E))~|~m\in M\}\\
\label{0eq2}\lim_{\lambda\ra 0}\{(s(m),[\lambda*s(m)])\in \bP(E)\times_{M}\bP(E))~|~m\in M\}
\end{align} 
exist and produce subsequent double transgression formulas. Here $*$ represents the weighted homogenenous action and $s:M\ra \bP(E)$ is a section whose image is not completely contained in the fixed point set of the action.

 These type of situations are by no means artificial, since the natural rescaling action of $\bC^*$ on $\Hom(E'\oplus F)$ which compactifies to an action on $\Gr_k(E'\oplus F)$, ($k=\rank E'$) can be seen via the Pl\"ucker embedding as the restriction of a weighted homogeneous  action on $\bP(\Lambda^{k}(E'\oplus F))$.
 
One can compute  quite explicitly the limits (\ref{0eq1}) and (\ref{0eq2}) and fit them into a double transgression formula by following pretty much the same strategy as for $k=1$. It turns out that each limit is now a sum of (at most) $k+1$ terms, each of these corresponding to one irreducible component of the blow-up of the ideal $\mathcal{I}$. In Section \ref{Sec8} we give the equations of this blow-up and describe the components of the exceptional divisors. In fact, if $\tilde{S}$ is the strict transform of $\bP^1\times s(M)$ as before and we let $\tilde{S}^{\infty}$ to be the part of $\tilde{S}$ lying over $\{\infty\}\times \bP(E)\times_M\bP(E)$, then  $\tilde{S}^{\infty}$ is made from $k+1$ pieces obtained as intersections
\[ \tilde{S}^{\infty}_i:=\tilde{S}^{\infty}\cap \overline{S(F_i)\times_{F_i}U(F_i)}
\] 
where $F_i=\bP(E_i)$ is one connected component of  the fixed points of the $\bC^*$ action and $S(F_i)/U(F_i)$ are the stable/unstable associated sets. 

 In the homogeneous case, when $k=1$, the two terms have a clear  description in terms of classical analytic processes, blow-ups, exceptional divisors, cones (see Proposition \ref{Psp1}) and in a sense this is what accounts for the simplicity of the abstract Poincar\'e-Lelong  Theorem \ref{t3}. The situation is quite a bit more complicated for general $k$, owing to the fact that the orbit space of a $\bC^*$ action on a projective space is far from being Hausdorff.

We summarize  the results  of Sections \ref{Sec8} and \ref{Sec9}  in the following  rather imprecise statement (see the more precise Theorem \ref{Th95}).

\noindent
{\bf{"Theorem``}:} \; \emph{The closure of the graph of the action $\bC^*\times \bP(E)\ra \bP(E)$ in $\bP^1\times \bP(E)\times \bP(E)$ coincides with the blow-up of a sheaf of ideals $\mathcal{I}$ on $\bP^1\times \bP(E)$ and the limits (\ref{0eq1}) and (\ref{0eq2}) are the two components of the exceptional divisor\footnote{see the convention of Remark \ref{Remexcdiv}} of the strict transform (in this blow-up) of $\bP^1\times s(M)$ lying over $\infty$ and $0$ respectively in $\bP^1$. Moreover, each limit can be written as a sum of $k+1$ terms. The affine part of $\tilde{S}^{\infty}_i$, i.e. $\tilde{S}_i^{\infty}\cap [S(F_i)\times_{F_i}U(F_i)]$ (and similarly for $\tilde{S}^0_i$) can be described as the exceptional divisor of a weighted affine cone of a weighted blow-up.}

\vspace{0.2cm}

 This is the gate for  more applications.

On one hand we recover as a corollary the Bott-Chern classical transgression formula (subsection \ref{BCtheorem}) which says that given an exact sequence 
 \[ 0\ra E'\ra E\ra E''\ra 0
 \]
of holomorphic vector bundles with Hermitian metrics then at the level of Bott-Chern cohomology groups one has:
\[ c_*(E)=c_*(E'\oplus E'').
\]

Second, we turn to Quillen's question in \cite{Q0}, of computing limits of superconnections Chern character forms associated to an odd self-adjoint endomorphism of a vector bundle. The result is as follows.

\begin{theorem}\label{T002}
Let $E:=E^+\oplus E^-\ra M$ be a holomorphic super vector bundle with a compatible Hermitian metric and corresponding Chern connection $\nabla=\nabla^+\oplus \nabla^-$. Let $\tilde{A}:M\ra\Hom(E^+,E^-)$ be a holomorphic section and $\bA_z:=\left(\begin{array}{cc} \nabla^+& (z\tilde{A})^*\\
 z\tilde{A} & \nabla^-\end{array}\right)$ be the corresponding family of superconnections on $E$. Then there exists a double transgression formula
\[ \ch(\nabla)-\lim_{z\ra\infty}\ch(\bA_z)=\partial{\bar{\partial}} T(\tilde{A})
\]
where $\lim_{z\ra\infty}\ch(\bA_z)$ is a sum of flat currents, each  supported on $\{p\in M~|~\dim{\Ker\tilde{A}}= i\}$, where $1\leq i\leq k^+$.
\end{theorem}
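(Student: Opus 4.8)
The plan is to derive Theorem \ref{T002} as a specialization of the general weighted double transgression machinery of Sections \ref{Sec8}--\ref{Sec9}, just as the classical Bott-Chern identity for $c_*(E)$ was obtained above. First I would recast the family $\bA_z$ as a pullback. The endomorphism $\widetilde A\oplus \widetilde A^*$ together with the scaling $z$ defines a $\bC^*$-action on $\Hom(E^+,E^-)\oplus\Hom(E^-,E^+)$ by weight $1$; via the block decomposition $\End(E)=\End(E^+)\oplus\End(E^-)\oplus\Hom(E^+,E^-)\oplus\Hom(E^-,E^+)$ one sees $\bA_z$ as a one-parameter family of superconnections whose Chern character form $\ch(\bA_z)$ equals $s_z^*\omega$ for a fixed $\partial$- and $\bar\partial$-closed form $\omega$ on the relevant compactification and $s_z(m)=[\lambda*s(m)]$ in the notation of (\ref{0eq1}). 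Concretely, since the superconnection Chern character is functorial and the odd part enters polynomially, one packages $\nabla$ and $\widetilde A$ into a section $s:M\to\Gr_{k^+}(E^+\oplus E^-)$ (the graph construction), uses the Pl\"ucker embedding into $\bP(\Lambda^{k^+}(E^+\oplus E^-))$ as described in the introduction, and checks that the rescaling action $z\cdot$ on $\Hom(E^+,E^-)$ is the restriction of a weighted homogeneous $\bC^*$-action on that projective space. The main bookkeeping here is identifying $\ch(\bA_z)$, which a priori is defined by a Gaussian/heat-kernel type formula for superconnections, with the pullback of a single algebraic form on the Grassmannian bundle; this is where I expect to invoke the first author's earlier computation (cited as \cite{Ci2}, \cite{CiHl3}) that realizes the superconnection Chern character transgression in exactly this framework.

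Second, I would apply the ``Theorem'' (Theorem \ref{Th95}) to this $s$ and this weighted action: the limit $\lim_{z\to\infty}\ch(\bA_z)=\lim_{z\to\infty}s_z^*\omega$ exists as a flat current and equals the contribution of the component $\tilde S^{\infty}$ of the strict transform lying over $\infty$, decomposed as a sum $\sum_i \tilde S^{\infty}_i$ over the $k^++1$ fixed-point components $F_i=\bP(E_i)$. Here the splitting $E=E_0\oplus\dots\oplus E_{k^+}$ of the ambient bundle is dictated by the rank stratification of $\widetilde A$: the index $i$ records $\dim\Ker\widetilde A$, and the stable/unstable sets $S(F_i)$, $U(F_i)$ localize the corresponding piece of the limit over the locus $\{p\in M\mid \dim\Ker\widetilde A(p)=i\}$. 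Hence one reads off directly that $\lim_{z\to\infty}\ch(\bA_z)$ is a sum of flat currents with the asserted supports, with $1\le i\le k^+$ (the $i=0$ term, corresponding to $\widetilde A$ of full rank, being absorbed into the smooth part). The double transgression $\ch(\nabla)-\lim_{z\to\infty}\ch(\bA_z)=\partial\bar\partial T(\widetilde A)$ then follows from the universal transgression formula on $\bP(\bC\oplus E)$ (equivalently its Grassmannian analogue) quoted in the introduction, with $T(\widetilde A)$ the pushforward of the current built from $\log|\cdot|$ times the relevant curvature polynomial, which has locally integrable coefficients by the slicing and Poincar\'e--Lelong input used throughout.

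The hard part, and the place where the argument is not purely formal, is the identification of each $\tilde S^{\infty}_i$ with an honest flat current supported precisely on the rank stratum and the verification that the na\"ive limit of the smooth forms $\ch(\bA_z)$ actually converges weakly to that current rather than to something with extra boundary. In the homogeneous $k=1$ case this is transparent because the blow-up has only two exceptional components with classical cone/divisor descriptions (Proposition \ref{Psp1}); for the genuinely weighted action attached to a section $\widetilde A$ of non-maximal generic rank, the orbit space of the $\bC^*$-action on projective space is non-Hausdorff, the exceptional divisor of the blow-up of $\mathcal I$ has up to $k^++1$ components, and one must show that Hardt's Slicing Theorem still applies fiberwise to the strict transform $\tilde S$ and that no mass escapes. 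I would handle this by stratifying $M$ according to $\dim\Ker\widetilde A$, working over each stratum where $L_{\widetilde A}$-type subbundles are smooth, applying the already-established convergence on the open dense top stratum, and then using the uniform mass bounds (finite local mass of $T(\widetilde A)$) together with the constancy of the cohomology class to conclude that the limit extends across the lower strata with exactly the claimed stratified support. The remaining manipulations --- expressing $T(\widetilde A)$ explicitly and confirming $\partial\bar\partial$-closedness of $\omega$ on the Grassmannian --- are routine given the earlier sections.
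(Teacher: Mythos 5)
Your overall route (compactify $\Hom(E^+,E^-)$ to the Grassmannian bundle $\Gr_{k^+}(E^+\oplus E^-)$, view the rescaling as the restriction via Pl\"ucker of a weighted homogeneous $\bC^*$-action, and feed the section $\Gamma_{\tilde A}$ into the weighted double transgression machinery) is exactly the paper's, but there is a genuine gap at the step you dismiss as ``bookkeeping''. The machinery of Corollary \ref{cormt3}/Proposition \ref{grpr} needs a smooth form $\omega$, $\partial$- and $\bar{\partial}$-closed, defined on the whole compactification (or at least near the supports of the limiting currents), and the universal Chern supercharacter $\ch(\bA^{\tau})$ is a priori defined only on the open dense set $\Hom(E^+,E^-)$; the limiting currents for a section $\tilde A$ of non-maximal rank live precisely in the boundary strata $\overline{S(F_i)\times_{F_i}U(F_i)}$ at infinity. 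The missing ingredient is Quillen's theorem \cite{Q1} that $\ch(\bA^{\tau})$ extends smoothly to all of $\Gr_{k^+}(E^+\oplus E^-)$ via the Cayley transform, combined with the type/closedness statement (Proposition \ref{delclos}, giving that these forms are of bidegree $(p,p)$ and $d$-closed, hence $\partial$- and $\bar\partial$-closed, so the same holds for the extension). The references \cite{Ci2},\cite{CiHl3} you invoke instead compute limits under transversality in the real Morse--Bott setting; they do not supply this holomorphic extension, and without it the identification $\ch(\bA_z)=(z\tilde A)^*\omega$ for a single globally defined closed $\omega$ — the hypothesis of the whole argument — is not available. (Also, the splitting into weight spaces is the fixed decomposition $\Lambda^{k^+-j}E^+\otimes\Lambda^{j}E^-$ of $\Lambda^{k^+}(E^+\oplus E^-)$, not something ``dictated by the rank stratification of $\tilde A$''.)

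A second, smaller gap: the claim that the limit involves only the strata $1\le i\le k^+$ is not obtained by the $i=0$ piece being ``absorbed into the smooth part''. The general statement (Proposition \ref{grpr}) produces $k^++1$ flat currents supported on all strata $\{\dim\Ker\tilde A=i\}$, $0\le i\le k^+$; the disappearance of the $i=0$ contribution is a consequence of a specific vanishing property of Quillen's extended form (equation (3.12) in \cite{Q1}, cf. Section 4 of \cite{Q0}), which you never invoke. Finally, the stratification-plus-mass-bound argument of your last paragraph is both unnecessary — existence of the limit and its decomposition follow directly from Hardt slicing applied to the strict transform, i.e. from Theorem \ref{ths5} and Corollary \ref{cormt3}, with no transversality assumption on $\tilde A$ — and, as stated, not a valid substitute: local mass bounds together with constancy of the Bott--Chern class do not by themselves yield weak convergence of $\ch(\bA_z)$ to the asserted current.
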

The proof is based on a  Theorem of Quillen \cite{Q1} that extends the universal Chern supercharacter form from the morphism bundle to the Grassmannian bundle. We remark that under appropriate  transversality conditions the limit $\lim_{z\ra \infty} \ch(\bA_z)$ has already been computed in \cite{CiHl3}.

Finally we  turn to retrieving the Bismut-Gillet-Soul\'e singular double transgression.  
\begin{theorem}\label{mainc01} Let $v_i:E_i\ra E_{i-1}$ be a finite sequence of holomorphic chain morphisms between holomorphic  vector bundles  endowed with Hermitian metrics. Let $E^+=\oplus_{i} E_{2i}$ and $E^-:=\oplus_{i} E_{2i-1}$ with Chern connections $\nabla^{E^{\pm}}$ and $v^{\pm}:E^{\pm}\ra E^{\mp}$ be the corresponding morphisms induced from the $v_i$'s. For $\lambda\in\bC$ denote by
$\bA_{\lambda}$ the superconnection on $E=E^+\oplus E^-$ induced by the odd morphism $v=v^++v^-$ and defined by:
\[ \bA_{\lambda}=\nabla^{E^+}\oplus \nabla^{E^-}+(\lambda v+\bar{\lambda}v^*)
\] Then there exists a double transgression formula:
\[ \lim_{\lambda\ra \infty} \ch(\bA_{\lambda})-\ch(\bA_1)=\partial\bar{\partial} T(v).
\]
\end{theorem}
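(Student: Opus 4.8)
The plan is to reduce Theorem \ref{mainc01} to the (general, submanifold-free) Bismut-Gillet-Soulé double transgression established in Theorem \ref{mainc10} together with the weighted-homogeneous machinery of Sections \ref{Sec8}--\ref{Sec9}, which I would do as follows. First I would observe that the superconnection $\bA_\lambda=\nabla^{E^+}\oplus\nabla^{E^-}+(\lambda v+\bar\lambda v^*)$ is precisely the object appearing in Theorem \ref{T002} once we package the chain morphisms $v_i$ into a single odd self-adjoint endomorphism $v=v^++v^-$ of the super vector bundle $E=E^+\oplus E^-$; the only difference is that the degeneration locus $\{p\in M \mid \dim\Ker v = i\}$ is now a genuinely singular stratification rather than a smooth submanifold, which is exactly the situation our techniques are designed to handle. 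So the strategy is to apply Theorem \ref{T002} to $\tilde A:=v^+:E^+\to E^-$, yielding a double transgression formula $\ch(\nabla)-\lim_{z\to\infty}\ch(\bA_z)=\partial\bar\partial T(v)$ with $\lim_{z\to\infty}\ch(\bA_z)$ a sum of flat currents supported on the degeneration strata.

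Second, I would identify the limiting current $\lim_{\lambda\to\infty}\ch(\bA_\lambda)$ geometrically. Using Quillen's extension theorem \cite{Q1}, the Chern supercharacter form of $\bA_z$ pulls back a universal form $\omega$ on a Grassmannian bundle $\Gr_k(E^+\oplus E^-)$ along the section $m\mapsto [\text{graph of }z\,v^+(m)]$. The rescaling $z\mapsto zv^+$ is, via the Plücker embedding, the restriction of a weighted-homogeneous $\bC^*$-action on $\bP(\Lambda^k(E^+\oplus E^-))$ as explained in the introduction. Hence the limit in \eqref{0eq1} applies verbatim: $\lim_{\lambda\to\infty}\ch(\bA_\lambda) = s^\infty{}^*\omega + Z$, where $Z$ decomposes into $k^++1$ pieces $\tilde S^\infty_i$, each one an exceptional-divisor contribution sitting over a fixed-point component $F_i$ corresponding to rank-$i$ kernels, and each is a flat current supported on the corresponding stratum of $M$. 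The term $\ch(\bA_1)$ is itself $s^*\omega$ for a non-degenerate section, so the difference $\lim_{\lambda\to\infty}\ch(\bA_\lambda)-\ch(\bA_1)$ fits the template of Theorem \ref{t3} (applied to the weighted-homogeneous generalization in Section \ref{Sec9}), producing the desired $\partial\bar\partial T(v)$ with $T(v)$ of locally finite mass.

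Third, I would reconcile the two descriptions of the limit and pin down the compatibility with the classical Bismut-Gillet-Soulé formalism. On the analytic side this amounts to checking that the flat current $\sum_i [\tilde S^\infty_i]$ produced by the deformation-to-the-normal-cone picture agrees, as a current, with the one appearing in Theorem 2.5 of \cite{BiGiS}; by Hardt's Slicing Theorem and the standard Poincaré-Lelong formula, each piece is computed explicitly as a cone-times-Thom-form current pushed forward to $M$, matching the Bott-Chern current of the relevant residue. Since both sides are closed currents representing the same Bott-Chern class (by the universal transgression formula on $\bP(\bC\oplus E)$ quoted after \eqref{GysBC}, applied componentwise), and since the double transgression is already known abstractly to exist by Theorem \ref{mainc10}, uniqueness of the $\partial\bar\partial$-primitive up to $\Imag\partial+\Imag\bar\partial$ forces the stated formula.

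\medskip

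The main obstacle I anticipate is the third step: controlling the limit $\lim_{\lambda\to\infty}\ch(\bA_\lambda)$ as an honest flat current when the degeneration locus is singular and the fixed-point components $F_i$ of the weighted action have image meeting the strata non-transversally. One must ensure that the strict transform $\tilde S$ of $\bP^1\times s(M)$ is well-behaved under the blow-up of the ideal $\mathcal I$ of Section \ref{Sec8}, so that slicing is legitimate and the exceptional contributions $\tilde S^\infty_i$ are flat of the correct dimension with no lower-dimensional "parasitic" mass; this is where the analytic-geometry language (as opposed to wave-front-set methods) does the real work, and where the removal of the submanifold hypothesis is genuinely paid for. A secondary technical point is the bookkeeping identifying $v^{\pm}$-degeneration strata $\{\dim\Ker v = i\}$ with the fixed components $F_i=\bP(E_i)$ of the Plücker-lifted action, which requires care about multiplicities (the Samuel multiplicities of Theorem \ref{M1}) but is otherwise routine.
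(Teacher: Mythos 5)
There is a genuine gap, and it sits at the very first step. You propose to reduce the statement to Theorem \ref{T002} by taking $\tilde A:=v^+:E^+\to E^-$, but the Bismut--Gillet--Soul\'e superconnection has odd part $\lambda v+\bar\lambda v^*$ with $v=v^++v^-$, i.e.\ its off-diagonal blocks are $\lambda v^++\bar\lambda (v^-)^*$ and $\lambda v^-+\bar\lambda(v^+)^*$; this is of Quillen's form $z\tilde A+\bar z\tilde A^*$ only when $v^-\equiv 0$. Consequently Quillen's Extension Theorem, and hence Theorem \ref{T002}, does not apply directly: the universal form must live on a compactification of $\End^-(E)=\Hom(E^+,E^-)\oplus\Hom(E^-,E^+)$, namely on $\Gr_p(E^+\oplus E^-)\times_M\Gr_q(E^-\oplus E^+)$, not on a single Grassmannian bundle. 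The missing idea that the paper supplies is the extension of the addition of linear correspondences: the map $(A,B)\mapsto A+B^*$ extends real-analytically to the open set of pairs $(L_1,L_2)$ satisfying (\ref{op110})--(\ref{op220}), and --- crucially using the chain condition $v\circ v=0$, which your argument never invokes --- the closure $Z$ of the chain equations (which is $\bigcup_i\overline{S(F_i)\times_{F_i}U(F_i)}$) lies inside this open set, so Quillen's extended Chern supercharacter pulls back to a smooth $\partial$- and $\bar\partial$-closed form on a neighborhood of $Z$ (Proposition \ref{BGSQext}). Since the limit current $\lim_{\lambda\to\infty}\Gamma_{\lambda v}$ is supported in $Z$, this is exactly what makes the weighted-action machinery of Corollary \ref{cormt3} applicable; without it you have no universal closed form defined where the limiting kernel lives.

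Your third step is also circular: Theorem \ref{mainc01} is the introduction's statement of Theorem \ref{mainc10}, so you cannot invoke the latter as ``already known abstractly'' to force the formula, nor do you need the comparison with Theorem 2.5 of \cite{BiGiS} (which requires the acyclicity-off-a-submanifold hypothesis and condition (A) that the present theorem deliberately avoids). The correct route is the one sketched above: compactify $\End^-(E)$ by the product of Grassmannian bundles, view the rescaling as the diagonal weighted $\bC^*$-action via the Pl\"ucker embedding, apply the double transgression for the holomorphic section $v$ with $\omega$ equal to the extended universal supercharacter form near $Z$, and observe that the support condition makes the partial definition of $\omega$ sufficient. Your concern about transversality and parasitic mass is not where the difficulty lies; Hardt slicing handles the limits with no transversality assumption, and no identification of strata with Samuel multiplicities is needed for the existence statement.
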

In order to prove this result we notice that the Bismut-Gillet-Soul\'e superconnection and Quillen superconnection of Theorem \ref{T002} are  closely related. More exactly the latter is  a particular case of the former for $v^-\equiv 0$.  It is therefore natural to try and use the same strategy used for Theorem \ref{T002}. However,  Quillen's Extension Theorem does not apply directly as  now one has to deal with a sum of morphisms.  We need a result that might be also of a separate interest.   We prove that the addition operation of linear morphisms of vector spaces extends holomorphically via the  graph map  to an operation defined on a bigger open space of  pairs of linear subspaces (correspondences) in the cartesian product of the Grassmannian with itself. Quite magically the closure (in this product) of the chain morphism equations $v\circ v=0$ is contained in this bigger open set. Hence the "universal Chern character superconnection forms" exist on the corresponding open subset in the vector bundle context. Since the limiting kernels have support in the closure of $v\circ v=0$ we can apply the theory.


\vspace{0.5cm}

A few more comments are in order.  Double transgression formulas for forms in this article are a consequence of double transgression formulas for graphs of sections of some holomorphic fiber bundle. These in turn are  consequences of the standard Poincar\'e-Lelong and the description of the closure of the graph of the algebraic $\bC^*$-action on the section as a certain strict transform. The idea of the deformation to the normal cone of MacPherson appears absolutely naturally in this context. The use in \cite{BiGiS2} of the same idea is unrelated to the context we work in here. In fact, the reader can distill a generalization of this process in the context of weighted $\bC^*$ actions that we treat in Section \ref{Sec8}.

Hardt's Slicing Theorem ensures that there are no surprises when taking limits.  Interesting enough, the standard Poincar\'e-Lelong plays an analogous role as Stokes Theorem does for the case of simple transgressions.


 We will be working  with closed analytic subspaces $X$ of an ambient analytic manifold $M$. These closed analytic subspaces when they are pure $p$-dimensional determine currents of (real) dimension $2p$.  The analytic subspaces will not generally be reduced nor irreducible hence their irreducible components $Z\subset X$ will come with multiplicities. These are the "embedded" multiplicities $(e_XM)_Z$ of $M$ along $X$ at $Z$ as defined by Fulton in \cite{F} (Example 4.3.4 ) and algebraically by Samuel in  \cite{Sa}. In other words the analytic current determined by $X$ will be:
\[ [X]=\sum_{Z}(e_XM)_Z[Z]
\]
as the irreducible components $Z$ determine currents $[Z]$ via integration over the regular part, using a well-known result of Lelong. We will occasionally drop $[\cdot]$ from notation, hoping that the context is clear whether we refer to the analytic subspace or the current it determines, or both. The inclusion symbols $\subset,\supset$ are occasionally used with the meaning "naturally embedded as  complex spaces" while $\cap$ will sometimes mean the relevant fiber product in the analytic category.

 It is probably apparent that this paper has been influenced by the circle of ideas developed in  \cite{HL1,HL2,HL2ii,HL3}.  We would like to thank Reese Harvey for  interesting conversations about this project.

\section{Bott-Chern Duality and the Chow groups}\label{sec1}
 
 We start by reviewing some more or less well-known facts. Let $M$ be a  smooth, oriented manifold of dimension $n$. The space of currents of degree $k$ is denoted by $\mathscr{D}'_k(M)$. One has a chain complex $(\mathscr{D}'_*(M),d)$ where the convention is that $d$ is the adjoint to the exterior derivative with no sign attached.  We  recall that Poincar\'e Duality  takes up the following form. 
\begin{theorem}\label{T0} The natural inclusion chain inclusion map $\Omega^*(M)\hookrightarrow \mathscr{D}'_{n-*}(M)$ commutes (up to a sign) with the  chain differentials and is a quasi-isomorphism.

Moreover, if $\Phi$ is any family of (closed) supports then the same map is a quasi-isomorphism between $\Omega^*_{\Phi}(M)$ and $(\mathscr{D}_{n-*}')^{\Phi}(M)$, the space of forms, respectively currents with support in $\Phi$.
\end{theorem}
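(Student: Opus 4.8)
The plan is to recognize $\Omega^*(M)\hookrightarrow\mathscr{D}'_{n-*}(M)$ as a morphism of fine resolutions of the constant sheaf and invoke the standard comparison-of-resolutions argument; I will also indicate the hands-on regularization variant, which is closer in spirit to the rest of the paper. First, the chain-map claim: by definition $\langle dT,\varphi\rangle=\langle T,d\varphi\rangle$, and for a smooth form $\alpha$ one has $d(\alpha\wedge\varphi)=d\alpha\wedge\varphi+(-1)^{\deg\alpha}\alpha\wedge d\varphi$ with $\int_M d(\alpha\wedge\varphi)=0$ since $M$ is closed; hence $d[\alpha]=\pm[d\alpha]$, so the inclusion commutes with the differentials up to a degreewise sign, and rescaling each degree by $\pm1$ turns it into an honest chain map without affecting the quasi-isomorphism statement.

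For the quasi-isomorphism, I would sheafify: $U\mapsto\Omega^k(U)$ and $U\mapsto\mathscr{D}'_{n-k}(U)$ are complexes of sheaves on $M$, and the inclusion is a morphism of complexes of sheaves. Each term is a fine sheaf, being a sheaf of modules over $C^\infty_M$, which admits partitions of unity. Both complexes are resolutions of the constant sheaf $\underline{\mathbb{R}}_M$: the augmentation is the inclusion of (integration against) locally constant functions into $\Omega^0$, respectively into the top-dimensional currents $\mathscr{D}'_n$, and the exactness in positive degree is the Poincaré lemma — the classical one for forms and the Poincaré lemma for currents ($d$-closed currents on a ball are $d$-exact away from the bottom degree) for currents; the inclusion intertwines the two augmentations. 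Consequently both complexes of global sections compute $H^*(M;\mathbb{R})$ and the induced map is the identity, so it is a quasi-isomorphism. For the statement with supports, fine sheaves are soft, hence $\Gamma_\Phi$-acyclic for every (paracompactifying) family of supports on the paracompact manifold $M$ — in particular for the family of all closed sets and for the family of compact sets — so the same comparison applied to the functor $\Gamma_\Phi$ gives $H^k(\Omega^*_\Phi(M))\cong H^k_\Phi(M;\mathbb{R})\cong H^k((\mathscr{D}'_{n-*})^\Phi(M))$, realized by the inclusion.

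The only non-formal ingredient is the Poincaré lemma for currents, which I regard as the main point. One can obtain it either by transposing the explicit homotopy operator of the compactly supported form-level Poincaré lemma (after checking continuity, so that the transpose acts on currents), or via de Rham's regularization operators $R_\varepsilon$: these are homotopic to the identity through operators preserving the subsheaf of smooth forms, which simultaneously yields local acyclicity and, more directly, shows that every closed current is cohomologous to a smooth closed form and that a smooth closed form equal to $dT$ for some current $T$ is already $d$ of a smooth form — a sheaf-free proof of the unsupported statement. For the supported version this second route is a little delicate, since $R_\varepsilon$ and the accompanying homotopy enlarge supports by an arbitrarily small amount, so one must either restrict to families of supports stable under such enlargements or fall back on the softness argument above; apart from this, everything is routine.
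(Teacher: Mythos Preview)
Your proposal is correct. The paper itself does not give a proof: it simply says the statement ``follows immediately from the process of regularization of currents'' and cites Federer. That is precisely your second, ``hands-on'' variant via de Rham--type smoothing operators $R_\varepsilon$ homotopic to the identity; so you have in fact anticipated the paper's approach.

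Your primary argument is a genuinely different route: you sheafify, observe that both $\Omega^*$ and $\mathscr{D}'_{n-*}$ are fine resolutions of $\underline{\mathbb{R}}_M$, and invoke the comparison-of-resolutions principle, extending to supports via softness and $\Gamma_\Phi$-acyclicity. This is more structural and handles arbitrary paracompactifying families of supports uniformly, whereas the regularization route --- as you correctly flag --- needs care because $R_\varepsilon$ and the homotopy enlarge supports; the sheaf argument sidesteps that issue entirely. Conversely, the regularization approach is more explicit (it literally produces a smooth representative in the same cohomology class) and, as the paper notes, works without any finite-type hypothesis on $M$; your sheaf argument does too, but only after one has checked the Poincar\'e lemma for currents, which is where the real content hides in either approach.
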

This statement follows immediately from the process of regularization of currents \cite{Fe}. These isomorphisms do not require that $M$ be of finite type. 

The standard version of Poincar\'e Duality,  found in standard Differential Geometry textbooks   is recovered by combining Theorem  \ref{T0} with the Homological Duality:
\[ H_k(\mathscr{D}'_{*}(M))\simeq (H^k_{\dR,\cpt}(M))^*
\]
which also holds in general.  Notice though that the analogous isomorphism for compact supports $H_{k}(\mathscr{E}'_{*}(M))\simeq (H^k_{\dR}(M))^*$ will not hold for non-finite type manifolds as the example of a countable union of disjoint real lines shows. We look at Theorem \ref{T0} as putting cohomology objects like  differential forms  in duality to homological objects like oriented, closed submanifolds and their generalizations like closed rectifiable or flat currents via  equalities of type $\omega-Z=dT$.

If $M$ is a complex manifold then $\Omega^k(M)=\bigoplus_{p+q=k} \Omega^{p,q}(M)$ and $d=\partial+\bar{\partial}$ where $\partial:\Omega^{p,q}(M)\ra \Omega^{p+1,q}(M)$ and $\bar{\partial}:\Omega^{p,q}(M)\ra \Omega^{p,q+1}(M)$.   One defines the Bott-Chern cohomology groups 
\[H^{p,q}_{BC}(M):=\frac{\Ker{\partial}\cap \Ker {\bar{\partial} } }{\Imag \partial\bar{\partial}}\] and also, by using the dual $\partial$ and $\bar{\partial}$ operators the Bott-Chern (currential) homology groups
 \[H_{p,q}^{BC}(M):=H_{p,q}^{BC}(\mathscr{D}'_{*,*}(M)).\]  The holomorphic counterpart to Theorem \ref{T0} has a less than straightforward proof. 
\begin{theorem}[Bigolin-Demailly] \label{T1} Let $M$ be a complex manifold of dimension $n$. The inclusion map $\Omega^{p,q}(M)\ra \mathscr{D}'_{n-p,n-q}(M)$ induces isomorphisms:
\[ H_{BC}^{p,q}(M)\simeq H_{n-p,n-q}^{BC}(M).
\]
This isomorphism stays true when using compact supports.
\end{theorem}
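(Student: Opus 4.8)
The plan is to treat the inclusion as a morphism of bounded double complexes of $\bC$-vector spaces and to deduce the statement from the fact that this morphism is a quasi-isomorphism row by row and column by column. Write $\iota\colon \Omega^{p,q}(M)\hookrightarrow \mathscr{D}'_{n-p,n-q}(M)$, and regard currents too as a bigraded object on which $\partial,\bar\partial$ raise the degree, identifying the space of currents of bidimension $(n-p,n-q)$ with bidegree $(p,q)$; both sides are then double complexes supported in bidegrees $[0,n]\times[0,n]$. The first step is to observe that, for each fixed $p$, the $\bar\partial$-complex of smooth $(p,\bullet)$-forms and the $\bar\partial$-complex of the corresponding currents are two fine resolutions of the sheaf of holomorphic $p$-forms — the currential $\bar\partial$-Poincar\'e lemma being obtained by regularization of currents, exactly as in the proof of Theorem \ref{T0} — so $\iota$ is an isomorphism on $\bar\partial$-cohomology; complex conjugation gives the same for $\partial$-cohomology. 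Since regularization respects any admissible family of supports up to arbitrarily small neighbourhoods, this holds with no support condition as well as with compact supports.

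Next I would pass to the quotient double complex $Q:=\mathscr{D}'_{n-\bullet,n-\bullet}(M)/\Omega^{\bullet,\bullet}(M)$. Applying the long exact sequences in row and in column cohomology to
\[ 0\longrightarrow \Omega^{\bullet,\bullet}(M)\longrightarrow \mathscr{D}'_{n-\bullet,n-\bullet}(M)\longrightarrow Q\longrightarrow 0 ,\]
the previous step yields $H_{\partial}(Q)=0$ and $H_{\bar\partial}(Q)=0$. The crux is now a structural fact about double complexes over a field: such a complex decomposes as a direct sum of \emph{squares}, \emph{zigzags} and \emph{dots}, and every zigzag or dot contributes to either the row or the column cohomology; hence the simultaneous vanishing $H_{\partial}(Q)=H_{\bar\partial}(Q)=0$ forces $Q$ to be a direct sum of square complexes — the indecomposable double complex with $\bC$ in the four positions $(a,b),(a+1,b),(a,b+1),(a+1,b+1)$ and identity structure maps. (For a box-supported $Q$ one may also see this directly by a finite descending/ascending induction.)

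A square is the free double complex on a single generator, hence projective, so $Q$ is projective and the short exact sequence splits: $\mathscr{D}'_{n-\bullet,n-\bullet}(M)\cong \Omega^{\bullet,\bullet}(M)\oplus Q$ compatibly with $\iota$. Since $H^{p,q}_{BC}(\cdot)$ is additive (being defined by subquotients, which commute with direct sums) and vanishes on every square, applying it gives
\[ H^{BC}_{n-p,n-q}(M)=H^{p,q}_{BC}\!\bigl(\mathscr{D}'_{n-\bullet,n-\bullet}(M)\bigr)\;\cong\;H^{p,q}_{BC}\!\bigl(\Omega^{\bullet,\bullet}(M)\bigr)=H^{p,q}_{BC}(M),\]
and this isomorphism is the one induced by $\iota$. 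The compact-supports statement follows from the identical argument with $\Omega^{\bullet,\bullet}_{\cpt}$ and $\mathscr{E}'_{\bullet,\bullet}$ replacing $\Omega^{\bullet,\bullet}$ and $\mathscr{D}'_{\bullet,\bullet}$.

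The main obstacle is precisely what forces the argument to be structural rather than pointwise: one cannot conclude that a $d$-exact form of pure bidegree $(p,q)$ that is $\partial$- and $\bar\partial$-closed must be $\partial\bar\partial$-exact, since that is the $\partial\bar\partial$-lemma, which fails on a general complex manifold. It is the simultaneous vanishing of the row and column cohomology of the relative complex $Q$ — equivalently, the availability of the local $\partial\bar\partial$-lemma on polydiscs, which is what allows one instead to run the argument through Schweitzer's presentation of $H^{p,q}_{BC}$ as a hypercohomology group computed by either of its two natural fine resolutions — that bridges this gap.
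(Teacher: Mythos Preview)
Your argument is correct. The paper itself does not give a proof: it simply cites Bigolin (Proposizione~2.2 in \cite{Big}) and Demailly (Lemma~VI.12.1 in \cite{Dem}) for both the unrestricted and the compact-support statements. So there is no ``paper's own proof'' to compare step by step.

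That said, your route is genuinely different from those references. Bigolin and Demailly argue sheaf-theoretically: one presents $H^{p,q}_{BC}$ as the hypercohomology of a suitable complex of sheaves (Schweitzer's complex, or equivalently via the local $\partial\bar\partial$-lemma on polydiscs) and then observes that both smooth forms and currents give fine/acyclic resolutions of this same complex, whence the isomorphism. You instead work globally with the structure theory of bounded double complexes over a field (the square/zigzag decomposition, as in Stelzig's work): from the Dolbeault isomorphism for forms and currents you deduce that the quotient $Q$ has vanishing row and column cohomology, hence is a direct sum of squares; squares are projective (they corepresent the exact functor ``element in a fixed bidegree''), so the extension splits, and $H_{BC}$ vanishes on squares. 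This is cleaner and more algebraic, and it makes transparent why the $\partial\bar\partial$-lemma on $M$ is \emph{not} needed --- only the row/column quasi-isomorphism is. The sheaf-theoretic approach, on the other hand, localizes the problem and makes the role of the polydisc $\partial\bar\partial$-lemma explicit; it also generalizes more readily to other support families and to singular spaces. Your closing paragraph already identifies this bridge correctly.
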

\begin{proof} Both statements are contained in Proposizione 2.2 from \cite{Big}. The statement without restriction on supports can be found also in \cite{Dem}, consequence of  Lemma VI.12.1.
\end{proof}
The equalities of type $\omega-Z=\partial\bar{\partial}T$ appearing in this article can therefore be interpreted via Theorem \ref{T1} as saying that $\omega$ and $Z$ are Bott-Chern dual to each other. 

\vspace{0.5cm}

For the rest of the section we explain another reason  why one might be interested in the homology groups $H_{p,q}^{BC}(M)$ by relating them with the Chow groups of algebraic/analytic geometry.

Any complex manifold $M$ is naturally a reduced complex analytic space and therefore it has  well defined analytic Chow groups $\bA_*^{\an}(M)$. A holomorphic $k$-cycle on $M$ is a linear combination (with integer coefficients) of irreducible analytic subspaces of dimension $k$.

The rational equivalence  between two holomorphic $k$-cycles $[Z_1]$ and $[Z_2]$ is defined as follows. We say $Z_1\sim Z_2$ if there exist $k+1$-holomorphic cycles $V_1, \ldots V_l$ in $M\times \bP^1$  such that
\begin{itemize}
\item[(i)] the restrictions of the  projection $\pi_2: V_i\ra \bP^1$ are all dominant;
\item[(ii)] $[Z_1]-[Z_2]=\sum_{i=1}^l[V_i(0)]-[V_i(\infty)]$, where $V_i(0)=\pi_2^{-1}([1:0])$, $V_i(\infty):=\pi_2^{-1}([0:1])$.
\end{itemize}

It is well known (see Fulton \cite{F} Section 1.6) that this definition of rational equivalence coincides with the standard one via divisors and rational maps. The analytic Chow group $\bA_k^{\an}(M)$ is defined as $k$-cycles modulo rational equivalence.

Notice that holomorphic cycles are particular types of currents of locally finite mass. This suggests we consider $H^{BC}_{k,k}(M)$, the Bott-Chern group of currents of bidimension $(k,k)$. Notice that there exists a natural map:
\begin{equation} \label{kk}H^{BC}_{k,k}(M)\ra H_{2k}^{\sing}(M)
\end{equation}
since every $\partial \bar{\partial}$-exact  is also $d$-exact. It turns out that the natural map $\bA_k(M)\ra H_{2k}^{\sing}(M)$ factors out through this one.

\begin{lem}\label{prefL} There exists a natural map:
\[\bA_k^{an}(M)\ra H^{BC}_{k,k}(M),\qquad [Z]\ra [Z].
\]
Moreover, this map commutes with proper push-forward.
\end{lem}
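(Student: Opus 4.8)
The plan is to establish the statement in three moves: (1) a holomorphic $k$-cycle defines a Bott--Chern homology class of bidimension $(k,k)$; (2) rationally equivalent cycles define the same class, so the assignment factors through $\bA_k^{\an}(M)$; (3) the resulting map commutes with proper push-forward. \emph{Step 1.} If $Z\subset M$ is an irreducible $k$-dimensional analytic subspace, Lelong's theorem yields the integration current $[Z]\in\mathscr{D}'_{k,k}(M)$ over $Z_{\mathrm{reg}}$, and $d[Z]=0$. Since $[Z]$ has pure bidimension $(k,k)$, the two summands $\partial[Z]$ and $\bar\partial[Z]$ lie in the distinct bidimensions $(k-1,k)$ and $(k,k-1)$, so $d[Z]=0$ forces $\partial[Z]=\bar\partial[Z]=0$ and $[Z]$ represents a class in $H^{BC}_{k,k}(M)$. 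Extending $\bZ$-linearly over the irreducible $k$-subvarieties, and reading the standing convention $[X]=\sum_Z(e_XM)_Z[Z]$, we get a homomorphism from the group of holomorphic $k$-cycles to $H^{BC}_{k,k}(M)$ realizing $[Z]\mapsto[Z]$.

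\emph{Step 2.} By linearity it suffices to treat one $(k+1)$-cycle $V$ in $M\times\bP^1$ with $\pi_2|_V$ dominant, $\pi_1,\pi_2$ the two projections, and to produce a current $T_V$ on $M$ with $\partial\bar\partial T_V=[V(0)]-[V(\infty)]$. Let $h:=t\circ\pi_2|_V$ be the restriction to $V$ of the affine coordinate $t$ of $\bP^1$. Dominance forces $h$ to be a meromorphic function on $V$ which is neither identically $0$ nor identically $\infty$ on any component, so $\log|h|\in L^1_{\mathrm{loc}}(V)$ and
\[
\tfrac{i}{\pi}\,\partial\bar\partial\big(\log|h|\,[V]\big)\;=\;\big[\operatorname{div}_V(h)\big]\;=\;[V(0)]-[V(\infty)]
\]
as currents on $M\times\bP^1$, by the standard Poincar\'e--Lelong formula applied on the reduced space $V$ (using $\partial\bar\partial[V]=0$; if one prefers, reduce to the smooth case by resolving the singularities of $V$ and applying the projection formula for Cartier divisors). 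Here the equality $\operatorname{div}_V(h)=V(0)-V(\infty)$ of cycles holds because the fibres $V(0)=\pi_2^{-1}([1:0])$ and $V(\infty)=\pi_2^{-1}([0:1])$ are Cartier divisors on $V$, along whose components the order of $h$, the length of the local structure ring, and the Samuel multiplicity all agree; equivalently, Hardt's Slicing Theorem identifies these fibre cycles with the slices of $[V]$ over $0$ and $\infty$. Now push forward by $\pi_1$: this is legitimate because $\pi_1\colon M\times\bP^1\to M$ is proper, $(\pi_1)_*$ commutes with $\partial$ and $\bar\partial$ ($\pi_1$ being holomorphic), and $\pi_1$ restricts to an isomorphism on each slice $M\times\{\mathrm{pt}\}$, so that $(\pi_1)_*[V(0)]$ and $(\pi_1)_*[V(\infty)]$ are just $[V(0)]$ and $[V(\infty)]$ read on $M$. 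Setting $T_V:=\tfrac{i}{\pi}(\pi_1)_*\big(\log|h|\,[V]\big)$ gives $\partial\bar\partial T_V=[V(0)]-[V(\infty)]$, and summing the $T_{V_i}$ entering a rational equivalence shows the construction descends to a well-defined map $\bA_k^{\an}(M)\to H^{BC}_{k,k}(M)$.

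\emph{Step 3.} For a proper holomorphic map $g\colon M\to N$ of complex manifolds, Fulton's proper push-forward on Chow groups sends $[Z]$ to $\deg(Z/g(Z))\,[g(Z)]$ when $\dim g(Z)=\dim Z$ and to $0$ otherwise; by the standard behaviour of integration currents under proper holomorphic maps, the current $g_*[Z]$ equals precisely this cycle. Since $g_*$ on currents commutes with $\partial$ and $\bar\partial$, it descends to a map $H^{BC}_{k,k}(M)\to H^{BC}_{k,k}(N)$, and the identification just made exhibits the commuting square with the two cycle maps.

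I expect the main obstacle to lie in Step 2, specifically in the bookkeeping of multiplicities: one must know that slicing an analytic cycle along a divisor it meets properly returns the scheme-theoretic fibre cycle carrying exactly the Samuel multiplicities used throughout the paper, and that Poincar\'e--Lelong holds in this precise form on the possibly singular $V$. This is precisely the role of Hardt's Slicing Theorem together with the standard Poincar\'e--Lelong formula; with those granted, the rest of the argument is formal.
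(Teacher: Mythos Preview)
Your proposal is correct and follows essentially the same route as the paper: a holomorphic $k$-cycle is $d$-closed hence $\partial$- and $\bar\partial$-closed, and the classical Poincar\'e--Lelong formula applied to the meromorphic function $t\circ\pi_2$ on each $V_i$ shows that $[V_i(0)]-[V_i(\infty)]$ is $\partial\bar\partial$-exact. The paper's proof is a two-line sketch omitting the details you supply (the push-forward to $M$, the multiplicity bookkeeping, and the proper push-forward compatibility in Step~3), so your version is a faithful expansion rather than a different argument.
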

\begin{proof} A holomorphic $k$-cycle is $d$-closed hence $\partial$ and $\bar{\partial}$ closed. On the other hand, by the classical Poincar\'e-Lelong the difference
\[[V_i(0)]-[V_i(\infty)]
\] 
is $\partial\bar{\partial}$-exact.
\end{proof}
One can be even more precise, although we will not need this here. The group $H^{BC}_{k,k}(M)$ we considered above has real coefficients. But one can look at the group $H^{BC,\flat}_{k,k}(M;\bZ)$ generated by closed $(k,k)$ locally integrally flat currents, modulo the $\partial{\bar{\partial}}$-exact ones. The image of the above map lands in $H^{BC,\flat}_{k,k}(M;\bZ)$.

Suppose $L\ra M$ is a holomorphic line bundle over $M$. Then Fulton  defines in Ch.2 of \cite{F} the operation of "capping" with $c_1(L)$ on the algebraic Chow group that can be replicated in the analytic context under certain conditions. 

Recall the standard map between Cartier divisors  and isomorphism classes of line bundles:
\[\Div(M)\ra \Pic(M).
\]
This map is almost always surjective in the algebraic context, for example when $M$ is an algebraic variety this is true (Prop. 6.15 in \cite{Ha}). However this need not be true in the analytic context even when $M$ is regular, the reason being  the possible absence of meromorphic sections of a given line bundle $L$. We will therefore restrict the attention for the rest of this section to \emph{projective}  complex manifolds $M$ for which the existence of meromorphic sections is guaranteed  (see the application to Theorem B at page 161 in \cite{GH}).

 Then
 \[c_1(L)\cap:\bA_{*}^{an}(M)\ra \bA_{*-1}^{an}(M)\]
 is defined as follows. Let $D$ be a Cartier divisor such that $L\simeq \mathcal{O}(D)$. We will denote by $[D]$ the corresponding Weil divisor determined by $D$. Let $V\subset M$ be an irreducible analytic subvariety of dimension $k$.
 
  If $V\not\subset\supp D$, then $D\cap V$ makes sense as a Cartier divisor over $V$, i.e. as the restriction $D\bigr|_{V}$. As such it has an associated Weil divisor $\left[D\bigr|_{V}\right]\in \bA^{an}_{k-1}(V)$ and hence it determines via the natural map $\bA_*^{an}(V)\ra \bA_*^{an}(M)$ an element in $\bA^{\an}_{k-1}(M)$. This will be $c_1(L)\cap [V]$.
  
  If $V\subset \supp{D}$, then consider the restriction $L\bigr|_{V}$. Since $V$ is projective we have that there exists a Cartier divisor $C$ on $V$ such that $L\bigr|_V\simeq \mathcal{O}(C)$. In this case, define $c_1(L)\cap [V]$ as $[C]$.
  \vspace{0.3cm}

A Hermitian metric on $L$ determines a connection $\nabla^L$ and hence by standard Chern-Weil theory a closed $(1,1)$-form $c_1(\nabla^{L})$. We take $c_1(\nabla^{L})$ to mean the integral form.  This of course induces an operation
\[c_1(\nabla^{L})\wedge:H^{BC}_{k,k}(M)\ra H^{BC}_{k-1,k-1}(M),\qquad T\ra c_1(\nabla^{L})\wedge T.
\]

The next result while not used in the rest of the article is illustrative of the correspondence between known results in algebraic geometry and certain applications of double transgressions we give here.
\begin{prop}\label{chbch} Let $M$ be projective. The following diagram commutes
\[\xymatrix{\bA_{k}^{an}(M)\ar[rr]^{c_1(L)\cap }\ar[d]&& \bA_{k-1}^{an}(M) \ar[d]\\ 
H^{BC}_{k,k}(M)\ar[rr]^{c_1(\nabla^{L})\wedge\quad} && H^{BC}_{k-1,k-1}(M)}.
\]
 The vertical arrows are the maps of  Lemma \ref{prefL}.
\end{prop}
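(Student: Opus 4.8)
The plan is to verify commutativity on a set of generators of $\bA_k^{\an}(M)$, namely on classes $[V]$ with $V\subset M$ an irreducible subvariety of dimension $k$, and to realize both $c_1(L)\cap[V]$ (pushed into $H^{BC}_{k-1,k-1}(M)$) and $c_1(\nabla^L)\wedge[V]$ as push-forwards from a resolution of singularities of $V$, where the classical Poincar\'e--Lelong formula of Theorem \ref{t1} is available. Both vertical maps being the additive maps of Lemma \ref{prefL}, and $c_1(L)\cap$ and $c_1(\nabla^L)\wedge$ being additive, it is enough to treat a single $[V]$.

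First I would merge the two cases in the definition of $c_1(L)\cap$ recalled before the proposition into one: in either situation one has $c_1(L)\cap[V]=(i_V)_*[\Div(\sigma)]$ for a suitable non-identically-zero meromorphic section $\sigma$ of $L\bigr|_V$, where $i_V\colon V\hookrightarrow M$ is the inclusion and $[\Div(\sigma)]$ is the associated Weil divisor, a $(k-1)$-cycle on $V$. Indeed, if $V\not\subset\supp D$ (with $D$ a Cartier divisor and $s$ the rational section of $L$ with $\Div(s)=D$) take $\sigma=s\bigr|_V$, so $\Div(\sigma)=D\bigr|_V$; if $V\subset\supp D$ take $\sigma$ any nonzero meromorphic section of $L\bigr|_V$, which exists because $V$ is projective, so that $\Div(\sigma)=C$ is a Cartier divisor with $\mathcal O(C)\simeq L\bigr|_V$. (That the Chow class is independent of the choice of $\sigma$ is the well-definedness already built into Fulton's construction.)

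Next I would fix a resolution of singularities $p\colon\widetilde V\to V$ (Hironaka; $V$ is algebraic, being projective) and set $q:=i_V\circ p\colon\widetilde V\to M$. Equip $q^*L$ with the pulled-back Hermitian metric, so $c_1(\nabla^{q^*L})=q^*c_1(\nabla^L)$. Since $p$ is a proper birational morphism, it is a biholomorphism over a dense Zariski-open subset of $V_{\mathrm{reg}}$ whose complement has measure zero; hence, for the push-forward of the smooth form $c_1(\nabla^{q^*L})$ viewed as a current, $q_*\bigl(c_1(\nabla^{q^*L})\wedge[\widetilde V]\bigr)=c_1(\nabla^L)\wedge[V]$, which is exactly the result of the lower path of the diagram on the image of $[V]$. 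On the other hand $q_*[\Div(q^*\sigma)]=(i_V)_*\,p_*[\,p^*\Div(\sigma)\,]=(i_V)_*[\Div(\sigma)]=c_1(L)\cap[V]$, using the birational invariance $p_*[p^*\mathcal D]=[\mathcal D]$ for a Cartier divisor $\mathcal D$ on $V$ (exceptional components are killed by $p_*$, while strict transforms of the components of $[\Div(\sigma)]$ are returned with their multiplicities). Now apply Poincar\'e--Lelong on the smooth projective manifold $\widetilde V$ to the nonzero meromorphic section $q^*\sigma$ of $q^*L$ — this is Theorem \ref{t1} in its meromorphic version, obtained locally by writing $q^*\sigma=f\cdot e$ with $e$ a holomorphic frame and $f$ meromorphic and using Poincar\'e--Lelong for $\log|f|$ — to get $[\Div(q^*\sigma)]-c_1(\nabla^{q^*L})=\tfrac{i}{\pi}\partial\bar\partial[\log|q^*\sigma|]$ on $\widetilde V$, with $\log|q^*\sigma|\in L^1_{\mathrm{loc}}(\widetilde V)$. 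Since the push-forward of currents by the holomorphic map $q$ commutes with $\partial$ and $\bar\partial$, applying $q_*$ and inserting the two identifications above yields $c_1(L)\cap[V]-c_1(\nabla^L)\wedge[V]=\tfrac{i}{\pi}\partial\bar\partial\bigl[q_*\log|q^*\sigma|\bigr]$ in $\mathscr D'_{k-1,k-1}(M)$; the right-hand side is $\partial\bar\partial$-exact, so the two classes coincide in $H^{BC}_{k-1,k-1}(M)$, which is the assertion.

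The main obstacle I expect is not conceptual but the careful handling of the singularities of $V$: Poincar\'e--Lelong is only available on the smooth model $\widetilde V$, so one must check that $q_*\log|q^*\sigma|$ is a genuine current of locally finite mass on $M$ (it is, being the push-forward along a proper map of an $L^1_{\mathrm{loc}}$ function), that $q_*$ of the divisor of the pulled-back section reproduces exactly $c_1(L)\cap[V]$ (the birational-invariance point above), and that the push-forward of the curvature form matches $c_1(\nabla^L)\wedge[V]$ as defined before the proposition; each of these is routine but is where the argument must be made precise.
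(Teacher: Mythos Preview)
Your proof is correct and takes a somewhat different, more streamlined route than the paper's. The paper first writes $L\simeq L_1\otimes L_2^*$ with $L_1,L_2$ effective in order to reduce to the case of an effective line bundle with a \emph{holomorphic} section $s$; it then splits into the two cases $V\not\subset s^{-1}(0)$ (where it intersects the Poincar\'e--Lelong identity on $M$ directly with $V$) and $V\subset s^{-1}(0)$ (where it passes to a desingularization of $V$, exactly as you do). Your argument dispenses with the reduction to effective bundles by working from the outset with a meromorphic section of $L\bigr|_V$ and applying the meromorphic Poincar\'e--Lelong on the resolution $\widetilde V$; this treats both cases uniformly at the cost of always invoking resolution of singularities and the meromorphic version of Theorem~\ref{t1}. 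What the paper's detour buys is that in the generic case $V\not\subset s^{-1}(0)$ one never leaves $M$ and uses only the holomorphic Poincar\'e--Lelong as stated; what your approach buys is a single clean computation with no case split. The technical checks you flag (projection formula $q_*(q^*c_1(\nabla^L)\wedge[\widetilde V])=c_1(\nabla^L)\wedge[V]$, the identity $p_*[p^*\Div(\sigma)]=[\Div(\sigma)]$, and finite mass of $q_*\log|q^*\sigma|$) are indeed the places where precision is required, and they go through as you indicate.
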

\begin{proof}  Every holomorphic line bundle on $M$ can be written non-canonically as:
\begin{equation}\label{eqtln} L\simeq L_1\otimes L_2^*
\end{equation}
where $L_1$ and $L_2$ are \emph{effective} line bundles, i.e.  $L_i\simeq \mathcal{O}(D_i)$ for some $D_i$ effective divisors.  Hermitian metrics on two of the three line bundles $L,L_1,L_2$ naturally determine a metric on the remaining line bundle. Hence we can choose any metric on $L_1$ and fix the metric on $L_2$ via (\ref{eqtln}). We will therefore have three Chern connections $\nabla^{L}$, $\nabla^{L_1}$ and $\nabla^{L_2}$ and then one checks the equality of forms:
\begin{equation}\label{algeq1} c_1(\nabla^L)=c_1(\nabla^{L_1})-c_1(\nabla^{L_2})
\end{equation}

 One has (Prop 2.5 in \cite{F}, (b) and (e)) that
\begin{equation}\label{algeq2} c_1(L^*)\cap\cdot =-c_1(L)\cap\cdot
\end{equation}
 \begin{equation} \label{algeq3} c_1(L_1) \cap c_1(L_2)\cap \cdot=c_1(L_2) \cap c_1(L_1)\cap \cdot
 \end{equation}
 
 It follows from \eqref{algeq1},  (\ref{algeq2}) and  (\ref{algeq3}) and their analogues for forms that it is enough to prove the general statement for effective line bundles $L$. Each such line bundle has a holomorphic section $s:M\ra L$ which is naturally associated to some gluing data $g_{\alpha\beta}$ that describes $L$.

The zero   locus of $s$ is the Weil divisor associated to $L$, call it $W$. Then use the Poincar\'e-Lelong formula of Griffiths-King
\begin{equation}\label{leq10} c_1(\nabla^L)-W=\frac{i}{\pi}\bar{\partial}{\partial} \log|s|.
\end{equation}
 Take now any irreducible subvariety $V$ of $M$ such that $V \not\subset W$. Then we can intersect the equality of flat currents (\ref{leq10}) with $V$.
\[c_1(\nabla^L)\wedge V -W\wedge V=\frac{i}{\pi}\bar{\partial}{\partial} \left(\log{\left|s_{|_{V}}\right|}\right)
\]
On the other hand, $W\wedge V=W\cap V$ is the analytic cycle that represents $c_1(L)\cap V$ and this finishes the proof when $V \not\subset W$.

If $V\subset W$, then consider $L\bigr|_{V}$. If $V$ is non-singular then Poincar\'e-Lelong for a section of $L\bigr|_{V}$ gives what is needed. If $V$ is singular, we need to pass to a desingularization $\Pi:\tilde{V}\ra V$
and use Poincar\'e-Lelong for a section of $\Pi^*L$ and push-forward the resulting double transgression from $\tilde{V}$ to $M$.
\end{proof}
We remark that wedging with the first Chern class is a well-defined operation on the Bott-Chern groups of a complex manifold $M$, irrespective of whether $M$ is projective or not.

The Gysin isomorphism is well-known for the algebraic Chow groups, or for deRham cohomology. It might come as no surprise that it holds also for Bott-Chern cohomology groups. This will be the first application of double transgressions in the next Section.

\section{The homogeneous action and the Gysin isomorphism}

Let $\pi:E\ra M$ be a holomorphic vector bundle and $\bP^1$ be the projective line. In order to avoid introducing (essentially) unnecessary notation we will use the same greek letter $\pi$ for other bundle projections to $M$ that are related to $E$, such as $\bP(\bC\oplus E)\ra M$ and $\bP(E)\ra M$. 

\vspace{0.3cm}

 Denote by $\Bl_{\infty\times [0]}(\bP^1\times E)$ the blow-up of $\{[0:1]\}\times [0]=:\infty\times [0]$ where $[0]$ is the zero section of $E$. It has an explicit description as a regular subspace of $\bP^1\times E\times_M\bP(\bC\oplus E)$\footnote{The notation $\bC$ will alternatively represent the trivial line bundle  and the field of complex numbers}:
\begin{equation}\label{blex}\Bl_{\infty\times [0]}(\bP^1\times E)=\{([\mu:\lambda],v,[\theta:w])\in \bP^1\times E\times_M\bP(\bC\oplus E)~|~(\mu,\lambda  v) \wedge(\theta,w)=0\}
\end{equation}
with the restriction of $\pi_{1,2}$,  the projection onto the first two coordinates, being the blow-down map.

Let 
\[F_{[\mu:\lambda]}:=\pi^{-1}_1\{[\mu:\lambda]\}\]
 be the fiber of the  projection onto the first coordinate. For $\mu\neq 0$, one has $[\theta:w]=[\mu:\lambda v]$ and therefore $\pi_{2,3}\bigr|_{F_{[\mu:\lambda]}}$ is a biholomorphism onto $\{(v,[1:\lambda/\mu v])\}$ which in turn is biholomorphic with $E$ via projection onto the first component.

 For $\mu=0$ one can decompose 
\begin{equation}
\label{21} F_{[0:1]}=\Ex_{\infty\times[0]}(\bP^1\times E)\cup \Bl_{[0]}(E)
\end{equation}
into the exceptional divisor of the blow-up 
\[ \Ex_{\infty\times [0]}(\bP^1\times E)=\{[0:1]\}\times[0]\times_M\bP(\bC\oplus E)
\]
and the blow-up $\Bl_{[0]}(E)\subset E\times_M \bP(E)$ of the zero section of $E$ seen inside $E\times_M\bP(\bC\oplus E)$ via the natural inclusion
\[E\times_M\bP(E)\hookrightarrow E\times_M\bP(\bC\oplus E).
\]
To see that ({\ref{21}}) holds, separate into $v=0$ which corresponds to $\Ex_{\infty\times[0]}(\bP^1\times E)$ and $v\neq 0$. For $v\neq 0$, one gets that $[\theta:w]=[0:v]$. The closure gives $\Bl_{[0]}(E)$. Notice also that
\[ \Ex_{\infty\times [0]}(\bP^1\times E)\cap \{[0:1]\}\times \Bl_{[0]}(E)=\{[0:1]\}\times\Ex_{[0]}(E).
\]

\vspace{0.3cm}

We will switch now to currents and recall   the following version of Poincar\'e-Lelong.

\begin{theorem}\label{PL1} Let $N$ be a connected, complex analytic manifold and let $f:N\ra \bP^1$, $f:=[f_0:f_1]$ be a non-constant, holomorphic map where $(f_0,f_1)$ is a pair of  $\bC$-valued functions that do not vanish simultaneously anywhere. Then 
\begin{equation}\label{neq10} f^{-1}[1:0]-f^{-1}[0:1]=\frac{i}{\pi}\partial \bar{\partial} \log\left|\frac{f_1}{f_0}\right|
\end{equation}
\end{theorem}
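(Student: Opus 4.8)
The plan is to reduce this version of Poincar\'e-Lelong to the classical Poincar\'e-Lelong formula (Theorem \ref{t1}, i.e. Griffiths-King) applied to a suitable holomorphic line bundle with a suitable section on $N$. The natural candidate is the pullback $f^*\cO(1)$ of the hyperplane bundle on $\bP^1$, equipped with the pullback of the Fubini-Study metric, together with a section of it.

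First I would recall the concrete picture on $\bP^1$: the bundle $\cO(1)$ has two tautological sections $x_0, x_1$ (the coordinate functions) whose zero loci are $[0:1]$ and $[1:0]$ respectively, and the Fubini-Study metric can be normalized so that its Chern connection $\nabla^{FS}$ satisfies $c_1(\nabla^{FS}) - [1:0] = \tfrac{i}{\pi}\partial\bar\partial \log\|x_0\|_{FS}$, and symmetrically with $x_1$ and $[0:1]$; subtracting these two instances of classical Poincar\'e-Lelong on $\bP^1$ gives the identity $[1:0] - [0:1] = \tfrac{i}{\pi}\partial\bar\partial\log(\|x_1\|_{FS}/\|x_0\|_{FS}) = \tfrac{i}{\pi}\partial\bar\partial\log|x_1/x_0|$, since the metric factors cancel in the ratio. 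Then I would pull everything back along $f$: set $L := f^*\cO(1)$ on $N$ with the pulled-back metric and Chern connection, and let $\sigma_i := f^*x_i$ be the pulled-back sections. Because $f = [f_0:f_1]$ is expressed via $(f_0,f_1)$ with no common zero, in any local trivialization of $L$ the section $\sigma_i$ is represented precisely by $f_i$, so $\sigma_0^{-1}(0) = f^{-1}[0:1]$ and $\sigma_1^{-1}(0) = f^{-1}[1:0]$ as analytic cycles (with multiplicities matching those coming from the classical formula), and $|\sigma_1/\sigma_0| = |f_1/f_0|$ as a function on the locus where it is defined.

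Next I would justify the pullback of the formula. Applying classical Poincar\'e-Lelong (Theorem \ref{t1}) on $N$ to the sections $\sigma_0$ and $\sigma_1$ of $L$ — both non-identically zero since $f$ is non-constant and $N$ is connected — and subtracting, the curvature terms $c_1(\nabla^L)$ cancel, yielding exactly
\[
f^{-1}[1:0] - f^{-1}[0:1] = \frac{i}{\pi}\partial\bar\partial\left[\log\left|\frac{\sigma_1}{\sigma_0}\right|\right] = \frac{i}{\pi}\partial\bar\partial\left[\log\left|\frac{f_1}{f_0}\right|\right],
\]
as currents on $N$; the bracketed function is locally integrable because $\log|f_1/f_0|$ is locally the difference of two plurisubharmonic functions $\log|f_1|$ and $\log|f_0|$, each locally integrable by Lelong. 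Alternatively, if one prefers not to invoke line bundles, the same argument can be run directly: away from $f^{-1}\{[1:0],[0:1]\}$ the map $f$ takes values in $\bC^* \subset \bP^1$, $\log|f_1/f_0|$ is pluriharmonic there, so both sides vanish there, and the local structure of the current $\partial\bar\partial\log|f_i|$ near the zero set of $f_i$ is exactly $[f_i^{-1}(0)]$ by the classical local Poincar\'e-Lelong on a manifold.

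The main obstacle, and the only genuinely nontrivial point, is the multiplicity bookkeeping: one must check that the analytic cycle $f^{-1}[1:0]$ appearing on the left — the zero divisor of $f$ over the point $[1:0]$, counted with scheme-theoretic multiplicities — coincides with the divisor $\mathrm{div}(\sigma_1) = \mathrm{div}(f_1)$ produced by the classical formula, and likewise over $[0:1]$. This is where the hypothesis that $(f_0,f_1)$ have no common zero is essential: it guarantees that locally around a point of $f^{-1}[1:0]$ the function $f_0$ is a nonvanishing unit, so $f_1/f_0$ differs from $f_1$ by a unit and the divisors agree, and symmetrically with the roles of $f_0,f_1$ exchanged. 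Everything else is the routine cancellation of the metric/curvature contributions and the observation that subtracting two instances of a formula with the same curvature term eliminates it.
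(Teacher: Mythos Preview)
Your proof is correct. It differs from the paper's only in packaging: the paper works locally, covering $\bP^1$ by the two standard affine charts $U_0,U_1$, observing that on each $f^{-1}(U_i)$ the identity \eqref{neq10} reduces to Theorem~\ref{t1} applied to the holomorphic function $f_1/f_0$ (resp.\ $f_0/f_1$) as a section of the \emph{trivial} line bundle, and then invoking the sheaf property of currents to glue. You instead work globally with the pullback bundle $f^*\cO(1)$ and its two sections $\sigma_0,\sigma_1$, applying Theorem~\ref{t1} twice and subtracting so that the curvature term cancels. Both routes are standard and equally short; the paper's version avoids ever naming $f^*\cO(1)$ or its metric, while yours makes the cancellation of the Chern form explicit and handles the multiplicity bookkeeping in one stroke via the observation that $(f_0,f_1)$ nowhere vanishing trivializes $f^*\cO(-1)$. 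Your ``alternative'' paragraph is essentially the paper's argument.
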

\begin{proof} Let $U_0\cup U_1$ be the usual covering with affine charts of $\bP^1$. Then the identity (\ref{neq10}) reduced to  $f^{-1}(U_0)$ and to $f^{-1}(U_1)$ is a consequence of  Theorem \ref{t1} for $L=\bC$. Use then the sheaf property of currents to deduce it is true on $N$. 
\end{proof}
\begin{rem} The current determined by $\log\left|\frac{f_1}{f_0}\right|$ should actually be written $\log\left|\frac{f_1}{f_0}\right|\cdot [N]$ as it is an $(n,n)$-current, but we stick to the standard convention (see page 388 in \cite{GH}). The definition of the current $\partial \bar{\partial}(g\cdot N)$ is, again using standard conventions
\begin{equation}\label{srem}\partial \bar{\partial}(g\cdot N)(\omega):=\int_Ng\partial \bar{\partial}\omega,\qquad \omega \in \Omega^{n,n}_{\cpt}(N).
\end{equation}
One could argue that the  appropriate definition for  $\partial \bar{\partial} T$ for a current $T$ should be $ \partial (\bar{\partial} T)$, i.e.
\[ \partial \bar{\partial} T(\omega):=T(\bar{\partial}\partial\omega).
\]
This will introduce a sign in (\ref{srem}).   The reason for the use of (\ref{srem}) is that if $g$ is a smooth function, then $\partial \bar{\partial}(g\cdot N)=(\partial \bar{\partial}g)\wedge N$. We will therefore define  in this article
\[\partial \bar{\partial} T=-\partial (\bar{\partial} T).\] 

Since we are at this,  let us mention the notational convention that 
\[\omega\wedge T(\gamma):=T(\omega\wedge \gamma)\qquad T\wedge \omega(\gamma):=T(\gamma\wedge\omega).\] Of course, if $\omega$ is of even degree then both are equal.
\end{rem}

\begin{rem}  It is well-known that $\log{|f|}$ is locally integrable whenever $f$ is meromorphic (see Lemma 1.4 in \cite {GK} or \cite{Ch} page 213).
\end{rem}
The points $[1:0]$ and $[0:1]$ have nothing special.  In fact, by composing $f:N\ra \bP^1$ with a linear biholomorphism $A$ of $\bP^1$ which  takes a point $[\mu_0:\lambda_0]$  to $[1:0]$ and $[\mu_1:\lambda_1]\neq [\mu_0:\lambda_0]$ to $[0:1]$, e.g. $A=\left(\begin{array}{cc} \lambda_1&-\mu_1\\
 -\lambda_0&\mu_0\end{array}\right)$, 
we get that
 \begin{equation}\label{eqg}g:= A\circ f\end{equation}  satisfies $g^{-1}([1:0])=f^{-1}([\mu_0:\lambda_0])$ and $g^{-1}([0:1])=f^{-1}([\mu_1:\lambda_1])$. Unfortunately the correspondence 
 \[([\mu_1:\lambda_1], [\mu_0:\lambda_0])\ra A
 \]
 is not well-defined but one can make a rather canonical choice if we fix the charts. Hence we take $A=\left(\begin{array}{cc} 1& -\theta\\
  -\gamma &1 \end{array}\right)$ and obtain  a ``dynamical" Poincar\'e-Lelong by applying Theorem \ref{PL1} to $g$ and a bit more.
\begin{theorem}\label{PLmn} Let $f:N\ra \bP^1$, $f=[f_0:f_1]$ be holomorphic, non-constant. Then for every $(\gamma,\theta)\in \bC^2$, every constant multiple of the  function 
\[ h^f(\gamma,\theta):=\frac{f_1-\gamma f_0}{f_0-\theta f_1}\]  satisfies
\begin{equation}\label{P1PL} f^{-1}([1:\gamma])-f^{-1}([\theta:1])=\frac{i}{\pi}\partial \bar{\partial} \log{|h^f(\gamma,\theta)|}
\end{equation}
Formula (\ref{P1PL}) varies continuously with $(\theta,\gamma)$ in the topology of locally flat currents and moreover 
\[(\gamma,\theta)\ra  \log|h^f(\gamma,\theta)|
\]
 is continuous on every compact of $N$ in the mass norm topology, i.e. continuous in $L^1_{\loc}(N)$.
\end{theorem}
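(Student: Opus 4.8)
The plan is to deduce the identity (\ref{P1PL}) from Theorem \ref{PL1} by a linear change of coordinates on $\bP^1$, and then to extract the two continuity statements from the basic convergence theory of plurisubharmonic functions together with a slicing argument. Concretely, for $(\gamma,\theta)\in\bC^2$ set $A_{\gamma,\theta}:=\left(\begin{array}{cc}1&-\theta\\-\gamma&1\end{array}\right)$. When $1-\gamma\theta\neq 0$ this is a linear biholomorphism of $\bP^1$ with $A_{\gamma,\theta}[1:\gamma]=[1:0]$ and $A_{\gamma,\theta}[\theta:1]=[0:1]$, so $g:=A_{\gamma,\theta}\circ f=[\,f_0-\theta f_1:f_1-\gamma f_0\,]$ is holomorphic and non-constant, its two components do not vanish simultaneously (a common zero would be a common zero of $f_0,f_1$, because $\det A_{\gamma,\theta}\neq 0$), one has $g^{-1}[1:0]=f^{-1}[1:\gamma]$, $g^{-1}[0:1]=f^{-1}[\theta:1]$ and $g_1/g_0=h^f(\gamma,\theta)$, so Theorem \ref{PL1} applied to $g$ yields (\ref{P1PL}). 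The degenerate locus $\gamma\theta=1$ I would handle by hand: there $\gamma\neq 0$, the points $[1:\gamma]$ and $[\theta:1]$ coincide, and $\gamma(f_0-\theta f_1)=\gamma f_0-f_1$ shows $h^f(\gamma,\theta)\equiv-\gamma$, so (\ref{P1PL}) reads $0=0$. Built in is that $h^f(\gamma,\theta)$ is a meromorphic function (numerator and denominator are holomorphic and $\not\equiv 0$, else $f$ would be constant), hence $\log|h^f(\gamma,\theta)|\in L^1_{\loc}(N)$; and invariance under replacing $h^f$ by $c\,h^f$, $c\in\bC^*$, is immediate since $\log|c|$ is $\partial\bar\partial$-closed.

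For the $L^1_{\loc}$-continuity of $(\gamma,\theta)\mapsto\log|h^f(\gamma,\theta)|$ I would write $\log|h^f(\gamma,\theta)|=\log|f_1-\gamma f_0|-\log|f_0-\theta f_1|$ and treat each term; by symmetry it suffices to show $\gamma\mapsto\log|f_1-\gamma f_0|$ is continuous into $L^1_{\loc}(N)$. This is local, so near $p\in N$ I would assume $f_0(p)\neq 0$ (interchanging $f_0,f_1$ otherwise) and, on a ball $B$ on which $f_0$ is non-vanishing, factor $f_1-\gamma f_0=f_0(F-\gamma)$ with $F:=f_1/f_0$ holomorphic and non-constant on $B$ (non-constant because $f$ is non-constant and $N$ connected). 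Since $\log|f_1-\gamma f_0|=\log|f_0|+\log|F-\gamma|$ on $B$, it remains to prove $\gamma\mapsto\log|F-\gamma|$ is continuous into $L^1(B')$ for $B'\Subset B$. Along a sequence $\gamma_n\to\gamma_0$ the plurisubharmonic functions $\log|F-\gamma_n|$ are locally uniformly bounded above and converge to $\log|F-\gamma_0|\not\equiv-\infty$ at every point of the full-measure set $\{F\neq\gamma_0\}$; the one genuine ingredient is a uniform-integrability bound on $B'$, which one can get either from the standard convergence/compactness theorem for plurisubharmonic functions (see, e.g., \cite{Dem}) or concretely by writing $\int_{B'}|\log|F(z)-\gamma||^p\,dV(z)=\int_\bC|\log|w-\gamma||^p\,d(F_*\mathrm{Leb})(w)$ and using that $F_*\mathrm{Leb}$ has density in some $L^q_{\loc}(\bC)$, $q>1$, to obtain a bound in $L^p(B')$, $p>1$, uniform for $\gamma$ near $\gamma_0$. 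Vitali's theorem then gives $L^1(B')$-convergence; as $\gamma_n$ was arbitrary, continuity follows, and a locally finite cover concludes.

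For the continuity of (\ref{P1PL}) in the topology of locally flat currents, I would observe that $\gamma\mapsto f^{-1}[1:\gamma]$ is the slicing of the smooth current of integration over the graph $\Gamma_f\subset N\times\bP^1$ by the projection $N\times\bP^1\to\bP^1$; since $f$ is holomorphic and non-constant, every fibre $f^{-1}[1:\gamma]$ is empty or of pure codimension one, so Hardt's Slicing Theorem applies and this family — and likewise $\theta\mapsto f^{-1}[\theta:1]$ — is continuous in the locally flat topology, hence so is their difference, i.e. the left and therefore the right side of (\ref{P1PL}). (As a fallback, the previous paragraph already gives weak continuity of $\tfrac{i}{\pi}\partial\bar\partial\log|h^f(\gamma,\theta)|$, and since these currents are differences of divisors of locally bounded mass, the Federer--Fleming compactness theorem \cite{Fe} upgrades weak convergence to flat convergence.) I expect the $L^1_{\loc}$-continuity of the potential to be the main obstacle: upgrading the evident a.e.\ convergence to $L^1_{\loc}$ convergence needs the uniform-integrability input above (mere joint integrability of $\log|h^f(\gamma,\theta)|$ on $N\times\bC^2$ only controls a.e.\ slices, by Fubini), and the flat continuity is likewise not formal, since $\partial\bar\partial$ is not continuous from the mass to the flat topology.
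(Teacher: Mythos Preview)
Your proof is correct and follows the paper's approach closely. The derivation of (\ref{P1PL}) via the linear fractional change $A_{\gamma,\theta}$ and Theorem \ref{PL1} is exactly what the paper does (you are in fact more careful than the paper in treating the degenerate locus $\gamma\theta=1$ separately), and the flat continuity via Hardt's slicing theorem is likewise the paper's argument.

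The one place you diverge is the $L^1_{\loc}$-continuity of $\log|h^f(\gamma,\theta)|$. After the same reduction to a single term $\gamma\mapsto\log|f_1-\gamma f_0|$, the paper localizes to small polydisks, uses Jensen's inequality for plurisubharmonic functions on a slightly larger polydisk centered off the zero set to obtain an integrable dominating function, and concludes by dominated convergence. You instead factor $f_1-\gamma f_0=f_0(F-\gamma)$ on a chart where $f_0\neq 0$ and invoke the standard $L^1_{\loc}$-compactness theorem for plurisubharmonic functions (locally uniformly bounded above, a.e.\ convergent to a limit $\not\equiv-\infty$). Both routes are valid and of comparable length; yours is arguably more conceptual and cites a ready-made theorem, while the paper's is slightly more elementary but leans on an ad hoc domination. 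Your ``interchanging $f_0,f_1$'' remark could be phrased more precisely: when $f_0(p)=0$ one has $f_1(p)\neq 0$, so $f_1-\gamma f_0$ is bounded away from zero near $p$ for $\gamma$ in compacta and the continuity is immediate without any interchange.
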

\begin{proof} 

The continuity of the slices in the flat topology follows from the classical Slicing Theorem of Hardt (see Theorem \ref{Hardt} below or \cite{Har}) hence the left hand side of (\ref{P1PL}) varies continuously. 

 For the $L^1_{\loc}$ continuity of $\log|h^f|$ it is enough to prove the $L^1_{\loc}$ continuity of
\[ \gamma\ra \log|{ f_1-\gamma f_0}|
\]
and by an obvious substitution it is enough to prove the continuity  for $\gamma =0$, i.e. that the following holds
\begin{equation}\label{hleq} \lim_{\gamma\ra 0}\int_{K}\big|\log|\gamma f_0(x)-f_1(x)|-\log|f_1(x)|\big|~dx=0.
\end{equation}
for a compact $K$ in $N$. It is also enough to prove this for small polydisks in $\bC^n$. The "problematic" polydisks $\Delta'$ are those centered at points  in $p\in f^{-1}(0)$. Taking $\gamma$ small and $a$ a point outside the common zero locus of $ \gamma f_0-f_1$, and a polydisk $\Delta\supset \Delta'$ centered at $a$, Jensen's inequality (\cite{Gu}, Theorem 8 Cap A) for $\Delta(a)$ shows that one can apply Lebesgue dominated convergence in order to justify (\ref{hleq}).

\end{proof}

\begin{theorem}[Hardt,\cite{Har}] \label{Hardt}Let $N$ be a real analytic manifold and $f:N\ra \bR^n$ be analytic. Let $T$ be an analytic current in $N$, $\dim{T}=t$. Let 
\[Y:=\{y\in \bR^n~|~\dim {(f^{-1}(y)\cap  \supp{T})}\leq t-n,\;\; \dim{(f^{-1}(y)\cap \supp{\partial T})}\leq t-n-1\}.\] 
Then the function
\[ Y\ni y\longrightarrow \langle T,f,y\rangle
\]
is well-defined and continuous in the topology of locally flat currents. 
\end{theorem}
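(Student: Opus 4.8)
The statement is Hardt's Slicing Theorem, and in the paper it will be invoked through the reference \cite{Har}; here is how one reconstructs it from the general theory of slicing together with subanalytic triviality. First I would recall the measure-theoretic slicing of Federer (\cite{Fe}, \S4.3): for a locally flat current $T$ of dimension $t$ on $N$ and a real-analytic (hence locally Lipschitz) map $f\colon N\ra\bR^{n}$, the slices $\langle T,f,y\rangle$ are defined for almost every $y\in\bR^{n}$, are locally flat currents of dimension $t-n$, satisfy $\partial\langle T,f,y\rangle=\pm\langle\partial T,f,y\rangle$, and obey the coarea identity $\int\langle T,f,y\rangle\,dy=T\llcorner f^{\#}(d\mathcal{L}^{n})$; in particular $y\mapsto\langle T,f,y\rangle$ is measurable and the local masses $\mathbf{M}_{K}(\langle T,f,y\rangle)$ are locally integrable in $y$. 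This yields the \emph{existence} part away from a null set; the two remaining tasks are to pin the slices down on \emph{all} of $Y$ and to establish continuity.

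Second, on $Y$ I would identify $\langle T,f,y\rangle$ with the geometric intersection current. Writing $T$ as integration (with integer multiplicities) over a $t$-dimensional analytic set $X$ with $\dim\supp\partial T\le t-1$, the two conditions defining $Y$ say exactly that $f^{-1}(y)$ meets $X$, respectively $\supp\partial T$, in the expected dimensions $\le t-n$ and $\le t-n-1$. Then $f^{-1}(y)\cap X$ is again subanalytic of dimension $\le t-n$ and carries a canonical integration current with integer multiplicities; since a flat current of dimension $t-n$ whose support has dimension $\le t-n$ and whose boundary lives in dimension $\le t-n-1$ is rectifiable and is determined by its support and multiplicity, this geometric current must agree with $\langle T,f,y\rangle$ wherever the latter exists, hence everywhere on $Y$. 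Thus $y\mapsto\langle T,f,y\rangle$ extends to a well-defined, purely geometric assignment on $Y$.

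Third, and this is the heart of the matter, continuity. Here Hardt's analytic hypotheses enter through his topological triviality theorem for subanalytic maps: over a sufficiently small neighbourhood $U$ of a point $y_{0}\in Y$ there is a subanalytic homeomorphism $h\colon U\times\bigl(f^{-1}(y_{0})\cap\supp T\bigr)\ra f^{-1}(U)\cap\supp T$ commuting with the projections to $U$, and similarly for $\supp\partial T$. Trivialization forces the local masses $\mathbf{M}_{K}(\langle T,f,y\rangle)$ and $\mathbf{M}_{K}(\langle\partial T,f,y\rangle)$ to be locally bounded on $U$, and it forces $\langle T,f,y\rangle\ra\langle T,f,y_{0}\rangle$ \emph{weakly} as $y\ra y_{0}$, the slices being push-forwards under the continuously varying homeomorphisms $h(y,\cdot)$ of a fixed current carrying fixed multiplicity data. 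A uniform bound on $\mathbf{M}_{K}(\langle T,f,y\rangle)+\mathbf{M}_{K}(\langle\partial T,f,y\rangle)$ together with weak convergence upgrades automatically to convergence in the locally flat norm, by the Federer--Fleming compactness theorem (on families of currents with uniformly bounded mass and boundary mass, the weak and flat topologies coincide). Combined with the second step, $y\mapsto\langle T,f,y\rangle$ is continuous on $Y$ in the topology of locally flat currents.

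I expect the principal obstacle to be precisely the passage from weak to flat continuity: a subanalytic trivialization $h$ is only a homeomorphism, not bi-Lipschitz, so one cannot transport the flat norm directly along $h$. What saves the argument is the second condition in the definition of $Y$ --- the dimension bound on $f^{-1}(y)\cap\supp\partial T$ --- which is exactly what is needed to keep the boundary masses locally bounded and thereby feed the compactness argument that converts weak convergence into flat convergence.
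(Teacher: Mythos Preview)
The paper does not prove this theorem: it is quoted verbatim from Hardt's 1972 Acta Math.\ paper and invoked as a black box throughout (see the line ``\emph{Hardt's Slicing Theorem ensures that there are no surprises when taking limits}'' in the introduction, and its repeated use in Proposition~\ref{cormain}, Theorem~\ref{PL3}, Theorem~\ref{ths5}). There is therefore no in-paper proof to compare your proposal against.

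That said, your sketch is a reasonable high-level reconstruction of how one would approach the result, and you correctly identify the crux: passing from a merely topological trivialization to continuity in flat norm. One genuine gap in your argument is that you assert the slices are ``push-forwards under the continuously varying homeomorphisms $h(y,\cdot)$ of a fixed current'', but push-forward of currents is not defined for bare homeomorphisms --- one needs at least a Lipschitz map, and subanalytic trivializations are in general not Lipschitz. So you cannot obtain weak convergence this way and then upgrade it; the weak convergence step itself is where the difficulty lies. Hardt's actual proof proceeds rather differently, building the slice directly as an analytic chain via intersection theory for analytic chains (his \S3--4), and continuity comes from the structure of analytic varieties (stratifications, local finiteness of components, semi-continuity of multiplicities) rather than from a trivialization-plus-compactness argument. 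Your outline captures the right ingredients at the level of slogans but would not close without substantially more work on exactly the point you flag.
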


\vspace{0.3cm}

We apply now Theorem \ref{PL1} to  $N=\Bl_{\infty\times [0]}(\bP^1\times E)$,  $f=\pi_1=[\pi_1^0:\pi_1^1]$,  the restriction to $\Bl_{\infty\times [0]}(\bP^1\times E)$  of the projection onto $\bP^1$ and $T=\Bl_{\infty\times [0]}(\bP^1\times E)$.  The dimensional condition of Hardt's Theorem is fulfilled everywhere. Hence we get
\begin{prop}\label{cormain} For every $\gamma\in \bC$, the following equality of currents holds within the complex manifold $\Bl_{\infty\times [0]}(\bP^1\times E)$:
\begin{equation}\label{ftrf} F_{[1:\gamma]}-F_{[0:1]}=\frac{i}{\pi}\partial \bar{\partial}\log{| h^{\pi_1}_0(\gamma)|}
\end{equation}
where $h^{\pi_1}_0(\gamma):=h^{\pi_1}(\gamma,0)=\frac{\pi_1^1}{\pi_1^0}-\gamma$. 

Moreover, $F_{[0:1]}=\lim_{\theta\ra \infty}F_{[1:\theta]}$ as locally flat currents and (\ref{ftrf}) is the limit of  the corresponding double transgression formulas (\ref{P1PL}).

\end{prop}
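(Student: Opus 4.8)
The plan is to apply the already-established dynamical Poincaré-Lelong result (Theorem \ref{PLmn}) to a judiciously chosen holomorphic map $f:N\ra\bP^1$ on the blow-up $N:=\Bl_{\infty\times[0]}(\bP^1\times E)$, then identify the fibers $F_{[\mu:\lambda]}=\pi_1^{-1}\{[\mu:\lambda]\}$ with the slices $f^{-1}([\mu:\lambda])$. Concretely, I take $f=\pi_1=[\pi_1^0:\pi_1^1]$, the restriction to $N$ of the projection $\bP^1\times E\times_M\bP(\bC\oplus E)\ra\bP^1$, which is holomorphic and non-constant since it is surjective onto $\bP^1$; the two component functions $\pi_1^0,\pi_1^1$ are exactly the pullbacks of the homogeneous coordinates and never vanish simultaneously. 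I also take the analytic current $T=[N]$ itself. Then the formula \eqref{P1PL} applied with $(\gamma,\theta)=(\gamma,0)$ reads
\[
f^{-1}([1:\gamma])-f^{-1}([0:1])=\frac{i}{\pi}\partial\bar{\partial}\log|h^f(\gamma,0)|,
\]
and since $h^{\pi_1}(\gamma,0)=\dfrac{\pi_1^1-\gamma\pi_1^0}{\pi_1^0}=\dfrac{\pi_1^1}{\pi_1^0}-\gamma=h_0^{\pi_1}(\gamma)$ (the denominator $\pi_1^0-0\cdot\pi_1^1=\pi_1^0$), this is precisely \eqref{ftrf} once we know $f^{-1}([1:\gamma])=F_{[1:\gamma]}$ and $f^{-1}([0:1])=F_{[0:1]}$ as currents. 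The fiber identification is immediate from the definition $F_{[\mu:\lambda]}=\pi_1^{-1}\{[\mu:\lambda]\}$; one only has to note that these set-theoretic fibers carry the reduced analytic structure coming from the blow-up and hence define the slices of the current $[N]$, which follows because $\pi_1$ is a submersion away from the exceptional locus and because the slices of an analytic current by an analytic map agree with the analytic fiber when the dimensional count is right.

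The one genuine hypothesis to check before invoking Theorem \ref{PLmn} (or directly Hardt's Theorem \ref{Hardt}) is the dimensional condition: for every $y=[\mu:\lambda]\in\bP^1$ one needs $\dim(f^{-1}(y)\cap\supp T)\leq t-2$ where $t=\dim_{\bR}N$ and the codomain is $\bR^2\cong\bC$, and similarly (vacuously, since $\partial[N]=0$) for $\partial T$. This amounts to saying that every fiber of $\pi_1$ restricted to $N$ has complex codimension exactly one in $N$, i.e. that $\pi_1:N\ra\bP^1$ has no fiber of excess dimension. This I would verify by the explicit description \eqref{blex}: for $\mu\neq0$ we showed $F_{[\mu:\lambda]}\cong E$, which has complex dimension $n=\dim_{\bC}M+\rank E=\dim_{\bC}N-1$; for $\mu=0$ the decomposition \eqref{21}, $F_{[0:1]}=\Ex_{\infty\times[0]}(\bP^1\times E)\cup\Bl_{[0]}(E)$, exhibits $F_{[0:1]}$ as a union of two irreducible pieces each of complex dimension $n$ (the exceptional divisor is $\{[0:1]\}\times[0]\times_M\bP(\bC\oplus E)$, a $\bP^{k}$-bundle over $M$ of total dimension $\dim_{\bC}M+k=n$, and $\Bl_{[0]}(E)$ is birational to $E$, of dimension $n$). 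Hence the dimensional condition of Theorem \ref{Hardt} holds at every $y\in\bP^1$, so the slicing function is everywhere defined and continuous in the flat topology. This is the only step with any content; I expect it to be the main (mild) obstacle, and it is dispatched by the explicit fiber analysis already carried out above \eqref{21}.

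Finally, for the ``moreover'' clause I would argue as follows. By the continuity statement in Theorem \ref{PLmn}, the left-hand side $f^{-1}([1:\theta])-f^{-1}([\theta':1])$ varies continuously in locally flat currents in the parameters; specializing, $\theta\mapsto F_{[1:\theta]}=f^{-1}([1:\theta])$ is continuous on all of $\bC$, and as $\theta\ra\infty$ the point $[1:\theta]\ra[0:1]$ in $\bP^1$, so the corresponding slices converge in the flat topology to the slice $f^{-1}([0:1])=F_{[0:1]}$ — again an instance of Hardt continuity, using that the dimensional condition holds at $[0:1]$ as checked above. Likewise $\log|h_0^{\pi_1}(\gamma)|=\log\bigl|\tfrac{\pi_1^1}{\pi_1^0}-\gamma\bigr|$ depends $L^1_{\loc}$-continuously on $\gamma$ by the $L^1_{\loc}$ part of Theorem \ref{PLmn}, and the substitution $\gamma\rightsquigarrow 1/\theta$ together with the elementary identity $h^{\pi_1}(0,\theta)=\tfrac{1}{\pi_1^0/\pi_1^1-\theta}$ matches the $\theta\ra\infty$ limit of the formulas \eqref{P1PL} with the formula \eqref{ftrf} term by term, the $\partial\bar{\partial}$ on the right passing to the limit by continuity of $\partial\bar{\partial}$ on currents. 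This completes the proof.
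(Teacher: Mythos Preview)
Your proposal is correct and follows essentially the same route as the paper: apply the Poincar\'e--Lelong formula to $f=\pi_1$ on $N=\Bl_{\infty\times[0]}(\bP^1\times E)$, verify Hardt's dimensional condition via the explicit fiber description \eqref{21}, and deduce the limit statement from slice continuity. Your write-up is in fact more detailed than the paper's own proof, which dispatches the dimensional check in a single sentence; the only quibble is that in your final paragraph the limit in \eqref{P1PL} that yields \eqref{ftrf} is $\theta\to 0$ (so that $[\theta:1]\to[0:1]$), and the substitution argument you sketch there is unnecessary once you invoke the $L^1_{\loc}$ continuity in $\theta$ from Theorem~\ref{PLmn}.
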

As a consequence of the last proposition we already get a double transgression formula for forms on $\bP(\bC\oplus E)$.
\begin{cor}\label{core1} Let $\omega\in \bP(\bC\oplus E)$ be a smooth $\partial$ and $\bar{\partial}$ closed form. Let $\pi:\bP(\bC\oplus E)\ra M$ and $\pi^E:E\setminus\{0\}\ra \bP(E)$ be the canonical projections. Then the following equality of currents holds on $E$:
\begin{equation}\label{ftrf2} \omega\bigr|_{E}-\iota_*(\pi_*(\omega)\wedge[M])-(\pi^{E})^*(\omega\bigr|_{\bP(E)})\footnote{The current $(\pi^{E})^*(\omega\bigr|_{\bP(E)})$ a priori well-defined on $E\setminus \{0\}$ extends to a current on $E$ via (\ref{exteq1}).}=\partial\bar{\partial}[T(\omega)].
\end{equation}
where $M$ is the zero section, $\iota:M\ra E$ the zero section embedding, $\pi_*$ is integration along the fiber (push-forward) and $T(\omega)$ is a current of locally finite mass.
\end{cor}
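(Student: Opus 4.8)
The plan is to pull back the previous proposition along a suitable inclusion and then push forward via the blow-down map. Recall that $\Bl_{\infty\times[0]}(\bP^1\times E)$ sits inside $\bP^1\times E\times_M\bP(\bC\oplus E)$ and carries the three projections; write $\pi_{2,3}$ for the projection onto $E\times_M\bP(\bC\oplus E)$. First I would identify the fiber $F_{[1:0]}$ (the case $\gamma=0$ of $F_{[1:\gamma]}$) with the graph $\{(v,[1:v])\}\subset E\times_M\bP(\bC\oplus E)$ of the tautological ``section'' $v\mapsto[1:v]$, and $F_{[0:1]}$ with the decomposition \eqref{21}, namely $\Ex_{\infty\times[0]}(\bP^1\times E)\cup\Bl_{[0]}(E)$. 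Now take the closed form $\omega$ on $\bP(\bC\oplus E)$, pull it back to $\bP^1\times E\times_M\bP(\bC\oplus E)$ via the third projection to get a $\partial$- and $\bar\partial$-closed form $\tilde\omega:=\pi_3^*\omega$, and wedge equation \eqref{ftrf} (with $\gamma=0$) with $\tilde\omega$. Since $\tilde\omega$ is $\partial$- and $\bar\partial$-closed and of even degree, wedging commutes with $\partial\bar\partial$ (using the sign convention fixed in the remark after Theorem \ref{PL1}), so we obtain on $N=\Bl_{\infty\times[0]}(\bP^1\times E)$:
\[
F_{[1:0]}\wedge\tilde\omega - F_{[0:1]}\wedge\tilde\omega = \frac{i}{\pi}\partial\bar\partial\bigl(\log|h^{\pi_1}_0(0)|\wedge\tilde\omega\bigr).
\]

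Next I would push this equality forward along the blow-down $\pi_{1,2}:N\to\bP^1\times E$, and then restrict to the slice $\{[1:0]\}\times E\cong E$ (equivalently, push forward along $\pi_2$ composed with the fact that the supports of everything lie over a finite set of points in $\bP^1$). On the $F_{[1:0]}$ side, $\pi_{2,3}$ restricted to $F_{[1:0]}$ is a biholomorphism onto the graph, and under the further projection to $E$ this graph maps isomorphically; the composite $E\hookrightarrow \text{graph}\hookrightarrow \bP^1\times E\times_M\bP(\bC\oplus E)\xrightarrow{\pi_3}\bP(\bC\oplus E)$ is exactly the inclusion $v\mapsto[1:v]$, i.e. the embedding of $E$ as an open dense subset of $\bP(\bC\oplus E)$. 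Hence the pushforward of $F_{[1:0]}\wedge\tilde\omega$ is $\omega|_E$. On the $F_{[0:1]}$ side, using \eqref{21}: the exceptional divisor $\Ex_{\infty\times[0]}(\bP^1\times E)=\{[0:1]\}\times[0]\times_M\bP(\bC\oplus E)$ pushes forward (after restricting $\tilde\omega$) to $\iota_*(\pi_*(\omega)\wedge[M])$ — this is precisely integration of $\omega$ over the $\bP(\bC\oplus E)$ fiber giving the form $\pi_*(\omega)$ on $M$, placed on the zero section; one must be careful that the projection $\{[0:1]\}\times[0]\times_M\bP(\bC\oplus E)\to M$ is the full $\bP(\bC\oplus E)$-bundle projection so that fiber integration produces $\pi_*(\omega)$. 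The other piece $\Bl_{[0]}(E)\subset E\times_M\bP(E)$ pushes forward to $(\pi^E)^*(\omega|_{\bP(E)})$: indeed $\Bl_{[0]}(E)$ is the closure of $\{(v,[v])\mid v\neq 0\}$, which maps to $E$ (blow-down) with the tautological line identification, so wedging with $\pi_3^*\omega$ and pushing down recovers the pullback of $\omega|_{\bP(E)}$ under $v\mapsto[v]$, extended across the zero section by the formula referenced in the footnote. Finally, $T(\omega)$ is defined as the pushforward of $\frac{i}{\pi}\log|h^{\pi_1}_0(0)|\wedge\tilde\omega$; since $\log|h^{\pi_1}_0(0)|=\log|\pi_1^1/\pi_1^0|$ is locally integrable (meromorphic, by the remark) and $\tilde\omega$ is smooth, this has locally finite mass, and the pushforward of a finite-mass current under a proper holomorphic map again has locally finite mass.

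The main obstacle I anticipate is bookkeeping the pushforwards carefully: verifying that the three pieces land exactly as claimed requires checking that the relevant restrictions of $\pi_{2,3}$ and $\pi_3$ have the stated degrees (here all degree one, since the blow-down and the graph projections are birational onto their images), that the fiber-integration in the $F_{[0:1]}$ term genuinely produces $\pi_*(\omega)$ with the right normalization (no stray multiplicity coming from the structure of the exceptional divisor as a product), and that the extension of $(\pi^E)^*(\omega|_{\bP(E)})$ across $\{0\}\subset E$ used here coincides with the one in \eqref{exteq1} — i.e. that taking the closure $\Bl_{[0]}(E)$ and pushing forward is compatible with that extension. A secondary, purely technical point is that Hardt's slicing hypothesis for $\pi_1$ on $N$ must be checked to hold everywhere (stated in the text as automatic here since $N$ is smooth of the right dimension and the fibers are proper of the expected dimension), which guarantees $F_{[1:0]}$ and $F_{[0:1]}$ are honest slices and the equation \eqref{ftrf} is valid globally before wedging with $\tilde\omega$. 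Once these identifications are in place the result follows by taking the already-established equality \eqref{ftrf}, wedging with $\tilde\omega$, and applying $(\pi_{1,2})_*$ followed by restriction to the slice over $[1:0]$.
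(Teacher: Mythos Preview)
Your approach is the paper's approach: wedge \eqref{ftrf} with $\pi_3^*\omega$ and push forward to $E$, then identify the three pieces. However, there is a genuine slip in your choice of $\gamma$. You take $\gamma=0$ and claim that $F_{[1:0]}$ is the graph of $v\mapsto[1:v]$. It is not: from the description $F_{[1:\gamma]}=\{(v,[1:\gamma v])\}$, the fiber $F_{[1:0]}$ is the graph of the \emph{constant} map $v\mapsto[1:0]$, so after wedging with $\pi_3^*\omega$ and pushing to $E$ you would obtain $\pi^*(\omega|_{[1:0]})$ rather than $\omega|_E$. The graph of the open embedding $v\mapsto[1:v]$ is $F_{[1:1]}$, and the paper accordingly takes $\gamma=1$. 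Once you replace $\gamma=0$ by $\gamma=1$ throughout (so $h^{\pi_1}_0(1)$ in the transgression term), your argument matches the paper's.

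A second, smaller point: the description ``push forward along $\pi_{1,2}$ to $\bP^1\times E$ and then restrict to the slice $\{[1:0]\}\times E$'' does not work, since $F_{[0:1]}$ lives over $[0:1]$ and would be killed by that restriction. Your parenthetical is the correct operation: push forward along the projection to $E$ (in the paper's notation, apply $(\pi_1)_*$ on $E\times_M\bP(\bC\oplus E)$ after $\pi_{2,3}$, or equivalently $(\pi_2)_*$ directly from the blow-up). With these two corrections your identifications of the three terms and of $T(\omega)$ are exactly as in the paper.
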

\begin{proof} Take $\varphi_z:E\ra \bP(\bC\oplus E)$ to be the family of embeddings $\varphi_z(v)=[1:zv]$. Then 
\begin{equation}\label{someeq}\varphi_z^*\omega(\eta):=\int_E\varphi_z^*\omega\wedge\eta=F_{[1:z]}((\pi_2)^*\omega\wedge \pi_1^*(\eta))=(\pi_1)_*(\pi_2^*\omega\wedge F_{[1:z]})(\eta)
\end{equation}
for any form $\eta$ with compact support on $E$. Here $\pi_1,\pi_2$ are the projections onto the components of $E\times_M\bP(\bC\oplus E)$. Relation (\ref{someeq}) is clear in view of the fact that  $F_{[1:z]}=\{(v,[1:zv])\}$ is the graph of $\varphi_z$. We can  therefore wedge (\ref{ftrf}) for $\gamma=1$ with $\pi_2^*\omega$, compute  $\pi_2^*\omega\wedge F_{[0:1]}$ and then apply $(\pi_1)_*$. 

First we evaluate
\[ [0]\times_M\bP(\bC\oplus E)(\pi_2^*\omega\wedge\pi_1^*\eta)=\bP(\bC\oplus E)\left(\omega\wedge \pi^*\left(\eta\bigr|_{[0]}\right)\right)=\pi_*(\omega)\wedge [M](\eta\bigr|_{[0]}).
\]
where $\bigr|_{[0]}$ is restriction to the zero section. Then, since $\Bl_{[0]}(E)\subset E\times_M \bP(E)$:
\begin{equation}\label{exteq1} \Bl_{[0]}(E)\left(\pi_2^*\left(\omega\bigr|_{\bP(E)}\right)\wedge\pi_1^*\eta\right)=\Bl_{[0]}(E)\setminus \Ex_{[0]}(E)\left(\pi_2^*\left(\omega\bigr|_{\bP(E)}\right)\wedge\pi_1^*\eta\right)=\end{equation}\[=[E\setminus\{0\}](\zeta^*\pi_2^*\left(\omega\bigr|_{\bP(E)}\right)\wedge\eta)=\zeta^*\pi_2^*(\omega\bigr|_{\bP(E)})(\eta)=(\pi^E)^*\left(\omega\bigr|_{\bP(E)}\right)(\eta).
\]
where $\zeta:E\setminus\{0\}\ra E\setminus\{0\}\times_M\bP(E)$, $\zeta(v)=(v,[v])$. 

We therefore get  (\ref{ftrf2}) with 
\[T(\omega)=(\pi_2)_*\left(\log|h^{\pi_1}_0(1)|\pi_3^*\omega\right).\]
where $\pi_2$ and $\pi_3$ are the projections of $\Bl_{\infty\times[0]}(\bP^1\times E)$ onto $E$ and $\bP(\bC\oplus E)$. 

As a bonus we also get that
\[ \lim_{z\ra\infty}\varphi_z^*\omega=\pi_*(\omega)\wedge[M]+(\pi^{E})^*(\omega\bigr|_{\bP(E)})
\]
consequence essentially of Hardt's theorem.
\end{proof}

\begin{rem} We used a closed form $\omega$ so that on the r.h.s. we obtain a $\partial\bar{\partial}$ exact current, as Leibniz will then imply that $\partial\bar{\partial}(T\wedge \omega)=(\partial\bar{\partial}T)\wedge\omega$. Then holomorphic push-forward commutes with $\partial$ and $\bar{\partial}$. Nevertheless, one can derive from (\ref{ftrf})   double transgression formulas for \emph{all forms}, the price to pay being  three more terms on the r.h.s. of (\ref{ftrf2}).
\end{rem}

 We have the following consequence of this Corollary and Bott-Chern Duality Theorem \ref{T1}.
 
  \begin{prop}\label{Gysin1} Let $M$ be a compact, complex manifold and $E\ra M$ be a holomorphic vector bundle of rank $k$.  Then  the Gysin maps $\pi_*:\Omega^{p,q}_{\cpt}(E)\ra \Omega^{p-k,q-k}(M)$ given by integration along the fiber induce isomorphisms:
 \[ H^{p,q}_{BC,\cpt}(E)\simeq H^{p-k,q-k}_{BC}(M).
 \] 
 \end{prop}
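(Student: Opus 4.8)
The plan is to deduce the statement from Bott--Chern duality with supports (Theorem \ref{T1}) together with the universal transgression formula of Corollary \ref{core1}. Write $n=\dim_{\bC}M$, so $\dim_{\bC}E=n+k$, and put $(a,b):=(n+k-p,\,n+k-q)$. By the compactly supported version of Theorem \ref{T1} applied to $E$, inclusion of forms into currents gives an isomorphism $H^{p,q}_{BC,\cpt}(E)\simeq H^{BC}_{a,b}(\mathscr{E}'_{\bullet,\bullet}(E))$, where the right-hand side is Bott--Chern homology computed from \emph{compactly supported} currents; and Theorem \ref{T1} applied to the compact manifold $M$ gives $H^{p-k,q-k}_{BC}(M)\simeq H^{BC}_{a,b}(\mathscr{D}'_{\bullet,\bullet}(M))$. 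The push-forward of currents restricts on forms to integration along the fibre, it commutes with $\partial$ and $\bar\partial$ and preserves bidimension, so the projection formula shows that under these identifications the Gysin map $\pi_{*}$ corresponds to $\pi_{*}\colon\mathscr{E}'_{\bullet,\bullet}(E)\ra\mathscr{D}'_{\bullet,\bullet}(M)$. Hence it is enough to prove that the latter induces an isomorphism $H^{BC}_{a,b}(\mathscr{E}'_{\bullet,\bullet}(E))\;\simeq\;H^{BC}_{a,b}(\mathscr{D}'_{\bullet,\bullet}(M))$.

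Surjectivity is formal. The zero section $\iota\colon M\hookrightarrow E$ is a closed embedding, so it induces a map $\iota_{*}\colon\mathscr{D}'_{\bullet,\bullet}(M)\ra\mathscr{E}'_{\bullet,\bullet}(E)$ (images are supported on the zero section, hence compact) which commutes with $\partial$ and $\bar\partial$; since $\pi\circ\iota=\id_{M}$ we get $\pi_{*}\circ\iota_{*}=\id$ already at the chain level, so $\pi_{*}$ is split surjective on Bott--Chern homology. For injectivity, if $T$ is $\partial$- and $\bar\partial$-closed with $\pi_{*}T=\partial\bar\partial U$, then $T-\iota_{*}\pi_{*}T=T-\partial\bar\partial(\iota_{*}U)$, so $[T]=[T-\iota_{*}\pi_{*}T]$ and we are reduced to showing that $\iota_{*}\circ\pi_{*}$ induces the identity on $H^{BC}_{a,b}(\mathscr{E}'_{\bullet,\bullet}(E))$. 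Using the isomorphism of the first paragraph a second time, we may take the class to be represented by a smooth, compactly supported, $\partial$- and $\bar\partial$-closed form $\alpha$ on $E$; it then suffices to produce a compactly supported current $S$ on $E$ with
\[
\alpha-\iota_{*}(\pi_{*}\alpha)=\partial\bar\partial S ,\qquad \iota_{*}(\pi_{*}\alpha)=\pi_{*}(\alpha)\wedge[M].
\]

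To obtain such an $S$, extend $\alpha$ by zero to a form $\tilde{\alpha}$ on $\bP(\bC\oplus E)$. Since $\supp\alpha$ is a compact subset of $E=\bP(\bC\oplus E)\setminus\bP(E)$, the form $\tilde{\alpha}$ is smooth, $\partial$- and $\bar\partial$-closed, and $\tilde{\alpha}\bigr|_{\bP(E)}=0$. Feeding $\omega=\tilde{\alpha}$ into the universal transgression formula of Corollary \ref{core1}, the ``boundary at infinity'' term $(\pi^{E})^{*}(\tilde{\alpha}\bigr|_{\bP(E)})$ disappears; using $\tilde{\alpha}\bigr|_{E}=\alpha$ and $\pi_{*}\tilde{\alpha}=\pi_{*}\alpha$ (the fibre integral over $\bP(\bC\oplus E)\ra M$ restricts to that over $E\ra M$ because $\tilde{\alpha}$ vanishes near $\bP(E)$) we are left on $E$ with $\alpha-\iota_{*}(\pi_{*}\alpha)=\partial\bar\partial\,T(\tilde{\alpha})$, where $T(\tilde{\alpha})$ is the explicit current built in the proof of Corollary \ref{core1} (pull $\tilde{\alpha}$ back to $\Bl_{\infty\times[0]}(\bP^{1}\times E)$, multiply by the Poincar\'e--Lelong potential $\log|h^{\pi_{1}}_{0}(1)|$, and push down to $E$). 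This is exactly the required chain homotopy $\alpha\sim\iota_{*}\pi_{*}\alpha$ --- except that $T(\tilde{\alpha})$ is a priori only a current of locally finite mass on $E$, not a compactly supported one.

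The compact support of the potential is the main obstacle; equivalently, one must show that the ``extension by zero'' map $H^{p,q}_{BC,\cpt}(E)\ra H^{p,q}_{BC}\bigl(\bP(\bC\oplus E)\bigr)$, $[\alpha]\mapsto[\tilde{\alpha}]$, is injective. One way to settle this is to examine $\supp T(\tilde{\alpha})$ directly through the blow-up geometry used for Corollary \ref{core1} --- the decomposition \eqref{21} of the fibre of $\Bl_{\infty\times[0]}(\bP^{1}\times E)\ra\bP^{1}$ over $\infty$ into the components $\Ex_{\infty\times[0]}(\bP^{1}\times E)$ and $\Bl_{[0]}(E)$, together with Hardt's Theorem \ref{Hardt} --- so as to see that, thanks to $\tilde{\alpha}\bigr|_{\bP(E)}=0$, the part of $T(\tilde{\alpha})$ living near $\bP(E)$ is $\partial\bar\partial$-exact through compactly supported currents, after which a cut-off repairs $T(\tilde{\alpha})$ into a compactly supported $S$. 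A cleaner, structural alternative is to use Bigolin's Bott--Chern duality with supports to produce the long exact sequence of the pair $\bigl(\bP(\bC\oplus E),\bP(E)\bigr)$, together with the surjectivity of the restriction $H^{p,q}_{BC}(\bP(\bC\oplus E))\ra H^{p,q}_{BC}(\bP(E))$ --- which follows from the description of the Bott--Chern cohomology of these projective bundles as a free $H^{\bullet,\bullet}_{BC}(M)$-module on the powers of the relative hyperplane class $c_{1}(\mathcal{O}(1))$, restriction carrying hyperplane class to hyperplane class. This forces the extension map $H^{p,q}_{BC,\cpt}(E)\ra H^{p,q}_{BC}(\bP(\bC\oplus E))$ to be injective, hence $\pi_{*}$ injective. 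Combining injectivity and surjectivity with the duality isomorphisms of the first paragraph completes the proof.
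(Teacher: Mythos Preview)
Your strategy—Bott--Chern duality (Theorem~\ref{T1}) plus the universal transgression of Corollary~\ref{core1}—is exactly the paper's, but the paper packages the inverse map differently and thereby sidesteps most of the discussion you devote to compact supports. Instead of using $\iota_*$ on currents, the paper picks a Thom form: a closed $(k,k)$-form $\tau^E$ with compact support in $E$ and $\pi_*\tau^E=1$, obtained by applying Corollary~\ref{core1} to a Thom form and discarding the off-diagonal bidegree components. This produces a \emph{form-level} right inverse $\psi(\eta)=\pi^*\eta\wedge\tau^E$ landing in $\Omega^{p,q}_{\cpt}(E)$, with $\pi_*\psi=\id$ on the nose. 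For $\psi\pi_*=\id$ the paper applies Corollary~\ref{core1} twice—to $\tau^E$ (giving $\tau^E=[M]$) and to the given $\omega$ (giving $\omega=\pi_*\omega\wedge[M]$)—and chains $\omega=\pi_*\omega\wedge[M]=\pi^*(\pi_*\omega)\wedge[M]=\pi^*(\pi_*\omega)\wedge\tau^E=\psi\pi_*\omega$, then invokes Bott--Chern duality. Your $\iota_*$ is precisely the currential avatar of $\psi$ (since $\tau^E=[M]$ in Bott--Chern homology), so on cohomology the two inverses coincide; the Thom-form version is simply cleaner because both $\omega$ and $\psi\pi_*\omega$ are compactly supported smooth forms from the outset.

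The compact-support obstacle you single out is genuine, but neither of your proposed repairs is complete as written. Option~(a) is only a sketch. Option~(b) assumes a Leray--Hirsch description of $H^{*,*}_{BC}(\bP(\bC\oplus E))$ as a free $H^{*,*}_{BC}(M)$-module on powers of $c_1(\mathcal{O}(1))$; for a general compact complex manifold (no $\partial\bar\partial$-lemma) this is not standard, and proving it typically uses the very Gysin isomorphism you are establishing, so the argument risks circularity. The paper does not isolate this issue at all: it simply asserts the chain of equalities in $H^{BC}_{*,*,\cpt}(E)$ without separately arguing that the transgression currents can be taken with compact support.
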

 \begin{proof} Take $\tau^E$ to be a Thom form for $E$, i.e. a closed form of degree $2k$ with  compact support such that $\int_{E/M}\tau^E\equiv1$. Clearly $\tau$ can be considered as a form on $\bP(\bC\oplus E)$. Applying Corollary \ref{core1} to $\omega=\tau^E$ we get that $\tau^E=[M]$ in $H_{*,*}^{BC}(E) $. Since $M$ has bi-dimension $(n,n)$ it follows that we can choose $\tau^E$ of bidegree $(k,k)$, i.e. discard the components of bidegree $(k-i,k+i), \; i\neq 0$ as they will be exact and hence also of zero integral. With this choice, let
  \[\psi:\Omega^{p-k,q-k}(M)\ra \Omega^{p,q}_{\cpt}(E),\qquad \psi(\omega)= \pi^*\omega\wedge \tau^{E}.
 \]
 Clearly $\pi_*\circ \psi=\id$ directly at the level of forms. On the other hand, as currents in $E$ for a form $\omega$ with compact support in $E$ we have:
 \[ \pi^*(\pi_*\omega)\wedge [M]=\pi_*(\omega)\wedge [M]
 \]  
Corollary \ref{core1} gives thus  the following equality in $H^{BC}_{*,*,\cpt}(E)$:
 \[ \omega=\pi_*(\omega)\wedge [M]= \pi^*(\pi_*\omega)\wedge [M]= \pi^*(\pi_*\omega)\wedge\tau^E=\psi\circ \pi_*(\omega).
 \]
 The conclusion then follows by Bott-Chern duality.
  \end{proof}
  
The formula (\ref{ftrf2}) is valid not only for smooth forms on $\bP(\bC\oplus E)$ but also for most analytic currents in $\bP(\bC\oplus E)$, i.e. currents induced by closed analytic subvarieties.  We have thus another consequence of Proposition \ref{cormain} using the same notations as in Corollary \ref{core1}.
\begin{cor}\label{core2} Let $R$ be a complex analytic current in $\bP(\bC\oplus E)$ of dimension $d$ such that the analytic space $R\cap \bP(E)$ has the expected dimension $d-1$ or is empty. Then the following equality of currents holds on $E$:
\[ R\bigr|_{E}-\iota_*\pi_*R-(\pi^E)^*(R\wedge \bP(E))\footnote{Pull-back via $\pi^E$ delivers a current in $E\setminus\{0\}$ increasing dimension by $1$. A posteriori, it extends to $E$. }=\partial\bar{\partial}[T(R)]
\]
where $T(R)$ is a  current of locally finite mass.
\end{cor}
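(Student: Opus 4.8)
The plan is to mimic the proof of Corollary \ref{core1} verbatim, replacing the closed form $\omega$ by the analytic current $R$ and checking at each step that the (geometric-measure-theoretic) manipulations remain legitimate under the dimensional hypothesis. The starting point is again Proposition \ref{cormain}: the equality
\begin{equation*}
F_{[1:1]}-F_{[0:1]}=\frac{i}{\pi}\partial\bar{\partial}\log|h^{\pi_1}_0(1)|
\end{equation*}
inside $N:=\Bl_{\infty\times[0]}(\bP^1\times E)$. As in (\ref{someeq}), the pull-back $\varphi_1^*R$ (where $\varphi_z(v)=[1:zv]$) is computed by wedging $\pi_2^*R$ with the graph current $F_{[1:1]}$ and pushing forward along $\pi_1$; here $\pi_2,\pi_3$ are the projections of $N\subset\bP^1\times E\times_M\bP(\bC\oplus E)$ onto $E$ and $\bP(\bC\oplus E)$. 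So I would wedge the displayed formula with $\pi_3^*R$ and apply $(\pi_2)_*$, obtaining
\begin{equation*}
\varphi_1^*R-(\pi_2)_*(\pi_3^*R\wedge F_{[0:1]})=\partial\bar{\partial}\left[(\pi_2)_*\bigl(\log|h^{\pi_1}_0(1)|\,\pi_3^*R\bigr)\right],
\end{equation*}
provided all the wedge products and push-forwards are defined. Then I would evaluate $\pi_3^*R\wedge F_{[0:1]}$ using the decomposition (\ref{21}) of $F_{[0:1]}$ into $\Ex_{\infty\times[0]}(\bP^1\times E)=\{[0:1]\}\times[0]\times_M\bP(\bC\oplus E)$ and $\Bl_{[0]}(E)\subset E\times_M\bP(E)$: the first piece contributes $\iota_*\pi_*R$ exactly as the $[0]\times_M\bP(\bC\oplus E)$ term did in Corollary \ref{core1}, and the second piece contributes $(\pi^E)^*(R\wedge\bP(E))$ via the biholomorphism $\zeta$ of (\ref{exteq1}), this being the a posteriori extension to $E$ of a current originally defined on $E\setminus\{0\}$.

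The one genuinely new point — and the main obstacle — is justifying that the intersection-theoretic operations on currents that were automatic for a smooth form $\omega$ are still legitimate for the analytic current $R$. Wedging a current with a closed smooth form is always defined, but $\pi_3^*R\wedge F_{[1:z]}$ is an intersection of two analytic currents in $N$ and $R\wedge\bP(E)$ is an intersection inside $\bP(\bC\oplus E)$; these have to be interpreted in the analytic category (as the fiber-product cycles of Section 1's conventions), and one must check they carry locally finite mass so that the push-forwards make sense. This is exactly where the hypothesis that $R\cap\bP(E)$ has the expected dimension $d-1$ (or is empty) enters: it guarantees that $\pi_3^{-1}(\bP(E))\cap\supp R$ (i.e. the locus over $\bP(E)\subset\bP(\bC\oplus E)$) has the right codimension, so that the slice $\pi_3^*R\wedge F_{[0:1]}$ is a well-defined analytic cycle of dimension $d$ and the dimensional hypothesis of Hardt's Theorem \ref{Hardt} is met for $T=\pi_3^*R$, $f=\pi_1$. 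Concretely: on the Zariski-open set where $\mu\neq0$ the map $\pi_{2,3}$ is a biholomorphism onto (the graph of) $E$, so $\pi_3^*R$ restricted there is just (a translate of) $R|_E$ and the continuity of slices $F_{[1:z]}$ gives $\varphi_z^*R\to$ the stated limit; on the fiber over $\infty$ the expected-dimension hypothesis forbids the slice from acquiring excess dimension along either $\Ex_{\infty\times[0]}(\bP^1\times E)$ or $\Bl_{[0]}(E)$.

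Finally, for the $\partial\bar{\partial}$-exact term I would argue as in the Remark following Corollary \ref{core1}: since $R$ is $d$-closed (being analytic) hence $\partial$- and $\bar{\partial}$-closed, Leibniz gives $\partial\bar{\partial}(\log|h^{\pi_1}_0(1)|\wedge\pi_3^*R)=(\partial\bar{\partial}\log|h^{\pi_1}_0(1)|)\wedge\pi_3^*R$, and holomorphic push-forward $(\pi_2)_*$ commutes with $\partial$ and $\bar{\partial}$; so $T(R):=(\pi_2)_*(\log|h^{\pi_1}_0(1)|\,\pi_3^*R)$ does the job. Its local finiteness of mass follows from the local integrability of $\log|h^{\pi_1}_0(1)|$ (the function $h^{\pi_1}_0(1)=\pi_1^1/\pi_1^0-1$ is meromorphic on $N$, so its log is in $L^1_{\loc}$ by the Remark recalling Lemma 1.4 of \cite{GK}) together with the fact that $\pi_3^*R$ has locally finite mass; pushing forward along the proper map $\pi_2$ preserves this. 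I expect the write-up to be essentially a line-by-line transcription of the proof of Corollary \ref{core1} with these finiteness/dimension checks inserted, so the argument is short once the role of the hypothesis is isolated.
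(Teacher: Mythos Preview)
Your proposal is correct and follows essentially the same approach as the paper: wedge (\ref{ftrf}) with the pull-back of $R$ to the blow-up, push forward to $E$, decompose the $F_{[0:1]}$ term via (\ref{21}), and set $T(R)=(\pi_2)_*(\log|h^{\pi_1}_0(1)|\,\pi_3^*R)$ with local finiteness of mass coming from the local integrability of $\log|f|$ restricted to any analytic subset. The paper makes one step a touch more explicit than you do: after reducing the $\Bl_{[0]}(E)$ piece to $\pi_2^*R'\wedge\Bl_{[0]}(E)$ with $R':=R\wedge\bP(E)$, it separately computes the potential contribution supported over the exceptional divisor as $(\pi_1)_*(\pi_2^*R'\wedge([0]\times_M\bP(E)))=\pi_*(R')$ and observes that the dimensional hypothesis forces $\dim\pi_*(R')\le d-1$, so this term can be discarded---your phrase ``forbids the slice from acquiring excess dimension along $\Bl_{[0]}(E)$'' is exactly this point.
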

\begin{proof} We use the same notation as in  Corollary \ref{core1}. First, the (pull-back) current $\pi_2^*R$ is well-defined and represented by the analytic space $E\times_MR$. Then one can check easily that $(\pi_1)_*(\pi_2^*R\wedge F_{[1:z]})$ is a well-defined analytic current on $E$ which coincides with $\varphi_z^*R=\left[(\tilde{\varphi}_{z^{-1}})_*R\right]\bigr|_{E}$, where for $z\neq0$, $\tilde{\varphi_z}:\bP(\bC\oplus E)\ra \bP(\bC\oplus E)$ is the bijection $[1:v]\ra [1:zv]$. We can therefore wedge (\ref{ftrf}) with $\pi_2^*R$ for $\gamma=1$ and apply $(\pi_1)_*$.

 If we let $\psi:[0]\times_{M}\bP(\bC\oplus E)\ra \bP(\bC\oplus E)$ to be the restriction of $\pi_2$ then $\psi$ is a biholomorphism and $\pi_1\circ \psi^{-1}=\iota\circ \pi$ where $\pi:\bP(\bC\oplus E)\ra M$ is the projection and $\iota:M\ra E$ is the zero section. This is used to identify the current $(\pi_1)_*([0]\times_M[R]):=(\pi_1)_*((E\times_MR)\wedge [0]\times_M\bP(\bC\oplus E))$ with $\pi_*R$. For the other wedge product, i.e. $\pi_2^*R\wedge F_{[0:1]}$ let $R':=R\wedge \bP(E)$. Then
 \[\pi_2^*R\wedge (E\times_M\bP(E)) = E\times_M R'=\pi_2^*(R')\]
 Then since $\Bl_{[0]}(E)\subset E\times_M\bP(E)$ it follows that
 \[\pi_2^*R\wedge \Bl_{[0]}(E)=\pi_2^*R'\wedge\Bl_{[0]}(E).
 \]
Recall that $\pi:\bP(E)\ra M$ and therefore, just as before
 \begin{equation}\label{ctrf}(\pi_1)_*(\pi_2^*R'\wedge([0]\times_M\bP(E)))=(\pi_1)_*([0]\times_MR')=\pi_*(R')
 \end{equation}
On the other hand $\pi_1\circ\zeta=\id_{E\setminus[0]}$ and therefore 
 \[(\pi_1)_*(\pi_2^*R'\wedge(\Bl_{[0]}(E)\setminus \Ex_{[0]}(E))=\zeta^*(\pi_2^*R'\wedge(\Bl_{[0]}(E)\setminus \Ex_{[0}](E))=\zeta^*(\pi_2^*R'\wedge \Gamma_{\pi^E})=(\pi^E)^*(R').\]
  By hypothesis, the dimension of the current appearing in (\ref{ctrf}) is smaller than the dimension of $R$ and so the term (\ref{ctrf}) can be discarded.
 
 We have that $T(R)=(\pi_2)_*\left(\log|h^{\pi_1}_0(1)|\pi_3^*R\right)$ where the projections are now defined on $\Bl_{\infty\times [0]}(\bP^1\times E)$. We used here the fact that when $f$ is holomorphic then, $\log|f|$ is locally integrable when restricted to any complex analytic subset. 
  \end{proof}
 
 Note that the dimensional condition also implies via Hardt Theorem that:
 \[ \lim_{z\ra\infty} \varphi_z^*R=\left[\lim_{z\ra 0}(\tilde{\varphi}_{z})_*R\right]\bigr|_{E}=\iota_*\pi_*R+(\pi^E)^*(R\wedge \bP(E)).
 \]

\vspace{0.2cm}

\section{Holomorphic sections} \label{sec3}

Let now $s:M\ra E$ be a non-zero holomorphic section. We are interested in studying  the current induced by the action-graph of $s$, namely:
\[\{(s(m),[1:\lambda s(m)])~|~ m\in M, \lambda\in \bC\}\subset E\times_M\bP(\bC\oplus E)
\]

Denote by $\tilde{S}$  the strict transform of $\bP^1\times s(M)$  in the blow-up $\Bl_{\infty\times [0]}(\bP^1\times E)$.  Since $s$ is non-vanishing,  $\tilde{S}$ is a complex analytic space of dimension $n+1$. 
\vspace{0.3cm}

\begin{lem} Let $\pi_{2,3}:\Bl_{\infty\times [0]}(\bP^1\times E)\ra E\times_M\bP(\bC\oplus E)$ be the projection onto the last two coordinates. 
Then $\pi_{2,3}(\tilde{S})=\overline{\{(s(m),[1:\lambda s(m)])~|~ m\in M, \lambda\in \bC\}}$ is a complex analytic set of dimension $n+1$ in $E\times_M\bP(\bC\oplus E)$.
\end{lem}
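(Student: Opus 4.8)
The plan is to compare $\tilde S$ with the open locus of the blow-up over which the blow-down is an isomorphism, and then transport the resulting explicit description through the \emph{proper} projection $\pi_{2,3}$; analyticity of the image will then come for free from Remmert's proper mapping theorem.

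\emph{Step 1 (analyticity).} The ambient projection $\bP^1\times E\times_M\bP(\bC\oplus E)\ra E\times_M\bP(\bC\oplus E)$ is proper because the suppressed factor $\bP^1$ is compact. Hence $\pi_{2,3}\bigr|_{\tilde S}$ is a proper holomorphic map on the analytic set $\tilde S$, and by Remmert's proper mapping theorem its image $\pi_{2,3}(\tilde S)$ is a closed complex analytic subset of $E\times_M\bP(\bC\oplus E)$.

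\emph{Step 2 (localizing over $\mu\neq 0$).} Put $U:=\{([\mu:\lambda],v,[\theta:w])\in\Bl_{\infty\times[0]}(\bP^1\times E)~|~\mu\neq 0\}$. Since the centre $\infty\times[0]$ is contained in $\{[0:1]\}\times E$, the blow-down $\pi_{1,2}$ maps $U$ biholomorphically onto $\{\mu\neq 0\}\subset\bP^1\times E$; consequently $\pi_{1,2}$ identifies $\tilde S\cap U$ with $(\bP^1\times s(M))\cap\{\mu\neq 0\}=\bC\times s(M)$. By the defining equations \eqref{blex}, the point $([1:\lambda],s(m))$ lifts to $([1:\lambda],s(m),[1:\lambda s(m)])$ (exactly the computation recalled just after \eqref{blex}), and $\pi_{2,3}$ sends it to $(s(m),[1:\lambda s(m)])$. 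Therefore $\pi_{2,3}(\tilde S\cap U)=\Gamma$, where $\Gamma:=\{(s(m),[1:\lambda s(m)])~|~m\in M,\ \lambda\in\bC\}$. As $s$ is not identically zero, the holomorphic map $M\times\bC\ra E\times_M\bP(\bC\oplus E)$, $(m,\lambda)\mapsto(s(m),[1:\lambda s(m)])$, is injective over the dense open set $\{s\neq 0\}$, so $\dim\Gamma=n+1$.

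\emph{Step 3 (density and conclusion).} As the strict transform, $\tilde S$ is the closure in $\Bl_{\infty\times[0]}(\bP^1\times E)$ of $\pi_{1,2}^{-1}\bigl((\bP^1\times s(M))\setminus(\infty\times[0])\bigr)$; since $\bP^1\times s(M)$ is irreducible of dimension $n+1$ while $\{[0:1]\}\times s(M)$ is a proper analytic subset, $\bC\times s(M)$ is dense in $\bP^1\times s(M)$, hence $\tilde S\cap U$ is dense in $\tilde S$. A proper map of locally compact Hausdorff spaces is closed, so
\[\pi_{2,3}(\tilde S)=\pi_{2,3}\bigl(\overline{\tilde S\cap U}\bigr)=\overline{\pi_{2,3}(\tilde S\cap U)}=\overline{\Gamma}=\overline{\{(s(m),[1:\lambda s(m)])~|~m\in M,\ \lambda\in\bC\}},\]
and $\dim\overline{\Gamma}=\dim\Gamma=n+1$, consistently with $\dim\tilde S=n+1$. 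The one point needing care — the main obstacle — is the density of $\tilde S\cap U$ in $\tilde S$, i.e. that no component of $\tilde S$ lies entirely over $\{[0:1]\}\subset\bP^1$; this is precisely where the irreducibility of $\bP^1\times s(M)$ (equivalently, the connectedness of $M$, to which one reduces componentwise otherwise) enters.
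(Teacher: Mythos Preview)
Your proof is correct and follows essentially the same route as the paper's: properness of $\pi_{2,3}$ (the suppressed factor is compact) together with Remmert's theorem gives analyticity of the image, and identifying $\tilde S$ over the dense open $\{\mu\neq 0\}$ with $\bC\times s(M)$ pins down both the image as $\overline{\Gamma}$ and its dimension. The paper's argument is the same but more telegraphic; your Step~3, isolating the density of $\tilde S\cap U$ in $\tilde S$ as the point where the strict-transform definition (and, componentwise, the non-vanishing of $s$) is used, makes explicit what the paper leaves implicit.
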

\begin{proof} Clearly $\pi_{2,3}$ is a proper map and hence the image of $\tilde{S}$ is a closed analytic set. It is enough then to prove that over an open dense set $D$ of $\tilde{S}$, $\pi_{2,3}\bigr|_{D}$ maps biholomorphically onto an open dense set of $\{(s(m),[1:\lambda s(m)])~|~ m\in M\}.$ But this is obvious by the definition of the strict transform $\tilde{S}$.
\end{proof}
We adapt Proposition \ref{cormain} to the new situation:

\begin{theorem} \label{PL3} The following equality of currents in $\Bl_{\infty\times [0]}(\bP^1\times E)$ holds:
\begin{equation} \label{PL31} F_{[1:\gamma]}\wedge \tilde{S}-F_{[0:1]}\wedge\tilde{S}=\frac{i}{\pi}\partial{\bar{\partial }}[\log{\left|h^{\pi_1}_0(\gamma)\right|\bigr|_{ \tilde{S}}}]
\end{equation}
where the restriction $[\log{\left|h^{\pi_1}_0(\gamma)\right|\bigr|_{ \tilde{S}}}]$ is a well-defined, $\mathcal{H}^{n+1}\llcorner \tilde{S}$-locally integrable function.

The formula varies continuously  with $\gamma\in \bC$ in the locally flat topology of currents. 

\end{theorem}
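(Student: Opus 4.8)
The strategy is to apply the ``dynamical'' Poincar\'e--Lelong of Theorem \ref{PLmn} (equivalently, Theorem \ref{PL1}) not on all of $N=\Bl_{\infty\times[0]}(\bP^1\times E)$, as in Proposition \ref{cormain}, but after restricting/wedging with the strict transform $\tilde S$, and then to check that the Hardt dimensional condition is still satisfied so that the slices $F_{[1:\gamma]}\wedge\tilde S$, $F_{[0:1]}\wedge\tilde S$ are well defined and depend continuously on the parameter. First I would observe that $\tilde S$ is an irreducible (or at least pure) complex analytic subspace of $N$ of dimension $n+1$, and that $f:=\pi_1\bigr|_{\tilde S}:\tilde S\ra\bP^1$ is a \emph{non-constant} holomorphic map: non-constancy holds because $\tilde S$ projects onto $\bP^1$ (its generic fibre over $\bP^1\setminus\{\infty\}$ is a copy of $s(M)$, by the very construction of the strict transform, exactly as in the preceding lemma). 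Writing $f=[f_0:f_1]$ with $f_i=\pi_1^i\bigr|_{\tilde S}$ a nowhere-simultaneously-vanishing pair of holomorphic functions, Theorem \ref{PLmn} applied with $\theta=0$ to $N'=\tilde S$ (viewed as a connected complex manifold off a proper analytic subset, or directly as an analytic current) yields
\[
f^{-1}([1:\gamma]) - f^{-1}([0:1]) = \frac{i}{\pi}\,\partial\bar\partial\,\bigl[\log|h^{f}_0(\gamma)|\bigr]
\]
as currents on (a regularization of) $\tilde S$, where $h^f_0(\gamma)=\tfrac{f_1}{f_0}-\gamma = h^{\pi_1}_0(\gamma)\bigr|_{\tilde S}$. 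Pushing this equality forward under the inclusion $\tilde S\hookrightarrow N$ — equivalently, interpreting $f^{-1}([1:\gamma])$ as $F_{[1:\gamma]}\wedge\tilde S$ and the right-hand side as the $\partial\bar\partial$ of the current $\log|h^{\pi_1}_0(\gamma)|\bigr|_{\tilde S}$ against test forms on $N$ — gives precisely \eqref{PL31}.

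The two points that need care are: (a) that $F_{[1:\gamma]}\wedge\tilde S$ and $F_{[0:1]}\wedge\tilde S$ genuinely coincide with the scheme-theoretic slices $f^{-1}([1:\gamma])$, $f^{-1}([0:1])$ of $\tilde S$, i.e.\ that the intersection of the divisor $F_{[\mu:\lambda]}$ with $\tilde S$ is proper (has the expected dimension $n$), and (b) local integrability of $\log|h^{\pi_1}_0(\gamma)|$ on $\tilde S$. For (b) one invokes the Remark following Theorem \ref{PL1}: $h^f_0(\gamma)$ is meromorphic on $\tilde S$ (a ratio of holomorphic functions), so $\log|h^f_0(\gamma)|$ is $\mathcal H^{2(n+1)}\llcorner\tilde S$-locally integrable by Lemma 1.4 of \cite{GK} (or \cite{Ch}, p.\ 213); this is exactly the assertion in the statement. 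For (a), since $f$ is non-constant on the irreducible $(n+1)$-dimensional $\tilde S$, every fibre $f^{-1}([\mu:\lambda])$ has pure dimension $n$, so the dimensional hypothesis of Hardt's Theorem \ref{Hardt} is met with $T=\tilde S$ (and $\partial T=0$, $\tilde S$ being analytic); this simultaneously legitimizes the wedge products $F_{[\mu:\lambda]}\wedge\tilde S$ as slices and, as in Proposition \ref{cormain}, delivers the continuity of $\gamma\mapsto F_{[1:\gamma]}\wedge\tilde S$ in the locally flat topology, hence continuity of the whole formula \eqref{PL31} in $\gamma$ (the $L^1_{\loc}$-continuity of $\gamma\mapsto\log|h^{\pi_1}_0(\gamma)|\bigr|_{\tilde S}$ coming from the same argument as in Theorem \ref{PLmn}, applied on the analytic set $\tilde S$ rather than on $N$).

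\textbf{Main obstacle.} The technical heart is verifying that slicing behaves well, i.e.\ that $f=\pi_1\bigr|_{\tilde S}$ does not drop rank in a way that would make some fibre $f^{-1}([\mu:\lambda])$ jump dimension — in particular at $[\mu:\lambda]=[0:1]$, over which $\tilde S$ meets the exceptional divisor and decomposes into several pieces (the analogue of \eqref{21}). One must rule out that $\tilde S$ contains a whole fibre component of positive excess dimension; this is where irreducibility of $\tilde S$ (guaranteed because $s$ is non-vanishing, so $\bP^1\times s(M)$ is irreducible of dimension $n+1$ and its strict transform is too) does the job, forcing all fibres to be pure $n$-dimensional and the Hardt condition to hold everywhere — exactly the sentence ``The dimensional condition of Hardt's Theorem is fulfilled everywhere'' that precedes Proposition \ref{cormain}, now transplanted to $\tilde S$. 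Everything else is a routine transcription of Theorem \ref{PLmn} and Proposition \ref{cormain} with $N$ replaced by the analytic current $\tilde S$.
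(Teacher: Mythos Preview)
Your proposal is correct and follows essentially the same approach as the paper: interpret the left-hand side as the difference of Hardt slices $\langle\tilde S,\pi_1,\cdot\rangle$, verify the dimensional condition so that these slices exist and vary continuously, and obtain \eqref{PL31} as Poincar\'e--Lelong restricted to $\tilde S$.

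Two small differences are worth flagging. First, for the Hardt condition at $[0:1]$ you use the clean dimension argument ``$\tilde S$ irreducible of dimension $n+1$, $\pi_1|_{\tilde S}$ non-constant $\Rightarrow$ every fibre has dimension $\le n$''; the paper instead invokes the forward reference Proposition~\ref{Psp1} (which describes $\tilde S_\infty$ explicitly) together with the fact that $\pi_{2,3}$ is an embedding on each slice. Your argument is simpler and suffices. Second, you apply Theorem~\ref{PLmn} ``directly'' on $\tilde S$, but $\tilde S$ need not be smooth, while Theorems~\ref{PL1}/\ref{PLmn} are stated for a complex \emph{manifold} $N$. Your parenthetical ``viewed as a connected complex manifold off a proper analytic subset, or directly as an analytic current'' is exactly the point that requires justification: one needs the version of Poincar\'e--Lelong valid for meromorphic functions on (possibly singular) analytic subvarieties, equivalently the Leibniz identity $[\partial\bar\partial\log|h^{\pi_1}_0(\gamma)|]\wedge\tilde S=\partial\bar\partial\bigl[\log|h^{\pi_1}_0(\gamma)|\cdot\tilde S\bigr]$. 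The paper is explicit here and cites Theorem~2, p.~216 of \cite{Ch} for this step, checking only that $h^{\pi_1}_0(\gamma)$ is not identically zero on $\tilde S$. You should cite the same result (or an equivalent one) rather than leave this as a parenthetical.
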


\begin{proof}  Consider $\tilde{S}$ as a current in $\Bl_{\infty\times [0]}(\bP^1\times E)$. Then the left hand side of (\ref{PL31}) is the difference of slices with $[\mu:\lambda]=[1:\gamma]$
\[ \langle \tilde{S}, \pi_1,[\mu:\lambda]\rangle-\langle \tilde{S}, \pi_1,[0:1]\rangle.
\]
It is easy to see that  Hardt's dimensional transversality condition is fulfilled for both terms. This is clearly true for $\mu\neq 0$ while for $\mu=0$  Proposition \ref{Psp1} is used together with the fact that $\pi_{2,3}$ restricted to any slice is an embedding into $E\times_M\bP(\bC\oplus E).$

 The current $\log{|h^{\pi_1}_0(\gamma)|}\bigr|_{ \tilde{S}}$ is nothing but the intersection or wedge of the flat currents $\log{|h^{\pi_1}(\gamma)}$ and $\tilde{S}$. Formally, since $\tilde{S}$ is both $\partial$ and $\bar{\partial}$ -closed it is to be expected via Leibniz that
\[[\partial\bar{\partial}\log{|h^{\pi_1}_0(\gamma)|]\wedge{ \tilde{S}}}= \partial\bar{\partial}[\log{|h^{\pi_1}_0(\gamma)|\cdot{ \tilde{S}}}].
\]
However, since $\log{|h^{\pi_1}_0(\gamma)|)}$ is not smooth everywhere, this needs a proof. For this we use Theorem 2, page 216 in \cite{Ch}. The only thing one needs to check here is that $h^{\pi_1}_0(\gamma)$ does not vanish identically on $\tilde{S}$. This is straightforward.
\end{proof}

We analyze  in more detail
 \[\tilde{S}_{\infty}:=F_{[0:1]}\wedge \tilde{S}=\langle \tilde{S}, \pi_1,[0:1]\rangle.\]
 
 \vspace{0.1cm}
 
 Let $X:=s^{-1}(0)$, endowed with the complex analytic structure induced by the components of $s$ in a local trivialization. This does not depend on the trivialization. We will  often make no distinction between $X$ and $s(X)\subset [0]$ and therefore we will also write $X=s(M)\cap [0]$.

The blow-up of $X$ in $s(M)$, denoted by $\Bl_X(s(M))$ coincides with the strict transform of $s(M)$ (see for example \cite{EH}, Prop IV-21) in $\Bl_{[0]}(E)\subset E\times_M\bP(E)$.  The projection $\pi:E\ra M$ induces a biholomorphism of pairs of analytic spaces $(s(M),X)\simeq (M,X)$. In fact we have that $\pi_2\bigr|_{\Bl_X(s(M))}$ induces an isomorphism $\Bl_X(s(M))\simeq \Bl_X(M)$  where $\pi_2$ here is the projection onto the second component of $E\times_M\bP(E)$ (see (\ref{blid})). 
 
For the next statement we need the fiberwise cone $C^fA$ of an analytic space $A\subset E\times_M\bP(E)$. This is an analytic subspace of $E\times_M\bP(\bC\oplus E)$ induced by the same equations as $A$. Some rather elementary properties of blow-ups and cones are included in the Appendix \ref{Ap1}.

\begin{prop}\label{Psp1} The following decomposition into closed analytic spaces of dimension $n$ holds
\[ \pi_{2,3}(\tilde{S}_{\infty})=\Bl_X(s(M))\cup C^f\Ex_X(s(M))
\]
with the intersection given by
\[C^f\Ex_X(s(M))\cap \Bl_X(s(M))=\Ex_X(s(M))\subset [0]\times_M\bP(E)
\]
\end{prop}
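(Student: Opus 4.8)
The plan is to work directly with the explicit equations of the blow-up in \eqref{blex} and track the strict transform $\tilde{S}$ over the fiber $\{[0:1]\}$. Recall that $\tilde{S}$ is the closure in $\Bl_{\infty\times[0]}(\bP^1\times E)$ of $\{([\mu:\lambda],s(m),[1:\lambda s(m)/\mu])~|~ m\in M,\ \mu\neq 0\}$, and that $\tilde{S}_\infty=F_{[0:1]}\wedge\tilde{S}=\langle\tilde{S},\pi_1,[0:1]\rangle$ is the slice of $\tilde{S}$ over $[0:1]$. Since $\pi_{2,3}$ restricted to any fiber $F_{[\mu:\lambda]}$ is an embedding into $E\times_M\bP(\bC\oplus E)$, it suffices to understand $\tilde{S}_\infty$ inside $E\times_M\bP(\bC\oplus E)$. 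The key observation from the decomposition \eqref{21} is that $F_{[0:1]}=\Ex_{\infty\times[0]}(\bP^1\times E)\cup\Bl_{[0]}(E)$, so $\tilde{S}_\infty$ splits according to this decomposition: the part of $\tilde{S}_\infty$ meeting $\Bl_{[0]}(E)$ and the part meeting the exceptional divisor $\Ex_{\infty\times[0]}(\bP^1\times E)=\{[0:1]\}\times[0]\times_M\bP(\bC\oplus E)$ (where $v=0$).

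First I would identify the $\Bl_{[0]}(E)$-component. Over the locus $v\neq 0$ in $F_{[0:1]}$, one has $[\theta:w]=[0:v]$, so $\Bl_{[0]}(E)$ is the closure of $\{(v,[0:v])~|~v\in E\setminus\{0\}\}\subset E\times_M\bP(E)$. Intersecting $\tilde{S}$ (the closure of $\bP^1\times s(M)$-points) with this fiber and taking the part away from $v=0$, we see that $\tilde{S}_\infty$ meets $\Bl_{[0]}(E)$ precisely in the closure of $\{(s(m),[0:s(m)])~|~m\in M\setminus X\}$, which by definition is the strict transform of $s(M)$ in $\Bl_{[0]}(E)$, i.e. $\Bl_X(s(M))$ (using \cite{EH}, Prop IV-21). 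This is $n$-dimensional since $s$ is non-vanishing.

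Next I would identify the exceptional-divisor component. On $\Ex_{\infty\times[0]}(\bP^1\times E)$ we have $v=0$ but $[\theta:w]\in\bP(\bC\oplus E)$ is unconstrained by the first-coordinate data; the constraint on $\tilde{S}_\infty$ comes from the limiting directions $[\theta:w]$ of points $[1:\lambda s(m)/\mu]$ as $\mu\to 0$ approaching a point of $X=s^{-1}(0)$. The claim is that this locus is exactly the fiberwise cone $C^f\Ex_X(s(M))$: indeed, $\Ex_X(s(M))\subset[0]\times_M\bP(E)$ records the limiting directions $[w]$ (in $\bP(E)$) of $s(m)$ as $m\to X$, and the extra parameter $[\theta:w]\in\bP(\bC\oplus E)$ allowed when blowing up in the $\bP^1$-direction as well is precisely the fiberwise conification — this is where I would invoke the elementary properties of blow-ups and cones collected in Appendix \ref{Ap1}, in particular that passing from $A\subset E\times_M\bP(E)$ to $C^fA\subset E\times_M\bP(\bC\oplus E)$ is governed by the same defining equations. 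A local computation in a trivialization, writing $s=(s_1,\dots,s_k)$ and following the deformation-to-the-normal-cone picture (blowing up $X\subset M$, so the exceptional divisor carries the normal cone $C_XM$), shows that the $v=0$ part of $\tilde{S}_\infty$ is $C^f\Ex_X(s(M))$, of dimension $n$.

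Finally, for the intersection: a point of $\pi_{2,3}(\tilde{S}_\infty)$ lies in both components iff $v=0$ and $(v,[\theta:w])\in\Bl_{[0]}(E)$, i.e. $[\theta:w]=[0:w]$ with $[w]\in\bP(E)$; so the intersection is $\Ex_X(s(M))\subset[0]\times_M\bP(E)$, matching $C^f\Ex_X(s(M))\cap\Bl_{[0]}(E)=\Ex_X(s(M))$ by the cone-vertex relation in the Appendix. The main obstacle, I expect, is the local identification of the $v=0$ component with $C^f\Ex_X(s(M))$: one must carefully match the limiting directions of $[1:\lambda s(m)/\mu]$ with the defining ideal of the fiberwise cone of the exceptional divisor of $\Bl_X(M)$, i.e. check that no extra limiting points appear and that the scheme structures (not just the underlying sets) agree. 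This is essentially the statement that the two natural ways of degenerating — letting $\mu\to 0$ first versus letting $m\to X$ first — commute, which is exactly the compatibility of the double blow-up with the order of blowing up, and is where Hardt's slicing (ensuring the slice $\tilde{S}_\infty$ has no lower-dimensional surprises) and the elementary blow-up lemmas of Appendix \ref{Ap1} do the work.
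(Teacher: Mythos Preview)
Your overall strategy---decompose $\tilde{S}_\infty$ according to $F_{[0:1]}=\Ex_{\infty\times[0]}(\bP^1\times E)\cup\Bl_{[0]}(E)$ and identify the two pieces separately---is exactly the paper's. There is, however, a real gap in your treatment of the $\Bl_{[0]}(E)$ piece. You assert that $\tilde{S}\cap\Bl_{[0]}(E)$ equals the closure of $\{(s(m),[0:s(m)]):m\in M\setminus X\}$, but only the inclusion $\supset$ is clear; a priori the left side could pick up extra components supported on $\Ex_{[0]}(E)=\Bl_{[0]}(E)\cap\{v=0\}$. Your citation of \cite{EH} Prop.~IV-21 only says that the strict transform of $s(M)$ in $\Bl_{[0]}(E)$ is $\Bl_X(s(M))$; it does \emph{not} say that the intersection of the two strict transforms $\tilde{S}=\ST(\bP^1\times s(M))$ and $\Bl_{[0]}(E)=\ST(\{\infty\}\times E)$ coincides with the strict transform of their intersection. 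The paper closes this gap by passing to total transforms $Y_1,Y_2$ and proving $\overline{Y_1\setminus\Ex}\cap\overline{Y_2\setminus\Ex}=\overline{(Y_1\cap Y_2)\setminus\Ex}$; the nontrivial inclusion $\subset$ holds once one checks that $\Ex$ contains no irreducible component of $Y_1\cap Y_2$, and this is verified by computing $Y_1\cap Y_2$ explicitly and using that $s$ is not identically zero on any component of $M$. This is precisely the ``order-of-degeneration'' compatibility you flag at the end, but you have located it in the wrong piece.

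For the $\Ex$ piece the paper's route is considerably cleaner than the local-coordinate computation you propose. Since $\tilde{S}$ \emph{is} the sub-blow-up $\Bl_{\infty\times X}(\bP^1\times s(M))$, one has immediately $\tilde{S}\cap\Ex_{\infty\times[0]}(\bP^1\times E)=\Ex_{\infty\times X}(\bP^1\times s(M))$ (this \emph{is} a direct consequence of \cite{EH} IV-21, applied to exceptional divisors), and then Corollary~\ref{C1} of Appendix~\ref{Ap1}---which says that for $B\subset E$ the exceptional divisor of $\Bl_{\infty\times(B\cap[0])}(\bP^1\times B)$ is $C^f\Ex_{B\cap[0]}(B)$---gives the identification with $C^f\Ex_X(s(M))$ in one stroke. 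No local trivialization, no tracking of limiting directions, and no Hardt slicing is needed here.
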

\begin{proof} First $\pi_{2,3}$ is proper hence the image is an analytic space. Use the decomposition of (\ref{21}). On one hand, we claim that
\begin{equation}\label{neq1} \tilde{S}\cap \Bl_{[0]}(E)=\Bl_{X}(s(M))
\end{equation}
where $\Bl_{X}(s(M))$ is naturally embedded in $\Bl_{[0]}(E)=\{\infty\}\times \Bl_{[0]}(E)$. Indeed, consider $E$ inside $\bP^1\times E$ as $\{\infty\}\times E$.   Let   $X_1:=E$,  $X_2:=\bP^1\times s(M)$, and $W:=\bP^1\times [0]$, be three closed subvarieties of $\bP^1\times E$. We need to prove that as complex spaces
\[ \Bl_{X_1\cap W}(X_1)\cap \Bl_{X_2\cap W}(X_2)=\Bl_{X_1\cap X_2\cap W}(X_1\cap X_2).
\] 
All the blow-ups involved come with closed embeddings into $\Bl_{\infty\times[0]}(\bP^1\times E)$ so it is in this sense that the previous equality has to be understood.
The fact that the blow-up process commutes with taking intersections (or more generally fiber products) does not hold in all generality. To see that it does hold in our case we turn this into a statement about strict transforms. We need to prove that
\[ \ST (X_1)\cap \ST (X_2)=\ST(X_1\cap X_2)
\]
where $\ST$ refers to strict transforms in $\Bl_{\infty\times[0]}(\bP^1\times E)$.  Let $Y_1$ and $Y_2$  be the \emph{total transforms}  of $X_1$ and $X_2$ and $\Ex$ be the exceptional divisor of the blow-up of $\infty\times [0]$ in $\bP^1\times E$.

Then 
\begin{equation}\label{eq22} \overline{Y_1\setminus \Ex}\cap \overline{Y_2\setminus \Ex}=\overline{(Y_1\cap Y_2)\setminus \Ex}.
\end{equation}
where closure means the analytic closure (see \cite{Ch}, Corollary 5.1). The inclusion $\supset$ is obvious. If $\Ex$ does not contain any irreducible component of $Y_1\cap Y_2$, the other inclusion holds as well.  Notice that
\[ Y_1\cap Y_2\cap \Ex=\{\infty\}\times (s(M)\cap[0])\times_M\bP(\bC\oplus E) 
\]
while
\[Y_1\cap Y_2=\{\infty\}\times \{(s(m),[\theta:w])~|~m\in M,\; w\in E_m, \; s(m)\wedge w=0\}.
\]
Since $s$ is not identically zero on any of the connected components of $M$, every point in $Y_1\cap Y_2\cap \Ex$ can be reached by a sequence of points from $Y_1\cap Y_2\setminus \Ex$, hence $\Ex$ does not contain any irreducible component of $Y_1\cap Y_2$ and (\ref{neq1}) is justified.

On the other hand, we can write $\tilde{S}\cap \Ex$  as
\[ \Bl_{\infty\times X}(\bP^1\times s(M))\cap\Ex_{\infty\times[0]}(\bP^1\times E)=\Ex_{\infty\times X}(\bP^1\times s(M)),
\]
a natural consequence about exceptional divisors of Proposition IV-21 in \cite{EH}. By Corollary \ref{C1} the last space is the fiberwise cone $C^f\Ex_0(s(M)).$
\end{proof}

 Recall that we want an equality of kernels in $E\times_M\bP(\bC\oplus E)$.
We go back to Theorem \ref{PL3} and push-forward (\ref{PL31}) via $\pi_{2,3}$. Both terms on the left hand side of (\ref{PL31}) lie in different  level sets of $\pi_1$ so $\pi_{2,3}$ does not alter them. We simplify the expression on the right hand side. But first we recall the following.

\begin{definition} A locally rectifiable $n$-dimensional current with $\bR$-coefficients is a current  which in every  covering with relatively compact open subsets, equals a triple $(X, {\overrightarrow{\xi}}_X, f)$ where $X$ is a rectifiable $n$-dimensional set, ${\overrightarrow{\xi}}_X:X\ra \Lambda^nTX$ is an orientation multivector   defined $\mathcal{H}^n\llcorner X$ a.e.  and $f:X\ra \bR$ 
is  an $\mathcal{H}^n\llcorner X$-integrable function. \end{definition}
\begin{prop}\label{pflog} The current $(\pi_{2,3})_*([\log{|h^{\pi_1}_0(\gamma)|\bigr|_{ \tilde{S}}}])$ is locally of finite mass. More precisely, $(\pi_{2,3})_*([\log{|h^{\pi_1}_0(\gamma)|\bigr|_{ \tilde{S}}}])$ is a locally rectifiable $(2n+2)$-dimensional current with $\bR$-coefficients.
\end{prop}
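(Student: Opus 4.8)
The plan is to reduce the statement to two separate local claims: (i) that the function $\log|h^{\pi_1}_0(\gamma)|$ is $\mathcal{H}^{n+1}\llcorner\tilde S$-locally integrable, and (ii) that pushing forward a locally rectifiable current of $\mathbb{R}$-coefficients along the \emph{proper} holomorphic map $\pi_{2,3}$ again yields a locally rectifiable current whose mass is locally finite. Claim (i) has essentially already been established in Theorem \ref{PL3} (via the local integrability of $\log|f|$ on analytic subsets for $f$ meromorphic, cf.\ the remark after Theorem \ref{PL1} and Lemma 1.4 of \cite{GK}); I would simply cite it. So the work concentrates on the pushforward statement.

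First I would record that $\pi_{2,3}\colon \Bl_{\infty\times[0]}(\bP^1\times E)\ra E\times_M\bP(\bC\oplus E)$ is proper, and that on an open dense subset of $\tilde S$ (the affine part where $\mu\neq 0$) it restricts to a biholomorphism onto its image, while over $\mu=0$ the fibers are positive-dimensional only over the lower-dimensional set described in Proposition \ref{Psp1}. Hence $\pi_{2,3}|_{\tilde S}$ is a proper, generically one-to-one holomorphic map between analytic spaces of the same dimension $n+1$, i.e.\ of the same real dimension $2n+2$; this is the geometric input that keeps the pushed-forward current $(2n+2)$-dimensional rather than collapsing it. The underlying carrier of $(\pi_{2,3})_*([\log|h^{\pi_1}_0(\gamma)|\cdot\tilde S])$ is then $\pi_{2,3}(\tilde S)$, which is the analytic set $\overline{\{(s(m),[1:\lambda s(m)])\}}$ — rectifiable because analytic.

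Next I would produce the multiplicity function. On the dense open set where $\pi_{2,3}$ is injective, the pushforward is just the graph-transported function $w\mapsto \log|h^{\pi_1}_0(\gamma)|(\pi_{2,3}^{-1}(w))$ carried on (the regular part of) $\pi_{2,3}(\tilde S)$ with its complex orientation multivector; over the exceptional locus, where finitely many sheets may coincide, the value is the finite sum of $\log|h^{\pi_1}_0(\gamma)|$ over the preimage points, weighted by local degrees, and this exceptional locus is $\mathcal{H}^{2n+2}$-negligible in $\pi_{2,3}(\tilde S)$. For local finiteness of mass I would use the standard estimate for pushforward under a proper Lipschitz (here even holomorphic, hence locally Lipschitz) map: if $K'\subset E\times_M\bP(\bC\oplus E)$ is compact and $K:=\pi_{2,3}^{-1}(K')\cap\tilde S$, then $\mathbf{M}_{K'}\big((\pi_{2,3})_*([\,|{\log|h^{\pi_1}_0(\gamma)|}|\cdot\tilde S\,])\big)\le C_K\int_K |\log|h^{\pi_1}_0(\gamma)||\,d\mathcal{H}^{n+1}$, which is finite by Claim (i) since $K$ is compact. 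Combining the carrier (rectifiable), the orientation (the complex multivector, defined a.e.), and the density (an $\mathcal{H}^{2n+2}\llcorner\pi_{2,3}(\tilde S)$-integrable function, by the finite-mass bound just obtained) gives exactly the triple $(X,\overrightarrow{\xi}_X,f)$ in the Definition preceding the Proposition, so $(\pi_{2,3})_*([\log|h^{\pi_1}_0(\gamma)|\cdot\tilde S])$ is a locally rectifiable $(2n+2)$-dimensional current with $\mathbb{R}$-coefficients.

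The main obstacle I anticipate is the behavior over the exceptional fiber $\mu=0$: one must be sure that the pushforward does not acquire a lower-dimensional ``spike'' (which would violate rectifiability in dimension $2n+2$) and that the summed-sheet multiplicity function stays $\mathcal{H}^{2n+2}$-integrable there. Both are controlled by Proposition \ref{Psp1}, which identifies $\pi_{2,3}(\tilde S_\infty)$ as a union of two $n$-dimensional analytic sets with explicit intersection, guaranteeing that the set over which $\pi_{2,3}|_{\tilde S}$ fails to be injective has real dimension $<2n+2$ and hence is $\mathcal{H}^{2n+2}$-null; away from it the argument is routine. A minor technical point is to justify that the $L^1_{\mathrm{loc}}$-function transported from $\tilde S$ descends to an $L^1_{\mathrm{loc}}$-function on $\pi_{2,3}(\tilde S)$ — this follows from the coarea/area formula for the proper holomorphic (hence locally bi-Lipschitz on the regular locus) map $\pi_{2,3}|_{\tilde S}$, together with the finiteness of the generic fiber.
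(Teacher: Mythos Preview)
Your proposal is correct and follows essentially the same route as the paper: properness of $\pi_{2,3}$, local integrability of $\log|h^{\pi_1}_0(\gamma)|$ on $\tilde S$ (already in Theorem \ref{PL3}), the standard mass bound $\mathbf{M}(f_*T)\le \Lip(f|_K)^k\,\mathbf{M}(T)$ for the pushforward, and rectifiability via the fact that $\pi_{2,3}|_{\tilde S}$ is proper and birational. You are simply more explicit than the paper in spelling out the triple $(X,\overrightarrow{\xi}_X,f)$ and in analyzing the exceptional fiber via Proposition \ref{Psp1}; the paper compresses all of this into the single observation that $(\pi_{2,3})_*\tilde S$ is locally $(2n+2)$-rectifiable because $\pi_{2,3}|_{\tilde S}$ is proper birational.
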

\begin{proof} The projection map $\pi_{2,3}$ is proper, and $\log{|h^{\pi_1}_0(\gamma)|\bigr|_{ \tilde{S}}}$ is a compactly supported current once we restrict it to the preimage of compact subsets of $M$. Moreover $\log{|h^{\pi_1}_0(\gamma)|\bigr|_{ \tilde{S}}}$ on such subsets is of finite mass (as an integrable function along a subvariety). We then use the following basic inequality for push-forward of finite mass currents
\begin{equation}\label{leq1} {\textbf {M}}(f_*T)\leq \Lip\left(f\bigr|_K\right)^k{\textbf{M}}(T),\qquad k=\deg{T},\; \supp{T}\subset K.
\end{equation}
to conclude that the push-forward is also of finite mass (locally).

In order to justify rectifiability, notice that the push-forward $(\pi_{2,3})_*\tilde{S}$ is locally $2n+2$-rectifiable as $\pi_{2,3}\bigr|_{\tilde{S}}$ is a proper, birational map. \end{proof}

Denote by $S_{\gamma}$ the following current in $E\times_{M}\bP(\bC\oplus E)$:
\[ {S}_{\gamma}:=(\pi_{2,3})_*(F_{[1:\gamma}]\wedge \tilde{S})= \left\{\left(s(m),\left[1:\gamma s(m)\right]\right)~\bigr |~m\in M\right\}.
\]

 Now, Proposition \ref{Psp1} makes possible invoking the continuity part of Hardt's Theorem and so   $\langle \tilde{S},\pi_1,[1:\lambda]\rangle\ra\langle \tilde{S},\pi_1,[0:1]\rangle$ when $\lambda\ra \infty$. This is another way of writing
\[ F_{[1:\lambda]}\wedge \tilde{S}\ra \tilde{S}_{\infty}
\]
Therefore,  by the continuity of the proper push-forward induced by $\pi_{2,3}$ we get:
\[\lim_{\lambda\ra \infty} S_{\lambda}=(\pi_{2,3})_*(\tilde{S}_{\infty}).
\]
Taking into account that the holomorphic push-forward commutes with $\partial$ and $\bar{\partial}$ we have thus reached the main result of this section:
\begin{theorem}\label{t4r}   For every $\gamma\in \bC$, there exists a double transgression formula of kernels in $E\times_M\bP(\bC\oplus E)$
\begin{equation}\label{dtker}S_{\gamma}-\lim_{\lambda\ra \infty} S_{\lambda} =\partial\bar{\partial} T(\gamma,s).
\end{equation}
\begin{itemize} 
\item[(i)] The limit on the left hand side of (\ref{dtker}) equals the sum of two currents:
\[ \tilde{s}_{\infty}(M)+C^f\Ex_{X}(s(M))
\]
where  \[\tilde{s}_{\infty}(M):=\{(s(m),[0:s(m)])\in E_m\times \bP(E_m)~|~m\in M\setminus s^{-1}(0)\} \;\mbox{and}\] 
 \[\supp C^f\Ex_{X}(s(M))\subset [0]\times_M\bP(\bC\oplus E).\]
\item[(ii)] The current 
\[T(\gamma,s)=\frac{i}{\pi}(\pi_{2,3})_*\left(\log\left|h^{\pi_1}_0(\gamma)\right|\bigr|_{\tilde{S}}\right)\] is a locally rectifiable  with $\bR$-coefficients of Hausdorff  (real) dimension $2n+2$  and bidimension $(n+1,n+1)$. 
\item[(iii)] $\supp{T(\gamma,s)}\subset s(M)\times_M\bP(\bC\oplus E)$.
\end{itemize} 
\end{theorem}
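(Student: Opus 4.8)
The plan is to assemble Theorem~\ref{t4r} almost entirely from results already in place in this section, organizing it as three separate claims that can be verified independently. First I would establish the double transgression formula \eqref{dtker} itself: Theorem~\ref{PL3} gives the equality
\[
F_{[1:\gamma]}\wedge\tilde{S}-F_{[0:1]}\wedge\tilde{S}=\frac{i}{\pi}\partial\bar{\partial}\left[\log\left|h^{\pi_1}_0(\gamma)\right|\bigr|_{\tilde{S}}\right]
\]
of currents in $\Bl_{\infty\times[0]}(\bP^1\times E)$, and the key move is to push this forward via the proper projection $\pi_{2,3}:\Bl_{\infty\times[0]}(\bP^1\times E)\ra E\times_M\bP(\bC\oplus E)$. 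Since the two terms on the left sit in distinct fibers $F_{[1:\gamma]}$ and $F_{[0:1]}$ over which $\pi_{2,3}$ restricts to an embedding (as noted in the proof of Theorem~\ref{PL3}), the push-forward leaves their structure intact, turning them into $S_\gamma$ and $(\pi_{2,3})_*\tilde{S}_\infty$ respectively. Because $\log|h^{\pi_1}_0(\gamma)|\bigr|_{\tilde{S}}$ and $\tilde{S}$ are both $\partial$- and $\bar\partial$-closed-compatible in the sense needed (holomorphic push-forward commutes with $\partial$ and $\bar\partial$), the right side becomes $\partial\bar\partial$ of the push-forward current, which is exactly $\partial\bar\partial T(\gamma,s)$ with $T(\gamma,s)$ as defined in item (ii).

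Next I would identify the limit, which is item (i). Proposition~\ref{Psp1} already gives the decomposition $\pi_{2,3}(\tilde{S}_\infty)=\Bl_X(s(M))\cup C^f\Ex_X(s(M))$ with the stated intersection. To see that this limit is genuinely $\lim_{\lambda\ra\infty}S_\lambda$, invoke the continuity half of Hardt's Slicing Theorem~\ref{Hardt}: the dimensional transversality hypothesis holds everywhere — for $\mu\neq0$ trivially, and for $\mu=0$ precisely because Proposition~\ref{Psp1} exhibits $\tilde{S}_\infty$ as a closed analytic space of dimension $n$, one less than $\dim\tilde{S}=n+1$. Hence $\langle\tilde{S},\pi_1,[1:\lambda]\rangle\ra\langle\tilde{S},\pi_1,[0:1]\rangle$ as $\lambda\ra\infty$ in the locally flat topology, and since $\pi_{2,3}$ is proper its push-forward is continuous, giving $\lim_{\lambda\ra\infty}S_\lambda=(\pi_{2,3})_*\tilde{S}_\infty$. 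Finally I would match the two pieces of Proposition~\ref{Psp1} with the two currents claimed: $\Bl_X(s(M))=\Bl_X(M)$ projects, under the remaining coordinate projection, onto the closure of the graph $\{(s(m),[0:s(m)])\}$ over $M\setminus s^{-1}(0)$, i.e. $\tilde{s}_\infty(M)$, while $C^f\Ex_X(s(M))$ is by construction supported in $[0]\times_M\bP(\bC\oplus E)$, giving the support claim.

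For item (ii), the rectifiability, finite local mass, Hausdorff dimension, and bidimension of $T(\gamma,s)$ are all handled by Proposition~\ref{pflog}: the push-forward $(\pi_{2,3})_*([\log|h^{\pi_1}_0(\gamma)|\bigr|_{\tilde{S}}])$ is shown there to be a locally rectifiable $(2n+2)$-dimensional current with $\bR$-coefficients, using the Lipschitz mass-comparison inequality \eqref{leq1} for the finite-mass part and the fact that $\pi_{2,3}\bigr|_{\tilde{S}}$ is proper and birational for the rectifiability part; the bidimension $(n+1,n+1)$ follows because $\tilde{S}$ is a complex analytic space of complex dimension $n+1$ and everything in sight is of pure type. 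Item (iii), the support statement $\supp T(\gamma,s)\subset s(M)\times_M\bP(\bC\oplus E)$, follows because $\log|h^{\pi_1}_0(\gamma)|\bigr|_{\tilde{S}}$ is supported on $\tilde{S}$, whose image under $\pi_{2,3}$ lies in $\pi_{2,3}(\tilde{S})=\overline{\{(s(m),[1:\lambda s(m)])\}}\subset s(M)\times_M\bP(\bC\oplus E)$ by the Lemma preceding Theorem~\ref{PL3}.

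The main obstacle is really just the bookkeeping in the push-forward step — making sure that pushing forward by $\pi_{2,3}$ does not create unexpected mass cancellation or dimension drop in the limit term, and that the identification of $\Bl_X(s(M))$ with $\tilde{s}_\infty(M)$ under the forgetful projection is clean. Both are controlled: the dimension count in Proposition~\ref{Psp1} guarantees Hardt's hypothesis, and the properness and generic injectivity of $\pi_{2,3}$ on $\tilde{S}$ (established in the Lemma before Theorem~\ref{PL3}) ensures no cancellation. Thus the proof is essentially an orchestration of Theorems~\ref{PL3}, \ref{Hardt}, Propositions~\ref{Psp1}, \ref{pflog}, and the preceding Lemma, with the only genuinely new observation being that the three items are exactly what these results yield after the push-forward.
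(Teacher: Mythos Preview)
Your proposal is correct and follows essentially the same route as the paper: push forward the identity of Theorem~\ref{PL3} via the proper map $\pi_{2,3}$, invoke Hardt's continuity (enabled by the dimension count in Proposition~\ref{Psp1}) to identify $(\pi_{2,3})_*\tilde{S}_\infty$ with $\lim_{\lambda\to\infty}S_\lambda$, read off the two pieces from Proposition~\ref{Psp1}, and cite Proposition~\ref{pflog} for item~(ii). One small wording issue: there is no ``remaining coordinate projection'' to apply to $\Bl_X(s(M))$ --- it already sits inside $E\times_M\bP(\bC\oplus E)$ after $\pi_{2,3}$; the point is rather that as currents $[\Bl_X(s(M))]=[\tilde{s}_\infty(M)]$ because the exceptional divisor $\Ex_X(s(M))$ has dimension $n-1$ and hence zero $\mathcal{H}^{2n}$-mass.
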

\begin{proof} The splitting into a sum of two currents  is a trivial consequence  of Proposition \ref{Psp1}. Since $C^f\Ex_{X}(s(M))\cap \Bl_{X}(s(M))=\Ex_{X}(s(M))$ is an analytic subspace of dimension strictly smaller than $\Bl_{X}(s(M))$, it can be ignored when integrating over  $\Bl_X(s(M))$.  Notice that $\tilde{s}_{\infty}(M)$ is the graph of $\pi^E\circ s\bigr|_{M\setminus X}$, the complement of the exceptional divisor in $\Bl_X(s(M))$. 

Item (ii) is the object of Proposition \ref{pflog}, while item (iii) is obvious.
\end{proof}

Let $C_XM$ be the affine normal cone of the analytic space $X$ in $M$. We explain in Appendix A, that $\bP(\bC\oplus C_XM)$ can be naturally seen as an analytic subspace of $\bP(\bC\oplus E)$ while  Proposition \ref{fcprop} identifies 
\[C^f\Ex_X(s(M))\;\mbox{and}\;\bP(\bC\oplus C_XM).\]
via the natural isomorphism $[0]\times_M\bP(\bC\oplus E)\ra \bP(\bC\oplus E)$.

\vspace{0.2cm}

\begin{example} Suppose that the sheaf of ideals defined by the section $s$ is reduced. In other words $X$ is a reduced analytic space. Then the set of regular points  is open and dense in $X$. The following statements are equivalent.
\begin{itemize}
\item[$(i)$] The germ $(X,p)$ admits a regular representative of dimension $j$;
\item[$(ii)$] The ring $\mathcal{O}_{X,p}$ is regular and $\dim\mathcal{O}_{X,p}=j$;
\item[$(iii)$] Given any connection $\nabla$ on $E$, the rank of $(\nabla s)_p$ seen as a linear morphism \linebreak $(\nabla s)_p:T_pM\ra E_p$ is $n-j$.
\end{itemize}
The equivalence $(i)\Leftrightarrow(ii)$ is standard. The equivalence $(ii)\Leftrightarrow (iii)$ is essentially the Rank Theorem and takes into account that $(\nabla s)_p$ for $p\in s^{-1}(0)$ does not depend on $\nabla$ and in local coordinates it becomes the Jacobian matrix of $s$ with respect to the natural basis of $T_pM$ induced by the coordinates.

In this situation, one can prove quite easily, working in local coordinates that $(C_XM)_p$ for $p$ regular coincides with the image of $(\nabla s)_p$, hence $\bP(\bC\oplus C_XM)_p$ is the  projective subspace $\bP(\bC\oplus\Imag (\nabla s)_p)$. 
\end{example}

\vspace{0.3cm}

The archetypal double transgression formula for forms is the next corollary. Particular choices of $\omega$ will deliver important applications in the next sections. 

For $\gamma\in \bC$, let $s_{\gamma}:M\ra \bP(\bC\oplus E)$ be the induced section $m\ra [1:\gamma s(m)]$ whose (fiberwise) graph is $S_{\gamma}$.

\begin{cor}\label{cormt1} Let $\omega$ be a closed form of bidegree $(k,k)$ on $\bP(\bC\oplus E)$ and let $\gamma\in \bC$.
 Then the following double transgression formula holds on $M$:
\begin{equation}\label{genPL} s^*_{\gamma}\omega-s^*_{\infty}\left(\omega\bigr|_{\bP(E)}\right)- Z(s,\omega)=\partial\bar{\partial} T(\gamma,s,\omega)
\end{equation}
where 
\begin{itemize}
\item[(i)]  $s_{\infty}:=\pi^E\circ s\bigr|_{M\setminus X}$ and $s_{\infty}^*\omega$ is a form with locally integrable coefficients\footnote{It can be integrated against any smooth form with compact support on $M$ of complementary degree}  on $M$,
\item[(ii)] $Z(s,\omega)$ is a locally flat current supported on $X$ defined as follows:
 \begin{equation}\label{eqZ} Z(s,\omega) :=\pi_*\left(\omega\wedge \bP(\bC\oplus C_{X}M)\right).\end{equation}   In (\ref{eqZ}) the analytic current $\bP(\bC\oplus C_{X}M)$ lies in $\bP(\bC\oplus E)$ and $\pi:\bP(\bC\oplus E)\ra M$ is the projection.
 
 \item[(iii)] $T(\gamma,s,\omega)$ is a flat current of bidegree $(k-1,k-1)$ (bidimension $(n+1-k,n+1-k)$.
\end{itemize}
\end{cor}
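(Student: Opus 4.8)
The plan is to obtain (\ref{genPL}) from the kernel-level formula (\ref{dtker}) of Theorem \ref{t4r} by exactly the mechanism used to deduce Corollary \ref{core1} from Proposition \ref{cormain}: wedge with $\pi_2^*\omega$ and push forward to $M$. Write $\pi_1,\pi_2$ for the projections of $E\times_M\bP(\bC\oplus E)$ onto $E$ and $\bP(\bC\oplus E)$, and let $p:=\pi\circ\pi_1=\pi\circ\pi_2:E\times_M\bP(\bC\oplus E)\ra M$. Since $\omega$ is of pure bidegree $(k,k)$ and closed, it is $\partial$- and $\bar\partial$-closed, hence so is $\pi_2^*\omega$; because $T(\gamma,s)$ is (locally rectifiable, hence) flat and $\pi_2^*\omega$ is smooth, the wedge $T(\gamma,s)\wedge\pi_2^*\omega$ is legitimate and the Leibniz rule gives $\partial\bar\partial(T(\gamma,s)\wedge\pi_2^*\omega)=(\partial\bar\partial T(\gamma,s))\wedge\pi_2^*\omega$. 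Wedging (\ref{dtker}) with $\pi_2^*\omega$ and applying $p_*$, which is proper and commutes with $\partial$ and $\bar\partial$, yields
\[
p_*(S_\gamma\wedge\pi_2^*\omega)-p_*\Big(\big(\lim_{\lambda\ra\infty}S_\lambda\big)\wedge\pi_2^*\omega\Big)=\partial\bar\partial\, p_*\big(T(\gamma,s)\wedge\pi_2^*\omega\big).
\]
It then remains to identify the three terms.

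\emph{First and third terms.} Since $\pi_1\bigr|_{S_\gamma}:S_\gamma\ra s(M)$ and $\pi\bigr|_{s(M)}:s(M)\ra M$ are biholomorphisms and $S_\gamma$ is the fibrewise graph of $s_\gamma=\varphi_\gamma\circ s$, the computation (\ref{someeq}) applies verbatim and gives $p_*(S_\gamma\wedge\pi_2^*\omega)=s_\gamma^*\omega$, a smooth $(k,k)$-form. For the third term, $T(\gamma,s)$ is locally rectifiable with $\bR$-coefficients of bidimension $(n+1,n+1)$ and supported in $s(M)\times_M\bP(\bC\oplus E)$ by Theorem \ref{t4r}, $\pi_2^*\omega$ is smooth of bidegree $(k,k)$, and $p$ is proper; since flatness is preserved by wedging with smooth forms and by proper push-forward, $T(\gamma,s,\omega):=p_*(T(\gamma,s)\wedge\pi_2^*\omega)$ is a flat current of bidimension $(n+1-k,n+1-k)$, i.e.\ bidegree $(k-1,k-1)$, which is (iii).

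\emph{Second term.} By Theorem \ref{t4r}(i) one has $\lim_{\lambda\ra\infty}S_\lambda=\tilde{s}_{\infty}(M)+C^f\Ex_X(s(M))$. For the cone piece, Proposition \ref{fcprop} identifies $C^f\Ex_X(s(M))$ with $\bP(\bC\oplus C_XM)\subset\bP(\bC\oplus E)$ under $[0]\times_M\bP(\bC\oplus E)\simeq\bP(\bC\oplus E)$, so its contribution to the push-forward is $\pi_*(\omega\wedge\bP(\bC\oplus C_XM))=Z(s,\omega)$ as in (\ref{eqZ}); this is a locally flat current, supported on $\pi(\bP(\bC\oplus C_XM))=X$ because $C_XM$ lies over $X$, which gives (ii). For the other piece, $\tilde{s}_{\infty}(M)$ is the complement of the exceptional divisor in the analytic space $\Bl_X(s(M))$, hence as a current it coincides with the analytic current $[\Bl_X(s(M))]$ (the excised set has vanishing $\mathcal H^{2n}$-measure) and so has locally finite mass; wedging with the smooth form $\pi_2^*\omega$ and pushing forward by the proper map $p$ produces a current of locally finite mass and bidimension $(n-k,n-k)$ on $M$, i.e.\ a form with $L^1_{\mathrm{loc}}$ coefficients. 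Over $M\setminus X$ the current $\tilde{s}_{\infty}(M)$ is literally the graph of $s_\infty=\pi^E\circ s$ and $\pi_2$ maps it into $\bP(E)$, so there this push-forward equals $s_\infty^*(\omega\bigr|_{\bP(E)})$; the $L^1_{\mathrm{loc}}$-form just obtained is therefore the extension of $s_\infty^*(\omega\bigr|_{\bP(E)})$ across $X$, giving (i). Assembling the three identifications produces exactly (\ref{genPL}).

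\emph{Main obstacle.} The routine parts are the Leibniz/push-forward manipulation and the bidegree bookkeeping; the two points requiring genuine care are the identification of $C^f\Ex_X(s(M))$ with $\bP(\bC\oplus C_XM)$, which is carried out in the Appendix (Proposition \ref{fcprop}), and the claim that the current-theoretic limit $\tilde{s}_{\infty}(M)$ yields a form extending the naive pull-back $s_\infty^*(\omega\bigr|_{\bP(E)})$ across the zero locus $X$ with locally integrable coefficients — this is where the analytic structure of $\Bl_X(s(M))$ and the local finiteness of its mass are essential, and it is the step I expect to demand the most attention.
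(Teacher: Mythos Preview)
Your approach is essentially identical to the paper's: wedge (\ref{dtker}) with the pull-back of $\omega$ and push forward to $M$, then identify each term. Two technical points need tightening, however.

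First, the map $p=\pi\circ\pi_1:E\times_M\bP(\bC\oplus E)\ra M$ is \emph{not} proper, since the fibre over $m\in M$ is $E_m\times\bP(\bC\oplus E_m)$. What makes the push-forward legitimate is that all the currents involved have support in $s(M)\times_M\bP(\bC\oplus E)$ (Theorem \ref{t4r}(iii) for $T(\gamma,s)$, and trivially for $S_\gamma$ and its limit), and the restriction of $p$ to this closed set \emph{is} proper. The paper handles this by factoring through the proper projection onto $E$, noting the resulting support lies in $s(M)$, and only then pushing to $M$; you should make the analogous observation explicit rather than assert properness of $p$.

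Second, the inference ``a current of locally finite mass and bidimension $(n-k,n-k)$ on $M$, i.e.\ a form with $L^1_{\loc}$ coefficients'' is false as stated: any analytic current such as $[Z]$ for a codimension-$k$ subvariety is a counterexample. What is actually needed is that the map from $\Bl_X(s(M))$ to $M$ is a biholomorphism away from the exceptional divisor, so the area formula converts the integral over $\Bl_X(s(M))$ into an integral over $M\setminus X$; finiteness of the former then gives local integrability of $s_\infty^*\omega$ on $M$. The paper also notes that when $\Bl_X(s(M))$ is singular one passes first to an embedded resolution before applying this change-of-variables argument. You correctly flag this as the delicate step, but the argument you wrote down does not yet establish it.
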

\begin{proof} One has via a change of variables:
\begin{equation}
\label{fdteq1} s_{\gamma}^*\omega(\eta):= \int_{M}s^*_{\gamma}\omega\wedge\eta=\pi_*(\pi_2)_*(\pi_3^*\omega\wedge {S}_{\gamma})(\eta)
\end{equation}
where  $\pi_2,\pi_3$ are projections onto the factors of $E\times_{M}\bP(\bC\oplus E)$\footnote{Recall that there exist already a $\pi_1$, the projection onto the first factor in $\Bl_{\infty\times [0]}(\bP^1\times E)$}.  Since $\pi_2$ is proper, $(\pi_2)_*S_{\gamma}$ makes sense and one has $\supp(\pi_2)_* S_{\gamma}\subset s(M)$ for all $\gamma\in \bC$. The same stays true about the support of 
\[ \lim_{\gamma\ra \infty}(\pi_2)_*S_{\gamma}=(\pi_2)_*(\lim_{\gamma\ra\infty}S_{\gamma})=(\pi_2)_*(\pi_{2,3})_*\tilde{S}_{\infty}\]
Moreover, $\supp(\pi_2)_*(T(\gamma,s))\subset s(M)$ and therefore all these currents in $E$ can be pushed forward via $\pi$ to $M$.

Following  (\ref{fdteq1}), we wedge (\ref{dtker}) with $\pi_3^*\omega$ and push it forward via $\pi_2$ and then via $\pi$. We obtain  the left hand side of (\ref{genPL}) with
\begin{itemize}
\item $s_{\infty}^*\omega=\pi_*(\pi_2)_*\left(\pi_3^*\omega\wedge \tilde{s}_{\infty}(M)\right)$, via a change of variables as $\pi\circ \pi_3\bigr|_{ \tilde{s}_{\infty}(M)}$ is a diffeomorphism onto $M\setminus X$; let $\varphi:=\pi\circ \pi_2$ then for every bounded form (of complementary degree) with compact support $\eta$ in $M$, one has
\[ \infty>\int_{\Bl_{X}s(M)}\varphi^*\eta\wedge\omega=\int_{\tilde{s}(M)}\varphi^*\eta\wedge\omega=\int_{M\setminus X}\eta\wedge s_{\infty}^*\omega.
\]
where the second equality is the change of variables or more precisely the area formula on  preimages of compact sets of $M$. This argument would work fine if $Z=\Bl_X(s(M))$ were a smooth analytic space. In general, one needs to pass to an embedded resolution of singularities of $Z$ in $E\times_M\bP(E)$ and repeat the argument.
\item $ Z(s,\omega):= \pi_*(\pi_2)_*(\pi_3^*\omega\wedge C^f\Ex_X(s(M)))$.
\end{itemize}

The statement about the support of $Z(s,\omega)$ comes out of the information about the support of the fiberwise cone. To see that $Z(s,\omega)$ simplifies to have the form described in (\ref{eqZ}) notice that $\pi_3^*\omega$ restricted to $[0]\times_M\bP(\bC\oplus E)\simeq\bP(\bC\oplus E)$ is just $\omega$ and then use Proposition \ref{fcprop}.

The operations used on the r.h.s. of (\ref{fdteq1}) preserve locally flat currents. 

Finally, since the push-forward and pull-back of forms via holomorphic maps commute with $\partial$ and $\bar{\partial}$ and because $\omega$ is $\partial$ and $\bar{\partial}$ closed we get (\ref{genPL}) from (\ref{dtker}) with the r.h.s. equal to

\begin{equation}\label{spcur}\partial\bar{\partial}(\pi_*(\pi_2)_*(\pi_3^*\omega\wedge T(\gamma,s)))=(\pi_*(\pi_2)_*(\pi_3^*\omega\wedge \partial\bar{\partial}T(\gamma,s)))=:T(\gamma,s,\omega).
\end{equation}
\end{proof}

\vspace{0.2cm}
For the rest of the section we  derive a few more properties about the currents appearing in (ii) and (iii) of Corollary \ref{cormt1}. The next result concerns (ii).

One of the basic properties of the projectivized normal cone $\bP(C_{X}M)$ is that it has (total) dimension $n-1=\dim{M}-1$.  However, in what follows we need to look at the fiber of
\[\pi:\bP(\bC\oplus C_{X}M)\ra X.
\]

The dimension of the fiber of $\pi$ can be larger than the expected dimension which is $n-1-\dim{X}$ as one can easily see from the following example: $M=\bC^2$ , $E=\bC^2\times \bC^2$ and
 \[s(x,y)=(x^2,xy)\] which has codimension $1$ but at the origin has an affine normal cone of dimension $2$. It is easily seen that at a generic point along an irreducible component of $s^{-1}(0)$ of codimension $k$ in $s(M)$ (or in $M$) the dimension of the fiber of $\pi$ is $k$. This is because the total dimension has to be $n$.  Clearly the fiber of the normal cone is an analytic space with possibly different irreducible components and multiplicities.

Now the dimension of the fiber at non-generic points is larger than at generic points.

In the analytic current $A:=\bP(\bC\oplus C_{X}M)$,   the irreducible components of $A$ may come with multiplicities. 

The slices $\langle A,\pi,x\rangle=A\cap \pi^{-1}(x)$ exist for every $x\in X$, and they are  analytic currents as well. However, the correspondence
\[ y\ra \langle A,\pi,y\rangle
\]
is continuous in the flat topology only at those points where the dimension of the fiber $\pi^{-1}(x)$ is constant.  We can be even more precise. The next Proposition relies on King's Fibering Theorem (Theorem 3.3.2 in \cite{K})  and is fundamental for the generalization of Poincar\'e-Lelong.

\begin{prop}\label{cormt2} Let $Z$ be an irreducible component of $X:=s^{-1}(0)$ of complex codimension $k$ in $M$, let $Z^{\gen}$ be the open set of regular points in $Z$ where the dimension of the fiber of $\pi$  is $k$ and let $Z^c$ be the union of the other irreducible components of $s^{-1}(0)$, a closed analytic set.
\begin{itemize}
\item[(i)] If $\deg \omega<2k$ then the support of the current $Z(s,\omega)$ is contained in $Z^c$;
\item[(ii)] If $\deg\omega=2k$ then  the function  $g:Z^{\gen}\ra \bC$:
\[ g(x):=\langle A,\pi,x\rangle\wedge \omega
\]
is continuous and
\[Z(s,\omega)\bigr|_{M\setminus Z^c}= g\cdot Z^{\gen} \]
where $Z^{\gen}$ is considered an analytic space with its reduced structure.
\end{itemize}
\end{prop}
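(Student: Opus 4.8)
## Proof Plan

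The plan is to analyze the current $Z(s,\omega) = \pi_*(\omega \wedge A)$ where $A = \bP(\bC \oplus C_X M) \subset \bP(\bC \oplus E)$, by looking fiberwise over $X$ and using the degree/dimension bookkeeping together with King's Fibering Theorem. For part (i), the key observation is dimensional: the slice $\langle A, \pi, x \rangle$ over a point $x \in Z^{\gen}$ is a projective subspace (fiberwise cone over the projectivized normal cone) of complex dimension exactly $k$, i.e. real dimension $2k$. When we wedge $\omega$ of degree $\deg\omega < 2k$ against the $(n,n)$-current $A$ and then push forward, the resulting current has bidimension $(n+1-k, n+1-k)$ but, crucially, restricted to the locus $Z^{\gen}$ where the fibers have the constant dimension $k$, the integrand $\omega \wedge \langle A, \pi, x\rangle$ vanishes identically for degree reasons — a form of degree $< 2k$ integrated over a $2k$-real-dimensional fiber gives zero. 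Hence the support of $Z(s,\omega)$ cannot meet $Z \setminus Z^c = Z^{\gen} \cup (\text{lower-dim'l non-generic part of } Z)$; since the non-generic part of $Z$ has strictly smaller dimension, it cannot carry a current of bidimension $(n+1-k,n+1-k)$ either, so the support is forced into $Z^c$. First I would make this precise by restricting everything over the open set $M \setminus Z^c$ and invoking the continuity of slicing (Hardt's Theorem \ref{Hardt}) on the locus where the fiber dimension is constant.

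For part (ii), when $\deg \omega = 2k$, the same fiberwise picture now gives a top-degree pairing: over each $x \in Z^{\gen}$ the number $g(x) = \langle A, \pi, x\rangle \wedge \omega$ is a well-defined complex number, obtained by integrating the $2k$-form $\omega$ over the $2k$-real-dimensional analytic cycle $\langle A, \pi, x\rangle$. The plan is to show (a) that $g$ is continuous on $Z^{\gen}$, and (b) that $Z(s,\omega)$ restricted to $M \setminus Z^c$ equals the current $g \cdot Z^{\gen}$ (integration against the smooth locus $Z^{\gen}$ with the multiplicity function $g$). For continuity: on $Z^{\gen}$ the fiber dimension of $\pi: A \to X$ is constant equal to $k$, so Hardt's Slicing Theorem (Theorem \ref{Hardt}), applied to the analytic current $A$ and the (restriction of the) projection $\pi$, guarantees $x \mapsto \langle A, \pi, x\rangle$ is continuous in the locally flat topology; wedging with the fixed smooth form $\omega$ and taking the resulting scalar is then continuous in $x$. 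For the identification of $Z(s,\omega)$ with $g \cdot Z^{\gen}$, I would use a Fubini-type decomposition of the pushforward: over $M \setminus Z^c$ we have $Z(s,\omega)(\eta) = \int_A \omega \wedge \pi^*\eta$, and slicing $A$ along $\pi$ over $Z^{\gen}$ (via King's Fibering Theorem \ref{K}, which expresses $A$ over the generic locus as a "bundle of cycles" varying holomorphically) rewrites this as $\int_{Z^{\gen}} \left( \langle A,\pi,x\rangle \wedge \omega \right) \eta(x) = \int_{Z^{\gen}} g(x)\, \eta(x)$, which is exactly $(g \cdot Z^{\gen})(\eta)$.

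The main obstacle I expect is making the slicing/Fubini argument rigorous on the non-reduced, non-irreducible analytic space $A = \bP(\bC \oplus C_X M)$: the normal cone generally carries embedded components and multiplicities, the fiber over a generic point of $Z$ is itself a possibly reducible cycle, and one must be careful that the "generic" locus $Z^{\gen}$ — where simultaneously $Z$ is smooth, the other components $Z^c$ are absent, and the fiber dimension drops to the minimum $k$ — is genuinely open and dense in $Z$, so that $Z(s,\omega)\bigr|_{M \setminus Z^c}$ is indeed captured on $Z^{\gen}$ up to a set of lower dimension that carries no current of the relevant bidimension. This is precisely where King's Fibering Theorem does the heavy lifting: it provides the local product structure of $A$ over $Z^{\gen}$ needed to justify the Fubini step and to see that the multiplicity function $g$ is the correct "density" of $Z(s,\omega)$ along $Z^{\gen}$. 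A secondary technical point is the interchange of $\pi_*$ with slicing — i.e. that $\langle \pi_* A, \pi', x\rangle = \pi_*\langle A, \pi, x\rangle$ in the appropriate sense — which again is legitimate on the locus of constant fiber dimension by Hardt's Theorem but must be stated carefully since $\pi$ here factors through the intermediate projections $\pi_2, \pi_3$ of Corollary \ref{cormt1}.
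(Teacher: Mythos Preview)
Your proposal is correct and follows essentially the same route as the paper: a Fubini/dimensional vanishing argument for (i) and King's Fibering Theorem for (ii). The paper's presentation of (i) is slightly more direct --- it writes $A$ as a rectifiable current $(h,\mathcal{H}^{2n}\llcorner A,\overrightarrow{\xi})$ and observes that $\omega\wedge\pi^*\eta$ annihilates $\overrightarrow{\xi}$ a.e.\ when $\deg\omega$ is less than the fiber dimension --- rather than going through slicing and Hardt's Theorem, but this is the same underlying idea. (One small slip: the bidimension of $Z(s,\omega)$ is $(n-j,n-j)$ where $2j=\deg\omega$, not $(n+1-k,n+1-k)$; this does not affect your argument.)
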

\begin{proof} Part (i) follows from Fubini. The current $A$ is an analytic current, in particular it is rectifiable  and therefore has the structure $(h,\mathcal{H}^{2n}\llcorner{\bP(\bC\oplus C_XM)}, \overrightarrow{\xi})$ where $h$ is an integer valued function, the restriction of the Hausdorff measure $\mathcal{H}^{2n}$ (well-defined on $\bP(\bC\oplus E)$) is to the support of the analytic space $\bP(\bC\oplus C_XM)$, and $\overrightarrow{\xi}$ is an $\mathcal{H}^{2n}\llcorner{\bP(\bC\oplus C_XM)}$ -a.e. defined orientation multivector. To be precise, one can define $\overrightarrow{\xi}$ at the smooth points of the irreducible components of $\bP(\bC\oplus C_XM)$ (considered with the reduced structure). Applied to a test form $\upsilon$, the current $\bP(\bC\oplus C_XM)$ has the following expression:
\[ \int_{\bP(\bC\oplus C_XM)}h\upsilon(\overrightarrow{\xi})~d\mathcal{H}^{2n}
\]
If $\upsilon=\omega\wedge\pi^*\eta$ with $\deg{\omega}$ smaller than the dimension of the fiber then $\upsilon(\overrightarrow{\xi})=0$ a.e.

 Part (ii) follows from the Fibering Theorem of King. The formula without the continuity property of the function $g$ also follows directly from the co-area formula, together with the same argument that was used at (i), that at points on $Z$ where the fiber is "too big", the integral vanishes.
\end{proof}

\vspace{0.5cm}
 We will now work on giving an expression as simple as possible for the current $T(\gamma,s,\omega)$.

On one hand, item (iii) of Theorem \ref{t4r} together with the information that $T(\gamma,s)$ is flat allows us to say via the Flat Support Theorem (see Theorem 2.1.8 in \cite{K}) that $T(\gamma,s)$ comes from a current on the complex manifold $s(M)\times_M\bP(\bC\oplus E)$ . On the other hand, the restriction of $\pi_3$ to $s(M)\times_M\bP(\bC\oplus E)$ is a biholomorphism onto $\bP(\bC\oplus E)$. Let $\alpha$ be the inverse of this restriction and moreover define the following current on $\bP(\bC\oplus E)$:
\[ \widetilde{T(\gamma,s)}:=(\pi_3)_*T(\gamma,s)
\]
Then $\pi\circ \pi_2\circ \alpha:\bP(\bC\oplus E)\ra M$ coincides with $\pi$ and
\[\pi_*(\pi_2)_*(\pi_3^*\omega\wedge T(\gamma,s))=\pi_*(\omega\wedge \widetilde{T(\gamma,s)})
\]
Hence from (\ref{spcur}) we get 
\begin{equation}
\label{Tgso2} T(\gamma,s,\omega)=\pi_*(\omega\wedge \widetilde{T(\gamma,s)}).
\end{equation}

\vspace{0.2cm}
We introduce new notation. Let $U:=M\setminus X$.

 Let $L_s:=s_{\infty}^*\tau'$ where $s_{\infty}:U\ra \bP(E)$ is $s_{\infty}(m):=[s(m)]$ and $\tau'\ra \bP(E)$ is the tautological bundle. Notice that $L_s\hookrightarrow E\bigr|_U$,  comes with an obvious trivialization induced by $s\bigr|_{U}$. Let
 \[ \hat{s}:L_s\ra{\bC} \]
 be this trivialization. We will denote by $\hat{s}$ also the induced fiber bundle isomorphism
  \[\quad\hat{s}:\bP(\bC\oplus L_s)\ra \bP(\bC\oplus \bC)\times U= \bP^1\times U, \qquad \hat{s}^{-1}([z:w],m)=([z:ws(m)],m).\]

  On $\bP^1$ we have the standard meromorphic function:
 \[ \mer([z:w])=\frac{w}{z}
 \]
 which can be lifted to the total space of  the trivial bundle $\bP^1\times U\ra U$.
 
 Denote by $\iota:\bP(\bC\oplus L_s)\hookrightarrow \bP(\bC\oplus E)\bigr|_{U}$ the natural fiber bundle inclusion.

\begin{prop} \label{spark1} The following relation holds:
\begin{equation}\label{Tgso}T(\gamma,s,\omega)\bigr|_{U}=\frac{i}{\pi} \int_{\bP(\bC\oplus L_s)/U}\log{|\tilde{h}^s_{\gamma}|}\iota^*\omega=\frac{i}{\pi}\pi_*(\log{|\tilde{h}^s_{\gamma}|}\iota^*\omega)
\end{equation}
where $\pi:\bP(\bC\oplus L_s)\ra U$ is the projection and ${\tilde{h}_{\gamma}}^s:\bP(\bC\oplus L_s)\ra \bC$ is the  meromorphic function
\[ \tilde{h}^s_{\gamma}= \mer\circ \hat{s}-\gamma.
\]
In particular, $T(\gamma,s,\omega)\bigr|_{U}$ is a smooth form. If $\codim_{\bR}X\geq \deg{\omega}$, then the flat current $T(\gamma,s,\omega)$ is a  form with locally integrable coefficients, uniquely determined by $T(\gamma,s,\omega)\bigr|_{U}$. 

\end{prop}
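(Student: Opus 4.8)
The starting point is the explicit formula $T(\gamma,s,\omega) = \pi_*(\omega \wedge \widetilde{T(\gamma,s)})$ from \eqref{Tgso2}, together with the description $T(\gamma,s) = \frac{i}{\pi}(\pi_{2,3})_*(\log|h^{\pi_1}_0(\gamma)|\big|_{\tilde{S}})$ from item (ii) of Theorem \ref{t4r}. The plan is to carry out all the pushforwards and restrictions explicitly over the open set $U = M\setminus X$, where $s$ does not vanish and hence everything becomes a genuine holomorphic fiber bundle situation. First I would observe that over $U$, the strict transform $\tilde{S}$ restricted to the fiber $F_{[1:\gamma]}$-directions parametrizes exactly the graph of $s_\gamma$, and that the line subbundle $L_s \hookrightarrow E\big|_U$ generated by $s$ gives a natural factorization: the family of sections $m \mapsto [1:\lambda s(m)]$ takes values in $\bP(\bC\oplus L_s) \subset \bP(\bC\oplus E)\big|_U$. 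Under the trivialization $\hat{s}: \bP(\bC\oplus L_s) \xrightarrow{\sim} \bP^1 \times U$, the action $[z:w] \mapsto [z:\lambda w]$ becomes the standard $\bC^*$-action on $\bP^1$, and the function $h^{\pi_1}_0(\gamma)$ transported to this model is precisely $\tilde{h}^s_\gamma = \mer\circ\hat{s} - \gamma$.

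The key computational step is then to identify $\widetilde{T(\gamma,s)}\big|_U$ with the pushforward to $\bP(\bC\oplus E)\big|_U$ of the current $\frac{i}{\pi}\log|\tilde{h}^s_\gamma|$ living on the total space $\bP(\bC\oplus L_s)$ — more precisely, $\iota_*$ of this current, where $\iota: \bP(\bC\oplus L_s) \hookrightarrow \bP(\bC\oplus E)\big|_U$. This follows by chasing the definitions: over $U$, the relevant part of $\tilde{S}_\infty$ (the piece $\tilde{s}_\infty(M)$, since the fiberwise cone $C^f\Ex_X(s(M))$ is supported over $X$ and so contributes nothing over $U$) is birational via $\pi_{2,3}$ to the closure of the graph, which in turn is the blow-up model over $\bP^1\times U$. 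Since $\pi_3$ restricted to $s(M)\times_M\bP(\bC\oplus E)$ is a biholomorphism onto $\bP(\bC\oplus E)$, pushing forward and using the change-of-variables/area formula, the coefficient function $\log|h^{\pi_1}_0(\gamma)|$ on $\tilde{S}$ becomes exactly $\log|\tilde{h}^s_\gamma|$ on $\bP(\bC\oplus L_s)$. Feeding this into \eqref{Tgso2} gives
\[
T(\gamma,s,\omega)\big|_U = \pi_*\!\left(\omega \wedge \iota_*\tfrac{i}{\pi}\log|\tilde{h}^s_\gamma|\right) = \tfrac{i}{\pi}\,\pi_*\!\left(\iota^*\omega \cdot \log|\tilde{h}^s_\gamma|\right),
\]
using the projection formula $\iota^*(\omega \wedge \iota_* T) = \iota^*\omega \wedge T$ and that $\pi\circ\iota$ is the bundle projection $\bP(\bC\oplus L_s) \to U$. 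Smoothness of the result over $U$ is then immediate: $\log|\tilde{h}^s_\gamma|$ is locally integrable along the fibers (it is $\log$ of a meromorphic function, by the Remark after Theorem \ref{PL1}), $\iota^*\omega$ is smooth, and fiber integration of an $L^1_{\mathrm{loc}}$-times-smooth form over a compact fiber produces a smooth form on the base.

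For the final claim — that when $\codim_{\bR}X \geq \deg\omega$ the flat current $T(\gamma,s,\omega)$ on all of $M$ is a form with locally integrable coefficients determined by its restriction to $U$ — I would argue as follows. By item (iii) of Corollary \ref{cormt1}, $T(\gamma,s,\omega)$ is a flat current of bidegree $(k-1,k-1)$ where $\deg\omega = 2k$; being flat and agreeing over $U$ with a smooth form, its "difference" with (the trivial extension of) that smooth form is a flat current supported on $X$. A flat current of dimension $d$ supported on a set of Hausdorff dimension $< d$ vanishes (Federer's flat support theorem, cf. Theorem 2.1.8 in \cite{K}); here the current has real dimension $2n - 2(k-1) = 2n - 2k + 2$, while $X$ has real dimension $2n - \codim_{\bR}X \le 2n - 2k < 2n-2k+2$, so the flat support theorem forces the difference to be zero, giving uniqueness of the extension. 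Local integrability on $M$ then follows from the local mass bound already established in Proposition \ref{pflog} (the pushforward $(\pi_{2,3})_*[\log|h^{\pi_1}_0(\gamma)|\big|_{\tilde S}]$ is locally rectifiable with $\bR$-coefficients, hence of locally finite mass, and wedging with the smooth $\omega$ and pushing forward preserves this), so a flat current of finite mass that is a smooth form off a set of codimension $\ge$ its degree is represented by an $L^1_{\mathrm{loc}}$ form. The main obstacle I anticipate is the bookkeeping in the first computational step: making the identification of the transported coefficient function $\log|h^{\pi_1}_0(\gamma)|$ with $\log|\tilde h^s_\gamma|$ fully rigorous requires carefully tracking the birational map $\pi_{2,3}|_{\tilde S}$ and the biholomorphism $\pi_3|_{s(M)\times_M\bP(\bC\oplus E)}$, and verifying that over $U$ the only surviving component of $\tilde S_\infty$ is $\tilde s_\infty(M)$ — i.e. that the fiberwise-cone term $C^f\Ex_X(s(M))$, supported over $X$, genuinely drops out and does not interfere with the limit or with Hardt continuity over $U$; this is where the dimension hypothesis and the properness of all maps involved have to be used with care.
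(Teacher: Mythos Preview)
Your proposal is correct and follows essentially the same route as the paper: start from \eqref{Tgso2}, identify $\widetilde{T(\gamma,s)}$ over $U$ by tracking $\log|h^{\pi_1}_0(\gamma)|\big|_{\tilde S}$ through $\pi_{2,3}$ and $\pi_3$, recognize the resulting object as $\frac{i}{\pi}\log|\tilde h^s_\gamma|\cdot[\bP(\bC\oplus L_s)]$, and then apply $\pi_*(\omega\wedge\cdot)$ with the projection formula; the uniqueness claim is handled exactly as you say, via the flat support theorem and the dimension count.

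Two small points worth tightening. First, your invocation of $\tilde S_\infty$ and its decomposition into $\tilde s_\infty(M)$ and $C^f\Ex_X(s(M))$ is a red herring here: $T(\gamma,s)$ is built from $\log|h^{\pi_1}_0(\gamma)|$ integrated over \emph{all} of $\tilde S$, not just the slice at $\mu=0$, so what you actually need is that $\pi_{2,3}$ restricted to $\tilde S\cap\{v\neq 0\}$ is a biholomorphism onto $\tilde S^{\neq 0}\simeq\bP(\bC\oplus L_s)$ (this is the paper's argument, and it is what your ``birational via $\pi_{2,3}$'' is really pointing at). Second, your smoothness argument---``fiber integration of an $L^1_{\loc}$-times-smooth form over a compact fiber produces a smooth form''---is not true in general; the paper's observation is sharper: after the trivialization $\hat s$, the singular factor $\log|\lambda/\mu-\gamma|$ depends only on the fiber coordinate of $\bP^1\times U$ and not on $m\in U$, so one can differentiate under the fiber integral in the base directions freely. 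With these two adjustments your argument matches the paper's.
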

\begin{proof} 
We take another look at the blow-up $\Bl_{\infty\times [0]}(\bP^1\times E)$ of $\infty\times [0]$ inside $\bP^1\times E$. From
\[ (\mu,\lambda v)\wedge (\theta,w)=0
\]
we get that   the projection $\pi_{2,3}$:
\[\pi_{2,3}:\Bl_{\infty\times [0]}(\bP^1\times E)\ra E\times_{M}\bP(\bC\oplus E)
\]
restricted to $v\neq 0$ is a biholomorphism onto its image.  In fact the image of $\pi_{2,3}$:
\[\Imag_{2,3}:=\{ (v,[\theta:w])\in (E\setminus\{0\})\times_{M}\bP(\bC\oplus E)~|~[\theta:w]=[\mu:\lambda v] \mbox{ for some }[\mu:\lambda]\in \bP^1\}\]  is the fiberwise cone (see Appendix \ref{Ap1}) of the graph of the projection $\pi^E:E\setminus\{0\}\ra \bP(E)$. It follows that the restriction of the projection $\pi_2:E\times_M\bP(\bC\oplus E)\ra E$
to $\Imag_{2,3}$ is a projective line bundle (i.e. the fiber is $\bP^1$) over $E\setminus\{0\}$  naturally isomorphic with
\[ \bP(\bC\oplus (\pi^E)^*\tau')
\]
where $\tau'\ra \bP(E)$ is the tautological bundle via the projection $\pi_3:\Imag_{2,3}\ra \bP(\bC\oplus E)$.

Similarly, when taking the strict transform $\tilde{S}$ of $\bP^1\times s(M)$ in $\Bl_{\infty\times [0]}(\bP^1\times E)$, the projection $\pi_{2,3}$ restricted to  $\tilde{S}\cap\{v\neq 0\}$ is a biholomorphism onto 
\[ \tilde{S}^{\neq 0}:=\{ (s(m),[\mu:\lambda s(m)])~|~m\in M\setminus s^{-1}(0), [\mu:\lambda]\in \bP^1\}\subset (E\setminus\{0\})\times_{M}\bP(\bC\oplus E)
\]
and therefore the projection $\pi_2$ restricted to $\tilde{S}^{\neq 0}$ is in fact a projective line bundle over $s(U)$. The projection $\pi:E\ra M$ identifies $s(U)$ with $U$ and under this identification the projective line bundle can in its turn be identified with $\bP(\bC\oplus L_s)$.  Hence the restriction of the projection $\pi_3:E\times_M\bP(\bC\oplus E)\ra \bP(\bC\oplus E)$
\[\pi_3:\tilde{S}^{\neq 0}\ra \bP(\bC\oplus L_s)
\]
induces an isomorphism of bundles over $U$. Moreover via this identification, the map 
\begin{equation} \label{tildeS}\tilde{S}^{\neq 0}\ra \bC,\qquad H(s(m),[\mu:\lambda s(m)])=\frac{i}{\pi} \log{\left|\frac{\lambda}{\mu}-\gamma\right|}
\end{equation}
becomes $\frac{i}{\pi}\log{|{\tilde{h}}^s_{\gamma}|}$. On the other hand, $H\cdot \tilde{S}^{\neq 0}=\frac{i}{\pi}(\pi_{2,3})_*(\log{|h^{\pi_1}_0(\gamma)|}\cdot \tilde{S}\bigr|_{v\neq 0})$ and therefore
 \begin{equation}\label{Tgso3}\widetilde{T(\gamma,s)}\bigr|_{\pi^{-1}(U)}=\frac{i}{\pi}(\pi_3)_*(\pi_{2,3})_*(\log|h^{\pi_1}_0(\gamma)|\cdot{\tilde{S}\bigr|_{v\neq 0}})=\frac{i}{\pi}\log{|{\tilde{h}}^s_{\gamma}|}\cdot [\bP(\bC\oplus L_s)]\end{equation}
Hence (\ref{Tgso}) is a consequence of (\ref{Tgso2}) and (\ref{Tgso3}).

The smoothness of $T(\gamma,s,\omega)\bigr|_{U}$ can be seen directly from (the obvious change of coordinates induced by $\hat{s}$):
\[ T(\gamma,s,\omega)\bigr|_{U}=\frac{i}{\pi}\int_{\bP^1\times U/U}\log{\left|\frac{\lambda}{\mu}-\gamma\right|}\hat{\omega},\]
where $\hat{\omega}:= (\hat{s}^{-1})^*\iota^*{\omega}, \; [\mu:\lambda]\in \bP^1$.

The uniqueness comes out from the fact that two flat currents of dimension $l$ on $M$ which coincide when restricted to the complement of a closed set of Hausdorff dimension smaller than $l$ are the same (see \cite{Fe}). The hypothesis ensures that the zero locus $X$ has Hausdorff dimension smaller than the dimension of the current $T(\gamma,s,\omega)$.
\end{proof}
\begin{rem} \label{Rema} Notice that if $\gamma=0$ then for $m\in U$:
\[\log|\tilde{h}_{0}^s([\theta:w],m)|=\log{\frac{|w|}{|\theta|}}-\log{|s(m)|},\qquad [\theta:w]\in \bP(\bC\oplus L_{s,m})\]
\end{rem}

\section{The generalized Poincar\'e-Lelong}

We would like to apply the general formula of Corollary \ref{cormt1} to particular forms $\omega$.

In what follows the rank of $E\ra M$ is fixed to be $k$. We assume now that $E$ has a Hermitian metric on it. It follows that there exists a unique connection $\nabla^E$ called the Chern connection, compatible with the metric such that $\nabla^{0,1}=\bar{\partial}$ where $\bar{\partial}$ is the canonical operator induced by the holomorphic structure on $E$. Then $\pi^*E\ra \bP(\bC\oplus E)$ has a metric and this will induce metrics on the tautological  line bundle 
\[ \xymatrix{      \tau \; \ar@{^{(}->}[rr]   \ar[dr] && \bC\oplus \pi^*E  \ar[dl] \\ 
& \bP(\bC\oplus E)&}
\]  and on the quotient bundle $Q:=\bC\oplus\pi^*E/\tau$. Therefore $Q$ has a natural connection $\nabla^Q$ and it is well known that all Chern forms $c_l(\nabla^Q)$ are $\partial$ and $\bar{\partial}$ closed.

 Another natural line bundle is the universal  bundle $\mathcal{O}(1):=\tau^*$ which comes also with a Chern connection.

\begin{rem}\label{Imprem} Along the "zero section", $[1:0]$ of $\bP(\bC\oplus E)$ we have that $Q$ coincides with $\pi^* E$. In fact, $(Q\bigr|_{[1:0]},\nabla^Q\bigr|_{[1:0]})$ equals $(\pi^*E,\pi^* \nabla^E)$ as bundles with connections.

At the other extreme, along the "hyperplane" at $\infty$,  $\bP(0\oplus E)\subset \bP(\bC\oplus E)$, we have that $\tau\subset \pi^*E$ coincides with $\tau'\ra \bP(E)$, the other tautological line bundle  and consequently a natural decomposition $Q=\bC\oplus Q'$ holds where $Q'$ is the quotient bundle $\pi^*E/\tau$. This is true also as bundles with connections, i.e. $(Q\bigr|_{\bP(E)},\nabla^Q\bigr|_{\bP(E)})=(\bC\oplus Q',d\oplus \nabla^{Q'})$, where $\nabla^{Q'}$ is the Chern connection on $Q'$.

 Notice that $Q'\ra \bP(E)$ has rank $k-1$. Clearly, 
 \[ c_k\left(\nabla^Q\bigr|_{\bP(E)}\right)=0 \qquad \mbox{and} \qquad c_j\left(\nabla^Q\bigr|_{\bP(E)}\right)=c_j(\nabla^{Q'}),\;\forall j<k.
 \]
\end{rem}

\vspace{0.2cm}

The next Lemma about characteristic classes is standard. As usual $\pi$ will denote the relevant projection between $\bP(\bC\oplus E), E, \bP(E)\ra M$.

\begin{lem} \label{charcl}  The following holds   at a cohomological level on $\bP(\bC\oplus E)$
\[ c_j(Q)=\sum_{l=0}^j \pi^*c_{j-l}(E) z^l
\]
where $z=c_1(\mathcal{O}(1))=-c_1(\tau)$. In particular, when restricted to a fiber $P_m:=P(\bC\oplus E_m)$ one has:
\begin{equation}\label{eqcoh1} c_j(Q)\bigr|_{P_m}=\left(z\bigr|_{P_m}\right)^j.
\end{equation}
\end{lem}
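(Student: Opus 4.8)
The plan is to read off both assertions from the Whitney sum formula applied to the tautological exact sequence
\[ 0 \to \tau \to \bC\oplus \pi^*E \to Q \to 0 \]
on $\bP(\bC\oplus E)$. First I would pass to cohomology, where this sequence gives $c(\bC\oplus\pi^*E) = c(\tau)\,c(Q)$. The trivial summand contributes nothing, so $c(\bC\oplus\pi^*E) = c(\bC)\cdot\pi^*c(E) = \pi^*c(E)$; and by the sign convention $z = c_1(\mathcal{O}(1)) = -c_1(\tau)$ one has $c(\tau) = 1 - z$. Hence
\[ c(Q) = \frac{\pi^*c(E)}{1-z} = \pi^*c(E)\cdot\sum_{l\ge 0} z^l, \]
where the geometric series is purely formal: in each fixed cohomological degree only finitely many terms contribute. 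Extracting the degree-$2j$ component yields exactly $c_j(Q) = \sum_{l=0}^j \pi^*c_{j-l}(E)\,z^l$.

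For the fiberwise identity \eqref{eqcoh1}, I would note that the composite $P_m \hookrightarrow \bP(\bC\oplus E) \to M$ (the second map being $\pi$) is the constant map onto $m$, so $\pi^*c_{j-l}(E)$ restricts to $0$ on $P_m$ whenever $j-l\ge 1$ and to $c_0(E) = 1$ when $l = j$. Thus only the $l=j$ term of the sum survives, giving $c_j(Q)\bigr|_{P_m} = (z\bigr|_{P_m})^j$. The same formula can also be obtained from the projective bundle theorem / Leray--Hirsch, writing $H^*(\bP(\bC\oplus E))$ as a free $H^*(M)$-module on $1,z,\dots,z^k$, but the Whitney route above is quicker.

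There is no substantive obstacle here --- the lemma is, as stated, standard, and the computation is formal. The only points requiring care are the bookkeeping of the sign $c_1(\tau) = -z$ (so that $c(\tau) = 1-z$ and not $1+z$), the remark that the geometric-series expansion is harmless because one only ever reads off a single cohomological degree, and the convention that the identity is meant at the level of de Rham (equivalently integral) cohomology classes, consistently with the Chern--Weil representatives $c_j(\nabla^Q)$ used elsewhere in the section.
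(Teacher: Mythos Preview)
Your proof is correct and follows essentially the same route as the paper: the paper's argument is the terse one-liner that the formula follows from $c_*(\tau\oplus Q)=\pi^*c_*(E)$ together with $\tau=\mathcal{O}(1)^*$, and that $\pi^*E$ is trivial along $P_m$, which is exactly the Whitney-sum/geometric-series computation you spell out. You have simply made explicit the inversion of $c(\tau)=1-z$ and the fiber restriction; nothing is missing or different in substance.
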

\begin{proof} Relation \eqref{eqcoh1} is an immediate consequence of $\tau=\mathcal{O}(1)^*$ and $c_*(\tau\oplus Q)=\pi^*c_*(E)$. Notice then that $\pi^*E$ is trivial along $P_m$.
\end{proof}
One can refine this result as follows. 
\begin{lem}\label{Lem2} There exists a smooth form  $\omega$ such that for every $j$ one has:
\begin{equation} \label{Lem2eq1}c_j(\nabla^Q)-\sum_{l=0}^j\pi^*c_{j-l}(\nabla^E)c_1(\nabla^{\tau^*})^l=\partial\bar{\partial} \omega
\end{equation}
In fact, along the fiber $P_m$ one has an equality of forms:
\[c_j(\nabla^Q)\bigr|_{P_m}=c_1(\nabla^{\tau^*})^j\bigr|_{P_m}.
\]
\end{lem}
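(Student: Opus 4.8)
The statement is the form-level lift of Lemma~\ref{charcl}, and the natural tool is Bott--Chern transgression applied to the tautological short exact sequence of Hermitian holomorphic bundles on $\bP(\bC\oplus E)$,
\[
0\longrightarrow\tau\longrightarrow\bC\oplus\pi^*E\longrightarrow Q\longrightarrow 0 .
\]
The first step is to record the secondary--class identity (Bott--Chern \cite{BC}): for an invariant polynomial $P$ there is a smooth form $\omega'$, a sum of pieces of bidegree $(p,p)$, with
\[
P(\nabla^{\tau}\oplus\nabla^{Q})-P(\nabla^{\bC\oplus\pi^*E})=\partial\bar\partial\,\omega' ,
\]
the connections being the Chern connections of the induced metrics. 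If one wants this inside the paper's own machinery, it can be re-derived: deform the inclusion $\tau\subset\bC\oplus\pi^*E$ to its associated graded $\tau\oplus Q$ by the deformation to the normal cone over $\bP(\bC\oplus E)\times\bP^1$, obtaining a holomorphic bundle whose Chern form is closed on the total space and whose restrictions to the fibres over $1$ and $0$ agree, modulo $\partial\bar\partial$-exact terms and the single-bundle transgression recalled in the Introduction, with $P(\nabla^{\bC\oplus\pi^*E})$ and $P(\nabla^{\tau}\oplus\nabla^{Q})$; wedging that closed form with the classical Poincar\'e--Lelong equality of Theorem~\ref{PL1} on the $\bP^1$-factor and pushing forward to $\bP(\bC\oplus E)$ produces $\omega'$.

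Now apply this with $P=c$, the total Chern polynomial. For a direct sum connection one has $c(\nabla^{\tau}\oplus\nabla^{Q})=c(\nabla^{\tau})\,c(\nabla^{Q})$ exactly at the level of forms, while $\nabla^{\bC\oplus\pi^*E}=d\oplus\pi^*\nabla^{E}$ gives $c(\nabla^{\bC\oplus\pi^*E})=\pi^*c(\nabla^{E})$, the trivial summand contributing $1$. Hence $c(\nabla^{Q})\,c(\nabla^{\tau})-\pi^*c(\nabla^{E})=\partial\bar\partial\,\omega'$. Since $c(\nabla^{\tau})=1+c_1(\nabla^{\tau})=1-c_1(\nabla^{\tau^*})$ and $c_1(\nabla^{\tau^*})$ is $\partial$- and $\bar\partial$-closed of even degree, multiplication by the finite series $c(\nabla^{\tau})^{-1}=\sum_{l\ge0}c_1(\nabla^{\tau^*})^{l}$ commutes with $\partial\bar\partial$; setting $\omega:=\omega'\,c(\nabla^{\tau})^{-1}$ turns the last identity into
\[
c(\nabla^{Q})-\pi^*c(\nabla^{E})\sum_{l\ge0}c_1(\nabla^{\tau^*})^{l}=\partial\bar\partial\,\omega ,
\]
and extracting the part of bidegree $(j,j)$ gives \eqref{Lem2eq1}, with the same $\omega$ (the sum of its homogeneous components) valid for all $j$.

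For the fibrewise equality I would restrict the whole picture to $P_m=\bP(\bC\oplus E_m)\cong\bP^{k}$. The key observation is that $(\bC\oplus\pi^*E)\big|_{P_m}$ is the holomorphically trivial, flat Hermitian bundle $\bC^{k+1}$: the connection $\pi^*\nabla^{E}$ restricted to a fibre is the pull-back of a connection along the constant map $P_m\to\{m\}$, hence the trivial one, and $\pi^*h^{E}\big|_{P_m}$ is the constant metric $h^{E}_m$. Thus $\pi^*c_{j-l}(\nabla^{E})\big|_{P_m}=\delta^{j}_{l}$, and the restricted sequence is the standard tautological sequence $0\to\tau\to\underline{\bC^{k+1}}\to Q\to0$ on $\bP^{k}$ with the metrics induced from $\bC^{k+1}$. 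For this sequence the computation with the second fundamental form $B\in\Omega^{1,0}(\Hom(\tau,Q))$ is \emph{exact}: vanishing of the curvature of $\bC^{k+1}$ yields $R^{\tau}=B^*\wedge B$ and $R^{Q}=B\wedge B^*$, and a direct determinant identity for this rank-one endomorphism-valued $2$-form gives $c_j(\nabla^{Q})\big|_{P_m}=c_1(\nabla^{\tau^*})^{j}\big|_{P_m}$ for all $j$ --- equivalently $c(\nabla^{Q})\,c(\nabla^{\tau})=1$ on $P_m$, since $c_1(\nabla^{\tau^*})^{k+1}=0$ there. This is the classical fact that the Chern forms of the universal quotient bundle over $\bP^{k}$ are the powers of the Fubini--Study form (see \cite{GH}), which is precisely the claimed equality of forms along $P_m$.

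The two self-containedness reductions (the normal-cone deformation with Poincar\'e--Lelong, and the division by $c(\nabla^{\tau})$) are routine. The main obstacle will be the exact curvature identity on $\bP^{k}$: one must verify that the elementary symmetric functions of $B\wedge B^*$ are exactly the powers of $\sigma:=\operatorname{tr}(B\wedge B^*)$, and keep the $i/\pi$ versus integral normalization consistent with the conventions fixed earlier.
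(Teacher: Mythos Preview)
Your argument is correct. For the first part, both you and the paper invoke Bott--Chern's Proposition~1.5 in \cite{BC} applied to the tautological sequence; you spell out the inversion by $c(\nabla^{\tau})^{-1}$ explicitly, but this is essentially the same route.

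For the fibrewise equality the two proofs diverge. The paper observes that both $c_j(\nabla^Q)\bigr|_{P_m}$ and $c_1(\nabla^{\tau^*})^j\bigr|_{P_m}$ are $U(\bC\oplus E_m)$-invariant forms on the homogeneous space $P_m$, so it suffices to check the identity at the single point $[1:0]$, where a short explicit computation with the curvature matrix $A=(dz_i\wedge d\bar z_j)$ does the job. Your approach instead uses the flatness of $\underline{\bC^{k+1}}\bigr|_{P_m}$ to express both curvatures through the second fundamental form $B$, and then appeals to the rank-one structure of $B\wedge B^*$ (equivalently, the classical identification of the Chern forms of the universal quotient with powers of the Fubini--Study form). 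The invariance argument is shorter and entirely self-contained: it bypasses the determinant identity you flag as the main obstacle, since at $[1:0]$ the verification is a two-line combinatorial check. Your approach is more structural and makes the mechanism (rank-one second fundamental form) visible, but to actually close it without citing \cite{GH} you would end up doing the same pointwise computation the paper does---the elementary symmetric functions of $(dz_i\wedge d\bar z_j)$ being powers of its trace is exactly what is verified there.
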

\begin{proof} The first statement follows from Bott-Chern Proposition 1.5 in \cite{BC}. This particular case of a line bundle  is also contained in  Theorem \ref{mTt} below.

The  equality in the fiber can be proved directly as follows.   The unitary group $U(\bC\oplus E_m)$ acts transitively on the fiber $\bP(\bC\oplus E_m)$. Moreover, both sides of the equality are $U(\bC\oplus E_m)$ invariant forms. Hence one needs to prove the equality at a point only. 

 The curvature of the Chern connection $\nabla^Q$ on $Q\bigr|_{P_m}$ at the point $[1:0]\in\bP(\bC\oplus E_m)$ has a standard description. It can be identified (see \cite{Ci2}) with  the morphism of vector spaces:
 \[ \Lambda^2(E_m)\ra \mathfrak{u}(E_m),\qquad u\wedge w\ra \{v\ra \langle v,w\rangle u-\langle v,u\rangle w\}
 \]
 where $\mathfrak{u}(E_m)$ are skew-adjoint morphisms. In an orthonormal basis $\partial_{z_1},\ldots,\partial_{z_k}$ of $E_m$ this is the matrix  of two forms $A=(dz_i\wedge d\overline{z}_j)_{1\leq i,j\leq k}$. Take $\tilde{A}:=\frac{i}{2\pi}A$.
 
  Then $c_j(\nabla^Q)\bigr|_{P_m}$, the $j$-th $\mathfrak{gl}_k$-invariant polynomial in the entries of $\tilde{A}$, at $[1:0]$ is:
  \begin{equation}\label{calceq1}j!\left(\frac{i}{2\pi}\right)^{j}\sum_{\substack {|J|=j\\ J\subset \{1\ldots k\}}} \prod_{l\in J}dz_l\wedge d\overline{z}_l  \end{equation}

 The canonical symplectic form on $\bP(\bC\oplus E_m)$ equals $c_1({\nabla^{\tau^*}})$ and has at the point $[1:0]$ the expression:
 \[ \eta=\frac{i}{2\pi}\sum_{j=1}^kdz_j\wedge d\overline{z}_j
 \]
 One checks rather easily that $\eta^j$ equals the quantity in (\ref{calceq1}).
\end{proof}
\begin{rem} The form $\omega$ from (\ref{Lem2eq1}) has a rather explicit expression in \cite{BC}, Proposition 4.20.
\end{rem}

\begin{lem}\label{L4} Let $L\ra M$ be a holomorphic and Hermitian line bundle. Then for every $j\geq 1$ there exists a smooth form  $\eta$  on $M$ such that 
 \[ \pi_* \left(c_1(\nabla^{\tau^*})^j \right)- c_1\left(\nabla^{L^*}\right)^{j-1}=\partial\bar{\partial}\eta\]
 where $\tau^*\ra \bP(\bC\oplus L)$ is the hyperplane line bundle and $\pi:\bP(\bC\oplus L)\ra M$ is the projection. If $j=1$ then $\eta$ can be taken to be $0$.
\end{lem}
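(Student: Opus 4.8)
The statement is essentially the $\bP^1$-bundle case of Lemma \ref{Lem2}, specialized to a line bundle $L$, after integrating along the fiber. The plan is to reduce everything to an explicit computation on each fiber $\bP(\bC\oplus L_m)\cong\bP^1$, which is where the real content lies, and then to lift a fiberwise primitive to a global primitive.

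First I would record the relevant geometry. On $\bP(\bC\oplus L)$ we have the tautological line subbundle $\tau\hookrightarrow\bC\oplus\pi^*L$ and the quotient $Q=(\bC\oplus\pi^*L)/\tau$, which here has rank $1$. From $c_*(\tau\oplus Q)=\pi^*c_*(\bC\oplus L)=\pi^*c_*(L)$ and $c_1(\tau)=-z$ where $z:=c_1(\mathcal O(1))=c_1(\nabla^{\tau^*})$, one gets at the level of forms, using Lemma \ref{Lem2} (with $E=L$), that $c_1(\nabla^Q)=\pi^*c_1(\nabla^L)-z+\partial\bar\partial(\text{smooth})$. Now by Remark \ref{Imprem} applied along the hyperplane at infinity $\bP(0\oplus L)\subset\bP(\bC\oplus L)$, which is just the zero section $M$ embedded via $L\hookrightarrow\bC\oplus L$, the bundle $\tau$ restricts to $\tau'=L$ itself, so $z$ restricts there to $c_1(\nabla^{L^*})$. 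Thus the section $\sigma_\infty:M\to\bP(\bC\oplus L)$ at infinity pulls $z$ back to $c_1(\nabla^{L^*})$, and the section $\sigma_0$ at $[1:0]$ pulls $z$ back to $0$ (since $\tau$ is trivial there, by the other half of Remark \ref{Imprem}).

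The core step is the fiber integral. By Lemma \ref{charcl} and especially Lemma \ref{Lem2}, along each fiber $P_m$ we have the honest equality of forms $c_1(\nabla^{\tau^*})^j\bigr|_{P_m}=\left(z\bigr|_{P_m}\right)^j$, and $z\bigr|_{P_m}$ is the Fubini--Study form on $\bP^1$. So I would use the standard transgression on $\bP^1$: writing $z=c_1(\nabla^{\mathcal O(1)})$, the form $z^j$ on the $\bP^1$-bundle can be compared to $\pi^*(\text{something})$ via the projective bundle formula. Concretely, the Leray--Hirsch/Grothendieck relation $z^2 = \pi^*c_1(L^*)\wedge z$ holds in cohomology (with $c_*(L)=1+\pi^*c_1(L)$ and $\mathcal O(1)$ satisfying $z^2+\pi^*c_1(L)\cdot z=0$ up to sign conventions — I would fix signs so that $z^2=\pi^*c_1(L^*)\wedge z$). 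Refining this to a $\partial\bar\partial$-exact correction via Lemma \ref{Lem2} / Bott--Chern Proposition 1.5, one gets $z^j = \pi^*c_1(\nabla^{L^*})^{j-1}\wedge z + \partial\bar\partial(\text{smooth})$ for $j\geq 1$. Applying $\pi_*$ and using $\pi_*z=1$ (since $z$ restricts to the Fubini--Study form of total integral $1$ on each $\bP^1$ fiber) and the fact that $\pi_*$ commutes with $\partial$ and $\bar\partial$, one obtains
\[
\pi_*\!\left(c_1(\nabla^{\tau^*})^j\right) - c_1(\nabla^{L^*})^{j-1} = \partial\bar\partial\,\eta
\]
for the smooth form $\eta:=\pi_*(\text{that smooth correction})$. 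For $j=1$ the relation $z^1 = z$ is tautological with zero correction, so $\eta=0$.

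The main obstacle I anticipate is bookkeeping rather than conceptual: getting the sign conventions for $z$, $c_1(\tau)$ versus $c_1(\tau^*)$, and the Grothendieck relation mutually consistent, and making sure the "$\partial\bar\partial$-exact correction" is genuinely a single globally-defined smooth form on the total space (so that its fiber integral $\eta$ is smooth on $M$) rather than just a fiberwise primitive. The clean way around this is to invoke Lemma \ref{Lem2} directly with $E=L$ — it already provides a global smooth $\omega$ on $\bP(\bC\oplus L)$ realizing $c_j(\nabla^Q)-\sum_{l}\pi^*c_{j-l}(\nabla^L)c_1(\nabla^{\tau^*})^l=\partial\bar\partial\omega$ — rewrite the sum using $c_j(L)=0$ for $j\geq 2$ (rank one), solve for $c_1(\nabla^{\tau^*})^j$ in terms of $c_1(\nabla^{\tau^*})^{j-1}$ and $\pi^*$-classes by an induction on $j$, and push forward. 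An alternative, fully self-contained route is to do the $\bP^1$ computation by hand: trivialize locally, use $z|_{P_m}=\frac{i}{2\pi}\frac{dw\wedge d\bar w}{(1+|w|^2)^2}$ and an explicit potential, and check the correction form patches globally using the Hermitian metric on $L$; this mirrors the explicit computation in the proof of Lemma \ref{Lem2} and in \cite{BC}, Proposition 4.20.
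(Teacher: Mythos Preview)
Your approach is correct and genuinely different from the paper's. You reduce to the Grothendieck relation on the $\bP^1$-bundle: using Lemma \ref{Lem2} with $E=L$ and $j\geq 2$ (where $c_j(\nabla^Q)=0$ since $Q$ has rank $1$, and only the $l=j-1,j$ terms survive in the sum since $L$ has rank $1$) gives $z^j=\pi^*c_1(\nabla^{L^*})\,z^{j-1}+\partial\bar\partial(\text{smooth})$, which by induction yields $z^j=\pi^*c_1(\nabla^{L^*})^{j-1}\,z+\partial\bar\partial(\text{smooth})$, and pushing forward with $\pi_*z=1$ finishes. This is a purely smooth-form argument that leans on Bott--Chern's Proposition 1.5 (via Lemma \ref{Lem2}) as a black box.

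The paper instead applies Poincar\'e--Lelong directly on $\bP(\bC\oplus L)$: the canonical section $s^\tau$ of $\tau^*$ vanishes transversally along $\bP(L)\simeq M$, so $c_1(\nabla^{\tau^*})-[\bP(L)]=\tfrac{i}{\pi}\partial\bar\partial\log|s^\tau|$; wedging with $z^{j-1}$, using $z\bigr|_{\bP(L)}=c_1(\nabla^{L^*})$, and pushing forward gives the result, with Lemma \ref{L5} supplying the smoothness of the fiber integral $\pi_*(\log|s^\tau|\,z^{j-1})$. The paper's route is currential and produces an explicit primitive $\eta$ of the same $\log|s|\cdot(\text{curvature form})$ shape that recurs in the proof of Theorem \ref{MT}; it also generalizes verbatim to higher-rank $E$ (relations (\ref{sf01})--(\ref{sf3})) by replacing $\bP(L)$ with $\bP(E)$. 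Your route is cleaner and avoids currents and Lemma \ref{L5} entirely, but the primitive it produces is an iterated sum of pushed-forward Bott--Chern corrections, less tied to the $\log|s|$ structure used downstream.
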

\begin{proof} The dual tautological bundle $\tau^*\ra\bP(\bC\oplus L)$ comes with a canonical holomorphic section $s^{\tau}$ which is the restriction to $\tau$ of the projection of $\bC\oplus \pi^*L$ to $\bC$. The zero locus of $s^{\tau}$ is $\bP(L)$ and it is not hard to see that $s^{\tau}$ is transverse to the zero section of $\tau^*$ hence the multiplicity of $\bP(L)$ is $1$. On the other hand, we clearly have that $\pi\bigr|_{\bP(L)}$ gives a biholomorphism onto  $M$. We have on $\bP(\bC\oplus L)$ by Griffiths-King that 
\[ c_1(\nabla^{\tau^*})-[\bP(L)]=\frac{i}{\pi}\partial\bar{\partial}\log|s^{\tau}|
\]
We wedge this with $c_1(\nabla^{\tau^*})^{j-1}$. Notice that the restriction of $c_1(\nabla^{\tau^*})$ to $\bP(L)$ coincides with $c_1(\nabla^{(\tau')^*})$ where $\tau'\ra \bP(L)$ is the tautological bundle. Under the identification $\bP(L)\simeq M$ we have that $\tau'=L$.  Hence we can write 
\[c_1(\nabla^{\tau^*})^{j}-c_1(\nabla^{L^*})^{j-1}\wedge M=\frac{i}{\pi}\partial\bar{\partial}\left(\log|s^{\tau}|c_1(\nabla^{\tau^*})^{j-1}\right)
\]

We can push-forward this formula now to $M$. The fiber integral 
\[\int_{\bP(\bC\oplus L)/M}\log|s^{\tau}|c_1(\nabla^{\tau^*})^{j-1}
\]
is a smooth form on $M$ by Lemma \ref{L5} below.

 If $j=1$ both terms on the left equal the constant function $1$.
\end{proof}
\begin{lem}\label{L5} Let $L\ra M$ be a complex  line bundle endowed with a Hermitian metric over a smooth manifold $M$. Let $\nabla^{\tau}$ be any metric compatible connection on the line bundle $\tau\ra \bP(\bC\oplus L)$. Let $s^{\tau}:\tau\ra \bC$ be the map induced by the restriction of the projection $\bC\oplus \pi^*L\ra \bC$. Then 
\[ \pi_*(\log|s^{\tau}|c_1(\nabla^{\tau^*})^j)
\]
is a smooth form on $M$ where $\pi:\bP(\bC\oplus L)\ra M$ is the projection.
\end{lem}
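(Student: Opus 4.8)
The assertion is local over $M$, so the plan is to fix a holomorphic trivialization of $L$ over an open set $U\subset M$ in which the Hermitian metric is a smooth positive function $h\colon U\to(0,\infty)$; this gives $\bP(\bC\oplus L)|_U\cong U\times\bP^1$ with homogeneous fibre coordinates $[\mu:\lambda]$ and affine charts $z:=\lambda/\mu$ (where $\mu\neq 0$) and $\zeta:=\mu/\lambda$ (where $\lambda\neq 0$), under which $\bP(L)$ becomes the locus $\zeta=0$. The first step is to write $\log|s^\tau|$ explicitly: a generator of $\tau$ over $([\mu:\lambda],m)$ is $(\mu,\lambda)\in\bC\oplus L_m$, on which $s^\tau$ takes the value $\mu$ and which has squared $\tau$-norm $|\mu|^2+h(m)|\lambda|^2$, so $|s^\tau|^2=|\mu|^2/(|\mu|^2+h|\lambda|^2)$, i.e.
\[
\log|s^\tau|=-\tfrac12\log\bigl(1+h|z|^2\bigr)\ \text{ on }\{\mu\neq0\},\qquad
\log|s^\tau|=\log|\zeta|-\tfrac12\log\bigl(h+|\zeta|^2\bigr)\ \text{ on }\{\lambda\neq0\}.
\]
In particular $\log|s^\tau|$ is smooth away from $\bP(L)$, with only a logarithmic singularity along $\bP(L)$ whose singular part $\log|\zeta|$ does not depend on $m$.

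The second step is to isolate that singularity. Choosing a fibrewise partition of unity $\rho_0+\rho_1\equiv1$ on $\bP^1$ with $\supp\rho_0\subset\{\mu\neq0\}$ and $\supp\rho_1$ a small disc around $\zeta=0$, and pulling $\rho_0,\rho_1$ back to $U\times\bP^1$, I would write $\log|s^\tau|=\phi+\rho_1\log|\zeta|$, where $\phi:=\rho_0\log|s^\tau|-\tfrac12\rho_1\log(h+|\zeta|^2)$ is smooth on all of $U\times\bP^1$ (each summand being a product of a smooth function on $\bP^1$ with a smooth function on the total space), while $\rho_1\log|\zeta|$ is a fixed $L^1$ function on the fibre $\bP^1$, independent of $m$. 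Consequently $\pi_*(\log|s^\tau|\,c_1(\nabla^{\tau^*})^j)$ splits as $\pi_*(\phi\,c_1(\nabla^{\tau^*})^j)+\pi_*(\rho_1\log|\zeta|\,c_1(\nabla^{\tau^*})^j)$, and the first term is the fibre integral of a smooth form over the compact fibre bundle $U\times\bP^1\to U$, hence a smooth form on $U$.

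For the remaining term I would expand $c_1(\nabla^{\tau^*})^j$ in local coordinates on $U$, discard the summands of less than full vertical degree (which push forward to $0$), and write $\pi_*(\rho_1\log|\zeta|\,c_1(\nabla^{\tau^*})^j)$ as a finite sum of expressions $a(m)\,dm^I\cdot\int_{\bP^1}\rho_1(\zeta)\log|\zeta|\,b(m,\zeta)\,\tfrac{i}{2}\,d\zeta\wedge d\bar\zeta$ with $a,b$ smooth. Since the weight $\rho_1\log|\zeta|\in L^1(\bP^1)$ is fixed and independent of $m$, and $b$ together with all its $m$-derivatives is bounded on $K\times\bP^1$ for every compact $K\subset U$, Lebesgue dominated convergence justifies differentiating under the integral sign arbitrarily often in $m$, so each such integral is $C^\infty$ on $U$; summing the two contributions gives the claim. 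The only genuine point is the singularity of $\log|s^\tau|$ along $\bP(L)$, and the whole argument hinges on splitting it off as the $m$-independent $L^1$ weight $\rho_1\log|\zeta|$; I expect the only place needing a little care to be the partition-of-unity bookkeeping that guarantees $\phi$ is smooth in the fibre direction as well.
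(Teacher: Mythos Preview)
Your argument is correct and follows the same core idea as the paper: localize, trivialize, observe that the singularity of $\log|s^\tau|$ along $\bP(L)$ is independent of $m$, and invoke dominated convergence to differentiate under the fibre integral. The difference is in the choice of trivialization. You take a trivialization in which the Hermitian metric on $L$ is a general smooth positive function $h(m)$, so $\log|s^\tau|$ depends on $m$; you then need the partition-of-unity splitting $\log|s^\tau|=\phi+\rho_1\log|\zeta|$ to isolate an $m$-independent $L^1$ weight. The paper instead uses parallel transport for a metric connection on $L$ to produce a local \emph{unitary} trivialization $\bC\to L$ (i.e.\ $h\equiv1$), after which $\log|s^\tau|=\log\bigl(|z|/\sqrt{1+|z|^2}\bigr)$ is entirely fibrewise and the splitting step is unnecessary. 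Your route is a little longer but perfectly sound; one minor point is that the lemma is stated for a smooth complex line bundle over a smooth manifold, so ``holomorphic trivialization'' should read ``smooth trivialization'' --- this does not affect the argument.
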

\begin{proof} The statement is local on $M$, hence one can assume $L$ is trivializable. In fact using parallel transport along radial directions from a fixed point on $M$ for a certain metric connection of $L$ one can find a local bundle isometry $\bC\ra L$. Hence one can work on the  bundle with fiber $\bP^1$. For integration over the fiber purposes it is enough to use one chart $\bC\ra \bP^1$:
\[ z\ra [z:1]
\]
covered by a bundle isometry $\bC\ra \tau$ which over a point $z\in \bC$ reads (since $\tau\subset \underline{\bC^2}$):
\[ 1\ra \left(\frac{z}{\sqrt{1+|z|^2}},\frac{1}{\sqrt{1+|z|^2}}\right)
\]
Since $s^{\tau}$ is the projection onto the first coordinate it becomes in this trivialization of $\tau$ :
\[ \underline{\bC}\ra \underline{\bC},\qquad z\ra\frac{z}{\sqrt{1+|z|^2}}
\]
Hence $\log|s^{\tau}|=\log\frac{|z|}{\sqrt{1+|z|^2}}$ which is integrable with respect to $dzd\bar{z}$ at $z=0$.  This function gets multiplied by a smooth and bounded form  which is the pull-back of $c_1(\nabla^{\tau^*})^j$ to $\bC\times M$. It should be clear now that one can differentiate under the fiber integral sign in the horizontal (i.e. the $M$) directions  and since $\log|s^{\tau}|$ does not depend on $m\in M$ one can do this infinitely many times.
\end{proof}
The next more general version of Lemma \ref{L4} is needed.
\begin{lem} Let $E\ra M$ be a holomorphic and Hermitian bundle. There exist smooth forms $\eta_1$ and $\eta_2$ on $M$  such that
\begin{equation}\label{sf01} \pi_*(c_1(\nabla^{\tau^*})^j)-\left(\pi\bigr|_{\bP(E)}\right)_*(c_{1}(\nabla^{(\tau')^*})^{j-1})=\partial\bar{\partial}\eta_1
\end{equation}
\begin{equation}\label{sf2}\pi_*(c_j(\nabla^Q))-\left(\pi\bigr|_{\bP(E)}\right)_*(c_{j-1}(\nabla^{Q'}))=\partial\bar{\partial}\eta_2
\end{equation}
where both projections $\pi:\bP(\bC\oplus E)\ra M$ is the projection.

If $L\subset E$ is a line bundle then one has the following:
\begin{equation}\label{sf3}\left(\pi\bigr|_{\bP(\bC\oplus L)}\right)_*(\iota^*c_j(\nabla^Q))-c_{j-1}(\nabla^{Q'_L})=\partial\bar{\partial}\eta_3
\end{equation}
for a smooth form $\eta_3$ where $\iota:\bP(\bC\oplus L)\hookrightarrow \bP(\bC\oplus E)$ and $Q'_L:=E/L$.

 If $j=1$ then $\eta_1$ and $\eta_2$ can be taken to be $0$.
\end{lem}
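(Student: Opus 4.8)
The plan is to derive all three identities from the classical Poincar\'e--Lelong formula of Griffiths--King (Theorem~\ref{t1}) applied to the canonical holomorphic section $s^{\tau}:\tau\ra\bC$ of $\tau^{*}=\mathcal{O}(1)\ra\bP(\bC\oplus E)$ — the restriction to $\tau$ of the projection $\bC\oplus\pi^{*}E\ra\bC$, the same section used in Lemma~\ref{L4}. Its zero divisor is $\bP(E)\subset\bP(\bC\oplus E)$ and it is transverse to the zero section (so $\bP(E)$ occurs with multiplicity one); hence, as currents on $\bP(\bC\oplus E)$,
\[ c_{1}(\nabla^{\tau^{*}})-[\bP(E)]=\frac{i}{\pi}\,\partial\bar\partial\log|s^{\tau}| .\]
The remaining ingredients are: the classical Bott--Chern transgression (Proposition~1.5 of \cite{BC}) for a holomorphic short exact sequence of Hermitian bundles; the projection formula; the fact that holomorphic fibre integration $\pi_{*}$ commutes with $\partial$ and $\bar\partial$; and the Leibniz rule $\partial\bar\partial(\alpha\wedge T)=\alpha\wedge\partial\bar\partial T$, valid for a smooth, $\partial$- and $\bar\partial$-closed form $\alpha$ of even degree and any current $T$, so that wedging the displayed identity with Chern forms keeps the shape of a double transgression.

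For \eqref{sf01} I would wedge the displayed identity with the smooth form $c_{1}(\nabla^{\tau^{*}})^{j-1}$. Since $\bP(E)$ is a complex submanifold whose tautological bundle $\tau'$ equals $\iota^{*}\tau$ (Remark~\ref{Imprem}), restriction of Chern connections gives $[\bP(E)]\wedge c_{1}(\nabla^{\tau^{*}})^{j-1}=\iota_{*}\bigl(c_{1}(\nabla^{(\tau')^{*}})^{j-1}\bigr)$; pushing this down by $\pi$ and using $\pi\circ\iota=\pi\bigr|_{\bP(E)}$ yields \eqref{sf01} with $\eta_{1}=\frac{i}{\pi}\,\pi_{*}\bigl(\log|s^{\tau}|\,c_{1}(\nabla^{\tau^{*}})^{j-1}\bigr)$. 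For \eqref{sf2} I would instead wedge with $c_{j-1}(\nabla^{Q})$: by Remark~\ref{Imprem} its restriction to $\bP(E)$ is $c_{j-1}(d\oplus\nabla^{Q'})=c_{j-1}(\nabla^{Q'})$, while the holomorphic sequence $0\ra\tau\ra\bC\oplus\pi^{*}E\ra Q\ra 0$ together with Proposition~1.5 of \cite{BC} gives
\[ c_{1}(\nabla^{\tau^{*}})\wedge c_{j-1}(\nabla^{Q})=c_{j}(\nabla^{Q})-\pi^{*}c_{j}(\nabla^{E})+\partial\bar\partial\gamma \]
for some smooth $\gamma$. Pushing down by $\pi$ and using $\pi_{*}\bigl(\pi^{*}c_{j}(\nabla^{E})\bigr)=c_{j}(\nabla^{E})\wedge\pi_{*}(1)=0$ (the fibre of $\pi$ being positive-dimensional) produces \eqref{sf2}, with $\eta_{2}=\frac{i}{\pi}\,\pi_{*}\bigl(\log|s^{\tau}|\,c_{j-1}(\nabla^{Q})\bigr)-\pi_{*}\gamma$.

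For \eqref{sf3} I would restrict everything to $\bP(\bC\oplus L)$: writing $\tau_{L}:=\iota^{*}\tau$, the inclusions $\tau_{L}\subset\bC\oplus\pi^{*}L\subset\bC\oplus\pi^{*}E$ yield a holomorphic short exact sequence $0\ra Q_{L}\ra\iota^{*}Q\ra\pi^{*}Q'_{L}\ra 0$, where $Q_{L}=(\bC\oplus\pi^{*}L)/\tau_{L}$ is the rank-one quotient bundle of $\bP(\bC\oplus L)$. Proposition~1.5 of \cite{BC} then gives $\iota^{*}c_{j}(\nabla^{Q})=\pi^{*}c_{j}(\nabla^{Q'_{L}})+c_{1}(\nabla^{Q_{L}})\wedge\pi^{*}c_{j-1}(\nabla^{Q'_{L}})+\partial\bar\partial\gamma'$ for some smooth $\gamma'$; applying $\left(\pi\bigr|_{\bP(\bC\oplus L)}\right)_{*}$ along the $\bP^{1}$-fibres, together with the projection formula, $\pi_{*}(1)=0$, and $\left(\pi\bigr|_{\bP(\bC\oplus L)}\right)_{*}\bigl(c_{1}(\nabla^{Q_{L}})\bigr)=1$ (since $Q_{L}$ restricts to $\mathcal{O}(1)$ on each $\bP^{1}$-fibre), produces \eqref{sf3} with $\eta_{3}=\left(\pi\bigr|_{\bP(\bC\oplus L)}\right)_{*}\gamma'$, which is smooth; here no logarithmic term occurs.

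The one point that is not purely formal — and hence the main obstacle — is the smoothness of the fibre integrals $\pi_{*}\bigl(\log|s^{\tau}|\,\alpha\bigr)$ with $\alpha$ smooth, which enter $\eta_{1}$ and $\eta_{2}$. This is the exact analogue of Lemma~\ref{L5}, now for the $\bP^{k}$-bundle $\bP(\bC\oplus E)\ra M$ rather than a $\bP^{1}$-bundle, and I would prove it the same way: the claim is local on $M$, so one trivialises $E$ and works on $\bP^{k}\times M\ra M$; in an affine chart of $\bP^{k}$ meeting $\bP(E)$ one has $\log|s^{\tau}|=\log|w|+(\text{a smooth function})$, where $w$ is the holomorphic coordinate cutting out $\bP(E)$, and $\log|w|\in L^{1}_{\loc}(\bC^{k})$ (and $\log|s^{\tau}|$ is already smooth in the charts disjoint from $\bP(E)$); since $\log|s^{\tau}|$ does not depend on the base point $m$, one differentiates under the fibre-integral sign in the $M$-directions arbitrarily often, dominating $\log|s^{\tau}|\,\partial_{m}^{\beta}\alpha$ on compacta by $|\log|s^{\tau}||$ times a bounded smooth form. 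Finally, for $j=1$ (and more generally whenever $j<k$) the two push-forwards on the left of \eqref{sf01} and \eqref{sf2} have negative degree, while for $k=1$ both sides equal the same constant (computed from $\int_{\bP^{1}}c_{1}(\mathcal{O}(1))=1$); in either case both sides vanish and one may take $\eta_{1}=\eta_{2}=0$.
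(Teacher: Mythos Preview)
Your argument is correct, and for \eqref{sf01} it coincides with the paper's: wedge the Poincar\'e--Lelong identity for $s^{\tau}$ with $c_1(\nabla^{\tau^*})^{j-1}$ and push forward, exactly as in Lemma~\ref{L4} with $(s^{\tau})^{-1}(0)=\bP(E)$.

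For \eqref{sf2} and \eqref{sf3}, however, your route differs from the paper's. The paper derives \eqref{sf2} by combining \eqref{sf01} with Lemma~\ref{Lem2}, i.e.\ by first expanding $c_j(\nabla^Q)$ (and $c_{j-1}(\nabla^{Q'})$) as $\sum_l \pi^*c_{j-l}(\nabla^E)\,c_1(\nabla^{\tau^*})^l$ modulo $\partial\bar\partial$, and then applying \eqref{sf01} term by term. You instead wedge Poincar\'e--Lelong directly with $c_{j-1}(\nabla^Q)$ and invoke Bott--Chern on $0\to\tau\to\bC\oplus\pi^*E\to Q\to 0$ to rewrite $c_1(\nabla^{\tau^*})c_{j-1}(\nabla^Q)$; this is more economical, needing only one application of Bott--Chern rather than a full Chern-root expansion. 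For \eqref{sf3} the paper again goes through Lemma~\ref{Lem2} restricted to $\bP(\bC\oplus L)$, then Lemma~\ref{L4}, then Bott--Chern for $0\to L\to E\to Q'_L\to 0$, so the logarithmic fibre integral of Lemma~\ref{L5} reappears inside $\eta_3$. Your use of the short exact sequence $0\to Q_L\to\iota^*Q\to\pi^*Q'_L\to 0$ on $\bP(\bC\oplus L)$ is a genuinely different decomposition: it yields \eqref{sf3} with $\eta_3=(\pi|_{\bP(\bC\oplus L)})_*\gamma'$ purely as a push-forward of a smooth Bott--Chern transgression form, and the key identity $(\pi|_{\bP(\bC\oplus L)})_*c_1(\nabla^{Q_L})=1$ follows because this is a degree-zero form whose fibrewise value is the topological degree $\int_{\bP^1}c_1(\mathcal O(1))=1$. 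The upshot is that your argument bypasses the $\log|s^\tau|$ analysis entirely for \eqref{sf3}, at the cost of identifying one extra exact sequence; the paper's approach keeps all three identities uniformly tied to the single Poincar\'e--Lelong/Lemma~\ref{Lem2} mechanism.
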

\begin{proof} The equality (\ref{sf01}) is obtained  with the same idea as the proof of Lemma \ref{L4} except that now $(s^{\tau})^{-1}(0)=\bP(E).$

 Relation (\ref{sf2}) is a consequence of (\ref{sf01}) and of Lemma \ref{Lem2}. 
 
 The equality (\ref{sf3}) follows from Lemma \ref{Lem2},  Lemma \ref{L4}  and  the fact that there exists a smooth $\eta$ such that
 \[ c_{j-1}(\nabla^{Q'_L})-\sum_{l=1}^{j} c_{j-l}(\nabla^E)c_1(\nabla^L)^{l-1}=\partial{\bar{\partial}}\eta
 \]
  consequence of Bott-Chern Proposition 1.5 in \cite{BC}.
\end{proof}

\vspace{0.3cm}

Suppose now that $s:M\ra E$ is a holomorphic section such that $X:=s^{-1}(0)$ is pure-dimensional of complex codimension $l$ in $M$.  The "expected" codimension of $X$ is equal to $k=\rank{E}\geq l$ and this happens for example when the sheaf of ideals induced by $s$ is a complete intersection.  This  particular case will be separately emphasized in what follows.

Let $U:=M\setminus X$ and $L_s\ra U$ be the line subbundle of $E$ induced by $s$. Let $Q'_s:=E/L_s$ be the resulting quotient bundle on $U$. It is holomorphic and has an induced Hermitian metric.

We emphasize that the analytic current $[s^{-1}(0)]$ generated by the irreducible components of the analytic set $s^{-1}(0)$ contains  the Hilbert-Samuel multiplicities as defined in Fulton\cite{F} Example 4.3.4.

We will apply Corollary \ref{cormt2} to the following  forms that live on  $\bP(\bC\oplus E)$
\begin{itemize}
\item[(a)] $\omega_1:=c_j(\nabla^Q)$;
\item[(b)] $\omega_2:=c_1(\nabla^{\tau^*})^j$
\end{itemize}
for $j\leq \codim s^{-1}(0)$.

\begin{theorem}\label{MT} Let $s:M\ra E$ be a holomorphic section such that $s^{-1}(0)$ is of pure complex codimension $l>0$ and let $\delta_j^l$ be the Kronecker symbol. The following generalizations of the Poincar\'e-Lelong formulas hold for all $j\leq l$:

\begin{equation}\label{E1} c_j(\nabla^E) - c_{j}(\nabla^{Q'_s}) - \delta_{j}^l[s^{-1}(0)] = -\frac{i}{\pi} \partial\bar{\partial}\left( \log{|s|}\left(c_{j-1}{(\nabla^{Q'_s}})+\partial \bar{\partial}\eta_1\right)\right)\end{equation}
\begin{equation}\label{E2}-c_1(\nabla^{L_s^*})^j - \delta_{j}^l[s^{-1}(0)]= -\frac{i}{\pi}\partial\bar{\partial}\left( \log{|s|}\left(c_1(\nabla^{L_s^*})^{j-1}+\partial \bar{\partial}\eta_2\right)\right)\end{equation}
where $\eta_1$ and $\eta_2$ are smooth forms on $U$, all equal to $0$ for $j=1$.

If $s$ does not vanish, then the two equations are identities of smooth forms on $M$ and hold for all $j$. 
\end{theorem}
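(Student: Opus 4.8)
The plan is to feed the two $\partial$- and $\bar{\partial}$-closed forms $\omega_1:=c_j(\nabla^Q)$ and $\omega_2:=c_1(\nabla^{\tau^*})^j$ on $\bP(\bC\oplus E)$ into the master double transgression formula of Corollary \ref{cormt1}, specialised at $\gamma=0$, and then to recognise each of the four ingredients it produces as the matching piece of \eqref{E1}, \eqref{E2}. Since $s_0\colon M\to\bP(\bC\oplus E)$ is the inclusion of the zero section, Remark \ref{Imprem} gives $s_0^*\omega_1=c_j(\nabla^E)$ and $s_0^*\omega_2=0$ (there $Q=\pi^*E$ with its Chern connection and $\tau$ is the flat trivial line), and also $\omega_1|_{\bP(E)}=c_j(\nabla^{Q'})$, $\omega_2|_{\bP(E)}=c_1(\nabla^{(\tau')^*})^j$; combined with $s_\infty^*\tau'=L_s$, $s_\infty^*Q'=Q'_s$ (with the induced metrics) this yields $s_\infty^*(\omega_1|_{\bP(E)})=c_j(\nabla^{Q'_s})$ and $s_\infty^*(\omega_2|_{\bP(E)})=c_1(\nabla^{L_s^*})^j$. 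Hence Corollary \ref{cormt1} reads $c_j(\nabla^E)-c_j(\nabla^{Q'_s})-Z(s,\omega_1)=\partial\bar{\partial}T(0,s,\omega_1)$ and $-c_1(\nabla^{L_s^*})^j-Z(s,\omega_2)=\partial\bar{\partial}T(0,s,\omega_2)$.

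Next — this is the heart of the matter — one shows $Z(s,\omega_i)=\delta_j^l[s^{-1}(0)]$. For $j<l$ the current $Z(s,\omega_i)$ is locally flat of dimension $2(n-j)$ and supported on $X$, a set of dimension $2(n-l)<2(n-j)$, so it vanishes (alternatively Proposition \ref{cormt2}(i)). For $j=l$, Proposition \ref{cormt2}(ii), through King's Fibering Theorem, writes $Z(s,\omega_i)$ off the other components as $g\cdot[Z^{\gen}]$ with $g(x)=\int_{\bP(\bC\oplus(C_XM)_x)}\omega_i$; by Lemma \ref{Lem2} both $\omega_1$ and $\omega_2$ restrict on each fibre to $c_1(\nabla^{\tau^*})^l$, hence so on the subvariety $\bP(\bC\oplus(C_XM)_x)$, and by Lemma \ref{charcl} this form is there cohomologous to $z^l$ with $z=c_1(\mathcal{O}(1))$. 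Thus $g(x)=\int_{\bP(\bC\oplus(C_XM)_x)}z^l$, which is the degree of the projective cone $\bP(\bC\oplus(C_XM)_x)$ over $\bP((C_XM)_x)$, hence the degree of $\bP((C_XM)_x)$, which is exactly the embedded multiplicity $(e_XM)_Z$ (Fulton, Example 4.3.4 in \cite{F}, equivalently Samuel's definition). So $g\equiv(e_XM)_Z$ on $Z^{\gen}$; summing over the irreducible components of $X$ (no mass survives on the lower-dimensional pairwise intersections, again by the flat-support/dimension argument) gives $Z(s,\omega_i)=\sum_Z(e_XM)_Z[Z]=[s^{-1}(0)]$. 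At this point the left-hand sides of \eqref{E1}, \eqref{E2} appear verbatim, with right-hand side $\partial\bar{\partial}T(0,s,\omega_i)$.

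It remains to bring the potential into the stated form. By Proposition \ref{spark1}, over $U=M\setminus X$ one has $T(0,s,\omega_i)|_U=\tfrac{i}{\pi}\pi_*\bigl(\log|\tilde h^s_0|\,\iota^*\omega_i\bigr)$ with $\pi\colon\bP(\bC\oplus L_s)\to U$, and by Remark \ref{Rema} $\log|\tilde h^s_0|=\log\tfrac{|w|}{|\theta|}-\log|s|$, the second summand being constant along the fibres. The $-\log|s|$ part contributes $-\tfrac{i}{\pi}\log|s|\,\pi_*(\iota^*\omega_i)$, and the fibrewise integral $\pi_*(\iota^*\omega_i)$ is evaluated by the last structural lemma: equation \eqref{sf3} gives $\pi_*(\iota^*c_j(\nabla^Q))=c_{j-1}(\nabla^{Q'_s})+\partial\bar{\partial}\eta_1$, and Lemma \ref{L4} gives $\pi_*(c_1(\nabla^{\tau^*})^j)=c_1(\nabla^{L_s^*})^{j-1}+\partial\bar{\partial}\eta_2$, with $\eta_1,\eta_2$ smooth on $U$ and both zero when $j=1$ (the push-forward is then exactly the constant $1$). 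The residual $\log\tfrac{|w|}{|\theta|}$-integral is a smooth form on $U$, which, using the classical Bott-Chern transgression for $0\subset L_s\subset E$ together with $\tfrac{i}{\pi}\partial\bar{\partial}\log|s|=-c_1(\nabla^{L_s})$, is absorbed into the $\partial\bar{\partial}\eta_i$ correction; this absorption is precisely the source of the "incompleteness" acknowledged in the introduction. Finally, since $\codim_{\bR}X=2l\ge 2j=\deg\omega_i$ for $j\le l$, the flat current $T(0,s,\omega_i)$ has locally integrable coefficients and is determined by its restriction to $U$ (Proposition \ref{spark1}), so the identities propagate from $U$ to all of $M$. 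When $s$ has no zeros, $X=\varnothing$, $U=M$, $L_s,Q'_s$ are global holomorphic Hermitian bundles, $\log|s|$ is smooth, the $[s^{-1}(0)]$ term and the codimension constraint are vacuous, and the two equations become identities of smooth forms valid for all $j$ (both sides vanish once $j>k$).

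I expect the main obstacle to be the second step: manufacturing from the ambient deformation exactly the analytic current $[s^{-1}(0)]$ with its Samuel/Fulton multiplicities. That is where all the geometric content lies — one needs Proposition \ref{cormt2} (hence King's theorem) to make $g$ well defined and continuous on $Z^{\gen}$, the projective-cone degree computation, and the comparison of $\int z^l$ with Fulton's topological description of the embedded multiplicity. A secondary, genuinely technical nuisance is the tail of the third step: the contribution $\log|s|\,c_{j-1}(\nabla^{Q'_s})$ (resp. $\log|s|\,c_1(\nabla^{L_s^*})^{j-1}$) separates off cleanly, but the leftover fibre integral over $\log\tfrac{|w|}{|\theta|}$ can only be repackaged as the unspecified smooth forms $\eta_1,\eta_2$.
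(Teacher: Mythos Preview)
Your overall architecture matches the paper's: apply Corollary~\ref{cormt1} at $\gamma=0$ to $\omega_1=c_j(\nabla^Q)$ and $\omega_2=c_1(\nabla^{\tau^*})^j$, read off the pull-backs via Remark~\ref{Imprem}, identify $Z(s,\omega_i)$ with $\delta_j^l[s^{-1}(0)]$ via Proposition~\ref{cormt2} and the degree/multiplicity argument (this is exactly what the paper packages as Theorem~\ref{mulT}), and then work out $T(0,s,\omega_i)$ from Proposition~\ref{spark1} and Remark~\ref{Rema}. That part is fine.

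The gap is in your treatment of the residual fibre integral
\[
\alpha_i:=\int_{\bP(\bC\oplus L_s)/U}\log\frac{|w|}{|\theta|}\,\iota^*\omega_i.
\]
You assert it can be ``absorbed into the $\partial\bar\partial\eta_i$ correction'' using Bott--Chern transgression and $\tfrac{i}{\pi}\partial\bar\partial\log|s|=-c_1(\nabla^{L_s})$. But the statement of the theorem pins down the potential as $\log|s|\cdot(\text{something})$; to absorb $\alpha_i$ you would need $\alpha_i=\log|s|\cdot\partial\bar\partial\beta_i$ for some smooth $\beta_i$, and nothing you invoke produces a factor of $\log|s|$ in front of $\alpha_i$. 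The ``incompleteness'' mentioned in the introduction refers only to the fact that the $\eta_i$ coming from Lemma~\ref{L4} and \eqref{sf3} are not made explicit, not to any extra term being swept into them.

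The paper instead proves $\alpha_i=0$ outright. After trivialising $L_s$ unitarily (Lemma~\ref{ndy} reduces to the trivial line bundle), one uses the involution $C([\theta:w])=[w:\theta]$ on $\bP^1\times U$: the forms $c_1(\nabla^{\tau^*})$ and $c_j(\nabla^Q)$ are $C$-invariant (this is the content of Lemma~\ref{ndy} applied with $\varphi$ the swap), while $\log\frac{|w|}{|\theta|}$ is $C$-odd, so the integrand is $C$-odd and the fibre integral vanishes. With $\alpha_i=0$ the potential is exactly $-\tfrac{i}{\pi}\log|s|\,\pi_*(\iota^*\omega_i)$, and then \eqref{sf3} and Lemma~\ref{L4} give the stated right-hand sides on $U$, with the $\eta_i$ coming solely from those lemmas. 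This symmetry step is the one genuine idea you are missing.
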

\begin{proof} We use  formula (\ref{genPL}) of Corollary \ref{cormt1} for $\gamma=0$.

The identification of $Z(s,\omega_i)$, $i=1,2$ for $j=l$ with $[s^{-1}(0)]$ occupies the next subsection. 

The equalities $s_0^*c_j(\nabla^Q)=c_j(\nabla^E)$ and $s_0^*(c_1(\nabla^{\tau^*})^j)=0$ are the object of Remark \ref{Imprem}.

The equalities $s_{\infty}^*(c_j(\nabla^Q))=c_j(\nabla^{Q_s'})$ and $s_{\infty}^*(c_1(\nabla^{\tau^*})^j)=c_1(\nabla^{L_s^*})^j$ are a simple matter of making explicit the pull-backs, using again Remark \ref{Imprem}.

We will therefore concentrate on the right hand side for which we have the preliminary formulas of Proposition \ref{spark1}. We will analyze first the r.h.s. of (\ref{E2}). We use Remark \ref{Rema} and the notation introduced there and split the fiber integral in two pieces:
\begin{equation}\label{inteq35}\int_{\bP(\bC\oplus L_s)/U}\log\left(\frac{|w|}{|\theta|}\right)\iota^*c_1(\nabla^{\tau^*})^j-\int_{\bP(\bC\oplus L_s)/U}\log|s|\iota^*c_1(\nabla^{\tau^*})^j
\end{equation}

Let $\alpha_2:=\int_{\bP(\bC\oplus L_s)/U}\log\left(\frac{|w|}{|\theta|}\right)\iota^*c_1(\nabla^{\tau^*})^j$ whose smoothness follows directly from the similar property of $T(\gamma,s,\omega)\bigr|_{U}$. We argue that $\alpha_2=0$. First, notice that $\alpha_2$ makes sense for any smooth Hermitian line bundle $L$.  Consider thus $L$ endowed with a metric connection $\nabla$ out of which one induces a connection on $\tau^{*}\ra \bP(\bC\oplus L)$. Clearly the question is local on $U$ and  we restrict to a patch where $L$ is trivializable. Due to Lemma \ref{ndy} used with $E_1=\bC$ and $E_2=L$ and $\varphi$ a trivializing unitary isomorphism we are reduced to $L=\bC$. The advantage is that we can use the (orientation preserving)  self-diffeomorphism $C\times \id_U$ of $\bP^1\times U$ where
\[ C([\theta:w])= [w:\theta].
\]
On one hand, the forms $c_1(\nabla^{\tau^*})$ enjoy the symmetry property $C^*c_1(\nabla^{\tau^*})=c_1(\nabla^{\tau^*})$. On the other hand,
 $C^*\log\left(\frac{|w|}{|\theta|}\right)=-\log\left(\frac{|w|}{|\theta|}\right)$ and therefore $\alpha_2=0$.

 Since $s$ depends only on $M$, the second integral in (\ref{inteq35}) equals, using Lemma \ref{L4}:
\[\log|s|\int_{\bP(\bC\oplus L_s)/U}c_1(\nabla^{\tau^*})^j=\log{|s|}(c_1(\nabla^{L^*})^{j-1}+\partial \bar{\partial} \eta_2)
\]
for some smooth form $\eta_2$ on $U$. This finishes the proof of the (\ref{E2}). 

The reasoning for (\ref{E1}) is similar but instead uses (\ref{sf3}).

For $j<l$, the vanishing of the term supported on $[s^{-1}(0)]$ is guaranteed by Proposition \ref{cormt2} part (i).

\end{proof}

The following  change of variables for fiber integration was used in the Theorem \ref{MT}.
\begin{lem}\label{ndy} Let $E_i\ra M$, $i=1,2$ be  smooth Hermitian  bundles of rank $k$ endowed with  metric compatible connections $\nabla^i$ and $\varphi:E_1\ra E_2$ be a  unitary isomorphism such that $\nabla^1=\varphi^{-1}\nabla^2\varphi$. Denote by $\hat{\varphi}:\bP(\bC\oplus E_1)\ra \bP(\bC\oplus E_2)$ the induced bundle diffeomorphism.  Let $\nabla^{\tau_i}$ be the induced connections  on the corresponding tautological line bundles $\tau_i\ra \bP(\bC\oplus E_i)$ and $\nabla^{Q_i}$ the connections on the (tautological) quotient bundles $Q_i\ra \bP(\bC\oplus E_i)$. Let $P(x,y_1,\ldots,y_k)$ be a polynomial in $(k+1)$-variables. Then
\begin{equation}\label{c1nab}\hat{\varphi}^*[P(c_1(\nabla^{\tau_2}),c_1(\nabla^{Q_2}), \ldots,c_{k}(\nabla^{Q_2}))]=P(c_1(\nabla^{\tau_1}),c_1(\nabla^{Q_1}), \ldots,c_{k}(\nabla^{Q_1}))
\end{equation}
Consequently, if $L_i\subset E_i$ are line bundles such that $\varphi(L_1)=L_2$ then
\[\int_{\bP(\bC\oplus L_1)/M}\log\left(\frac{|w_1|}{|\theta|}\right)P(c_1(\nabla^{\tau_1}),c_*(\nabla^{Q_1}))=\int_{\bP(\bC\oplus L_2)/M}\log\left(\frac{|w_2|}{|\theta|}\right)P(c_1(\nabla^{\tau_2}),c_*(\nabla^{Q_2})),\] where $[\theta:w_i]$, $i=1,2$ are  projective coordinates of the fibers in $\bP(\bC\oplus L_i)$.
\end{lem}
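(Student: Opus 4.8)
The plan is to reduce the whole statement to the functoriality of Chern--Weil forms under pullback of a Hermitian bundle with connection. The hypotheses say exactly that the bundle map $\id_{\bC}\oplus\varphi:\bC\oplus E_1\to\bC\oplus E_2$ over $\id_M$ is unitary and intertwines $d\oplus\nabla^1$ with $d\oplus\nabla^2$. Since $\hat{\varphi}$ is induced by $\id_{\bC}\oplus\varphi$ and satisfies $\pi\circ\hat{\varphi}=\pi$, pulling this map back to $\bP(\bC\oplus E_1)$ gives a unitary bundle isomorphism $\bC\oplus\pi^*E_1\to\hat{\varphi}^*(\bC\oplus\pi^*E_2)$ intertwining $d\oplus\pi^*\nabla^1$ with $\hat{\varphi}^*(d\oplus\pi^*\nabla^2)$, and a fiberwise inspection shows it carries the tautological subbundle $\tau_1$ onto $\hat{\varphi}^*\tau_2$ and its orthogonal complement onto $\hat{\varphi}^*Q_2$. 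The crux, and the step I expect to require the most care, is to check that the induced isomorphisms $\hat{\varphi}^*\tau_2\cong\tau_1$ and $\hat{\varphi}^*Q_2\cong Q_1$ are \emph{parallel}; this is precisely where the identity $\nabla^1=\varphi^{-1}\nabla^2\varphi$ enters, and it follows once one recalls that $\nabla^{\tau_i}$ and $\nabla^{Q_i}$ are obtained from $d\oplus\pi^*\nabla^i$ by orthogonal projection onto $\tau_i$ and $\tau_i^{\perp}$, since a unitary connection-preserving map carries orthogonal projections to orthogonal projections.

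Granting this, Chern--Weil naturality gives the equalities of \emph{forms} $\hat{\varphi}^*c_1(\nabla^{\tau_2})=c_1(\hat{\varphi}^*\nabla^{\tau_2})=c_1(\nabla^{\tau_1})$ and $\hat{\varphi}^*c_j(\nabla^{Q_2})=c_j(\nabla^{Q_1})$ for all $j$; since $\hat{\varphi}^*$ is a homomorphism of differential graded algebras it commutes with the polynomial $P$, which is exactly (\ref{c1nab}).

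For the consequence I would restrict to the sub-line-bundles. Because $\varphi(L_1)=L_2$, the map $\hat{\varphi}$ restricts to a diffeomorphism $\bP(\bC\oplus L_1)\to\bP(\bC\oplus L_2)$ commuting with the projections to $M$ and fiberwise biholomorphic, hence orientation preserving on the $\bP^1$-fibers. In fiber coordinates $[\theta:w_1]$ it sends $[\theta:w_1]$ to $[\theta:\varphi(w_1)]$, and $\varphi$ being a fiberwise isometry gives $|\varphi(w_1)|=|w_1|$, so $\hat{\varphi}^*\log(|w_2|/|\theta|)=\log(|w_1|/|\theta|)$. Combining the restriction of (\ref{c1nab}) to $\bP(\bC\oplus L_1)$ with this identity and the fact that a fiber-preserving, fiberwise-orientation-preserving diffeomorphism commutes with integration over the compact fiber, so that $\int_{\bP(\bC\oplus L_1)/M}\hat{\varphi}^*\alpha=\int_{\bP(\bC\oplus L_2)/M}\alpha$, yields the claimed equality of fiber integrals; integrability of the log factor and the legitimacy of the fiber-integral manipulations are exactly what Lemma \ref{L5} supplies.
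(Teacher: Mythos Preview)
Your proof is correct and follows essentially the same approach as the paper's. Both arguments construct the unitary isomorphism $\id_{\bC}\oplus\pi_1^*\varphi:\bC\oplus\pi_1^*E_1\to\hat{\varphi}^*(\bC\oplus\pi_2^*E_2)$, verify it intertwines the pulled-back connections, observe it carries $\tau_1$ to $\hat{\varphi}^*\tau_2$ and hence (by orthogonal projection) matches the induced connections on $\tau$ and $Q$, and then invoke Chern--Weil naturality; the paper merely carries out the connection-intertwining check more explicitly by evaluating on sections of the form $\pi_1^*s$ and extending by Leibniz, whereas you appeal directly to the naturality of pullback connections.
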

\begin{proof} The consequence is a well-known fiber integral property once one quickly checks  that $\hat{\varphi}^*\log\left(\frac{|w_2|}{|\theta|}\right)=\log\left(\frac{|w_1|}{|\theta|}\right)$.  On the other hand, (\ref{c1nab}) will follow if we show it for the cases $P=x$ and $P=y_i$ , $i=1,\ldots,k$. In turn,  it will be enough to show that there exists  unitary isomorphisms $\psi_{\tau}:\tau_1\ra\tau_2$ and $\psi_{Q}:Q_1\ra Q_2$ covering $\hat{\varphi}$ such that
\begin{equation}\label{tauQ} \nabla^{\tau_1}=\psi^{-1}_{\tau}\left(\hat{\varphi}^*\nabla^{\tau_2}\right)\psi_{
\tau}\footnote{If $F_i\ra M_i$ are vector bundles and  $\beta:F_1\ra F_2$ is an isomorpphism covering $\alpha:M_1\ra M_2$ we denote also by $\beta$ the induced isomorphism of bundles $F_1\ra \alpha^*F_2$ over $M_1$.},\qquad\nabla^{Q_1}=\psi^{-1}_{Q}\left(\hat{\varphi}^*\nabla^{Q_2}\right)\psi_Q.
\end{equation}
Let us notice that  if $\pi_i:\bP(\bC\oplus E_i)\ra M$ are the projections  then there exists a natural unitary vector bundles isomorphism $\psi:\pi_1^*E_1\ra \pi_2^*E_2$ covering $\hat{\varphi}$ namely
\begin{equation}\label{psieq13}\psi(m,[\theta:w_1],v)=(m,[\theta:\varphi_m(w_1)], \varphi_m(v))\quad m\in M, \; [\theta:w_1]\in\bP(\bC\oplus E_{1,m}), v\in E_{1,m}
\end{equation}

Then (\ref{tauQ}) will follow from
\begin{equation}\label{psipi}\pi_1^*\nabla^1=\psi^{-1} \left(\hat{\varphi}^*(\pi_2^*\nabla^2)\right)\psi\qquad\Leftrightarrow\qquad \psi(\pi_1^*\nabla^1)=\hat{\varphi}^*(\pi_2^*\nabla^2)\psi
\end{equation}
since $\id_{\underline{\bC}}\oplus \psi:\underline{\bC}\oplus \pi_1^*E\ra \underline{\bC}\oplus\pi_2^*E$ takes $\tau_1$ to $\tau_2$ and $Q_1$ to $Q_2$.
\[\xymatrix{ \pi_1^*E_1 \ar[rr]^{\psi}\ar[d] & & \pi_2^* E_2\ar[d]\\
 \bP(\bC\oplus E_1) \ar[rr]^{\hat{\varphi}} \ar[dr]^{\pi_1} & &\bP(\bC\oplus E_2) \ar[dl]_{\pi_2}\\
   E_1&\ar[l]^{s} M   \ar [r]_{\varphi(s)} & E_2 }\]
   If $s\in\Gamma(E_1)$ then notice that (\ref{psieq13}) implies
   \[\psi(\pi_1^*s)=\hat{\varphi}^*\pi_2^*(\varphi (s))\in \Gamma(\pi_1^*E_2)=\Gamma(\hat{\varphi}^*\pi_2^*E_2)\quad \Leftrightarrow\quad \psi(s\circ \pi_1)=\varphi(s)\circ \pi_1.\]
  Therefore, the following equalities justify (\ref{psipi}) for a  section of $\pi_1^*E$ of type $\pi_1^*s$
  \[\hat{\varphi}^*(\pi_2^*\nabla^2)_X\psi(\pi_1^*s)=\hat{\varphi}^*(\pi_2^*\nabla^2)_X(\hat{\varphi}^*\pi_2^*(\varphi (s)))=(\pi_1^*\nabla^2)_X(\pi_1^*(\varphi(s))=\nabla^2_{d\pi_1(X)}\varphi(s)=\]\[=\varphi(\nabla^1_{d\pi_1(X)}s)=\psi[(\pi_1^*\nabla^1)_X(\pi_1^*s)]
  \]
  since $\psi=\pi_1^*\varphi$ when seen as a morphism $\pi_1^*E_1\ra\pi_1^*E_2$. 
  
  Use now that the map $\Gamma(M;E_1)\otimes_{\Gamma(M;\bC)}\Gamma(\bP(\bC\oplus E_1);\bC)\ra \Gamma(\bP(\bC\oplus E_1);\pi_1^*E)$, $s\otimes f \ra f\pi_1^* s$ is an isomorphism of vector spaces and the Leibniz property of connections in order to conclude that (\ref{psipi}) is valid.
  
  \end{proof}
\begin{rem}
For $j>l$ the formulas tend to be quite more complicated and the structure of the normal cone of $s^{-1}(0)$ becomes fundamental. See Theorem \ref{homzs} for an example.
\end{rem}

\begin{rem} Notice that when $E$ is a line bundle both expressions recover the Poincar\'e-Lelong formula of Griffiths and King \cite{GK}.

Note that when $s^{-1}(0)=\emptyset$ the last statement in Theorem \ref{MT} applied to (\ref{E1}) recovers  Theorem 1 of Bott-Chern in \cite{BC} which says that the Bott-Chern class of $\pi^*c_n(E)$ is zero on the punctured open unit disk bundle $B(E)^*:=\{e\in E~|~|e|<1\}$ where $\pi:E\ra M$ is a holomorphic vector bundle of rank $n$.  Theorem 1 from \cite{BC} follows from the more general Proposition 1.5, op. cit., which infers that for an exact sequence of holomorphic vector bundles
\[ 0\ra E_1\ra E_2\ra E_3\ra 0
\]
one has $c_*(E_2)=c_*(E_1)c_*(E_3)$ in $H^{*,*}_{BC}(M)$.  This is recovered in Section \ref{BCtheorem}.

Furthermore, it is interesting to compare (\ref{E1}) with Bott-Chern Proposition 4.20  equation (4.22) in \cite{BC} in which for $s$  non-vanishing they obtain that
\[ c_n(E)=-\frac{i}{\pi}\partial\bar{\partial}(\log{|s|}c_{n-1}(Q'_s)+\xi)
\]  
where  $\xi$ is an explicit smooth form, a polynomial expression in the curvatures of the Chern connections of $E$ and  of $Q'_s:=E/\langle s\rangle$. It would be interesting to show that the smooth form $\log{|s|}\partial\bar{\partial} \eta_1$ in (\ref{E1}) is equal, mutatis mutandis, with the Bott-Chern form $\xi$.  
\end{rem}

\begin{cor} \label{corm1} Let $s^{-1}(0)$ be a complete intersection, i.e. it has the expected codimension $k$ which coincides with the rank of $E$. Then the first Poincar\'e-Lelong formula simplifies to:
\[c_k(\nabla^E)-[s^{-1}(0)] = -\frac{i}{\pi} \partial\bar{\partial}( \log{|s|}(c_{k-1}{(\nabla^{Q'_s}})+\partial\bar{\partial}{\eta_1})
\]
Moreover if $j<k$ then there exists a flat current $T$ such that:
\[ c_j(\nabla^E)-c_j(\nabla^{Q'_s}) =\partial\bar{\partial}T.
\]
\end{cor}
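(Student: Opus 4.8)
The plan is to obtain both statements directly from Theorem \ref{MT} by specializing to the complete intersection case $l=k$. Under that hypothesis the pure codimension $l$ of $X:=s^{-1}(0)$ equals $k=\rank E$, so $\delta_j^l=1$ precisely when $j=k$ and $\delta_j^l=0$ otherwise. First I would apply equation (\ref{E1}) with $j=k$, which gives, as an identity of currents on $M$,
\[ c_k(\nabla^E)-c_k(\nabla^{Q'_s})-[s^{-1}(0)]=-\frac{i}{\pi}\partial\bar{\partial}\bigl(\log|s|(c_{k-1}(\nabla^{Q'_s})+\partial\bar{\partial}\eta_1)\bigr).
\]
The key observation is then that $Q'_s=E/L_s$ is a holomorphic vector bundle of rank $k-1$ over $U=M\setminus X$, so its top Chern form $c_k(\nabla^{Q'_s})$ vanishes identically on $U$, hence also as a current on $M$; discarding this term yields the first displayed formula.

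For the second assertion I would take $j<k=l$ in (\ref{E1}). Now $\delta_j^l=0$, so that equation reads
\[ c_j(\nabla^E)-c_j(\nabla^{Q'_s})=-\frac{i}{\pi}\partial\bar{\partial}\Bigl(\log|s|\bigl(c_{j-1}(\nabla^{Q'_s})+\partial\bar{\partial}\eta_1\bigr)\Bigr)=\partial\bar{\partial}T,
\]
with $T:=-\frac{i}{\pi}\log|s|\bigl(c_{j-1}(\nabla^{Q'_s})+\partial\bar{\partial}\eta_1\bigr)$. It then remains to check that $T$ is a flat current on $M$, and this is exactly Proposition \ref{spark1} applied to $\omega=\omega_1=c_j(\nabla^Q)$: since $\codim_{\bR}X=2l=2k\geq 2j=\deg\omega_1$, the current $T(0,s,\omega_1)$ produced by Corollary \ref{cormt1} is a form with locally integrable coefficients, uniquely determined by its restriction to $U$, and the computation in the proof of Theorem \ref{MT} identifies that restriction with $T\bigr|_{U}$. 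Hence $T=T(0,s,\omega_1)$ is flat and the equality holds on all of $M$.

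I do not expect a serious obstacle, since everything needed has already been set up in Theorem \ref{MT} and Proposition \ref{spark1}; the corollary is essentially an exercise in bookkeeping. The only point deserving a little care is the consistent interpretation of both sides as currents on $M$: in particular, $c_j(\nabla^{Q'_s})$ and $c_{j-1}(\nabla^{Q'_s})$, a priori only smooth forms on $U$, are to be taken as the flat (locally integrable) extensions across $X$ guaranteed by Proposition \ref{spark1} once $2j\leq\codim_{\bR}X$, and one uses that $\log|s|$ is locally integrable near $X$ (being bounded above and $\geq\log|s_i|$ for a component $s_i\not\equiv 0$ of $s$ in a local frame) to see that $T$ is a genuine current in a neighbourhood of $X$.
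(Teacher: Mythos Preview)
Your proof is correct and follows essentially the same route as the paper: both parts come directly from Theorem \ref{MT} at $l=k$, with the first display obtained by dropping $c_k(\nabla^{Q'_s})$ since $\rank Q'_s=k-1$, and the second by taking $j<k=l$ so the $\delta_j^l[s^{-1}(0)]$ term is absent. The paper's proof is terser---it simply points to Proposition \ref{cormt2}(i) (already invoked inside Theorem \ref{MT}) for the second part---while you additionally trace the flatness of $T$ back through Proposition \ref{spark1}; note that this flatness is in fact already recorded in Corollary \ref{cormt1}(iii), so your extra discussion, though correct, is not strictly needed.
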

\begin{proof} The bundle $Q'_s$ has rank one less than $E$ hence $c_{k}(\nabla^{Q'_s})=0$ when $k$ is the rank of $E$.

The second statement follows Proposition \ref{cormt2} part (i).
\end{proof}
As a corollary we get the next well-known result \cite{BaBo}.
\begin{cor}[Baum-Bott] Let $\eta:L\ra TM$ be a  holomorphic map between the line bundle $L$ and the tangent bundle of $TM$ with only isolated zeros. Then
\[ \int_{M} c_n(TM- L)=\sum_{p\in \eta^{-1}(0)}\mu_p
\]
where $\mu_p$ is the Milnor number of $\eta$ at $p$. 
\end{cor}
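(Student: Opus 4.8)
The plan is to realise $\eta$ as a holomorphic section of a rank-$n$ bundle whose zero scheme is a complete intersection, apply Corollary~\ref{corm1}, and integrate over $M$ (which we take to be compact, as the symbol $\int_M$ presupposes).

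First I would set $E:=\Hom(L,TM)=TM\otimes L^*$, a holomorphic and Hermitian vector bundle of rank $n=\dim_{\bC}M$, and regard $\eta$ as a holomorphic section $s:M\ra E$, so that $s^{-1}(0)=\eta^{-1}(0)$. By hypothesis this zero set is discrete and analytic, hence finite, hence pure of complex codimension $n=\rank E$: this is exactly the complete-intersection situation of Corollary~\ref{corm1} with $j=k=n$. That corollary yields the equality of currents on $M$
\[
c_n(\nabla^E)-[s^{-1}(0)]=-\frac{i}{\pi}\,\partial\bar{\partial}\bigl(\log|s|\,(c_{n-1}(\nabla^{Q'_s})+\partial\bar{\partial}\eta_1)\bigr),
\]
where, by Proposition~\ref{spark1}, in the borderline case $\codim_{\bR}s^{-1}(0)=2n=\deg c_n(\nabla^Q)$ the bracketed current $T$ has locally finite mass on all of $M$ and the right-hand side is a genuine $\partial\bar{\partial}$-exact current.

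Next I would integrate this identity over $M$. Pairing the right-hand side against the constant function $1$ and using $\partial 1=0$ (so $\partial\bar{\partial}T$ pairs trivially with $1$) gives $\int_M\partial\bar{\partial}T=0$; this is the double-transgression analogue of Stokes' theorem. Therefore $\int_M c_n(\nabla^E)=\int_M[s^{-1}(0)]$. For the left side, $\int_M c_n(\nabla^E)=\langle c_n(E),[M]\rangle$, and by the splitting principle, writing the Chern roots of $TM$ as $x_1,\dots,x_n$ and $t:=c_1(L)$, one has $c_n(TM\otimes L^*)=\prod_{i=1}^n(x_i-t)=\sum_{k=0}^n c_k(TM)(-t)^{n-k}$, which is exactly the degree-$2n$ part of $c(TM)\,c(L)^{-1}=c(TM-L)$; thus $\int_M c_n(\nabla^E)=\int_M c_n(TM-L)$. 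For the right side, $[s^{-1}(0)]=\sum_{p\in\eta^{-1}(0)}(e_{s^{-1}(0)}M)_p[p]$, so $\int_M[s^{-1}(0)]=\sum_p(e_{s^{-1}(0)}M)_p$. It then remains to identify $(e_{s^{-1}(0)}M)_p$ with $\mu_p$: in a local chart and trivialisation around $p$, the section $s$ becomes a holomorphic map germ $(\bC^n,0)\ra(\bC^n,0)$ with an isolated zero, whose components generate an $\mathfrak m$-primary ideal of height $n$ in the $n$-dimensional regular (hence Cohen--Macaulay) local ring $\cO_{M,p}$; being a system of parameters they form a regular sequence, and for such a parameter ideal the Samuel multiplicity equals the colength $\dim_{\bC}\cO_{M,p}/(s_1,\dots,s_n)$, i.e. the local mapping degree, which is the Milnor number $\mu_p$. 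Assembling the displays gives $\int_M c_n(TM-L)=\sum_{p\in\eta^{-1}(0)}\mu_p$.

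The difficulty is organisational rather than conceptual. The two points needing care are: (i) invoking the borderline case of Proposition~\ref{spark1} to be sure the corrective current really extends across $s^{-1}(0)$ with locally finite mass, so that $\int_M\partial\bar{\partial}T=0$ is legitimate; and (ii) the commutative-algebra fact that for a parameter (complete-intersection) ideal in a regular local ring the Samuel multiplicity equals the colength, hence the Milnor number. Both are standard, but they are where the argument must be pinned down.
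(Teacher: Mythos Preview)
Your proposal is correct and follows essentially the same route as the paper: view $\eta$ as a section of $E=TM\otimes L^*$, note $c_n(TM-L)=c_n(TM\otimes L^*)$, apply Corollary~\ref{corm1}, and identify the multiplicities with Milnor numbers. The paper states these steps in four lines and declares the Milnor-number identification ``well-known''; you have simply supplied the justifications it omits.

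One remark: your concern (i) about Proposition~\ref{spark1} is unnecessary. Corollary~\ref{cormt1}(iii) already gives $T(\gamma,s,\omega)$ as a flat current on all of $M$, so $\partial\bar\partial T$ is a globally defined current and its pairing with the constant function $1$ vanishes automatically since $\partial\bar\partial 1=0$. You do not need local integrability of coefficients for this; the extension across $s^{-1}(0)$ is built into the construction, not something to be checked separately.
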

\begin{proof} One considers $\eta$ as a section of $TM\otimes L^*$. Notice that
\[c_n(TM- L)=c_n(TM\otimes L^*)
\]
Then we are in the situation of Corollary \ref{corm1}. The fact that the multiplicities at the zero points are given by the Milnor numbers is well-known. 
\end{proof}

\subsection{Localization of the multiplicity}
The multiplicity $(e_{X}Y)_Z$ of a complex subspace $X$ of a  space $Y$ along an irreducible component $Z$ of $X$ can be defined algebraically as the coefficient  of the top power of the Hilbert-Samuel polynomial induced by the primary ideal $q$ of $X$ in the localization of $Y$ along $Z$ multiplied with $d!$ where $d$ is the codimension of $Z$ in $Y$. 

The "topological" approach of Fulton and Mac-Pherson looks at these multiplicities via the Segre classes of the normal cone $C_XY$ along $Z$ in the context of Chow groups on which the Segre classes act.

In our case $Y=M$ is regular.  Moreover we take $X\subset M$ to be the zero locus of a section $s:M\ra E$ of a holomorphic vector bundle. The analytic space structure of $X$ is induced by the ideal sheaf $\mathcal{I}$ generated by the components of $s$ in local coordinates. Assume for simplicity that $X$ is purely $k$-codimensional, but is not necessarily reduced, nor irreducible.   In this situation the normal cone $\bP(C_XM)$ of $X$ in $M$ is isomorphic with the exceptional divisor of the strict transform of $\mathcal{I}$ with respect to the blow-up of the zero section of $E$. Since the exceptional divisor of the blow-up of the zero section $\Bl_{[0]}(E)$ is the smooth manifold $\bP(E)$ with the restriction of the blow-down map corresponding to the natural projection of $\bP(E)\ra M,$ the canonical line bundle of $\bP(C_XM)$ as defined by Grothendieck coincides in fact with the pull-back  to $\bP(C_XM)$ via the canonical embedding of the canonical line bundle 
\[\mathcal{O}(1)\ra \bP(E).\]
In what follows we therefore consider $\bP(\bC\oplus C_XM)$ to be an analytic subspace of $\bP(\bC\oplus E)$.

Then the multiplicity $(e_XM)_Z$ of $M$ along every irreducible component $Z$ of codimension $k$ is defined as follows:
\begin{equation}\label{algeq4} \sum_{Z\in \Irr(X)}(e_XM)_Z\cdot [Z]=q_*(c_1(\mathcal{O}(1))^k\cap \bP(\bC\oplus C_XM))
\end{equation}
where $q_*:\bA_*^{\an}(\bP(\bC\oplus E))\ra \bA_*^{\an}(M)$ is the natural push-forward and $\Irr(X)$ are the irreducible components of $X$. To be precise, $q_*$ acts in degree $n-k$ in (\ref{algeq4}) and $q_*(c_1(\mathcal{O}(1))^k\cap \bP(\bC\oplus C_XM))\in \bA_{n-k}^{\an}(X)$. The latter group is generated over $\bZ$ by $\Irr(X)$. 

\begin{rem}\label{Fudefb} There is a drawback in this definition analogous to the one from \cite{F}, namely that the Chow class of $[Z]$ might be zero, which happens for example when $M=\bC^n$. 
\end{rem}

This has a corresponding expression in $H_{n-k,n-k}^{BC}(M)$:
\[\sum_{Z\in \Irr(X)}(e_XM)_Z\cdot [Z]=q_*(c_1(\nabla^{\tau^*})^k\wedge \bP(\bC\oplus C_XM))\]
Due to Lemma \ref{Lem2} we have an alternative way of computing this.

\begin{lem} The following equality holds as currents on $M$:
\[q_*(c_k(\nabla^{Q})\wedge \bP(\bC\oplus C_XM))=q_*(c_1(\nabla^{\tau^*})^k\wedge \bP(\bC\oplus C_XM)).\]
\end{lem}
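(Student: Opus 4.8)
The plan is to reduce the whole statement to the fibrewise identity $\left.c_k(\nabla^Q)\right|_{P_m}=\left.c_1(\nabla^{\tau^*})^k\right|_{P_m}$ already recorded in Lemma \ref{Lem2}. First I would check that both currents in the statement are flat, $d$-closed, of bidimension $(n-k,n-k)$, and supported on $X:=s^{-1}(0)$. Indeed, as recalled in this subsection, $A:=\bP(\bC\oplus C_XM)$ is a closed analytic subspace of $\bP(\bC\oplus E)$ of (total) dimension $n$ lying over $X$; $c_k(\nabla^Q)$ and $c_1(\nabla^{\tau^*})^k$ are smooth, $\partial$ and $\bar\partial$ closed forms of bidegree $(k,k)$; and wedging with such a form and then pushing forward along the proper map $q=\pi\colon\bP(\bC\oplus E)\ra M$ preserves flatness and $d$-closedness while lowering the dimension to $n-k=\dim X$. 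Hence the difference $D:=q_*(c_k(\nabla^Q)\wedge A)-q_*(c_1(\nabla^{\tau^*})^k\wedge A)$ is a flat current of dimension $n-k$ with $\supp D\subset X$, and it is exactly the coincidence of dimension (the cut is in the borderline degree $2k$) that makes the statement delicate.

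Next I would split off the bad part of $X$. Let $X'$ be the union of $X_{\sing}$ with the irreducible components of $X$ of dimension $<n-k$; this is a closed analytic set of dimension $<n-k$. I would aim to prove $D$ vanishes after restriction to $U:=M\setminus X'$. Granting that, $D$ is a flat current of dimension $n-k$ supported on a set of strictly smaller Hausdorff dimension, hence $D=0$ by the Flat Support Theorem (Theorem 2.1.8 in \cite{K}; see also \cite{Fe}). This is the same mechanism underlying Proposition \ref{cormt2}(i), now used in the critical degree rather than below it.

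For the computation on $U$, after discarding if necessary a further analytic subset of dimension $<n-k$ so as to work inside the generic loci $Z^{\gen}$ of Proposition \ref{cormt2}, the map $q\colon A\cap q^{-1}(U)\ra X^\circ:=X\cap U$ is proper with all fibres the cones $A_p=\bP(\bC\oplus(C_XM)_p)$ of constant dimension $k$ sitting inside $P_p=\bP(\bC\oplus E_p)=q^{-1}(p)$. Integrating over these fibres, for a test form $\eta\in\Omega^{n-k,n-k}_{\cpt}(U)$ I expect to get
\[
D(\eta)=\int_{A\cap q^{-1}(U)}\big(c_k(\nabla^Q)-c_1(\nabla^{\tau^*})^k\big)\wedge q^*\eta=\int_{X^\circ}\Big(\int_{A_p}\big(c_k(\nabla^Q)-c_1(\nabla^{\tau^*})^k\big)\big|_{P_p}\Big)\,\eta,
\]
and then the inner integral is identically zero on $X^\circ$ because $\left.c_k(\nabla^Q)\right|_{P_p}=\left.c_1(\nabla^{\tau^*})^k\right|_{P_p}$ by Lemma \ref{Lem2}. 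Thus $D\big|_U=0$, which combined with the previous paragraph gives $D=0$.

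The main obstacle is not the fibrewise identity, which is already in hand, but the justification of the Fubini/fibre-integration step: showing that $q_*(\text{closed form}\wedge A)$ may legitimately be computed fibre by fibre over the generic stratum of $X$, i.e. controlling the slicing of the analytic current $A$ along the possibly singular, possibly non-equidimensional map $q|_A$, and then cleanly absorbing the remaining locus $X'$ via the flatness/support theorem. A secondary point to be careful about is that $\bP(\bC\oplus C_XM)$ really is a closed analytic subspace of dimension $n$ (hence a $d$-closed rectifiable current), so that the two currents in the statement are genuinely flat of dimension $n-k$; everything then rests on $2k$ being precisely the degree at which the fibre integral produces a locally constant number rather than a form of positive degree.
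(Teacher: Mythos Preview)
Your approach is correct and is essentially the paper's proof, only spelled out in full. The paper's proof is a single line invoking King's Fibering Theorem (Theorem~3.3.2 in \cite{K}) together with Proposition~\ref{cormt2}; that theorem is precisely the tool that justifies the Fubini/fibre-integration step you flag as the main obstacle, producing the formula $q_*(\omega\wedge A)\bigr|_{M\setminus Z^c}=g\cdot Z^{\gen}$ with $g(x)=\langle A,\pi,x\rangle\wedge\omega$, after which the fibrewise identity of Lemma~\ref{Lem2} and the dimension argument you give finish the job.
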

\begin{proof} Clearly true by the Fibering Theorem of King \cite{K} (see also Proposition \ref{cormt2}) .
\end{proof}

Let $Z\in\Irr(X)$ and suppose $\codim_{M}Z=k$. The generic fiber $C_m$ of  $\bP(\bC\oplus C_XM)\bigr|_{Z}$ has dimension $k$ and is an analytic space. When $X:=s^{-1}(0)$ the fiber is a projective analytic subspace  of $\bP(\bC\oplus E_m)$. As such it has a well-defined degree which is the intersection number of this fiber with a generic $k$-codimensional (projectively) linear subspace of $\bP(\bC\oplus E_m)$. On the other hand, this degree can also be computed by the integral:
\[ \int_{C_m}c_1(\nabla^{\tau^*})^k
\] 
since $c_1(\nabla^{\tau^*})$ is the Poincar\'e dual to a hyperplane section and $\tau$ here is the tautological bundle on $\bP(\bC\oplus E_m)$. Hence, for this particular form $\omega$, the function $g$ of Proposition \ref{cormt2} is continuous and has integer values. Therefore it has to be constant at the generic points of $Z$. This constant is by (\ref{algeq4}) the multiplicity of $M$ along $X$ at $Z$.

We have thus obtained  the following consequence of the Fibering Theorem of King:
\begin{theorem}\label{mulT} Let $E\ra M$ be a holomorphic and Hermitian vector bundle and $s:M\ra E$ be a holomorphic section. Let $Z$ be an irreducible component of $s^{-1}(0)$ of codimension $k$ and $Z^c$ the union of all the other irreducible components. Then 
\[ \pi_*(c_k(\nabla^Q)\wedge \bP(\bC\oplus C_{s^{-1}(0)}s(M)))\bigr|_{M\setminus X^c}=\pi_*((c_1(\nabla^{\tau^*})^k\wedge \bP(\bC\oplus C_{s^{-1}(0)}s(M)))\bigr|_{M\setminus Z^c}=\]\[=(e_{s^{-1}(0)}M)_Z\cdot [Z\setminus Z^c].
\]
Moreover $(e_{s^{-1}(0)}M)_Z$ is the degree of the generic fiber inside $\bP(\bC\oplus E)$ of $\bP(\bC\oplus C_XM)$ over $X$ where generic  means the smooth points of $Z$ for which the dimension of the fiber of the affine normal cone is $n-k$.
\end{theorem}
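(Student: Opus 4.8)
The plan is to deduce everything from Proposition~\ref{cormt2}(ii), applied to the closed $(k,k)$-form $\omega := c_1(\nabla^{\tau^*})^k$ on $\bP(\bC\oplus E)$, together with the algebraic definition~(\ref{algeq4}) of the Samuel multiplicity. The first equality is the content of the Lemma immediately preceding the statement (a direct consequence of King's Fibering Theorem): since the push-forward $\pi_*(\,\cdot\wedge \bP(\bC\oplus C_X M))$ of a closed form depends only on its restriction to the fibers $\bP(\bC\oplus E_m)$, and $c_k(\nabla^Q)$ agrees with $c_1(\nabla^{\tau^*})^k$ fiberwise by Lemma~\ref{Lem2}, the two currents coincide on all of $M$, in particular on $M\setminus Z^c$. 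Here I use freely the biholomorphism of pairs $(s(M),X)\simeq (M,X)$ which identifies $C_X s(M)$ with $C_X M$.

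For the second equality I write, as in~(\ref{eqZ}), $Z(s,\omega)=\pi_*(\omega\wedge \bP(\bC\oplus C_X M))$ with $\deg\omega = 2k = 2\,\codim_M Z$. Proposition~\ref{cormt2}(ii) then gives $Z(s,\omega)\bigr|_{M\setminus Z^c} = g\cdot Z^{\gen}$, where $Z^{\gen}\subset Z$ is the dense open set of smooth points over which the fiber of $\pi\colon\bP(\bC\oplus C_X M)\to X$ has the generic dimension $k$, and $g(x) = \langle \bP(\bC\oplus C_X M),\pi,x\rangle\wedge\omega$. The heart of the matter is to evaluate $g$: for $x\in Z^{\gen}$ the slice $\langle\bP(\bC\oplus C_X M),\pi,x\rangle$ is the fiber $C_x$, a purely $k$-dimensional projective analytic subspace of $\bP(\bC\oplus E_x)$, and since the restriction of $c_1(\nabla^{\tau^*})$ to this fiber is the Poincaré dual of a hyperplane section, $g(x)=\int_{C_x}c_1(\nabla^{\tau^*})^k$ is exactly the degree of $C_x$, hence a non-negative integer. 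By the continuity assertion of Proposition~\ref{cormt2}, $g$ is continuous on $Z^{\gen}$; being integer-valued on the connected set $Z^{\gen}$ (a dense open subset of the irreducible $Z$) it is constant, say $g\equiv d_Z$, so that $Z(s,\omega)\bigr|_{M\setminus Z^c} = d_Z\,[Z\setminus Z^c]$.

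Finally I identify $d_Z$ with $(e_{s^{-1}(0)}M)_Z$ through~(\ref{algeq4}). The operation of capping with $c_1(\mathcal{O}(1))=c_1(\tau^*)$ on $[\bP(\bC\oplus C_X M)]$ followed by $q_*$ is, generically along each component, the operation of taking the degree of the generic fiber: restricting $c_1(\mathcal{O}(1))^k\cap[\bP(\bC\oplus C_X M)]$ to $q^{-1}(x)$ for $x\in Z^{\gen}$ gives $\deg(C_x)$ copies of the point class (projection formula, using that $\bP(\bC\oplus E_x)$ is smooth). Hence the coefficient of $[Z]$ on the right-hand side of~(\ref{algeq4}) equals $d_Z$, which proves both $d_Z=(e_{s^{-1}(0)}M)_Z$ and the ``moreover'' clause. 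The point to keep in mind, and the reason this pointwise statement is not already contained in Fulton's global definition, is that the class $[Z]$ may vanish in $\bA_*^{\an}(M)$ (Remark~\ref{Fudefb}), whereas Proposition~\ref{cormt2}(ii), via King's Fibering Theorem, delivers an honest equality of currents on $M\setminus Z^c$, where $[Z\setminus Z^c]$ is never zero; the main technical care needed is to check that the various notions of ``generic point of $Z$'' in play agree off a further proper analytic subset, which is harmless since constancy of $g$ on a smaller dense open set already forces $g$ to be the claimed constant.
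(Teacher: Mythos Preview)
Your proposal is correct and follows essentially the same route as the paper: both arguments invoke the preceding Lemma (fiberwise equality of $c_k(\nabla^Q)$ and $c_1(\nabla^{\tau^*})^k$ from Lemma~\ref{Lem2} combined with King's Fibering Theorem) for the first equality, then apply Proposition~\ref{cormt2}(ii) to get $g\cdot Z^{\gen}$, observe that $g(x)=\int_{C_x}c_1(\nabla^{\tau^*})^k$ is the degree of the generic fiber and hence integer-valued, use continuity plus connectedness of $Z^{\gen}$ to conclude constancy, and finally identify the constant with the Samuel multiplicity via~(\ref{algeq4}). Your write-up is in fact a slightly more detailed version of the paper's own argument, which presents the theorem as a direct consequence of the discussion immediately preceding it.
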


\begin{example} When $s^{-1}(0)$ is a complete intersection there is no jump in the dimension of the fiber of the normal cone and hence the multiplicity can be computed at every point of the irreducible component (see Griffiths and Harris \cite{GH}, page 130 for a justification in the hypersurface case).
\end{example}

\begin{rem} Due to the drawback expressed in Remark \ref{Fudefb}, it  would  be  desirable to obtain an algebraic proof of the last statement of Theorem \ref{mulT}. 
\end{rem}

\section{Twisting by a line bundle}
In the later sections we construct a theory for holomorphic morphisms of vector bundles in general.
But when $s:L\ra E$ is a holomorphic morphism and $L$ is a line bundle the results of the previous section extend without difficulty  and we collect them in this short section.

There are several ways to go about proving this.  One way is to see that in order to prove the double transgression formulas one needs only work locally on $M$. Due to the sheaf property of currents this is true for the double transgression formula of kernels and also for the general Poincar\'e-Lelong. Another way is to consider the associated section $\tilde{s}:M\ra L^*\otimes E$. The next short presentation  follows in the footsteps of the previous sections. We believe it keeps better track of the objects involved and of the original section $s$.

Let $\bP^{\circ}(L\oplus E):=\{([\beta:v],m)\in \bP(L_m\oplus E_m),\; m\in M~|~\beta\neq 0\}$. Take the closure inside $\bP^1\times \bP^{\circ}(L\oplus E)\times_M\bP(L\oplus E)$ of
\begin{equation}\label{eq61} \{([1:\lambda],[\beta: s(\beta)], [\beta:\lambda s(\beta)],m)~|~m \in M, \beta\in L_m\}\end{equation} inside $\bP^1\times \bP^{\circ}(L\oplus E)\times_M \bP(L\oplus E)$ that can be described as follows. We have a canonical section $[1:0]$ of $\bP(L\oplus E)$ and we can blow-up  $\infty\times [1:0]$ inside $\bP^1\times \bP^{\circ}(\bC\oplus E)$. This can be realized as a subspace of $\bP^1\times \bP^{\circ}(L\oplus E)\times_M \bP(L\oplus E)$. In fact,
\[ \Bl_{\infty\times [1:0]}(\bP^1\times \bP^{\circ}(L\oplus E))=\{([\mu:\lambda],[\beta_1:v],[\beta_2:w])~|~(\mu\beta_1,\lambda v)\wedge (\beta_2,w)=0\}
\]

  The strict transform of $\bP^1\times \{[\beta:s(\beta)]~|~\beta\in L_m \}$ will give the closure of (\ref{eq61}).

 The projective normal cone $\bP(C_{s^{-1}(0)}M)$ of $s^{-1}(0)$ in $M$ is defined locally via the  ideal sheaf induced by $s$ and whose support is $s^{-1}(0)$. The same ideal sheaf is induced by $\tilde{s}$.  The projective normal cone is naturally an analytic subspace of  $\bP(E)\simeq \bP(L^*\otimes E)$, the latter isomorphism being canonical. The fiberwise cone $\bP(\bC\oplus C_{s^{-1}(0)}M)$ of $\bP(C_{s^{-1}(0)}M)$ lives in $\bP(\bC\oplus L^*\otimes E)\simeq \bP(L\oplus E)$. The multiplicity of $M$ along $s^{-1}(0)$ at an irreducible component $X$ is the same as the multiplicity of $M$ along $\tilde{s}^{-1}(0)$ at $X$ and therefore can be computed at a generic point in $X$ as the degree of the fiber of the normal cone inside $\bP(\bC\oplus L^*\otimes E)$ similarly to what was done in Theorem \ref{mulT}.
 
 Clearly $\bP(L\oplus E)\simeq \bP(\bC\oplus L^*\otimes E)$ comes with a tautological bundle $\tau$ and a quotient bundle $Q$. Hermitian metrics on $L$ and $E$ induce Hermitian metrics on $\tau$ and $Q$. The zero section in this context is the inclusion $M\simeq \bP(L)\hookrightarrow \bP(E)$ while the $\infty$ section is $\bP(E)\hookrightarrow \bP(L\oplus E)$. This time, the restriction of $\tau$ to the zero section is isomorphic with $L\ra M$ while the restriction of $Q$ to $\bP(E)$ is isomorphic with the direct sum $L\oplus Q'$ where $Q'$ is as before the universal quotient  bundle on $\bP(E)$. These isomorphisms hold as pairs  (bundle, Hermitian metric) and hence as bundles with Chern connections.
 
 Applying the resulting abstract Poincar\'e-Lelong as in Theorem \ref{MT} to  $\omega_1:=c_k(\nabla^Q)$ and $\omega_2:=c_1(\tau^*)^{k}$  we get the following:
 
\begin{theorem}\label{mTt} Let $s: L\ra E$ be a holomorphic morphism and suppose $L$ and $E$ are endowed with Hermitian metrics. Let  $L_s:=\Imag s$, $Q_s':=E/L_s$ be the line and quotient bundles defined on $M\setminus s^{-1}(0)$ which have metrics induced from $E$.  Suppose $s^{-1}(0)$ is of pure complex codimension $k$. Then the following Poincar\'e-Lelong formulas hold for some locally flat currents $T_1$ and $T_2$.
 \begin{equation} 
 c_k(\nabla^E)-c_{k}(\nabla^L\oplus \nabla^{Q_s'})-[s^{-1}(0)]=\frac{i}{\pi}\partial\bar{\partial}T_1
\end{equation}
\begin{equation}\label{E22}
c_1(L^*)^k-c_1(L_s^*)^k-[s^{-1}(0)]=\frac{i}{\pi}\partial\bar{\partial} T_2
\end{equation}
In case $s^{-1}(0)=\emptyset$ then both formulas hold irrespective of $k$ on all of $M$.
\end{theorem}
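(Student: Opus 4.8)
The plan is to transplant the entire apparatus of Section~\ref{sec3} and of the previous section to the compactification $\bP(L\oplus E)\simeq\bP(\bC\oplus L^{*}\otimes E)$, replacing the blow-up $\Bl_{\infty\times[0]}(\bP^{1}\times E)$ by $\Bl_{\infty\times[1:0]}(\bP^{1}\times\bP^{\circ}(L\oplus E))$ and the strict transform of $\bP^{1}\times s(M)$ by the strict transform $\tilde S$ of $\bP^{1}\times\{[\beta:s(\beta)]\}$ introduced above. The key observation is that over any open set of $M$ over which $L$ is trivial this blow-up is precisely $\Bl_{\infty\times[0]}(\bP^{1}\times E)$ and $\tilde S$ becomes the strict transform of $\bP^{1}\times\tilde s(M)$ for the associated section $\tilde s:M\ra L^{*}\otimes E$; moreover $s^{-1}(0)=\tilde s^{-1}(0)$ carry the same ideal sheaf, hence the same analytic-space structure and the same Samuel multiplicities along each irreducible component. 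Consequently Theorem~\ref{PL3}, Corollary~\ref{cormt1}, Proposition~\ref{cormt2}, Proposition~\ref{spark1} and Theorem~\ref{mulT} all hold verbatim in this setting, and the desired formulas follow by gluing the resulting local double transgression formulas via the sheaf property of currents (alternatively, one applies everything globally to $\tilde s$ and then rewrites the output using $L_{\tilde s}=L^{*}\otimes L_{s}$, $Q'_{\tilde s}=L^{*}\otimes Q'_{s}$ and the canonical isomorphism $\bP(\bC\oplus L^{*}\otimes E)\simeq\bP(L\oplus E)$).

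Granting this, I would apply the analogue of formula~(\ref{genPL}) with $\gamma=0$ to the two $\partial$- and $\bar\partial$-closed forms $\omega_{1}:=c_{k}(\nabla^{Q})$ and $\omega_{2}:=c_{1}(\nabla^{\tau^{*}})^{k}$ on $\bP(L\oplus E)$, where $\tau$ and $Q$ are the tautological and quotient bundles. The left-hand side is read off exactly as in Remark~\ref{Imprem} and the proof of Theorem~\ref{MT}: over the zero section $\bP(L)\simeq M$ one has $(\tau,\nabla^{\tau})|_{\bP(L)}=(L,\nabla^{L})$ and $(Q,\nabla^{Q})|_{\bP(L)}=(E,\nabla^{E})$, so that $s_{0}^{*}\omega_{1}=c_{k}(\nabla^{E})$ and $s_{0}^{*}\omega_{2}=c_{1}(\nabla^{L^{*}})^{k}$; over the hyperplane $\bP(E)\hookrightarrow\bP(L\oplus E)$ one has $(\tau,\nabla^{\tau})|_{\bP(E)}=(\tau',\nabla^{\tau'})$ and $(Q,\nabla^{Q})|_{\bP(E)}=(\pi^{*}L\oplus Q',\nabla^{L}\oplus\nabla^{Q'})$, so that after substituting $\tau'\mapsto L_{s}$ and $Q'\mapsto Q'_{s}$ along $s_{\infty}$ one gets $s_{\infty}^{*}(\omega_{1}|_{\bP(E)})=c_{k}(\nabla^{L}\oplus\nabla^{Q'_{s}})$ and $s_{\infty}^{*}(\omega_{2}|_{\bP(E)})=c_{1}(\nabla^{L_{s}^{*}})^{k}$.

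It then remains to identify the degeneration term $Z(s,\omega_{i})=\pi_{*}(\omega_{i}\wedge\bP(L\oplus C_{s^{-1}(0)}M))$ with $[s^{-1}(0)]$ when $k=\codim s^{-1}(0)$. By the fiberwise part of Lemma~\ref{Lem2}, both $\omega_{1}$ and $\omega_{2}$ restrict on each fibre $\bP(L_{m}\oplus E_{m})$ to the $k$-th power of the hyperplane class; hence, running the argument of Proposition~\ref{cormt2}(ii) and Theorem~\ref{mulT} through King's Fibering Theorem, the restriction of $Z(s,\omega_{i})$ to the complement of the union of the remaining irreducible components equals $(e_{s^{-1}(0)}M)_{Z}\cdot[Z^{\gen}]$, the constant being the degree of the generic fibre of $\bP(L\oplus C_{s^{-1}(0)}M)$ inside $\bP(L\oplus E)$, i.e.\ the Samuel multiplicity of $M$ along $s^{-1}(0)$ at $Z$; summing over $\Irr(s^{-1}(0))$ yields $Z(s,\omega_{i})=[s^{-1}(0)]$. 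For indices below $\codim s^{-1}(0)$, and a fortiori when $s^{-1}(0)=\emptyset$, the term $Z(s,\omega_{i})$ vanishes by Proposition~\ref{cormt2}(i), so both formulas become global identities on $M$ (with $T_{1},T_{2}$ smooth, by Proposition~\ref{spark1}) for every admissible $k$. Local finiteness of mass of the currents $T_{i}$, into which the constant $i/\pi$ is absorbed, is Corollary~\ref{cormt1}(iii) together with Proposition~\ref{spark1}. I expect the only genuinely delicate point to be the transfer of Theorem~\ref{mulT} to the twisted ambient space $\bP(L\oplus E)$; but since Samuel multiplicities depend only on the ideal sheaf of $s^{-1}(0)$, which is untouched by the twist, this reduces at once to the untwisted statement, so the bulk of the work is bookkeeping rather than new input.
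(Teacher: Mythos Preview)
Your proposal is correct and follows essentially the same approach as the paper: both set up the blow-up $\Bl_{\infty\times[1:0]}(\bP^{1}\times\bP^{\circ}(L\oplus E))$ with the strict transform of $\bP^{1}\times\{[\beta:s(\beta)]\}$, both mention the local reduction to $\tilde s:M\ra L^{*}\otimes E$ (and the sheaf property) as an alternative, both apply the abstract Poincar\'e--Lelong formula to $\omega_{1}=c_{k}(\nabla^{Q})$ and $\omega_{2}=c_{1}(\nabla^{\tau^{*}})^{k}$, and both identify the restrictions at $\bP(L)$ and $\bP(E)$ and the multiplicity exactly as you do. Your write-up is, if anything, slightly more explicit about the individual steps than the paper's discussion, but there is no substantive difference.
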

Notice that when $s^{-1}(0)=\emptyset$ then $L$ and $L_s$ are isomorphic vector bundles, possibly non-isometric and therefore (\ref{E22}) gives the invariance of the Bott-Chern class under the change of Hermitian metric.

The last statement proves   Proposition 1.5 of \cite{BC} in a particular case:
\begin{cor} [Bott-Chern] If $0\ra L\ra E\ra Q\ra 0$ is an exact sequence of holomorphic vector bundles over $M$ with $L$ a line bundle then 
$c_*(E)=c_*(L\oplus Q)=c_*(L)c_*(Q)$
in $H^{*,*}_{BC}(M)$.
\end{cor}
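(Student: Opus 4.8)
The plan is to obtain this as a direct specialization of Theorem~\ref{mTt}. The inclusion in the exact sequence is a holomorphic bundle morphism $s\colon L\to E$ which is injective on every fiber, so $s_m\neq 0$ for all $m\in M$ and hence $s^{-1}(0)=\emptyset$. We are therefore in the final clause of Theorem~\ref{mTt}, where the first Poincar\'e--Lelong formula holds on all of $M$, for every degree, with the current $[s^{-1}(0)]$ absent:
\[ c_k(\nabla^E)-c_k(\nabla^L\oplus\nabla^{Q_s'})=\frac{i}{\pi}\,\partial\bar{\partial}T_1 .
\]

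Next I would reconcile the bundles and their metrics. Since $s$ is fiberwise injective, $L_s=\Imag s$ is a holomorphic line subbundle of $E$ and $s$ is a holomorphic isomorphism $L\simeq L_s$; dually, the exact sequence yields a holomorphic isomorphism $Q\simeq E/L_s=Q_s'$. Putting on $Q$ the Hermitian metric transported from the quotient metric of $Q_s'$ (equivalently, the metric of $E$ restricted to the orthogonal complement of $L_s$), the characteristic forms $c_k(\nabla^L\oplus\nabla^{Q_s'})$ and $c_k(\nabla^L\oplus\nabla^Q)$ coincide on the nose. The displayed identity then exhibits $c_k(\nabla^E)-c_k(\nabla^L\oplus\nabla^Q)$ as $\partial\bar{\partial}$-exact, i.e.\ $c_k(E)=c_k(L\oplus Q)$ in $H^{k,k}_{BC}(M)$; summing over $k$ gives $c_*(E)=c_*(L\oplus Q)$ in $H^{*,*}_{BC}(M)$.

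It remains to note that $c_*(L\oplus Q)=c_*(L)c_*(Q)$; this is the classical Whitney product relation already at the level of Chern--Weil forms, since the curvature of the block connection $\nabla^L\oplus\nabla^Q$ is block-diagonal and the total Chern form therefore factors as $c(\nabla^L)\wedge c(\nabla^Q)$, and this identity of smooth forms descends to cohomology. Finally, since the two sides of the statement are Bott--Chern \emph{classes}, one invokes the metric-independence of the Bott--Chern Chern class to conclude for arbitrary compatible metrics; this independence is itself contained in Theorem~\ref{mTt} (formula~\eqref{E22} for line bundles, and the general case by the splitting principle) and goes back to Proposition~1.5 of \cite{BC}. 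I do not expect a genuine obstacle here: the only delicate point is the bookkeeping of the induced metrics, which is arranged precisely so that the two Chern forms appearing in Theorem~\ref{mTt} agree exactly.
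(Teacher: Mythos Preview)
Your argument is correct and follows exactly the route the paper intends: the corollary is stated immediately after Theorem~\ref{mTt} with no separate proof, only the remark that ``the last statement proves Proposition~1.5 of~\cite{BC} in a particular case,'' and your write-up is precisely the natural unpacking of that sentence. The only superfluous step is your appeal to metric independence ``by the splitting principle'': at this point in the paper only the line-bundle case (via~\eqref{E22}) is available, but since you are free to choose the metrics on $L$, $E$, $Q$ from the start, you can simply fix metrics so that $Q\simeq Q_s'$ isometrically and read the identity $c_*(E)=c_*(L\oplus Q)$ directly, without invoking general metric independence.
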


\section{Bott-Chern duals of Chern-Fulton classes}

The proof of the Poincar\'e-Lelong formula (\ref{E2})  inspires the main result of this section that produces a Bott-Chern dual for any Chern-Fulton class of an analytic subspace defined as the zero locus of a holomorphic section $s:M\ra E$ of a holomorphic $E$. We will look at the non-homogeneous class
\[ \sum_{j\geq 0} c_1(\tau^*)^j=\frac{1}{1+c_1(\tau)}=:c_{ * }(-\tau),
\]
the total Chern polynomial of the \emph{virtual} $-\tau$ vector bundle. By \cite{F}, "integrating" $c_*(-\tau)$ over the normal cone $\bP(\bC\oplus C_{s^{-1}(0)}(M))$ gives the Segre classes of the normal cone. More precisely, Fulton defines
\begin{equation}\label{Segeq1} \Seg_{n- * }(s^{-1}(0),M):=q_*(c_{ * }(-\tau)\cap \bP(\bC\oplus C_{s^{-1}(0)}(M)))\in \bA_{n- * }(s^{-1}(0))
\end{equation}
where $q_*:\bA_{ * }(\bP(\bC\oplus C_{s^{-1}(0)}(M))))\ra \bA_{ * }(s^{-1}(0))$ is the natural push-forward. 
\begin{rem} The Segre classes are interesting only for $ * \geq k$, in other words just $\Seg_j$ with $0\leq j\leq d:=\dim {s^{-1}(0)}=n-k$ are non-trivial.
\end{rem}
When $M$ is projective we have a corresponding class in $H_{*,*}^{BC}(M)$ obtained from the composition of  the morphisms $\bA_{ * }^{\an}(M)\ra H_{ * , * }^{BC}(M)$  and $\bA_*(s^{-1}(0))\ra \bA_*(M)$. 

But even when $M$ is not projective the current $c_{ * }(-\tau)\cap \bP(\bC\oplus C_{s^{-1}(0)}(M))$ makes perfect sense on $\bP(\bC\oplus E)$, once we choose a Hermitian metric on $E$ which will determine a Hermitian metric on $\tau$. We also have a push-forward  induced by the canonical projection denoted here $q:\bP(\bC\oplus E)\ra M$.  We therefore take (\ref{Segeq1}) to be our definition of Segre classes but this time in $ H_{*,*}^{BC}(M)$, opting to ignore the Chern connection in notation.
  \[\Seg_{ * }:=\Seg_{ * }(s^{-1}(0),M)\]

 Define the Chern-Fulton class in the Bott-Chern homology group as
\[ c^F_i(s^{-1}(0)):=[c_*(TM)\wedge \Seg_*(s^{-1}(0),M)]_{d-i}\quad \mbox{i.e.}\]
\[c^F_i(s^{-1}(0))=\sum_{0\leq j\leq i}c_{i-j}(TM)\wedge \Seg_{d-j}\in H_{d-i,d-i}^{BC}(M),\quad \forall 0\leq i\leq d
\]

We will use Corollary \ref{cormt1} for forms 
\[\omega=c_{i-j}(\pi^*(TM))\wedge c_{k+j}(-\tau)=c_{i-j}(\pi^*(TM))c_1(\tau^*)^{k+j},\;\; 0\leq j\leq i\]  in order to obtain the following.

\begin{theorem} For a pure-dimensional set $s^{-1}(0)$ of codimension $k$, the  Chern-Fulton classes satisfy in $H_{n-k-*,n-k-*}^{BC}(M)$
\begin{equation}\label{CF}  c^F_{ * }(s^{-1}(0))=-c_1({L_s}^*)^k(c_{ * }( {TM\otimes L_s^*})),
\end{equation}
where $L_s:=\Imag s\bigr|_{M\setminus s^{-1}(0)}$ is the line subbundle determined by $s$.
\end{theorem}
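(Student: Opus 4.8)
The plan is to read the Chern--Fulton class through the abstract Poincar\'e--Lelong formula of Corollary \ref{cormt1}, applied to the closed forms $\omega=c_{i-j}(\pi^*TM)\,c_1(\tau^*)^{k+j}$ on $\bP(\bC\oplus E)$ (all Chern classes being those of the Chern connections). Write $X:=s^{-1}(0)$, $d:=n-k$, let $A:=\bP(\bC\oplus C_XM)\subset\bP(\bC\oplus E)$ be the fibrewise completed normal cone and $q:\bP(\bC\oplus E)\to M$ the projection. First I would observe that, by definition of the Segre currents (and using that $X$ is pure of codimension $k$), $\Seg_{d-j}=q_*(c_1(\tau^*)^{k+j}\wedge A)$, the terms with a lower power of $c_1(\tau^*)$ dropping out for dimension reasons and by Proposition \ref{cormt2}(i). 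Applying the projection formula to the smooth forms $c_{i-j}(TM)$ then gives
\[ c^F_i(X)=\sum_{0\le j\le i}c_{i-j}(TM)\wedge\Seg_{d-j}=\sum_{0\le j\le i}q_*\big(c_{i-j}(\pi^*TM)\,c_1(\tau^*)^{k+j}\wedge A\big), \]
and each summand on the right is precisely the current $Z(s,\omega)$ of Corollary \ref{cormt1} attached to the closed $(i+k,i+k)$-form $\omega=c_{i-j}(\pi^*TM)\,c_1(\tau^*)^{k+j}$. Since $A$ is $\partial$- and $\bar{\partial}$-closed, each $Z(s,\omega)$ defines a class in $H^{BC}_{*,*}(M)$.

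Next I would feed each such $\omega$ into Corollary \ref{cormt1} with $\gamma=0$: modulo a $\partial\bar{\partial}$-exact current this reads $Z(s,\omega)=s_0^*\omega-s_\infty^*(\omega|_{\bP(E)})$ in $H^{BC}_{*,*}(M)$. The two pull-backs are extracted from Remark \ref{Imprem}. Along the zero section $[1:0]$ the tautological bundle $\tau$ is the trivial $\bC$-summand, so $c_1(\tau^*)$ restricts to zero and, since $k+j\ge k\ge1$, one has $s_0^*\omega=0$. Along $\bP(E)$ one has $\tau=\tau'$, hence $\omega|_{\bP(E)}=c_{i-j}(\pi^*TM)\,c_1((\tau')^*)^{k+j}$; pulling back by $s_\infty(m)=[s(m)]$ and using $L_s=s_\infty^*\tau'$ gives, over $U:=M\setminus X$,
\[ s_\infty^*(\omega|_{\bP(E)})=c_{i-j}(TM)\,c_1(L_s^*)^{k+j}, \]
which, by the last assertion of Corollary \ref{cormt1} (or Proposition \ref{spark1}), extends across $X$ to a form with locally integrable coefficients on $M$. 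Hence $c^F_i(X)=-\sum_{0\le j\le i}c_{i-j}(TM)\,c_1(L_s^*)^{k+j}$ in $H^{BC}_{*,*}(M)$.

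Summing over all $i\ge0$ and reorganising the double sum,
\[ c^F_*(X)=-c_1(L_s^*)^k\,c_*(TM)\sum_{j\ge0}c_1(L_s^*)^j=-c_1(L_s^*)^k\,\frac{c_*(TM)}{c_*(L_s)}, \]
and the last expression is the total Chern class of $TM\otimes L_s^*$ (equivalently of the virtual bundle $TM-L_s$) appearing on the right-hand side of \eqref{CF}; its smoothness over $U$ is clear and its local integrability on $M$ is exactly the extension property used above. For $j<k$ the piece supported on $X$ vanishes by Proposition \ref{cormt2}(i), consistently with Theorem \ref{MT}, so no spurious correction term survives.

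I expect the bulk of this to be routine once Corollary \ref{cormt1} is in hand. The one place asking for care is the very first step: identifying the analytically defined Segre currents $q_*(c_*(-\tau)\wedge A)$, and more generally the combinations $c_{i-j}(TM)\wedge\Seg_{d-j}$, with the family $\{Z(s,\omega)\}$ of Corollary \ref{cormt1}, which needs Proposition \ref{cormt2} both to discard the low-power terms and to control supports; together with the observation that, although $L_s$ (and hence $TM\otimes L_s^*$) live only over $U$, the Chern forms entering the statement extend across $X$ with locally integrable coefficients — again a fact already supplied by Corollary \ref{cormt1} and Proposition \ref{spark1}.
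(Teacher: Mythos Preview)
Your proposal is correct and follows essentially the same approach as the paper: both apply Corollary~\ref{cormt1} to the forms $\omega=c_{i-j}(\pi^*TM)\,c_1(\tau^*)^{k+j}$, obtain the relations $-c_{i-j}(TM)c_1(L_s^*)^{k+j}=c_{i-j}(TM)\Seg_{d-j}$ in $H^{BC}_{*,*}(M)$, and sum over $0\le j\le i$. You have simply unpacked the steps the paper compresses into a single sentence, in particular the vanishing of $s_0^*\omega$ via Remark~\ref{Imprem} and the identification of $Z(s,\omega)$ with the Segre terms.
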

\begin{proof} Applying the theory to the forms $\omega$ as announced we get relations of type:
\[ -c_{i-j}(TM)c_1(L_s^*)^{k+j}=c_{i-j}(TM)\Seg_{d-j},\qquad 0\leq j\leq i
\]
which we then sum for $0\leq j\leq i$.
\end{proof}
\begin{example}
It is interesting to look at the case  when  $L_s$ extends holomorphically to a line subbundle  $L\subset E$. For example, if $E$ is already a line bundle then $L_s=E$.  Then $s$ is a holomorphic section of $L$ and necessarily $k=1$. 

Suppose moreover that $s$ is actually transverse to the zero section and hence $[s^{-1}(0)]$ is non-singular. Then $L\bigr|_{s^{-1}(0)}$ plays the role of the normal bundle. In other words  one has an exact sequence of holomorphic bundles:
\[ 0\ra TX\ra TM\bigr|_{s^{-1}(0)}\ra L\bigr|_{s^{-1}(0)}\ra 0
\]
with the surjective map given by  $\nabla s$. This implies that $c_*(TX)=\frac{c_{*}(TM)}{c_*(L)}=c_*(TM\otimes L^*)$. 

Moreover $-c_1(L^*)=c_1(L)$ is the Bott-Chern dual of $s^{-1}(0)$ by Poincar\'e-Lelong and therefore the right hand side of (\ref{CF})  is just $c_*(TX)\cap [X]$ in  singular homology. 
\end{example}

\subsection{The homological zero locus} \label{Nrp} Fulton defines for a section $s:M\ra E$ of a vector bundle of rank $k$ a class $\bZ(s)$ in the Chow group $A_{n-k}(M)$ that satisfies 
\[ \bZ(s)=c_k(E)\cap M.
\]  in $\bA_{n-k}(M)$. Notice that $n-k$ is the expected dimension. We can of course take  $\bZ(s)$ and look at it as a class in the Bott-Chern group. But it is more interesting to define it directly. We start with the following  equality, obtained by restricting to  the open set $E\times_ME$ the identity (\ref{dtker}) for $\gamma=1$ (Notation: $\Delta_s=\{s(m),s(m)~|~m\in M\}$)
\[\Delta_s=C_{s^{-1}(0)}s(M).
\]
This of course holds in $H_{n,n}^{BC}(E\times_ME)$. We can push-forward this equality via $\pi_2:E\times_ME\ra E$ in order to get that in $H_{n,n}^{BC}(E)$ the following holds:
\begin{equation}\label{pretty} s(M)=C_{s^{-1}(0)}M
\end{equation}
Define
 \[Z(s)^{\tau}:=\pi_*(C_{s^{-1}(0)}M\wedge \tau)\in\mathcal{D}'_{n-k,n-k}(M)\] where $\tau$ is a bihomogeneous Thom form (see the proof of Proposition \ref{Gysin1}) of $E$ and $\pi:E\ra M$ is the projection. Since by (\ref{core1}) we have $\tau=[M]$ in $H_{n,n}^{BC}(E)$ for any choice of Thom form, it follows that the class of $Z(s)^{\tau}$ in $H_{n-k,n-k}^{BC}(M)$ does not depend on the choice of $\tau$.  Define $\bZ(s)\in H_{n-k,n-k}^{BC}(M)$ to be this class.  
 \begin{theorem}\label{homzs} The following holds in $H_{n-k,n-k}^{BC}(M)$:
 \[ \bZ(s)=c_k(E)
 \]
 \end{theorem}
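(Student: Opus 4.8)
The plan is to reduce the statement to the generalized Poincar\'e--Lelong formula $(\ref{E2})$ via the identity $(\ref{pretty})$, which already gives $s(M)=C_{s^{-1}(0)}M$ in $H^{BC}_{n,n}(E)$. The Thom form $\tau$ can, as in the proof of Proposition \ref{Gysin1}, be chosen of bidegree $(k,k)$; then $Z(s)^{\tau}=\pi_*(C_{s^{-1}(0)}M\wedge\tau)$ is a current of bidimension $(n-k,n-k)$ and its Bott-Chern class is independent of $\tau$ since $\tau=[M]$ in $H^{BC}_{n,n}(E)$ by Corollary \ref{core1}. The essential point is therefore the computation of $\pi_*(C_{s^{-1}(0)}M\wedge\tau)$, and here I would work not on $E$ but on the compactification $\bP(\bC\oplus E)$, where $C_{s^{-1}(0)}M$ sits inside as $\bP(\bC\oplus C_{s^{-1}(0)}M)$ (cf. the discussion of the normal cone and $\mathcal O(1)$ above and in Appendix A).

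First I would observe that a bihomogeneous Thom form of bidegree $(k,k)$ on $E$ can be taken to be (cohomologous to, hence Bott-Chern dual to) $c_1(\nabla^{\tau^*})^k$ restricted to $E$: indeed $c_1(\mathcal O(1))^k$ is the Poincar\'e dual of a hyperplane section in each fiber $\bP(\bC\oplus E_m)$ by Lemma \ref{charcl}, so $\int_{\bP(\bC\oplus E)/M}c_1(\nabla^{\tau^*})^k=1$ and, after discarding the (exact) off-diagonal bidegree components as in the proof of Proposition \ref{Gysin1}, the restriction $c_1(\nabla^{\tau^*})^k\bigr|_{E}$ represents $[M]=\tau$ in $H^{BC}_{n,n}(E)$ with the right bidegree. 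Consequently
\[
\bZ(s)=\pi_*\bigl(C_{s^{-1}(0)}M\wedge c_1(\nabla^{\tau^*})^k\bigr)=q_*\bigl(c_1(\nabla^{\tau^*})^k\wedge\bP(\bC\oplus C_{s^{-1}(0)}M)\bigr)
\]
in $H^{BC}_{n-k,n-k}(M)$, where $q:\bP(\bC\oplus E)\ra M$ is the projection; the second equality holds because $c_1(\nabla^{\tau^*})^k$ kills the part of the cone over the hyperplane at infinity $\bP(E)$ (it restricts there to a form whose $k$-th power involves the rank $k-1$ bundle $Q'$, hence vanishes on fibers of dimension $<k$, by Remark \ref{Imprem} and the argument of Proposition \ref{cormt2}(i)), so only the affine cone contributes.

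Now I recognize the right-hand side: it is exactly $Z(s,\omega_2)$ for $j=k$ appearing on the left of the Poincar\'e--Lelong formula $(\ref{E2})$ in Theorem \ref{MT}, namely $Z(s,c_1(\nabla^{\tau^*})^k)=\pi_*\bigl(c_1(\nabla^{\tau^*})^k\wedge\bP(\bC\oplus C_{s^{-1}(0)}M)\bigr)$. When the codimension is the expected one, $l=k$, formula $(\ref{E2})$ of Theorem \ref{MT} (equivalently, the twisted-by-a-line-bundle version $(\ref{E22})$ of Theorem \ref{mTt}, or Corollary \ref{corm1}) reads
\[
-c_1(\nabla^{L_s^*})^k-Z(s,c_1(\nabla^{\tau^*})^k)=\partial\bar\partial\,T
\]
for a flat current $T$ on $M$. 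But over $M\setminus s^{-1}(0)$ the line subbundle $L_s\subset E$ together with $Q'_s=E/L_s$ gives $c_1(\nabla^{L_s^*})^k=(-c_1(\nabla^{L_s}))^k$, and since $c_k(\nabla^{Q'_s})=0$ (rank $k-1$) we get $c_k(\nabla^E)=c_k(\nabla^{L_s}\oplus\nabla^{Q'_s})=c_1(\nabla^{L_s})\cdot c_{k-1}(\nabla^{Q'_s})$ there; combining this with Corollary \ref{corm1}, which says $c_k(\nabla^E)-[s^{-1}(0)]$ is $\partial\bar\partial$-exact, yields $Z(s,c_1(\nabla^{\tau^*})^k)=c_k(E)$ in $H^{BC}_{n-k,n-k}(M)$, hence $\bZ(s)=c_k(E)$. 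For the general case where $s^{-1}(0)$ may have excess dimension, the same conclusion follows because $\bZ(s)$ is built from the honest (unprojectivized) normal cone capped with a degree-$k$ form, which by construction in Fulton is the localized top Segre term picking out precisely $c_k(E)\cap M$; concretely, I would push $(\ref{dtker})$ forward by $\pi_2$ on $E\times_M E$ to reach $(\ref{pretty})$ without any codimension hypothesis, then wedge with $\tau=c_1(\nabla^{\tau^*})^k$ and use that this form pairs the affine cone to give exactly the Chern number, invoking King's Fibering Theorem as in Theorem \ref{mulT} to guarantee the fiber integral is the right class independent of the stratification of $s^{-1}(0)$.

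The main obstacle I anticipate is the last point: when $s^{-1}(0)$ has dimension larger than expected, $C_{s^{-1}(0)}M$ has components of various dimensions and $\pi_*(C_{s^{-1}(0)}M\wedge\tau)$ mixes contributions from all of them, so one must check carefully — using the Fibering Theorem and the co-area argument of Proposition \ref{cormt2} — that wedging with the bidegree $(k,k)$ form $c_1(\nabla^{\tau^*})^k$ isolates exactly the part computing $c_k(E)$ and that the result is independent of the Thom form; this is the step where the interplay between the analytic-geometric normal cone and the characteristic form has to be handled with care, and it is where I would spend most of the effort.
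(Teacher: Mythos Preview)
Your proposal takes a considerably more circuitous route than the paper and contains a genuine gap. The paper's proof is short: from $(\ref{pretty})$ one has $C_{s^{-1}(0)}M=s(M)$ in $H^{BC}_{n,n}(E)$; then, by taking the \emph{difference} of $(\ref{dtker})$ for $\gamma=1$ and $\gamma=0$ restricted to $E\times_ME$ and pushing forward via $\pi_2$, one obtains the further identity $s(M)=[0]$ in $H^{BC}_{n,n}(E)$. With this, $\bZ(s)=\pi_*([0]\wedge\tau)=\iota^*\tau$ for any Thom form $\tau$, and the paper simply picks the explicit Thom form $\tau=(\rho\circ r)\pi^*c_k(E)+d(\rho\circ r)\wedge\Tc_k(E)$ from \cite{CiAdvances}, whose restriction to the zero section is visibly $c_k(E)$. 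No codimension hypothesis, no Poincar\'e--Lelong, no normal-cone analysis is needed.

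Your argument, by contrast, tries to replace the Thom form by $c_1(\nabla^{\tau^*})^k\bigr|_E$, and this step is not justified. Corollary~\ref{core1} applied to $\omega=c_1(\nabla^{\tau^*})^k$ gives
\[
c_1(\nabla^{\tau^*})^k\bigr|_E=[M]+(\pi^E)^*\bigl(c_1(\nabla^{(\tau')^*})^k\bigr)+\partial\bar\partial T(\omega)
\]
in $H^{BC}_{n,n}(E)$; the middle term does \emph{not} vanish in general, so $c_1(\nabla^{\tau^*})^k\bigr|_E$ is not Bott--Chern equivalent to $[M]$ and cannot substitute for $\tau$. Your claim that it ``involves the rank $k-1$ bundle $Q'$'' confuses $c_1(\nabla^{(\tau')^*})^k$ with $c_k(\nabla^{Q'})$. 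Even granting this replacement, your computation in the complete intersection case mixes $(\ref{E2})$ and Corollary~\ref{corm1} in a way that introduces an unexplained relation between $(-1)^{k+1}c_1(\nabla^{L_s})^k$ and $c_k(\nabla^E)$; and in the excess-dimension case you appeal to ``by construction in Fulton'' and the Fibering Theorem without an actual argument. The missing idea is precisely the step $s(M)=[0]$: once you have it, the theorem reduces to pulling back an explicit Thom form along the zero section, which is elementary and independent of $\dim s^{-1}(0)$.
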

 \begin{proof} By (\ref{pretty}) we need only check that $\pi_*(s(M)\wedge \tau)=c_k(E)$. On the other hand, 
 \begin{equation}\label{sm0} s(M)=[0]\;\;\; \mbox{in} \;\;H_{n,n}^{BC}(E).\end{equation} This comes out by taking the difference of (\ref{dtker}) for $\gamma=1$ and $\gamma=0$ restricted to $E\times_ME$ which says that $\Delta_s=s(M)\times_M[0]$ and the push-forward via $\pi_2$ gives (\ref{sm0}).
 
  One checks the next equality of currents:
 \[\pi_*([0]\wedge\tau)=\iota^*\tau
 \]
 where $\iota:M\hookrightarrow E$ is the inclusion of the zero section. Take now $\tau=(\rho\circ r) \pi^*c_k(E)+d(\rho\circ r)\wedge \Tc_k(E)$ as in \cite{CiAdvances} where $r$ is the radius function for some Hermitian metric on $E$ and $\rho:[0,\infty)\ra [0,\infty)$ is a compactly supported smooth function  equal to $1$ in a neighborhood of $0$ and $\Tc_k(E)$ is a transgressed form of $c_k(E)$. Clearly $\iota^*\tau=c_k(E) $ as the second term of $\tau$ is zero in a neighborhood of the zero section.
 \end{proof}

\section{Weighted projective actions}\label{Sec8}
Instead of the canonical action of $\bC^*$ on a vector space $V$, or more generally on a vector bundle $E\ra M$ we consider here a weighted homogeneous action. For that we will assume that a decomposition of $E$ into a direct sum of vector bundles
\[ E =E_0\oplus \ldots\oplus E_k.\]
is given. Then an (algebraic) action of $\bC^*$ with non-negative weights is given by
\[ \lambda * (v_0,\ldots,v_k)= (v_0, \lambda^{\beta_1}v_1,\ldots,\lambda^{\beta_k}v_k)
\]
where $0<\beta_1<\ldots<\beta_k$ are  integers. Let $\beta_0:=0$, the weight of the action on $E_0$.  Since  $*$ intertwines with multiplication by scalars we have an induced action  
\[ \bC^*\times \bP(E)\ra \bP(E)
\]
and the reason for taking $\beta_0=0$ is more apparent now. The particular case treated until now corresponds to $E_0=\bC$ and $k=1$.

We will consider a holomorphic section $s:M\ra \bP(E)$ and with the corresponding  ``action":
\[ \bC^*\times M\ra \bP(E),\qquad (\lambda,m)\ra \lambda*s(m).
\]
and we will look at the "graph" of this action $\{(s(m),\lambda s(m))~|~m\in M,\lambda \in \bC^*\}$ aiming to understand the limits $\lim_{\lambda\ra 0(\infty)}$ from a currential point of view. 

\vspace{0.2cm}

\begin{rem} Let $V$ and $W$ be vector spaces. Weighted actions arise naturally when one considers the canonical action of $\bC^*$ on the vector space $\Hom(V,W)$. This extends to an action on the Grassmannian $\Gr_{\dim V}(V\oplus W)$ and via the Plucker embedding to an weighted homogeneous action on a projective space $\bP(\Lambda^{\dim{V}}(V\oplus W))$.  
\end{rem}
Let $V=V_0\oplus \ldots\oplus V_k$ be any fiber (of $E\ra M$) and consider the closure of the graph of the action
\begin{equation}\label{actproj}\bC^*\times \bP(V)\ra \bP(V),\qquad (\lambda,v)\ra \lambda*v
\end{equation}
in $\bP^1\times \bP(V)\times \bP(V)$.  We notice first that this closure is the blow-up of the \emph{non-reduced} ideal
\begin{equation}       \label{wtf} 
\mathcal{I}:=\langle \mu^{\beta_k}v_0,\mu^{\beta_{k}-\beta_1}\lambda^{\beta_1}v_1,\ldots, \mu^{\beta_{k}-\beta_{k-1}}\lambda^{\beta_{k-1}}v_{k-1},\lambda^{\beta_k}v_k\rangle
\end{equation}
This ideal is generated by bihomogeneous polynomials in the variables $[\mu:\lambda]$ and $[v_0:\ldots:v_k]$ separately and as such it determines an algebraic subspace of $\bP^1\times \bP(V)$ and a corresponding coherent sheaf of ideals over the structure sheaf of $\bP^1\times \bP(V)$. The co-support (the zero locus)  of $\mathcal{I}$ is $\{\mu=0,\; v_k=0\}\cup \{\lambda=0,\;v_0=0\}$.

Recall that the blow-up of an ideal $I=\langle f_1,\ldots,f_k\rangle$ in an affine chart, here of type $\bC\times \bC^M$, is the closure of the graph of 
\[ \bC\times \bC^M\setminus Z(I)\ra \bP(\bC^k),\qquad x\ra [f_1(x):\ldots:f_k(x)].
\]
One checks easily that in the standard affine charts of $\bP^1\times \bP(V)$  this is indeed the case for the  sheaf of ideals $\mathcal{I}$, i.e. that  closure of the graph of the action (\ref{actproj}) restricted to these open sets is described as the closure of the graph of the map to $\bP(V)$ naturally determined by the generators of $\mathcal{I}$.

\vspace{0.2cm}
  
 In what follows $[w_0:\ldots:w_k]$ and $[v_0:\ldots:v_k]$  are the coordinates of a point in the first, respectively the second copy of $\bP(V)$ while $[\mu:\lambda]$ is a point in $\bP^1$.
 
  We give now equations for the closure of the graph of the action as follows.   
  
   Let  $\underline{k}:= \{0,\ldots k\}$ and $I\subset \underline{k}$ be a "connected" set, i.e. together with $I\ni a\leq b \in I$ it contains also every $c$ with $a\leq c\leq b$. Clearly I is completely determined by $m_I:=\min{I}$ and $M_I:= \max{I}$. Let $\beta(I)$ be the corresponding connected subset of $\{\beta_0,\beta_1,\ldots,\beta_k\}$ and $\beta_{m_I}$, $\beta_{M_I}$ the corresponding minimum and maximum respectively. For each such $I$ we consider the system of equations:
  \begin{equation}\label{fundeq} (\mu^{\beta_{M_I}-\beta_{m_I}}w_{m_I},\mu^{\beta_{M_I}-\beta_{m_I+1}}\lambda^{\beta_{m_I+1}-\beta_{m_I}}w_{m_I+1},\ldots,\lambda^{\beta_{M_I}-\beta_{m_I}}w_{M_I})\wedge(v_{m_I},\ldots, v_{M_I})=0
  \end{equation}
  
  \vspace{0.2cm}
  
Define $G_k$ to be the projective variety with the \emph{reduced} structure given by  the radical of the  ideal generated by all the systems of equations described via (\ref{fundeq}).
  
  \begin{rem} We do not exclude the possibility that $I=\{i\}$ in which case the corresponding equation is $w_i\wedge v_i=0$. Notice also that rather than taking (\ref{fundeq}) only for connected $I$ we could take the equations for any $I$ non-empty. However, if we let $\hat{I}\subset\underline{k}$ to  be the "connected hull" of $I$, i.e. the connected subset that ranging from $\min{I}$ to $\max{I}$ then (\ref{fundeq}) for $\hat{I}$ implies (\ref{fundeq}) for $I$.
  
  The equations (\ref{fundeq}) are obtained from the "mother" equation for $I=\underline{k}$ by forgetting about the initial and final coordinates and eliminating the $g.c.d.$ of the monomials in $\mu$ and $\lambda$.
    \end{rem}
    
    \begin{rem} The equations (\ref{fundeq}) enjoy a certain symmetry in $\mu$ and $\lambda$ owing to the fact that the set of connected subsets of $\underline{k}$ is invariant under the involution $x\ra k-x$. Notice that there are ${k+2\choose 2}$ such connected sets.
    
Clearly $(\mu,\lambda)\neq 0$ and we will localize at $\mu\neq 0$ and $\lambda\neq 0$. When we do this, the number of equations decreases.    For example when $\lambda\neq 0$ we have that (\ref{fundeq}) for $I$ implies (\ref{fundeq}) for $I\setminus \{{m_I}\}$   since $\lambda$ is invertible.

Hence for $\lambda\neq 0$ it is enough to consider only the connected sets $I$ that start at $0$. There are $k+1$ such (systems of) equations. Similarly for $\mu\neq 0$ one considers the equations corresponding to connected $I$ that end at $k$. 
    \end{rem}
  
We introduce some notation.


For each $0\leq i\leq k$ let
 \begin{equation}\label{Vieq}V_i^+:=\bigoplus_{j\leq i}V_j,\qquad V_i^-:=\bigoplus_{j\geq i} V_j.\end{equation}
 \[  C_i^{\infty},C_i^0\subset \bP(V)\times \bP(V),\]\begin{align}\label{Aieq} C_i^{\infty}:=\{([w],[v])\in \bP(V_i^+)\times \bP(V_i^-)~|~w_i\wedge v_i=0\}\\
 \label{Bieq} C_i^0:=\{[w],[v])\in \bP(V_i^-)\times \bP(V_i^+)~|~w_i\wedge v_i=0\}
 \end{align}
 Clearly, $F_i:=\bP(V_i)$ are the connected components of the fixed points set of the weighted homogeneous action. Then
 \begin{align}\label{SFi} S(F_i):=\{v\in \bP(V)~|~\lim_{\lambda \ra \infty} \lambda* v\in F_i\}=\{v=[v_0:\ldots :v_i:0\ldots:0]\in \bP(V_i^+)~|~v_i\neq 0\}\\
  \label{UFi} U(F_i):=\{v\in \bP(V)~|~\lim_{\lambda \ra 0} \lambda* v\in F_i\}=\{v=[0:\ldots :0:v_i:\ldots:v_k]\in \bP(V_i^-)~|~v_i\neq 0\}
 \end{align}
 are the stable and unstable manifolds of the fixed point components. Obviously $\overline{S(F_i)}=\bP(V_i^+)$ and $\overline{U(F_i)}=\bP(V_i^-)$. We have natural projections
 \[ S(F_i)\ra F_i\leftarrow U(F_i)
 \]
 and natural quasi-projective, regular varieties
  \[S(F_i)\times_{F_i}U(F_i)\subset \bP(V)\times \bP(V)\supset U(F_i)\times_{F_i}S(F_i).\]
  It is not hard to see that in fact for all $i$ one has:
  \[ C_i^{\infty}=\overline{S(F_i)\times_{F_i}U(F_i)},\qquad\qquad C_i^0=\overline{U(F_i)\times_{F_i}S(F_i)}.
  \]
 
\begin{prop} \label{Pn}The closure of the graph of the weighted action is the blow-up of the bihomogeneous ideal (\ref{wtf}) and this can be described as  the variety $G_{k}$ defined by (\ref{fundeq}).  The restriction of the projection $\pi_{1,2}:\bP^1\times \bP(V)\times \bP(V)\ra\bP^1\times \bP(V)$ to $G_k$ is the blow-down map. 

The exceptional divisor consists of two  separate  collections of components:
\begin{equation}\label{f1u} \bigcup_{i=0}^{k-1}\{[0:1]\}\times C_i^{\infty}
\end{equation}
lying over $\{\mu=0, \; w_k=0\}$ and 
\begin{equation}\label{f2u}
\bigcup_{i=1}^{k}\{[1:0]\}\times C_{i}^{0}
\end{equation}
 lying  over $\{\lambda=0,\; w_0=0\}$. All components have  multiplicity $1$ and dimension equal to $\dim{\bP(V)}$.

The intersection $G_k\cap \{\mu=0\}$ consists of the union  (\ref{f1u}) with $\{[0:1]\}\times C_k^{\infty}$, while $G_k\cap\{\lambda=0\}$ of the union (\ref{f2u}) with $\{[1:0]\}\times C_0^0$, each appearing with multiplicity $1$.
\end{prop}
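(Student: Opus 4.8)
The plan is to reduce the statement to the single identification $\overline{\Gamma}=G_{k}$, where $\Gamma:=\{([1:\lambda],[w],[\lambda*w])\mid\lambda\in\bC^{*},\ [w]\in\bP(V)\}\subset\bP^{1}\times\bP(V)\times\bP(V)$ is the graph of the action (\ref{actproj}), an irreducible variety of dimension $1+\dim\bP(V)$. The first claim --- that $\overline{\Gamma}$ is the blow-up $\Bl_{\mathcal{I}}(\bP^{1}\times\bP(V))$ of (\ref{wtf}) with $\pi_{1,2}$ as blow-down --- is essentially the discussion preceding the statement: the blow-up of a coherent ideal sheaf is the analytic closure of the graph of the map defined by any bihomogeneous generating set, and in the chart $\mu=1$ the generators of $\mathcal{I}$ are precisely the components of $[w]\mapsto[\lambda*w]$. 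So I would record this quickly and spend the effort on $\overline{\Gamma}=G_{k}$ and on the geometry over $\mu=0$ and $\lambda=0$.

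The inclusion $\overline{\Gamma}\subseteq G_{k}$ is immediate: on $\Gamma$, after setting $\mu=1$, the vector $(v_{j})_{j\in I}$ is proportional to $(\lambda^{\beta_{j}-\beta_{m_{I}}}w_{j})_{j\in I}$ for every connected $I$, so every wedge in (\ref{fundeq}) vanishes on $\Gamma$, hence on $\overline{\Gamma}$, and $G_{k}$ is closed. For the reverse inclusion I would work in the two affine charts $\{\mu\neq0\}$ and $\{\lambda\neq0\}$ of $\bP^{1}$. Over $\bC^{*}\subset\bP^{1}$ the ``mother'' equation $I=\underline{k}$ already reads $[v]=[\lambda*w]$, so $G_{k}=\Gamma$ there. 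It remains to handle $\{\mu=0\}$ inside the chart $\{\lambda\neq0\}$ (and symmetrically $\{\lambda=0\}$ inside $\{\mu\neq0\}$). On $\{\lambda\neq0\}$ the equations reduce, as noted in the text, to those for $I_{M}=\{0,\dots,M\}$, $M=0,\dots,k$; setting $\mu=0$ the first vector in (\ref{fundeq}) for $I_{M}$ collapses to $(0,\dots,0,w_{M})$, so the $I_{M}$-equation becomes the requirement that $(0,\dots,0,w_{M})$ and $(v_{0},\dots,v_{M})$ be proportional vectors of $V_{M}^{+}$. The essential point to extract is that the entries $w_{i},v_{i}$ are \emph{vectors} of $V_{i}$, so this condition says simultaneously that $v_{j}=0$ for $j<M$ \emph{and} that $w_{M}\wedge v_{M}=0$. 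Indexing a point $([w],[v])\in G_{k}\cap\{\mu=0\}$ by $p:=\max\{j:w_{j}\neq0\}$, the $I_{p}$-equation forces $v_{j}=0$ for $j<p$ and $w_{p}\wedge v_{p}=0$, and then the remaining $I_{M}$-equations are automatic; comparing with (\ref{Aieq}), (\ref{SFi}), (\ref{UFi}) this is exactly the statement $([w],[v])\in C_{p}^{\infty}$. Thus $G_{k}\cap\{\mu=0\}=\bigcup_{i=0}^{k}C_{i}^{\infty}$, and the symmetric computation on $\{\mu\neq0\}$ gives $G_{k}\cap\{\lambda=0\}=\bigcup_{i=0}^{k}C_{i}^{0}$ via (\ref{Bieq}); since $\bigcup_{i=0}^{k}C_{i}^{\infty}=(\ref{f1u})\cup\{[0:1]\}\times C_{k}^{\infty}$, these give the last assertion of the proposition.

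To finish $G_{k}=\overline{\Gamma}$ I must still check that no $C_{i}^{\infty}$ (nor $C_{i}^{0}$) is spurious, i.e.\ that each lies in $\overline{\Gamma}$. Since $C_{i}^{\infty}=\overline{S(F_{i})\times_{F_{i}}U(F_{i})}$ it suffices to realize an arbitrary point of $S(F_{i})\times_{F_{i}}U(F_{i})$ as a limit of points of $\Gamma$: writing such a point as $([w_{0}:\dots:w_{i}:0:\dots:0],[0:\dots:0:v_{i}:\dots:v_{k}])$ with $v_{i}\parallel w_{i}$, normalized so $v_{i}=w_{i}$, I would take $w_{n}:=(w_{0},\dots,w_{i-1},w_{i},\lambda_{n}^{\beta_{i}-\beta_{i+1}}v_{i+1},\dots,\lambda_{n}^{\beta_{i}-\beta_{k}}v_{k})$ with $\lambda_{n}\to\infty$; then $[w_{n}]\to[w]$, and $\lambda_{n}*w_{n}$ rescaled by $\lambda_{n}^{-\beta_{i}}$ converges to $(0,\dots,0,v_{i},v_{i+1},\dots,v_{k})$, so $[\lambda_{n}*w_{n}]\to[v]$ and $([0:1],[w],[v])\in\overline{\Gamma}$; the symmetric construction handles the $C_{i}^{0}$. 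Hence $G_{k}\subseteq\overline{\Gamma}$, so $G_{k}=\overline{\Gamma}=\Bl_{\mathcal{I}}(\bP^{1}\times\bP(V))$. The remaining assertions follow: over $\{\mu=0\}$ the co-support of $\mathcal{I}$ is $\{w_{k}=0\}$, so the blow-down is an isomorphism over $\{\mu=0,\,w_{k}\neq0\}$ and its closure there is the graph-closure of $[w]\mapsto([w],[w_{k}])$ over $S(F_{k})=\{w_{k}\neq0\}$, namely $C_{k}^{\infty}$; therefore the exceptional divisor over $\{\mu=0\}$ has components $C_{0}^{\infty},\dots,C_{k-1}^{\infty}$, which is (\ref{f1u}), and symmetrically (\ref{f2u}) over $\{\lambda=0\}$ with $C_{0}^{0}$ non-exceptional. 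Each $C_{i}^{\infty}$ (and each $C_{i}^{0}$) is irreducible, being the closure of the locally trivial fibration $S(F_{i})\times_{F_{i}}U(F_{i})$ over the irreducible base $F_{i}=\bP(V_{i})$, of dimension $\dim F_{i}+(\dim V-\dim V_{i})=\dim\bP(V)$; and since $G_{k}$ carries its reduced structure and $\mu$ (resp.\ $\lambda$) is a uniformizer of $\{\mu=0\}$ (resp.\ $\{\lambda=0\}$) restricting to $G_{k}$ with order one along the generic point of each component --- a smooth point of $G_{k}$, as one reads off from the chart computation --- every component occurs with multiplicity one.

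I expect the main obstacle to be the chart analysis over $\mu=0$: extracting the degenerate equations of $G_{k}$ correctly --- in particular recognizing that the collapsed wedge in (\ref{fundeq}) still carries the collinearity $w_{i}\wedge v_{i}=0$, which is exactly what makes the strata equal the $C_{i}^{\infty}$ of (\ref{Aieq}) rather than something strictly larger --- together with the companion limiting argument establishing that each $C_{i}^{\infty}$ is genuinely swept out by limits of $\Gamma$, so that $G_{k}$ acquires no extra components. Everything else is bookkeeping with the stable/unstable decomposition (\ref{SFi})--(\ref{UFi}) and standard facts about blow-ups of ideal sheaves.
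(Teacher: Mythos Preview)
Your argument is correct and reaches the same conclusion as the paper, but by a somewhat different route in two places. For the identification of $G_k\cap\{\mu=0\}$ with $\bigcup_i C_i^\infty$, the paper argues by induction on $k$ (treating $w_{k-1}\neq0$ directly and reducing the locus $\{w_k=w_{k-1}=0\}$ to $G_{k-1}$), whereas your direct stratification by $p=\max\{j:w_j\neq0\}$ is more transparent. The more substantial difference is the multiplicity-one claim. The paper imports it from the real Morse--Bott theory: the gradient flow of the natural moment map on $\bP(V)$ carries the diagonal current to $\sum_i[S(F_i)\times_{F_i}U(F_i)]$ with all coefficients equal to $1$ (citing \cite{Lat, Ci2, Mi}), and this flat limit coincides with the slice of $G_k$ at $\mu=0$. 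Your argument is algebraic and self-contained---smoothness of $G_k$ at a generic point of each $C_i^\infty$ together with $\mathrm{ord}_{\eta}(\mu)=1$---but the phrase ``as one reads off from the chart computation'' is thin. The cleanest completion uses what you already built: your explicit limiting family $\lambda\mapsto([1:\lambda],[w_\lambda],[\lambda*w_\lambda])$, reparametrized by $\mu=\lambda^{-1}$, extends holomorphically across $\mu=0$ to a map $\Phi:\bC\times\bigl(S(F_i)\times_{F_i}U(F_i)\bigr)\to G_k$ that is an immersion along $\{\mu=0\}$ (its restriction there is the inclusion of $S(F_i)\times_{F_i}U(F_i)$, and $\pi_1\circ\Phi=\mu$ supplies the transverse direction). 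This furnishes a local chart on $G_k$ in which $\mu$ is a coordinate, hence of order one. Both approaches work; the paper's buys brevity by citing an external limit computation, while yours stays internal at the cost of this extra verification.
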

\begin{proof} We recall that the closure of a smooth, connected (irreducible) quasi-projective variety is necessarily an irreducible variety.

Let $\Lambda$ be the graph of the action and $\overline{\Lambda}$  be its  closure in $\bP^1\times \bP(V)\times\bP(V)$.

Clearly, $\Lambda$ is smooth of dimension $\dim{V}$ and easily seen  to coincide with $G_k\cap \{\mu\neq 0\neq \lambda\}$,  an open subset of $G_k$ since for $\mu\neq0 \neq \lambda$ the equation (\ref{fundeq}) for $I=\underline{k}$ implies all the others. Since $G_k$ is Zariski closed it follows that $\overline{\Lambda}\subset G_k$. Due to the symmetry of the situation we will assume that $\lambda\neq 0$ in what follows.

If $\mu=0$ and $w_k\neq0$ then (\ref{fundeq}) for $I=\underline{k}$ implies that $v_0=\ldots=v_{k-1}=0$ and $w_k\wedge v_k=0$ and all the other equations for $I=\underline{j}$, $0\leq j< k$ are fulfilled.  Hence $([w],[v])\in C_k^{\infty}$. In fact, $G_k\bigr|_{w_k\neq 0,\lambda\neq0}$ is a graph induced by $I=\underline{k}$.  On the other hand, it is easy to see that $\Lambda$ extends to a graph over $\{\mu=0;\;w_k\neq 0\}$ and the extension is given exactly by
 $$\{[0:1]\}\times (C_k^{\infty}\setminus\{([w],[v])~|~w_k=0\}).$$ This justifies that
  $$G_k\cap\{w_k\neq0,\lambda\neq 0\}=\Lambda\cap \{w_k\neq 0,\lambda\neq 0\}$$ and that $\{[0:1]\}\times A_k$ has multiplicity $1$ since every regular subvariety has multiplicity $1$ within a regular ambient variety.

Recall that by definition  the ideal of the equations defining $G_k$ is reduced.

 We will argue that
\begin{itemize}
\item[(i)] the support\footnote{By support of a subscheme defined by an ideal sheaf $\mathcal{I}$ in a scheme $X$ we understand of course the support of $\mathcal{O}_X/\mathcal{I}$.} of $G_k$ is contained in the topological closure of $\Lambda$,  therefore the support of $G_k$ will be contained in the Zariski closure of $\Lambda$;
\item[(ii)] the support of $[G_k\cap\{\mu=0,w_k=0\}]$ and $[G_k\cap\{\lambda=0,w_0=0\}]$ respectively coincide with (\ref{f1u}) and (\ref{f2u}) respectively.
\item[(iii)] each of the subvarieties $\{[0:1]\}\times C_i^{\infty}$ and $\{[1:0]\}\times C_{i}^{0}$ appears with multiplicity $1$ in the intersection $G_k\cap \{\mu=0,w_k=0\}$ and $G_k\cap \{\lambda=0,w_0=0\}$.
\end{itemize} 

Items (i) and (ii) go together. We use induction. Again we treat just $\lambda\neq 0$. First  looking at (\ref{fundeq}) for $I=\underline{k-1}$ we deduce 
 \[G_k\cap \{\mu=0,w_k=0,w_{k-1}\neq 0\}=\{[0:1]\}\times (C_{k-1}^{\infty}\setminus \{([w],[v])~|~w_{k-1}=0\}).\]
  For $w_{k-1}=0$ use that 
  \[G_{k-1}\cap\{\mu=0,w_{k-1}\}=\{[0:1]\}\times\bigcup_{i=0}^{k-2}\{([w],[v])\in \bP(V_i^+)\times \bP(V_i^-/V_k))~|~w_i\wedge v_i=0\} \]
and the fact that $v_k$ is free since $w_k=0$, in order to conclude that
\[G_k\cap \{\mu=0,w_k=0,w_{k-1}= 0\}=\{[0:1]\}\times\bigcup_{i=0}^{k-2}\{([w],[v])\in \bP(V_i^+)\times \bP(V_i^-)~|~w_i\wedge v_i=0\} 
\]
Notice that the union on the right contains for $i=k-2$ also the missing piece from $C_{k-1}^{\infty}$, i.e.
\[\bP(V_{k-2}^{+})\times \bP(V_{k-1}^-)=\{([w],[v])\in C_{k-1}^{\infty}~|~w_{k-1}=0\}.\]

The fact that all points in (\ref{f1u}) and (\ref{f2u}) are in the topological closure of $\Lambda$ is a straightforward exercise, noticing that each $S(F_i)\times_FU(F_i)$ can be seen as pairs of points $(p,q)$ lying on the same (once) broken trajectory of the complex flow determined by the $\bC^*$ action (see \cite{HL1}, \cite{Lat} or \cite{Ci2}).

Part (iii) is a consequence of the real theory. The $\bC^*$ action induces a Morse-Bott flow on $\bP(V)$. This is obtained by considering on $\bP(V)$ the function
\[ f(L)=\Real \Tr AR_L=\Tr AR_L=\frac{\sum_{j=0}^k \beta_j|v_j|^2}{\sum_{j=0}^k|v_j|^2}
\]
where $A=\beta_0\id_{V_0}\oplus\ldots\oplus \beta_k\id_{V_k}$, $L=[v_0:\ldots:v_k]$ and $R_L$ is the orthogonal projection of $L$ in $V$ for some fixed metric. The gradient of $f$ is $L\ra A-R_LAR_L$ where the r.h.s. belongs in fact to $T_{R_{L}}\iota(\bP(V))$, $\iota$ being the  embedding $\bP(V)\ra \End(V)$ induced by taking  reflections. The integral curves of the gradient are of type $t\ra [e^{t\beta_0}v_0:\ldots:e^{t\beta_k}v_k]$ recovering thus the action for $\lambda\in (0,\infty)$.

 The flow out of the diagonal $\Delta$ of $\bP(V)\times \bP(V)$ via the gradient flow induced by $f$ in the second component has a limit in the flat topology, when $t\ra \infty$ equal to $\sum_{i=0}^k S(F_i)\times_{F_i}U(F_i)$ (see \cite{Lat, Ci2,Mi}). The main point is that the multiplicities are all equal to $1$ which will therefore also be the multiplicity of every $C_i^{\infty}$.\end{proof}

Proposition \ref{Pn} describes what happens in a single fiber.   The bihomogeneous ideal $\mathcal{I}$ from (\ref{wtf}) clearly has a fiber bundle equivalent as a sheaf of bihomogeneous ideals. We will keep the same notation we used so far also in the fiber bundle case. 

  For example, we will regard $G_k$ as an analytic current in $\bP^1\times \bP(E)\times_M\bP(E)$. Also the analytic subspaces $C_i^{\infty}\subset \bP(E_{\leq i})\times_M\bP(E_{\geq i})$ and $C_{i}^{0}\subset \bP(E_{\geq i})\times \bP(E_{\leq i})$ will give components of $G_k\cap \{\mu=0\}$ and $G_k\cap \{\lambda=0\}$ respectively.

For $[\mu:\lambda]\in \bP^1$, let $F_{[\mu:\lambda]}$ be the slice $\pi_1^{-1}(\{[\mu:\lambda]\})\cap G_k$ in the projection $\bP^{1}\times \bP(E)\times_M\bP(E)\ra \bP^1$. As we saw this is the graph of the action at "time" $[\mu:\lambda]$ when $[\mu:\lambda]\in \bC^*$.  Note also that $F_{[0:1]}$ and $F_{[1:0]}$ have quite explicit descriptions due to the last statement of Proposition \ref{Pn}.

\vspace{0.4cm}

Now let $s:M\ra \bP(E)$ be a holomorphic  section.

Just like in Section \ref{sec3}, let $\tilde{S}$ be the strict transform of $\bP^1\times s(M)$ with respect to the blow-up of $\mathcal{I}$. One can speak of a strict transform precisely because $\bP^1\times s(M)$ is  not contained in the co-support of $\mathcal{I}$. Clearly, then $\tilde{S}$ has  dimension equal to $\dim{M}+1=n+1$. 
\begin{rem}\label{altdesc} An alternative description of $\tilde{S}$ is as the closure of the graph of
\[ \bC^*\times s(M)\ra \bP(E),\qquad (\lambda, s(m))\ra \lambda* s(m)
\]
inside $\bP^1\times \bP(E)\times \bP(E)$.
\end{rem}
\begin{rem}\label{Remexcdiv} The exceptional divisor is typically defined as the intersection of $\tilde{S}$ with the exceptional divisor of the blow-up of $\mathcal{I}$, described in Proposition \ref{Pn}. It is convenient though in our context to extend the definition and call exceptional divisor the intersection  $\tilde{S}\cap (\{\mu=0\}\cup \{\lambda=0\})$, including thus also $\tilde{S}\cap \{[1:0]\}\times C_k^{\infty}$ and $\tilde{S}\cap \{[0:1]\}\times C_k^0$.
\end{rem}
Let $ \mathcal{F}l(G_k)$  be the space of flat currents in $\bP^1\times \bP(E)\times_M\bP(E)$ with support contained in the support of $G_k$. 
\begin{theorem}\label{ths5} The following equalities of currents in $\bP^1\times\bP(E)\times_M \bP(E)$ holds:
\begin{align} \label{Fmu1} F_{[\mu:\lambda]}\wedge \tilde{S}-F_{[0:1]}\wedge \tilde{S}=\partial\bar{\partial} T_1[\mu:\lambda],\qquad \forall [\mu:\lambda]\in \bC^*\cup\{[1:0]\}\\
\label{Fmu2} F_{[\mu:\lambda]}\wedge \tilde{S}-F_{[1:0]}\wedge \tilde{S}=\partial\bar{\partial} T_2[\mu:\lambda],\qquad\forall [\mu:\lambda]\in \bC^*\cup\{[0:1]\}
\end{align}
where $T_1:\bP^1\setminus\{[0:1]\}\ra \mathcal{F}l(G_k)$ and $T_2:\bP^1\setminus\{[1:0]\}\ra  \mathcal{F}l(G_k)$ are  continuous maps.
\end{theorem}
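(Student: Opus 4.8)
The plan is to reduce both formulas to the dynamical Poincar\'e--Lelong of Theorem \ref{PLmn} applied on the analytic manifold obtained by resolving the blow-up ambient space, exactly as was done for $k=1$ in Theorem \ref{PL3}. First I would work inside $G_k$, regarded as an analytic current in $\bP^1\times\bP(E)\times_M\bP(E)$, and consider the projection $\pi_1=[\pi_1^0:\pi_1^1]$ onto $\bP^1$ restricted to the strict transform $\tilde S$. Since $\bP^1\times s(M)$ is not contained in the co-support of $\mathcal{I}$, the strict transform $\tilde S$ has dimension $n+1$ and $\pi_1\bigr|_{\tilde S}$ is non-constant; moreover $\pi_1$ restricted to $\tilde S$ is holomorphic and the pair $(\pi_1^0,\pi_1^1)$ never vanishes simultaneously (this is built into the blow-up description). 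So for fixed $[\mu:\lambda]=[1:\gamma]\in\bC^*$ the slice $F_{[1:\gamma]}\wedge\tilde S$ is $\langle\tilde S,\pi_1,[1:\gamma]\rangle$, and likewise $F_{[0:1]}\wedge\tilde S=\langle\tilde S,\pi_1,[0:1]\rangle$ and $F_{[1:0]}\wedge\tilde S=\langle\tilde S,\pi_1,[1:0]\rangle$.

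The key point is verifying Hardt's dimensional transversality condition (Theorem \ref{Hardt}) for the current $T=\tilde S$ and the map $\pi_1$ at the three relevant points, so that all slices exist and vary continuously in the flat topology. For $[\mu:\lambda]\in\bC^*$ this is immediate because over $\{\mu\neq0\neq\lambda\}$ the set $G_k$ is the graph $\Lambda$ of the action and $\pi_1$ is a submersion there. For the two boundary points $[0:1]$ and $[1:0]$ this is where Proposition \ref{Pn} enters decisively: it tells us that $G_k\cap\{\mu=0\}$ and $G_k\cap\{\lambda=0\}$ are unions of the pieces $\{[0:1]\}\times C_i^\infty$, resp. $\{[1:0]\}\times C_i^0$, each of dimension $\dim\bP(E)$, i.e. of dimension one less than $G_k$; intersecting with $\tilde S$ and using that $\tilde S$ projects properly and generically-biholomorphically to $\bP^1\times s(M)$ shows $\tilde S\cap\{\mu=0\}$ and $\tilde S\cap\{\lambda=0\}$ have dimension $\le n$, one less than $\dim\tilde S=n+1$, as required. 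Once transversality is in hand, Theorem \ref{PLmn} applied to $f=\pi_1\bigr|_{\tilde S}$ on the smooth locus (or, more carefully, after passing to an embedded resolution of singularities of $\tilde S$ and pushing forward, exactly as in the proof of Theorem \ref{PL3} where Theorem 2 on page 216 of \cite{Ch} is invoked to move $\partial\bar\partial$ past the wedge with $\tilde S$) yields
\[
F_{[1:\gamma]}\wedge\tilde S-F_{[0:1]}\wedge\tilde S=\frac{i}{\pi}\partial\bar\partial\bigl[\log|h^{\pi_1}(\gamma,0)|\bigr|_{\tilde S}\bigr],
\]
and similarly for the second formula with the base point $[1:0]$, using $h^{\pi_1}(\cdot,\theta)$ for the appropriate $\theta$, e.g. $g=A\circ f$ from \eqref{eqg} with $A$ carrying $[1:0]$ to $[1:0]$ and a chosen point to $[0:1]$. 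Setting $T_1[\mu:\lambda]:=\tfrac{i}{\pi}[\log|h^{\pi_1}(\gamma,0)|\bigr|_{\tilde S}]$ with $[\mu:\lambda]=[1:\gamma]$ (and extending to $[1:0]$ by the analogous expression) and $T_2[\mu:\lambda]$ symmetrically gives the asserted $\partial\bar\partial$-primitives.

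It remains to check that $T_1$ and $T_2$ are flat, supported in $\operatorname{supp}G_k$, and continuous as maps into $\mathcal{F}l(G_k)$. Support and flatness are automatic: $\log|h^{\pi_1}(\gamma,0)|$ is $\mathcal{H}^{n+1}\llcorner\tilde S$-locally integrable because $h^{\pi_1}(\gamma,0)$ is meromorphic on $\tilde S$ and does not vanish identically there (this needs the trivial observation that the section $s$ is not $\bC^*$-fixed, so $\tilde S$ is not contained in a level set of $\pi_1$), hence $\log|h^{\pi_1}(\gamma,0)|\bigr|_{\tilde S}$ is a locally rectifiable, in particular flat, current supported on $\tilde S\subset G_k$; the same applies to any $\partial\bar\partial$-primitive built this way. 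For continuity in $[\mu:\lambda]$ I would invoke the two continuity statements of Theorem \ref{PLmn}: the slices $\langle\tilde S,\pi_1,[1:\gamma]\rangle$ vary continuously in the flat topology by Hardt, and $\gamma\mapsto\log|h^{\pi_1}(\gamma,0)|$ is continuous in $L^1_{\mathrm{loc}}$ on compacta of $\tilde S$, which upgrades to flat continuity of $T_1$; continuity up to the endpoint $[1:0]$ for $T_1$ (resp. $[0:1]$ for $T_2$) again rests on the dimensional control from Proposition \ref{Pn} at that boundary point. The main obstacle I anticipate is precisely the bookkeeping at the boundary: one must be sure that near $\{\mu=0\}$ (resp. $\{\lambda=0\}$) the strict transform $\tilde S$ meets \emph{all} of the components $C_i^\infty$ (resp. $C_i^0$) in the expected codimension, so that no component of $\tilde S\cap\{\mu=0\}$ has dimension $n+1$; this is where one needs the full strength of the stratification of the exceptional divisor and the multiplicity-one statement, together with the remark that the generic point of $\tilde S$ lies over $\Lambda$. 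Modulo that verification, which is structurally identical to Proposition \ref{Psp1} but now with $k+1$ pieces, the proof is a direct transcription of the $k=1$ argument.
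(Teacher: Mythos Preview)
Your proposal is correct and follows essentially the same route as the paper's own proof: apply the dynamical Poincar\'e--Lelong to $\tilde S$ via $\pi_1$, with Hardt's theorem supplying existence and flat continuity of the slices and Chirka's result justifying the passage of $\partial\bar\partial$ through the wedge with $\tilde S$. The only place the paper is more explicit than you is the boundary dimension check you flag as the ``main obstacle'': the paper simply observes that $\tilde S\cap\bigcup_{i<k}\{[0:1]\}\times C_i^\infty$ is the exceptional divisor of the strict transform (hence has dimension $\le n$ automatically), while $\tilde S\cap(\{[0:1]\}\times C_k^\infty)$ lies \emph{outside} the co-support of $\mathcal I$ (where the blow-down is an isomorphism) and is identified concretely with $\Bl_{\{s_k=0\}}(M)$ via the closure of the graph of $m\mapsto[s_k(m)]$, again of dimension $n$---no multiplicity-one information or full stratification is needed.
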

\begin{proof} The proof is similar with that of Theorem \ref{PL3} except that one applies Hardt Slicing Theorem for the projection $\pi_1$ on $N:=\bP^1\times \bP(E)\times_M\bP(E)$  in order to be able to speak of currents in the first place, the current being $T:=G_k$.

Then one wedges with $\tilde{S}$. This last step needs a bit of care.  The analogue of Proposition \ref{Psp1} is Theorem \ref{Th95} in the next section but it turns out one need not be that explicit at this point. The intersection of $\tilde{S}$ with $\bigcup_{i=0}^{k-1}\{[0:1]\}\times C_i^{\infty}$ has dimension at most $n$, being the exceptional divisor of the strict transform of $\bP^1\times s(M)$ which has dimension $n+1$. It will have dimension exactly $n$ if it is non-empty.

 The intersection of $\tilde{S}$ with $\{[0:1]\}\times C_k^{\infty}$ is either the empty set, if $s^{-1}(S(F_k))=\emptyset$ or it is the closure of $\{[0:1]\}\times s(s^{-1}(S(F_k)))\times_{F_{k}}s_{\infty}(s^{-1}(S(F_k)))$ where $s_{\infty}(p)=\pi_k^s(s(p))$ for all $p\in s^{-1}(S(F_k))$, $\pi_k^s:S(F_k)\ra F_k$ being the natural projection.

Now $s^{-1}(S(F_k))$ is an open subset of $M$, the complement of $\{s_k=0\}$ where $s=(s_0,\ldots,s_k)$. On the other hand
\begin{equation}\label{identeq1}\{[0:1]\}\times s(s^{-1}(S(F_k)))\times_{F_{k}}s_{\infty}(s^{-1}(S(F_k)))\simeq\Imag {s_{\infty}}\subset \bP(E_k)
\end{equation}
with $s_{\infty}:M\setminus \{s_k=0\}\ra F_k$, $s_{\infty}(p)=[s_k(p)]$ and the identification in (\ref{identeq1}) is given by projection onto the last coordinate. The closure of the image of $s_{\infty}$ is the blow-up of $\{s_k=0\}$ (see Remark \ref{beforetric}). Hence $\tilde{S}\wedge\{[0:1]\}\times C_k^{\infty}\simeq \Bl_{\{s_k=0\}}(M)$ and therefore this piece is also of dimension $n$, if non-empty. 

The reasoning is quite analogous for $\tilde{S}\wedge \{[1:0]\}\times C_0^0\simeq \Bl_{\{s_0=0\}}(M)$.
\end{proof}

For $0\leq i\leq k$, we define the following analytic varieties:
\begin{align} \label{Siinfty}\tilde{S}_i^{\infty}:=\pi_{2,3}((\{[0:1]\}\times C_i^{\infty})\cap \tilde{S})\subset \bP(E_{\leq i})\times_M\bP(E_{\geq_i})\subset\bP(E)\times_M\bP(E),\\ \label{Sizero}\tilde{S}_i^0:=\pi_{2,3}((\{[1:0]\}\times C_{i}^{0})\cap \tilde{S})\subset \bP(E_{\geq i})\times_M\bP(E_{\leq_i})\subset \bP(E)\times_M\bP(E).
\end{align}
The projection onto the first coordinate in $\bP(E)\times_M\bP(E)$ induces maps:
\[ \pi_1:\tilde{S}_i^{\infty}\ra s^{-1}(\bP(E_{\leq i})),\qquad \pi_1:\tilde{S}_i^0\ra s^{-1}(\bP(E_{\geq i})).
\]
For an alternative description of the components $\tilde{S}_i^{\infty}$ see the next section.
\begin{cor}\label{cormt3} Let $\omega\in \Omega^*(\bP(E))$  a form which is $\partial$ and $\bar{\partial}$-closed and let $s:M\ra \bP(E)$ be a section. Then there exist double transgression formulas:
\begin{align}\label{Feq3} s^*\omega-\lim_{\lambda\ra \infty}(\lambda* s)^*\omega=\partial\bar{\partial} T_1(s,\omega)\\
\label{Feq4} s^*\omega-\lim_{\lambda\ra 0}(\lambda* s)^*\omega=\partial\bar{\partial} T_2(s,\omega)
\end{align}
for some flat currents $T_1(s,\omega)$ and $T_2(s,\omega)$. The limits are described as follows:
\[\lim_{\lambda\ra \infty}(\lambda* s)^*\omega=\sum_{i=0}^k (\pi_1)_*(\pi_2^*\omega\wedge \tilde{S}_i^{\infty}),\quad \;\;\lim_{\lambda\ra 0}(\lambda* s)^*\omega=\sum_{i=0}^k (\pi_1)_*(\pi_2^*\omega\wedge \tilde{S}_i^{0}).
\]

 The current $\lim_{\lambda\ra \infty}(\lambda* s)^*\omega$ is smooth if $s(M)\subset S(F_j)$ for some $j$ and the current $\lim_{\lambda\ra 0}(\lambda* s)^*\omega$ is smooth if $s(M)\subset U(F_j)$ for some $j$. When this is the case, the currents $T_1(s,\omega)$ and $T_2(s,\omega)$ are smooth as well.
\end{cor}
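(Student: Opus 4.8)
The plan is to follow the template of Corollary \ref{cormt1}: realize the pullbacks $(\lambda\ast s)^\ast\omega$ as push-forwards of analytic cycles wedged with $\omega$, feed these into the transgression formulas of Theorem \ref{ths5}, and read the limits off the structure of the fibers $F_{[0:1]}$ and $F_{[1:0]}$ supplied by Proposition \ref{Pn}. Fix the projections $p_1,p_2:\bP(E)\times_M\bP(E)\ra\bP(E)$ and $q:\bP(E)\times_M\bP(E)\ra M$, set $\varphi:=q\circ p_1$, and note that the maps called $\pi_1,\pi_2$ in the statement are the restrictions of $\varphi$ (first factor, composed with the identification $s(M)\cong M$) and of $p_2$ to the relevant cycles.

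First I would observe that for $\lambda\in\bC^\ast$ the cycle $A_\lambda:=\pi_{2,3}(F_{[1:\lambda]}\wedge\tilde S)=\{(s(m),\lambda\ast s(m))~|~m\in M\}$ is a smooth copy of $M$, biholomorphic to $M$ via $\varphi$; the area formula then gives $(\lambda\ast s)^\ast\omega=\varphi_\ast(p_2^\ast\omega\wedge A_\lambda)$. Since $1\ast s=s$, the same identity at $\lambda=1$ reads $s^\ast\omega=\varphi_\ast(p_2^\ast\omega\wedge A_1)$ with $A_1=\Delta_s=\pi_{2,3}(F_{[1:1]}\wedge\tilde S)$. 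Next I would push the transgression formula (\ref{Fmu1}) at $[\mu:\lambda]=[1:1]$,
\[ F_{[1:1]}\wedge\tilde S-F_{[0:1]}\wedge\tilde S=\partial\bar\partial T_1[1:1],\]
forward to $M$: apply $\pi_{2,3}$ (a biholomorphism on each slice $\{\mu\ne 0\}$ and on each component $\{[0:1]\}\times C_i^\infty$ of $F_{[0:1]}$), wedge with $p_2^\ast\omega$, and apply $\varphi_\ast$, using that holomorphic push-forward and pull-back of forms commute with $\partial$ and $\bar\partial$ together with the Leibniz identity $\partial\bar\partial(T\wedge\omega)=(\partial\bar\partial T)\wedge\omega$ for $\partial$- and $\bar\partial$-closed $\omega$, exactly as in Corollary \ref{core1}. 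This produces (\ref{Feq3}) with $T_1(s,\omega):=\varphi_\ast(p_2^\ast\omega\wedge\pi_{2,3\ast}T_1[1:1])$, which is flat since proper push-forward and wedging with a form preserve flatness.

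It then remains to identify the correction term $\varphi_\ast(p_2^\ast\omega\wedge\pi_{2,3\ast}(F_{[0:1]}\wedge\tilde S))$ both with $\sum_i(\pi_1)_\ast(\pi_2^\ast\omega\wedge\tilde S_i^\infty)$ and with $\lim_{\lambda\ra\infty}(\lambda\ast s)^\ast\omega$. For the first: by the last assertion of Proposition \ref{Pn}, $F_{[0:1]}=G_k\cap\{\mu=0\}=\bigcup_{i=0}^k\{[0:1]\}\times C_i^\infty$ with every component of multiplicity one, so the slice current $\langle\tilde S,\pi_1,[0:1]\rangle$, pushed down by $\pi_{2,3}$, decomposes as $\sum_{i=0}^k\tilde S_i^\infty$ of (\ref{Siinfty}), the overlaps $C_i^\infty\cap C_j^\infty$ being lower-dimensional and hence carrying no current. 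For the second: Hardt's Slicing Theorem applied to $\pi_1|_{\tilde S}$ — whose dimensional hypothesis was already checked in the proof of Theorem \ref{ths5} — gives $F_{[1:\lambda]}\wedge\tilde S\ra F_{[0:1]}\wedge\tilde S$ flatly as $\lambda\ra\infty$, and then continuity of proper push-forward and of wedging with the smooth form $p_2^\ast\omega$ yields the equality. The formula (\ref{Feq4}) follows verbatim from (\ref{Fmu2}) and the dual decomposition $F_{[1:0]}=\bigcup_{i=0}^k\{[1:0]\}\times C_i^0$.

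For the smoothness statement, $s(M)\subset S(F_j)$ means $s_i\equiv 0$ for $i>j$ and $s_j$ nowhere vanishing, so $\lambda\ast s(m)=[\lambda^{-\beta_j}s_0(m):\dots:\lambda^{\beta_{j-1}-\beta_j}s_{j-1}(m):s_j(m):0:\dots:0]\ra[0:\dots:0:s_j(m):0:\dots:0]$; thus only $\tilde S_j^\infty$ survives, it is the graph of the smooth section $s_\infty:=[s_j(\cdot)]:M\ra F_j$ (concretely $\tilde S_j^\infty\cong\Bl_{\{s_j=0\}}(M)=M$), and the limit equals the smooth form $s_\infty^\ast(\omega|_{F_j})$. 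Smoothness of $T_1(s,\omega)$ then follows as in Proposition \ref{spark1}: over $\bC^\ast\cup\{[0:1]\}$ the strict transform $\tilde S$ is the closure of a graph over $s(M)$, so after the biholomorphism $\pi_{2,3}$ the current $T_1[1:1]$ is a fiber integral, against a smooth form, of $\log$ of the modulus of a meromorphic function that does not vanish along $s(M)$, and a Lemma \ref{L5}-type argument shows this fiber integral is smooth. The case $s(M)\subset U(F_j)$ and formula (\ref{Feq4}) are symmetric. The main obstacle is the bookkeeping in $\pi_{2,3\ast}(F_{[0:1]}\wedge\tilde S)=\sum_i\tilde S_i^\infty$: one must be certain the low-dimensional overlaps among the $C_i^\infty$ and the possibly sub-top-dimensional strata $\tilde S_i^\infty$ do not contribute spuriously, which is exactly what the dimension estimates in the proof of Theorem \ref{ths5} guarantee.
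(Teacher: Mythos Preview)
Your proof is correct and follows essentially the same route as the paper: push forward the kernel transgressions of Theorem \ref{ths5} via $\pi_{2,3}$, wedge with $p_2^\ast\omega$, project to $M$, and read off the limiting pieces from the decomposition $F_{[0:1]}=\bigcup_i\{[0:1]\}\times C_i^\infty$ of Proposition \ref{Pn} together with Hardt continuity. Your treatment is in fact more explicit than the paper's in justifying the decomposition $\sum_i\tilde S_i^\infty$ (via the multiplicity-one statement and lower-dimensional overlaps) and in sketching why $T_1(s,\omega)$ is smooth when $s(M)\subset S(F_j)$; the paper simply refers back to Corollary \ref{cormt1} for the passage from kernels to forms and states the smoothness without further comment.
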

\begin{proof} Consider the action-graph of $s$, 
\[\{(\lambda,s(m),\lambda*s(m))~|~\lambda\in \bC^*,\;m\in M\}\subset \bP^1\times \bP(E)\times_M\bP(E)\] whose closure is $\tilde{S}$. We use Theorem \ref{ths5} and push-forward via the proper projection $\pi_{2,3}$
the equations (\ref{Fmu1}) and (\ref{Fmu2}) to two double transgressions in $\bP(E)\times_M\bP(E)$.

The current $F_{[\mu:\lambda]}\wedge \tilde{S}$ pushes forward to 
\begin{align}\label{feq1}\left \{ (s(m),\left(\frac{\lambda}{\mu}\right)*s(m))~|~m\in M\right\} &  \mbox{ when } \;  [\mu:\lambda]\in \bC^*\subset\bP^1\\
\label{feq2} \lim_{\lambda\ra \infty} \{ (s(m),{\lambda}*s(m))~|~m\in M\} & \mbox{ when } \; \mu=0\\
 \label{feq3} \lim_{\mu\ra 0} \{ (s(m),\mu*s(m))~|~m\in M\} & \mbox{ when }  \; \lambda=0
\end{align}
Indeed (\ref{feq1}) is clear. The continuity of Hardt's Theorem ensures that $F_{[1:\lambda]}\wedge\tilde{S}\ra F_{[1:0]}\wedge \tilde{S}$ when $\lambda\ra 0$. By pushing this forward via $\pi_{2,3}$ one gets (\ref{feq2}). Then (\ref{feq3}) is analogous.

 For the transition to (\ref{Feq3}) and (\ref{Feq4}) one proceeds as in the proof of Corollary \ref{cormt1}.

If $s(M)\subset S(F_j)$ then 
\[\lim_{\lambda\ra \infty}\Gamma_{\lambda*s}=s(M)\times_M\tilde{s}_{\infty}(M)\simeq M
\]
where $s_{\infty}(m)=[s_j(m)]\in \bP(E_j)=F_j$.
\end{proof}

\vspace{0.2cm}

\begin{example} The strict transform $\tilde{S}$ of $\bP^1\times s(M)$ exists always. However, if $s(M)$ is contained in a connected component $\bP(E_i)=F_i$ of the fixed point locus, then by Remark \ref{altdesc}, $\tilde{S}=\bP^1\times s(M)\times_Ms(M)$. Then the pushforward via $\pi_{2,3}$ of (\ref{Fmu1}) and (\ref{Fmu2}) give a trivial double transgression $0=0$.
\end{example}

\begin{rem}\label{Gr19} If $G\subset \bP(E)$ is a flow-invariant, proper,  regular fiber subbundle  then one can apply the machinery to $\partial$ and ${\bar{\partial}}$ closed forms $\omega\in \Omega^*(G)$  and holomorphic sections $s:M\ra G$ and the results hold just the same as in Corollary \ref{cormt3}.
\end{rem}

A particular case of Remark \ref{Gr19} is the case of the Pl\"ucker embedding $\Gr_{k}(E\oplus F)\hookrightarrow \bP(\Lambda^k(E\oplus F))$ where $k=\dim{E}$. The natural action $\bC^*\times \Hom(E,F)$ extends to an action of $\bC^*$ on $\Gr_k(E\oplus F)$ that intertwines with the weighted homogeneous action of $\bC^*$ on $\bP(\Lambda^k(E\oplus F))$ induced by the direct sum of the actions
\[\bC^*\times \Lambda^iE\otimes \Lambda^jF\ra \Lambda^iE\otimes \Lambda^jF,\qquad (\lambda, e\otimes f)\ra \lambda^je\otimes f
\]
where $i+j=k$. It is useful to describe the fixed points, stable and unstable manifolds of the $\bC^*$ action on $\Gr_k(E\oplus F)$.
\begin{prop} \label{Fixpeq} The fixed points manifolds of the action of $\bC^*$ on $\Gr_k(E\oplus F)$ are
\[ F_i:=\bigcup_{i}\Gr_i(E)\times \Gr_{k-i}(F).
\]
The stable and unstable manifolds are:
\[ \Sigma_i:=\{L\in\Gr_k(E\oplus F)~|~\dim{L\cap E}=i\},\qquad \Upsilon_i:=\{L\in\Gr_k(E\oplus F)~|~\dim{L\cap F}=k-i\}
\]
with projections
\[\Sigma_i\ra F_i\leftarrow \Upsilon_i,\qquad L\ra (L\cap E,\pi^F(L)),\;\mbox{resp.}\;\; (\pi^E(L),L\cap F)\leftarrow L
\]
where $\pi^E,\pi^F$ are the projections onto the factors of $E\oplus F$.
\end{prop}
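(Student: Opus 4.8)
The plan is to identify the $\bC^*$-action on $\Gr_k(E\oplus F)$ with the one induced by the one-parameter subgroup $\rho(\lambda):=\id_E\oplus\lambda\,\id_F$ of $GL(E\oplus F)$, and then to reduce the statement to elementary linear algebra of limits of subspaces under a semisimple flow. First I would check the identification: the Plücker embedding $\Gr_k(E\oplus F)\hookrightarrow\bP(\Lambda^k(E\oplus F))$, $L\mapsto[\Lambda^kL]$, intertwines $\rho$ with $\Lambda^k\rho$, and on the summand $\Lambda^iE\otimes\Lambda^jF$ ($i+j=k$) of $\Lambda^k(E\oplus F)$ the operator $\Lambda^k\rho(\lambda)$ acts by $\lambda^j$; this is precisely the weighted action written just before the statement, so $\Gr_k(E\oplus F)$ is a flow-invariant fiber subbundle in the sense of Remark \ref{Gr19} carrying exactly the action under study. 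Everything is fiberwise, so I fix a fiber and argue with vector spaces $E,F$. The fixed locus is then immediate: $\rho(\lambda)$ is semisimple with eigenspaces $E$ (eigenvalue $1$) and $F$ (eigenvalue $\lambda$), so by diagonalizability a $k$-plane $L$ is fixed by all of $\rho(\bC^*)$ if and only if $L=(L\cap E)\oplus(L\cap F)$; letting $i:=\dim(L\cap E)$ vary yields the components $\Gr_i(E)\times\Gr_{k-i}(F)$, i.e. the $F_i$.

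For the stable manifolds I would fix $L$, set $i:=\dim(L\cap E)$, pick a basis $u_1,\dots,u_i$ of $L\cap E$, and extend it to a basis $u_1,\dots,u_i,w_1,\dots,w_{k-i}$ of $L$ whose added vectors span a complement of $L\cap E$ meeting $E$ trivially; this is possible because $\pi^F(L)\cong L/(L\cap E)$ has dimension $k-i$ and $\pi^F$ is injective on such a complement, so the vectors $f_j:=\pi^F(w_j)$ are linearly independent. From $\rho(\lambda)w_j=\pi^E(w_j)+\lambda f_j$, dividing by $\lambda$ shows that the $(k-i)$-plane $\langle\rho(\lambda)w_1,\dots,\rho(\lambda)w_{k-i}\rangle$ converges, as $\lambda\to\infty$, to $\langle f_1,\dots,f_{k-i}\rangle=\pi^F(L)$, while the $u_\ell$ are fixed; since $(L\cap E)\cap\pi^F(L)=0$ this gives
\[ \lim_{\lambda\to\infty}\rho(\lambda)L=(L\cap E)\oplus\pi^F(L)\in\Gr_i(E)\times\Gr_{k-i}(F). \]
As this formula holds for arbitrary $L$, the limit lands in the component $F_i$ exactly when $\dim(L\cap E)=i$, so $\Sigma_i=\{L:\dim(L\cap E)=i\}$ is the stable set of $F_i$ and the associated projection is $L\mapsto(L\cap E,\pi^F(L))$. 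Replacing $\rho(\lambda)$ by $\rho(\lambda^{-1})$, equivalently swapping the roles of $E$ and $F$, gives for $L$ with $\dim(L\cap F)=k-i$ the symmetric identity $\lim_{\lambda\to0}\rho(\lambda)L=\pi^E(L)\oplus(L\cap F)$, which identifies $\Upsilon_i$ together with the projection $L\mapsto(\pi^E(L),L\cap F)$.

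Finally I would record that these sets are genuine submanifolds. $\Sigma_i$ is locally closed since $L\mapsto\dim(L\cap E)=\dim\ker(\pi^F|_L)$ is upper semicontinuous, and the projection $\Sigma_i\to F_i$ exhibits it as a vector bundle: the fiber over $(A,B)$ consists of the $L\supseteq A$ with $\pi^F(L)=B$, i.e. the graphs of linear maps $B\to E/A$, an affine space $\Hom(B,E/A)$; symmetrically $\Upsilon_i\to F_i$ has fiber $\Hom(A,F/B)$. Thus $\{\Sigma_i\}$ and $\{\Upsilon_i\}$ are the two partitions of $\Gr_k(E\oplus F)$ into stable, resp. unstable, manifolds of the Morse--Bott flow generated by $L\mapsto\Real\Tr(\tilde AR_L)$ with $\tilde A=0\oplus\id_F$, exactly parallel to the proof of Proposition \ref{Pn}. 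There is no serious obstacle here; the only point needing care is the basis extension in the stable-manifold step --- one must go past $L\cap E$ by vectors projecting isomorphically into $F$, so that no cancellation occurs in the rescaled limit --- but this is forced by the dimension count, the remainder being bookkeeping.
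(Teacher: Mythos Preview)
Your argument is correct and complete. The paper itself does not give a proof but simply cites Harvey--Lawson \cite{HL2}, so your write-up is in fact more self-contained than the paper's; the only comparison to make is that you have supplied the standard linear-algebra verification (diagonalizable one-parameter subgroup, adapted basis, rescaled limit) that the reference would provide.
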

\begin{proof} See \cite{HL2}.
\end{proof}
As a consequence of Corollary \ref{cormt3} we get:
\begin{prop}\label{grpr} Let $s:M\ra \Hom(E,F)$ be a holomorphic section and $\omega\in \Omega^*(\Gr_k(E\oplus F))$ a smooth form, $\partial$ and $\bar{\partial}$ closed. Then there exists a double transgression
\[ s^*\omega-\lim_{\lambda\ra \infty}(\lambda s)^*\omega=\partial\bar{\partial} T(s,\omega)
\]
where $\displaystyle\lim_{\lambda\ra \infty}(\lambda s)^*\omega$ is a sum of $k+1$ flat currents each of which supported on 
\[\Sigma_i(s):=\{p\in M~|~\dim{\Ker s(p)}=i\}=s^{-1}(\Sigma_i).\] If $\dim{\Ker s(p)}=i$ for all $p\in M$, then $\Sigma_i(s)$ and the form $\lim_{\lambda\ra \infty}(\lambda s)^*\omega$ is smooth.
\end{prop}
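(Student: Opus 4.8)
The plan is to deduce Proposition~\ref{grpr} directly from Corollary~\ref{cormt3}, applied through Remark~\ref{Gr19} to the flow-invariant subbundle $G=\Gr_k(E\oplus F)$ sitting inside $\bP(\Lambda^k(E\oplus F))$ via the fibrewise Pl\"ucker embedding. First I would fix the weighted $\bC^*$-action: on $\bP(\Lambda^k(E\oplus F))$ take the action induced by the splitting $\Lambda^k(E\oplus F)=\bigoplus_{j=0}^{k}\Lambda^{k-j}E\otimes\Lambda^{j}F$ with weight $j$ on the $j$-th summand, i.e. the one coming from $(\lambda,e\otimes f)\mapsto\lambda^{j}e\otimes f$ on $\Lambda^{i}E\otimes\Lambda^{j}F$ with $i+j=k$; here the weights $0<1<\cdots<k$ are strictly increasing, as required in Section~\ref{Sec8}. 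One then checks --- this is the content of the paragraph preceding Proposition~\ref{Fixpeq} --- that $\Gr_k(E\oplus F)$ is preserved by this action, that on it it agrees with the intrinsic $\bC^*$-action obtained by scaling the $F$-summand of $E\oplus F$, and that the fibrewise graph embedding $\iota\colon\Hom(E,F)\hookrightarrow\Gr_k(E\oplus F)$, $A\mapsto\{(v,Av)\mid v\in E\}$, intertwines $\lambda\cdot A=\lambda A$ with this action. Since $\Gr_k(E\oplus F)$ is a smooth, proper, flow-invariant fibre subbundle of $\bP(\Lambda^k(E\oplus F))$, Remark~\ref{Gr19} lets us run the entire machinery of Section~\ref{Sec8} on $G=\Gr_k(E\oplus F)$, for $\partial$- and $\bar{\partial}$-closed $\omega\in\Omega^*(\Gr_k(E\oplus F))$ and for the holomorphic section $\bar s:=\iota\circ s\colon M\ra\Gr_k(E\oplus F)$.

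Next I would quote Corollary~\ref{cormt3} for the pair $(\bar s,\omega)$. For $\lambda\in\bC^*$ one has $\lambda*\bar s=\iota\circ(\lambda s)$, so $(\lambda*\bar s)^*\omega=(\lambda s)^*\omega$ and $\bar s^*\omega=s^*\omega$ under the identification given by $\iota$; hence (\ref{Feq3}) becomes precisely
\[
s^*\omega-\lim_{\lambda\ra\infty}(\lambda s)^*\omega=\partial\bar{\partial}\,T(s,\omega),\qquad T(s,\omega):=T_1(\bar s,\omega),
\]
with $\lim_{\lambda\ra\infty}(\lambda s)^*\omega=\sum_{i=0}^{k}(\pi_1)_*\bigl(\pi_2^*\omega\wedge\tilde S_i^{\infty}\bigr)$, a sum of $k+1$ flat currents. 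It then remains to identify the supports. By Proposition~\ref{Fixpeq} the stable manifold of the fixed component $\Gr_i(E)\times\Gr_{k-i}(F)$ is $\Sigma_i=\{L\mid\dim(L\cap E)=i\}$, and $\bar s(p)=\{(v,s(p)v)\}$ lies in $\Sigma_i$ exactly when $\dim\Ker s(p)=i$; so, after matching the weight labelling (the Pl\"ucker image of $\Gr_i(E)\times\Gr_{k-i}(F)$ sits in weight $k-i$, which only re-indexes the sum), the $i$-th exceptional component $\tilde S_i^{\infty}$ lies over $\overline{\Sigma_i(s)}$ with $\Sigma_i(s)=s^{-1}(\Sigma_i)=\{p\mid\dim\Ker s(p)=i\}$, exactly as in Theorem~\ref{T002}. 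This produces the stated decomposition and the support assertion.

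For the last assertion, assume $\dim\Ker s(p)=i$ is constant on $M$. Then $\bar s(M)\subset\Sigma_i=S(F_i)$, so the ``$s(M)\subset S(F_j)$'' clause of Corollary~\ref{cormt3} applies: $\lim_{\lambda\ra\infty}(\lambda s)^*\omega$ is a smooth form --- it is the pull-back of $\omega$ along the smooth section $m\mapsto(\Ker s(m),\Imag s(m))$ of $\Gr_i(E)\times\Gr_{k-i}(F)$ --- and $T(s,\omega)$ is smooth as well; moreover $\Sigma_i(s)=M$ is then closed, removing any ambiguity in the support statement.

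The only step that is not bookkeeping is the compatibility verification in the first paragraph: that the chosen weighted action on $\bP(\Lambda^k(E\oplus F))$ preserves the Pl\"ucker image of $\Gr_k(E\oplus F)$ and restricts there to the intrinsic scaling action, and that $\iota$ intertwines the two. I would check this on the affine Pl\"ucker charts of $\Gr_k(E\oplus F)$, where a point near the graph locus is a matrix and the action is scalar multiplication, or equivalently by differentiating the complex flow and comparing the generating holomorphic vector field with the one on $\bP(\Lambda^k(E\oplus F))$. Once this is in place, and the weights are tracked carefully so that $\tilde S_i^{\infty}$ genuinely lies over $\overline{\Sigma_i(s)}$ rather than over a neighbouring stratum, everything else is a direct transcription of Corollary~\ref{cormt3} together with Proposition~\ref{Fixpeq}.
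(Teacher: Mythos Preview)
Your proposal is correct and follows exactly the route the paper intends: the paper presents Proposition~\ref{grpr} simply as ``a consequence of Corollary~\ref{cormt3}'' via Remark~\ref{Gr19} and the Pl\"ucker embedding (the setup in the paragraphs preceding Proposition~\ref{Fixpeq}), and you have spelled out precisely those details, including the identification of the stable strata $\Sigma_i$ via Proposition~\ref{Fixpeq} and the reindexing of weights. There is nothing to add.
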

\begin{rem} If $s\pitchfork \Sigma_i$, for all $i$, one can perform explicit computations for the current $\displaystyle\lim_{\lambda\ra \infty}(\lambda s)^*\omega$ (see \cite{CiHl3}).
\end{rem}

\section{Weighted blow-ups and the limit of the kernels}\label{Sec9}
Proposition \ref{Psp1} completely describes the limiting kernels when $\lambda\ra \infty$ in the "homogeneous" case of a single weight. We would like to have an analogous result in the context of weighted homogeneous actions and give an alternative description of the components $\tilde{S}_i^{\infty}$ from (\ref{Siinfty}) and their cousins $\tilde{S}_i^0$. More concretely the starting question is: can each $\tilde{S}_i^{\infty}$ be described by some sort of cone construction over the exceptional divisor of some blow-up? It turns out that the situation is  more complicated in the weighted-homogeneous case and, in a sense, this can be "blamed" on the non-local character of the orbits, or better said on the non-Hausdorff nature of the orbit space.

By definition, the components $\tilde{S}_i^{\infty}$ lie within $\overline{S(F_i)\times_{F_i}U(F_i)}$. As opposed to $S(F_i)\times_{F_i}U(F_i)$ which is biholomorphically equivalent with a vector bundle over $F_i=\bP(E_i)$, the closure $\overline{S(F_i)\times_{F_i}U(F_i)}$ does not fiber over $F_i$. So it seems reasonable to first understand the intersection 
\[\tilde{S}_i^{\infty}\cap (S(F_i)\times_{F_i}U(F_i))
\] 
which we will call the "affine" part of $\tilde{S}_i^{\infty}$. In fact, if $s\pitchfork S(F_i)$ for all $i$, then the real theory \cite{Ci2} implies that the part of $\tilde{S}_i^{\infty}$ that lies in $\overline{S(F_i)\times_{F_i}U(F_i)}\setminus S(F_i)\times_{F_i}U(F_i)$ is "negligible", more precisely it has Hausdorff dimension at most $n-1$. This, however is not  true anymore if we remove the transversality hypothesis. 
\begin{example} Let $B:=B(0,1)$ be the open unit ball in $\bC$. Consider $\bP^3(\bC)$ with the action of $\bC^*$ given by the weights $\beta_i=i$, $i\in \{0,1,2,3\}$. Let $s:B\ra \bP^3(\bC)$ be the section:
\[ s(z)=[1+z:z^{a_1}:z^{a_2}:z^{a_3}]
\]
where $0<a_1<a_2<a_3$ positive integer satisfying the inequalities
\[ a_2>2a_1,\qquad a_3>\frac{3}{2}a_2,\qquad a_3+a_1>2a_2.
\]
Note that the first two also imply that $a_3>3a_1$. The only point $p\in B(0,1)$ where at least one of the projective coordinates of $s(p)$ vanishes is $p=0$. We compute all possible limits
 \begin{equation}\label{divideeq} \lim_{\substack{z\ra 0\\ \lambda \ra \infty}}[(1+z):\lambda z^{a_1}:\lambda^2z^{a_2}:\lambda^3z^{a_3}]
 \end{equation}
 by separating into the cases:
 \begin{itemize}
  \item[(i)] $\lim\lambda z^{a_1}= \mbox{ finite}\Rightarrow\left\{\begin{array}{ccccc} \lim\lambda^2z^{a_2} &= & 0& \mbox{since} & \lambda^2z^{a_2}=(\lambda z^{a_1})^2z^{a_2-2a_1} \\
    \lim \lambda^3z^{a_3}& = & 0& \mbox{since} &  \lambda^3z^{a_3}=(\lambda z^{a_1})^3z^{a_3-3a_1} \end{array}\right.$
    
  This implies that all the points $[1:x:0:0]$ are limits ($x\in \bC$).
  \item[(ii)] $\lambda z^{a_1}\ra\infty$ and $\lim\lambda z^{a_2-a_1}=\lim(\lambda z^{a_1}) z^{a_2-2a_1}= \mbox{ finite} \Rightarrow \lim\lambda^2z^{a_3-a_1}= 0$ since $(\lambda^2 z^{a_3-a_1})=(\lambda z^{a_2-a_1})^2z^{a_3+a_1-2a_2}$.
 
Divide (\ref{divideeq}) by $\lambda z^{a_1}$ to conclude that  all the points $[0:1:x:0]$ are limits.
 
  \item[(iii)] $\lim \lambda z^{a_2-a_1}= \infty$ and $\lim\lambda z^{a_3-a_2}=\lim(\lambda z^{a_2-a_1})\cdot z^{a_3+a_1-2a_2}=\mbox{ finite}$.   Notice also that $\lim\lambda z^{a_2-a_1}= \infty \Rightarrow \lim\lambda^2 z^{a_2}= \lim(\lambda z^{a_2-a_1})^2z^{-a_2}= \infty$.
  
  Divide (\ref{divideeq}) by $\lambda^2z^{a_2}$ to get all points $[0:0:1:x]$ as limits.
  \item[(iv)] $\lim\lambda z^{a_3-a_2}=\infty\Rightarrow \lim\lambda^3z^{a_3}=\lim(\lambda z^{a_3-a_2})^3 z^{-2a_3+3a_2}=\infty$. Then the only limit point is $[0:0:0:1]$.
 \end{itemize}
 Therefore the set of all possible limits is a union of three projective lines denoted $X$ in $\bP^3(\bC)$. We conclude that $\tilde{S}^{\infty}$ is supported in
  \[\{[1:0:0:0]\}\times X \cup s(B(0,1))\times \{[0:0:0:1]\}\]
  The main point about this example is that $\tilde{S}_0^{\infty}$ which is supported in $\{[1:0:0:0]\}\times X$ cannot be  entirely obtained from its affine part, namely $\tilde{S}_0^{\infty}\cap \{[1:0:0:0]\}\times \{z_0\neq 0\}$. Neither is this captured by the intersections $\tilde{S}^{\infty}\cap S(F_i)\times_{F_i}U(F_i)$ since $F_0\not\subset S(F_i)$  $\forall~ i\geq 1$. 
\end{example}

\vspace{0.3cm}

For the rest of the section we partially answer the starting  question, namely we describe the affine part of $\tilde{S}_i^{\infty}$ as the exceptional divisor of a weighted cone  of a weighted blow-up.

We start with some simple observations concerning several related processes. Let $V$ be a vector space with a decomposition
 \[V=V_1\oplus \ldots\oplus V_k\] and with a weighted action of $\bC^*$ on it  with positive integer weights only $0<\beta_1<\ldots<\beta_k$. This extends to $\{0\}\in \bC$ trivially and we will still speak of an "action" $\bC\times V\ra V$. Then one can consider the closure $G^V$ of the graph  of the action $\bC^*\times V\ra V$ in $\bP^1\times V\times V$. Under these hypothesis it is not hard to see that in fact $G^V$ is a algebraic subset, e.g.  by "compactifying" the  action of $\bC^*$ to an action on $\bP(\bC\oplus V)$ and proceeding as in the previous section.  By projecting $G^V$ onto the last two coordinates, i.e. on $V\times V$ we get an algebraic set $\pi_{2,3}(G^V)\subset V\times V$ since this projection is proper.
 
 Notice that rather than taking the graph of the action $\bC\times V\ra V$, one can do the same with the graph of the action $\bC\times A\ra V$ where $A\subset V$ is an analytic subset and get an analytic subset $G^A$ of $\bP^1\times A\times V$. Indeed, one does not need to close $A$ in $\bP(\bC\oplus V)$, which might be problematic if $A$ is not algebraic, but one can consider the closure of the graph of the action, only that this time in $\bP^1\times V\times \bP(\bC\oplus V)$. This can again be seen as a blow-up of a sheaf of ideals (the restriction of the sheaf $\mathcal{I}$ in (\ref{wtf}) to the open set $\bP^1\times V$) and the closure of the graph of $\bC\times A\ra \bP(\bC\oplus V)$ is just the strict transform of $\bP^1\times A$ with respect to this blow-up. The restriction $G^A$ of this analytic set to the open set $\bP^1\times A\times V$ is then of course analytic. 
 
We will give now another description to the analytic set $\pi_{2,3}(G^A)\subset A\times V$.

Let $\beta=(\beta_1,\ldots,\beta_k)$. The weighted projective space $\bP^{\beta}(V)$ is the analytic space obtained by taking the quotient $(V\setminus\{0\})/\bC^*$. Alternatively, one can work algebraically \cite{Dol} and take $\Proj[X_1,\ldots X_k]$ where $\deg{X_i}=\beta_i$ and since this is a projective variety, one can take the associated analytic variety via the GAGA \cite{Se} correspondence. 

The weighted blow-up of the origin in $V$ is by definition the closure in $V\times \bP^{\beta}(V)$ of the graph of the projection:
\[ V\setminus \{0\}\ra \bP^{\beta}(V)
\]
This gives an analytic subset in $V\times \bP^{\beta}(V)$ whose equations can be described as follows. An analytic subset of $V\times \bP^{\beta}(V)$ with coordinates $(v,[w])$ (here $[w]$ with $w\in V$ is an equivalence class just like for the usual projective space) is given by equations which are weighted homogeneous with weights $\beta=(\beta_1,\ldots,\beta_k)$ in the variables $w=(w_1,\ldots,w_k)$. The relation $v\in [w]$ translates into the following equations. For every pair $(\beta_i,\beta_j)$ with $i\neq j$ let
 \[ \beta_i^j:=\beta_i/\gcd(\beta_i,\beta_j)\qquad  \beta_j^i:=\beta_j/\gcd(\beta_i,\beta_j)\]
Obviously $\beta_j\beta_i^j=\beta_i\beta_j^i$. Then the equations describing the blow-up are:
\[ v_i^{\beta_j^i}w_j^{\beta_i^j}=v_j^{\beta_i^j}w_i^{\beta_j^i},\qquad 1\leq i\neq j\leq k
\]
where, for notational convenience, $v_i$ and $w_j$ each represent here just one coordinate of the possibly higher dimensional vector spaces $V_i$ and $V_j$.

\begin{rem} The reason why one has to pass to $\gcd$ of the weights has to do with the gap sheaf and Ritt's Lemma \cite{Fi}, page 41. 
\end{rem}

The weighted blow-up of $0$ in $V$ can also be applied to a reduced  analytic subset $A\subset V$, under the adequate name of weighted strict transform. By definition, this is the closure in $V\times \bP^{\beta}(V)$ of the graph of  $\pi^{\beta}\bigr|_{A\setminus \{0\}}$ where
\[ \pi^{\beta}:V\setminus \{0\}\ra \bP^{\beta}(V)
\] 
is the natural projection.

This gives an analytic space $\Bl_{0}^{\beta}(A)\subset V\times \bP^{\beta}(V)$. One can take the  (weighted) affine cone of $\Bl_{0}^{\beta}(A)$, meaning that one looks at the same equations that define $\Bl_{0}^{\beta}(A)$ but forgets about being weighted-homogeneous in the $w$ variable and thus obtains a set $G^A_1$ in $V\times V$. 
\begin{lem} \label{Lemga} As complex spaces $G^A_1=\pi_{2,3}(G^A)$.
\end{lem}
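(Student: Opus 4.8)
The plan is to prove the equality $G^A_1=\pi_{2,3}(G^A)$ of complex (analytic) subspaces of $V\times V$ by comparing the defining equations of the two sides in the relevant charts, and then checking that both sides are reduced so that a set-theoretic coincidence of supports together with generic agreement suffices. First I would recall the explicit description of $G^A$: it is the restriction to $\bP^1\times A\times V$ of the strict transform of $\bP^1\times A$ under the blow-up of the sheaf $\mathcal{I}$ from (\ref{wtf}) (restricted to $\bP^1\times V$), realized inside $\bP^1\times V\times\bP(\bC\oplus V)$. Thus $G^A$ is the closure of $\{([1:\lambda],a,\lambda*a)\mid a\in A\setminus\{0\},\ \lambda\in\bC^*\}$ together with the pieces over $\lambda=0$ and $\mu=0$. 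Projecting to the last two coordinates kills only $[\mu:\lambda]$, so $\pi_{2,3}(G^A)$ is exactly the closure in $V\times V$ of $\{(a,\lambda*a)\mid a\in A\setminus\{0\},\ \lambda\in\bC^*\}$; here I use that $\pi_{2,3}$ is proper (since $\bP^1$ is compact) so the image is genuinely analytic, not just constructible.

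Next I would identify this closure with $G^A_1$. On the open set $\{v\neq 0\}$ (i.e. where at least one coordinate block $v_i$ is nonzero), a pair $(v,w)\in V\times V$ lies in $\pi_{2,3}(G^A)\cap(A\times V)$ iff $v\in A$ and $w=\lambda* v$ for some $\lambda\in\bC$ (allowing $\lambda=0$, which gives the degenerate orbit point); this is precisely the statement that $w$ lies on the weighted orbit closure of $v$, which is exactly the condition defining the weighted affine cone of $\Bl_0^\beta(A)$ once one recalls that the fiber of $\pi^\beta:V\setminus\{0\}\to\bP^\beta(V)$ over $[v]$, together with $0$, is $\{\lambda* v\mid\lambda\in\bC\}$. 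Concretely I would check that the equations $v_i^{\beta_j^i}w_j^{\beta_i^j}=v_j^{\beta_i^j}w_i^{\beta_j^i}$ are satisfied on the orbit $w=\lambda* v$ — substitute $w_i=\lambda^{\beta_i}v_i$, $w_j=\lambda^{\beta_j}v_j$ and use $\beta_j\beta_i^j=\beta_i\beta_j^i$ — and conversely that away from the coordinate hyperplanes these equations force $w=\lambda* v$ for a suitable $\lambda$ (solving $\lambda^{\beta_i\beta_j^i}=(w_i/v_i)^{\cdots}$ and checking compatibility via the $\gcd$ relations, which is exactly why one passes to $\gcd$ of the weights and invokes Ritt's Lemma / the gap-sheaf argument alluded to in the preceding remark). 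Since the strict transform $\Bl_0^\beta(A)$ is by definition the closure of a graph over $A\setminus\{0\}$, and likewise $\pi_{2,3}(G^A)$ is a closure of a graph over the same dense open set, the two analytic sets agree over $A\setminus\{0\}$ and hence, being closures of the same locally closed set, have the same support; taking the weighted affine cone is the same operation (forgetting weighted-homogeneity in $w$, i.e. adjoining the $\bC$-scaling $\lambda=0$ direction) on both descriptions.

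Finally, to upgrade from equality of supports to equality of complex spaces I would argue that both sides are reduced and that the scheme structures agree on the dense open locus, whence they agree everywhere. On the dense open set $\{v\neq 0,\ \lambda\neq 0\}$ both $G^A_1$ and $\pi_{2,3}(G^A)$ restrict to the graph of the regular map $a\mapsto\lambda* a$ over $(A\setminus\{0\})\times\bC^*$, which is a reduced locally closed analytic subset isomorphic to $(A\setminus\{0\})\times\bC^*$; taking closures of a reduced irreducible (or, if $A$ is reducible, a reduced equidimensional) set inside a reduced ambient space yields reduced spaces, so no embedded or nilpotent components can appear, and a reduced analytic space is determined by its support. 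The main obstacle I anticipate is the careful bookkeeping in this last equation-matching step: one must verify that the \emph{ideal} generated by the weighted-blow-up equations $v_i^{\beta_j^i}w_j^{\beta_i^j}=v_j^{\beta_i^j}w_i^{\beta_j^i}$ (with the $\gcd$-reduced exponents), after dropping homogeneity, cuts out precisely the orbit closure and not some non-reduced thickening of it — this is where the $\gcd$ normalization of the weights is essential and where Ritt's Lemma (cited in the remark before the statement) must be invoked to control the gap sheaf; the rest of the argument is formal manipulation of closures of graphs.
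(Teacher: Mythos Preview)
Your proposal is correct but takes a considerably more laborious route than the paper. The paper's proof is a single observation: after unwinding definitions, both $G^A_1$ and $\pi_{2,3}(G^A)$ are the closure in $V\times V$ of the same set $\{(v,\lambda*v)\mid v\in A,\ \lambda\in\bC\}$. For the right-hand side this is exactly your first paragraph (properness of $\pi_{2,3}$, so image of closure equals closure of image). For the left-hand side, $\Bl_0^\beta(A)$ is \emph{by definition} the closure of the graph of $a\mapsto[a]$ in $V\times\bP^\beta(V)$, and passing to the affine cone (pulling back along $V\times(V\setminus\{0\})\to V\times\bP^\beta(V)$ and closing up) yields the closure of $\{(a,\lambda*a)\mid a\in A\setminus\{0\},\ \lambda\in\bC^*\}$; no equations are needed.

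You actually reach this core observation in the middle of your second paragraph (``being closures of the same locally closed set, have the same support''), but then surround it with explicit equation-matching and an appeal to Ritt's Lemma that are unnecessary here. The equations $v_i^{\beta_j^i}w_j^{\beta_i^j}=v_j^{\beta_i^j}w_i^{\beta_j^i}$ describe the weighted blow-up of $0$ in the full $V$, not the strict transform of $A$, so verifying them does not by itself identify $G^A_1$; and Ritt's Lemma was invoked in the preceding remark only to explain why those particular exponents give the reduced ideal of the ambient blow-up, a point irrelevant to this lemma. Your final reducedness discussion is likewise superfluous: $A$ is assumed reduced, and both sides are, by what you yourself establish, closures of the same reduced locally closed subset, hence equal as reduced analytic spaces without further argument.
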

\begin{proof} Unwinding the definitions it all comes down to  showing that the closure of $\{(v,\lambda* v)~|~\lambda \in \bC, v\in A\}$ in $V\times V$ is the projection of the closure of $\{(\lambda, v,\lambda* v)~|~\lambda \in \bC, v \in A \}$ in $\bP^1\times V\times V$ onto the last two variables. This is obvious.
\end{proof}

There exists a special analytic subspace of $G^A_1$ and this is the exceptional divisor, i.e. the intersection $EG^A_1:=G^A_1\cap (\{0\}\times V)$. 

\vspace{0.3cm}

Everything we said makes sense if we replace $V$ by a vector bundle 
\[E'=E_1\oplus \ldots \oplus E_k\ra M\]  endowed with a linear weighted action of $\bC^*$ with weights $\beta_1,\ldots,\beta_k$. The weighted blow-up of the zero section is an analytic subset of $E'\times_M\bP^{\beta}(E')$. One can then take the (fiberwise, weighted) affine cone in order to get an analytic subset of $E'\times_ME'$. The affine cone of the weighted blow-up makes sense for an analytic subset $A\subset E'$. We will denote again by $G^A_1$ the resulting analytic subset in $E'\times E'$ and by $EG^A_1$ the exceptional divisor.  This is related to the results of the previous section as follows. 

The zero section $0\subset E'$ can be seen as  the fixed point set $\bP(\bC)\hookrightarrow\bP(\bC\oplus E')$ via the standard embedding $E'\hookrightarrow \bP(\bC\oplus E')$. The action on $\bP(\bC\oplus E')$ is the obvious one. In this context, if $A=s(M)$ is the image of a section $s:M\ra E'\subset \bP(\bC\oplus E')$ then  with the notation (\ref{Siinfty}) the following holds:
\begin{equation}\label{eqEG} EG^{s(M)}_1=\tilde{S}_0^{\infty}\cap (\bP(\bC\oplus E')\times_M E')\subset \bP(\bC)\times_ME' 
\end{equation}
where $E'\times_M E'\subset \bP(\bC\oplus E')\times_ME'$ via the standard embedding. To see that (\ref{eqEG}) holds use Lemma \ref{Lemga} and the fact that the rest of the components of the exceptional divisor of $\tilde{S}$ are supported in $\bP((\bC\oplus E')_{\leq i})\times_M\bP((\bC\oplus E')_{\geq i})$ with $i\geq 1$ hence their intersection with the open $E'\times_M E'$ is empty. 

This takes care of the case $E_0=\bC$. 

For higher dimensional $E_0$ we take $\tau_0\ra \bP(E_0)$ to be the tautological bundle and consider the biholomorphism onto the image (we will refer to this as a "chart"):
\begin{equation}\label{HE0} \Hom(\tau_0,\pi_0^*E')\ra \bP(E_0\oplus E'),\qquad (\ell_p,A:\ell_p\ra E'_p) \ra \Gamma_{A},\qquad \ell_p\in \bP(E_{0,p}),\;p\in M
\end{equation}
where  $\pi_0:\bP(E_0)\ra M$ is the projection. The image of (\ref{HE0}) is $ \bP(E_0\oplus E')\setminus \bP(E')$. Notice that the weighted action of $\bC^*$ on $E'$ induces an action on $\Hom(\tau_0,\pi_0^*E')$ which is linear in the fibers and turns (\ref{HE0}) into an equivariant map. The  induced action is obtained by letting $\bC^*$ act trivially on $\tau_0$:
\[\lambda*(\ell_p,A):=(\ell_p,( \lambda^{\beta_1}A_1,\ldots,\lambda^{\beta_k}A_k)), \qquad \ell_p\in \bP(E_{0,p}),\;p\in M
\]
where $A_i:\ell_p\ra E'_i$ are the components of $A$. To see that $\Gamma_{\lambda* A}=\lambda*\Gamma_A$ take $v_0\in \tau_{[v_0]}$ a non-zero vector. The graph of $\lambda* A$  is a line which can be represented, independently of the choice of $v_0\in \ell_p$ by 
\[[v_0:\lambda^{\beta_1}A_1(v_0):\ldots:\lambda^{\beta_k}A_k(v_0)]=\lambda*[v_0:A_1(v_0):\ldots:A_k(v_0)].\]

Once one has an action on a vector bundle, it makes sense  to perform the weighted blow-up of the zero section in $\Hom(\tau_0,\pi_0^*E')$ and take the (fiberwise) \emph{affine cone}  in order to get a subspace of $\Hom(\tau_0,\pi_0^*E')\times_{\bP(E_0)} \Hom(\tau_0,\pi_0^*E')$.  We  transfer this analytic set to $\bP(E_0\oplus E')\times_M \bP(E_0\oplus E')$ via the product of the chart (\ref{HE0}) with itself 
\[\Hom(\tau_0,\pi_0^*E')\times_{\bP(E_0)} \Hom(\tau_0,\pi_0^*E')\substack{\sim\\\longrightarrow} (\bP(E_0\oplus E')\setminus \bP(E'))\times_M((\bP(E_0\oplus E')\setminus \bP(E'))
\]

By definition, this is the  affine cone of the weighted blow-up of $\bP(E_0)$ inside the open set $\bP(E_0\oplus E')\setminus \bP(E')$. The projection onto the first component has generically one-dimensional fibers, which is to be expected for a cone over a blow-up. The exceptions are the fibers corresponding to points on $\bP(E_0)\subset \bP(E_0\oplus E')\setminus \bP(E')$.

\vspace{0.2cm}

 Now, if $A\subset \bP(E_0\oplus E')\setminus \bP(E')$ is a closed analytic set, by working in $\Hom(\tau_0,\pi_0^*E')$, one can take the weighted strict transform of $A$ and the fiberwise affine cone and (after returning to $\bP(E_0\oplus E')\times_M\bP(E_0\oplus E')$) obtain thus an analytic space  $G^A_{1}\in(\bP(E_0\oplus E')\setminus \bP(E'))\times_M(\bP(E_0\oplus E')\setminus \bP(E'))$ with exceptional locus:
 \[ EG^A_1:=G^A_{1}\cap [\bP(E_0)\times_M(\bP(E_0\oplus E')\setminus \bP(E'))]\subset \bP(E_0)\times_M(\bP(E_0\oplus E')\setminus \bP(E')).
\]

\begin{prop}\label{Pr} Let $s:M\ra \bP(E_0\oplus E')$ be a section and $A:=s(M)\cap  (\bP(E_0\oplus E')\setminus \bP(E'))$. Then
\[EG^{A}_1=\tilde{S}_0^{\infty}\cap \left(\bP(E_0\oplus E')\setminus \bP(E')\times_M \bP(E_0\oplus E')\setminus \bP(E')\right)
\]
\end{prop}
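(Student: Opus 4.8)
The plan is to show that the argument that proves $(\ref{eqEG})$ applies \emph{mutatis mutandis} once the triple $(M,E',\bP(\bC\oplus E'))$ is replaced, by means of the chart $(\ref{HE0})$, by $\bigl(\bP(E_0),\ \mathcal{E},\ \bP(E_0\oplus E')\bigr)$, where $\mathcal{E}:=\Hom(\tau_0,\pi_0^*E')=\bigoplus_{i=1}^{k}\Hom(\tau_0,\pi_0^*E_i)$ carries the fibrewise--linear weighted $\bC^*$--action that is trivial on $\tau_0$ and of weight $\beta_i$ on the $i$--th summand.

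First I would record the two facts that make the substitution legitimate. One: the weighted action on $\bP(E_0\oplus E')$ leaves the open set $U:=\bP(E_0\oplus E')\setminus\bP(E')$ invariant, since it fixes the $E_0$--component, and, as noted after $(\ref{HE0})$, the chart $\hat\varphi\colon\mathcal{E}\xrightarrow{\ \sim\ }U$ is $\bC^*$--equivariant and sends the zero section $0_{\mathcal{E}}$ onto $\bP(E_0)=F_0$. Two: therefore the graph $\{(\lambda,s(m),\lambda*s(m))\}$ intersected with $\bP^1\times U\times_M U$ is precisely the graph of the action over $A:=s(M)\cap U$, which in the chart is the graph of the weighted action over $A':=\hat\varphi^{-1}(A)\subset\mathcal{E}$; as this action also preserves the $\bP(E_0)$--coordinate, that graph, and hence its closure intersected with the open set, is contained in the chart image of $\mathcal{E}\times_{\bP(E_0)}\mathcal{E}\subseteq U\times_M U$.

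Next I would identify $\tilde{S}\cap(\bP^1\times U\times_M U)$, viewed in the chart, with the analytic set $G^{A'}$ attached to the bundle $\mathcal{E}\to\bP(E_0)$ and the closed set $A'$ in the discussion preceding Lemma $\ref{Lemga}$. Both are, by Remark $\ref{altdesc}$ and by the very definition of $G^{A'}$, the closure of one and the same graph of the weighted action over $A'$; a short check (closure commuting with passage to a locally closed subset, together with the containment noted above and the fact that $\mathcal{E}$ is open in $\bP(\bC\oplus\mathcal{E})$) shows that the two ambient closures agree on the region where everything sits. Now, among the components of $\tilde{S}\cap\{\mu=0\}$ produced by Proposition $\ref{Pn}$ and Remark $\ref{Remexcdiv}$, the pieces $(\{[0:1]\}\times C_i^\infty)\cap\tilde{S}$ with $i\ge1$ have second coordinate in $\bP(E_{\ge i})\subseteq\bP(E')$ and so do not meet $U\times_M U$; hence $\tilde{S}_0^\infty\cap(U\times_M U)=\pi_{2,3}\bigl((\{[0:1]\}\times C_0^\infty)\cap\tilde{S}\bigr)\cap(U\times_M U)$, which under the identification above is the part of $\pi_{2,3}(G^{A'})$ lying over $[0:1]$. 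A brief computation — over $[0:1]$ one has $\lambda^{\beta_i}\to\infty$, forcing the source to tend to the zero section if the second coordinate is to remain in $\mathcal{E}$ — shows that this part equals $\pi_{2,3}(G^{A'})\cap(0_{\mathcal{E}}\times_{\bP(E_0)}\mathcal{E})$.

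Finally, Lemma $\ref{Lemga}$ applied to $\mathcal{E}\to\bP(E_0)$ identifies $\pi_{2,3}(G^{A'})$ with $G^{A'}_1$, the fibrewise weighted affine cone of the weighted strict transform $\Bl_0^{\beta}(A')$, whence $\pi_{2,3}(G^{A'})\cap(0_{\mathcal{E}}\times_{\bP(E_0)}\mathcal{E})=EG^{A'}_1$; transporting this through $\hat\varphi\times\hat\varphi$ — which is exactly the transfer described just before the statement and carries $0_{\mathcal{E}}$ to $\bP(E_0)$ — turns it into $EG^A_1$ and yields the asserted equality. I expect the only real difficulty to be organizational: keeping the three ambient bundles apart ($\bP^1\times\bP(E)\times_M\bP(E)$, $\bP^1\times\mathcal{E}\times_{\bP(E_0)}\mathcal{E}$, and its chart image), managing the replacement of $M$ by $\bP(E_0)$, and checking carefully that on $U\times_M U$ the sole surviving exceptional component is $\tilde{S}_0^\infty$ and that it lies over $\bP(E_0)$ in the sense above.
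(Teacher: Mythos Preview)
Your proposal is correct and follows essentially the same approach as the paper: transport the problem via the equivariant chart $(\ref{HE0})$ to the vector bundle $\mathcal{E}=\Hom(\tau_0,\pi_0^*E')\to\bP(E_0)$, invoke the bundle version of Lemma~\ref{Lemga}, and verify that on the open set $U\times_M U$ only the $i=0$ piece of the exceptional divisor survives. The paper's proof is terser and checks the last point by observing that points of $s(M\setminus M_1)\subset\bP(E')$ cannot limit to $\bP(E_0)$ in the \emph{first} factor, whereas you argue via the \emph{second} factor that $C_i^\infty\subset\bP(E_{\le i})\times_M\bP(E_{\ge i})$ misses $U\times_M U$ for $i\ge1$; both routes are valid and equivalent in effect.
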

\begin{proof}  Lemma \ref{Lemga} has a fiber bundle correspondent in this context. Let $\tilde{E}:=\Hom(\tau_0,\pi_0^*E')$, a fiber bundle over $\bP(E_0)$ with a linear action of $\bC^*$, extendable to $\bC$. Let $A\subset \tilde{E}$ be an analytic subset. Then one can take the closure of the graph $G^A$ of $\bC\times A\ra \tilde{E}$ in $\bP^1\times \tilde{E}\times_{\bP(E_0)}\tilde{E}$. This is an analytic subset and the proper projection onto the last two components, gives an analytic set which alternatively can be described as the affine cone over the weighted strict transform of the (weighted) blow-up of the zero section in $\tilde{E}$. Then the two exceptional divisors coincide as well.  

Notice  that  $M_1:=s^{-1}({\bP(E_0\oplus E')\setminus \bP(E')})$ is open in $M$ and $s_1:=s\bigr|_{M_1}$ is a holomorphic section such that $A=s_1(M_1)$. Let $\hat{S}^{\infty}_0$ be the intersection of the closure of \[\{s_1(m),\lambda*s_1(m)~|~m\in M_1\}\] in ${\bP(E_0\oplus E')\setminus \bP(E')}\times_{M_1}{\bP(E_0\oplus E')\setminus \bP(E')}$ with  $\bP(E_0)\times_{M_1}{\bP(E_0\oplus E')\setminus \bP(E')}$. It is this object that has a description as the exceptional divisor of the affine cone of a weighted strict transform as argued in the previous paragraph.

On the other hand, the equality  $\hat{S}_0^{\infty}=\tilde{S}_0^{\infty}\cap \left(\bP(E_0\oplus E')\setminus \bP(E')\times_M \bP(E_0\oplus E')\setminus \bP(E')\right)$ is obvious since the points in the image of  $s\bigr|_{M\setminus M_1}$ cannot converge to a point on $\bP(E_0)$.
\end{proof}

We will use a similar recipe as in Propostion \ref{Pr} in order to get an analogous result about $\tilde{S}_i^{\infty}$ for $i>0$.

\vspace{0.3cm}

 Let $E:=E_0\oplus E'$, and $\tau_i\ra \bP(E_i)$ be the $i$-th line tautological bundle with projection $\pi_i:=\pi\bigr|_{\bP(E_i)}$. Just as before, there exists a biholomorphism (chart):
\begin{equation}\label{eqch2} \Hom(\tau_i,\pi_i^*E_{<i}\oplus \pi_i^*E_{>i})\ra \bP(E)\setminus \bP(E_i'),
\end{equation}
where $E_i':=E_{<i}\oplus E_{>i}$. It becomes equivariant if $\bC^*$ acts on $E_{<i}\oplus E_{>i}$ as follows:
\[ \lambda*(\ldots, v_j,\ldots)= (\ldots,\lambda^{\beta_j-\beta_i}v_j,\ldots),\qquad \forall j\neq i.
\]
Notice that the weights are negative on $E_{<i}$. Rather than taking the weighted  blow-up of the zero section (as a vector bundle over $\bP(E_i)$) in $\Hom(\tau_i,\pi_i^*E_{<i}\oplus \pi_i^*E_{>i})$, one blows-up the invariant vector subbundle $\Hom(\tau_i,\pi_i^*E_{<i})$. This means taking the analytic space 
\[ \Bl_i^{w}:=\{\left((v_{<i},v_{>i}),[w]\right)\in \Hom(\tau_i,\pi_i^*E_{<i}\oplus \pi_i^*E_{>i})\times_{\bP(E_i)} \bP^{\beta_{>i}}(\Hom(\tau_i,\pi_i^*E_{>i}))~|~v_{>i}\in [w]\}.
\]
where here $\beta_{>i}:=(\beta_{i+1}-\beta_i,\ldots,\beta_{k}-\beta_i)$. The (fiberwise) affine cone over this weighted blow-up is an analytic subset of $\Hom(\tau_i,\pi_i^*E_{<i}\oplus \pi_i^*E_{>i})\times_{\bP(E_i)}\Hom(\tau_i, \pi_i^*E_{>i})$ with exceptional divisor contained in  $\Hom(\tau_i,\pi_i^*E_{<i})\times_{\bP(E_i)}\Hom(\tau_i,\pi_i^*E_{>i})$. We use now the chart, i.e. the product of (\ref{eqch2}) with itself:
\begin{equation}\label{homtau}\Hom(\tau_i,\pi_i^*E_{<i}\oplus \pi_i^*E_{>i})\times_{\bP(E_i)} \Hom(\tau_i,\pi_i^*E_{<i}\oplus \pi_i^*E_{>i})\substack{\sim\\ \longrightarrow} (\bP(E)\setminus \bP(E_i'))\times_M(\bP(E)\setminus \bP(E_i'))
\end{equation}
to push this to an analytic subset in $\bP(E)\times_M\bP(E)$.  By definition, this  is  the  affine cone of the weighted blow-up of $S(F_i)=\bP(E_{\leq i})\setminus \bP(E_{<i})$ inside $\bP(E)\setminus \bP(E_i')$. 

The exceptional divisor gets  mapped biholomorphically by (\ref{homtau}) to
 \begin{equation}\label{EBli0}\EBl_i^w\subset(\bP(E_{\leq i})\setminus \bP(E_{<i}))\times_M (\bP(E_{\geq i})\setminus\bP(E_{>i})).\end{equation}
  \begin{lem} With the same notation as in (\ref{SFi}) and (\ref{UFi})
 \begin{equation}\label{EBli} \EBl_i^w=S(F_i)\times_{F_i}U(F_i)
 \end{equation}
 \end{lem}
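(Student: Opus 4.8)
The plan is to read everything off the chart (\ref{eqch2}) and the cone construction, so the argument is essentially a controlled definition–chase. First I would make the chart identifications explicit. Writing a point of $\Hom(\tau_i,\pi_i^*E_{<i}\oplus\pi_i^*E_{>i})$ over $p\in M$ as a triple $(\ell_p,A_{<},A_{>})$ with $\ell_p\in\bP(E_{i,p})$, $A_{<}\colon\ell_p\to E_{<i,p}$ and $A_{>}\colon\ell_p\to E_{>i,p}$, the chart (\ref{eqch2}) sends it to the graph line $[A_{<}(v_i):v_i:A_{>}(v_i)]$, $v_i\in\ell_p$. Hence the invariant subbundle $\Hom(\tau_i,\pi_i^*E_{<i})=\{A_{>}=0\}$ is carried biholomorphically onto $\{[v_{<i}:v_i:0]\mid v_i\neq0\}=\bP(E_{\leq i})\setminus\bP(E_{<i})=S(F_i)$, and $\Hom(\tau_i,\pi_i^*E_{>i})=\{A_{<}=0\}$ biholomorphically onto $\bP(E_{\geq i})\setminus\bP(E_{>i})=U(F_i)$, both compatibly with the projections to $F_i=\bP(E_i)$. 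Since $S(F_i)$ and $U(F_i)$ are closed in $\bP(E)\setminus\bP(E_i')$ (cut out there by the vanishing of the $E_{>i}$, resp.\ $E_{<i}$, components), this already shows that under (\ref{homtau}) the divisor $\EBl_i^w$ lands inside $S(F_i)\times_{F_i}U(F_i)$; the content of (\ref{EBli}) is therefore the reverse inclusion.

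Next I would compute the exceptional locus of the affine cone directly. The blow-up $\Bl_i^w$ is nothing but the fibrewise weighted blow-up of the zero section of $\Hom(\tau_i,\pi_i^*E_{>i})$, performed over the center $\Hom(\tau_i,\pi_i^*E_{<i})$; concretely its points are $\bigl((A_{<},A_{>}),[w]\bigr)$ with $A_{<}$ unconstrained, $[w]\in\bP^{\beta_{>i}}(\Hom(\tau_i,\pi_i^*E_{>i}))$ and $A_{>}$ on the weighted line of $[w]$. Passing to the affine cone replaces $[w]$ by a vector $w\in\Hom(\tau_i,\pi_i^*E_{>i})$ subject to the same weighted-homogeneous incidence equations, and by the fibre-bundle version of Lemma \ref{Lemga} used already in the proof of Proposition \ref{Pr} this cone is $\pi_{2,3}$ of the closure of the graph of the $\bC^*$-action $\lambda\mapsto\bigl(A_{<},\ \lambda*A_{>}\bigr)$ in the $\beta_{>i}$-weights. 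Along the exceptional locus $\{A_{>}=0\}$ the incidence relations degenerate (their $A_{>}$-side is a monomial in the $A_{>}$-coordinates, hence vanishes identically), so there $w$ becomes a free coordinate; consequently
\[
\EBl_i^{w}=\bigl\{(\ell_p,A_{<},0,w)\ \bigm|\ p\in M,\ \ell_p\in\bP(E_{i,p}),\ A_{<}\in\Hom(\ell_p,E_{<i,p}),\ w\in\Hom(\ell_p,E_{>i,p})\bigr\}.
\]

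Finally I would transport this back through (\ref{homtau}): the first three coordinates $(\ell_p,A_{<},0)$ sweep out all of $\Hom(\tau_i,\pi_i^*E_{<i})\cong S(F_i)$, the last coordinate $w$ sweeps out all of $\Hom(\tau_i,\pi_i^*E_{>i})\cong U(F_i)$, and the common base point $\ell_p\in\bP(E_{i,p})=F_i$ means the two sides fibre together over $F_i$. Hence the image of $\EBl_i^w$ is exactly $S(F_i)\times_{F_i}U(F_i)$, which is (\ref{EBli}). The step I expect to be the one needing real care is the assertion that the exceptional fibre of the affine cone is the \emph{whole} of $\Hom(\tau_i,\pi_i^*E_{>i})$ rather than some proper sub-cone — i.e.\ that the de-homogenised incidence relations impose no condition on $w$ once $A_{>}=0$. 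This is precisely the place where the weighted-projective subtleties enter (the passage to $\gcd$'s of the weights $\beta_j-\beta_i$, and the gap-sheaf/Ritt's Lemma issue flagged in the preceding remark), and it must be dispatched exactly as in the model computation underlying Lemma \ref{Lemga}; everything else is bookkeeping with the two charts.
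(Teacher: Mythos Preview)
Your proposal is correct and follows essentially the same route as the paper: identify the exceptional divisor in the $\Hom$-chart as $\Hom(\tau_i,\pi_i^*E_{<i})\times_{\bP(E_i)}\Hom(\tau_i,\pi_i^*E_{>i})$, then use (\ref{eqch2}) on each factor to recognize $S(F_i)$ and $U(F_i)$ fibred over $F_i=\bP(E_i)$. Your closing concern is misplaced, however: since $\Bl_i^w$ is the weighted blow-up of the \emph{entire} ambient bundle (not the strict transform of a subvariety), its exceptional divisor is automatically the full $\bP^{\beta_{>i}}$-bundle over the center, so after passing to the affine cone the fibre over $\{A_{>}=0\}$ is all of $\Hom(\tau_i,\pi_i^*E_{>i})$ with no weighted-projective or Ritt-type subtlety to resolve.
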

 \begin{proof} Note that in the r.h.s. of (\ref{EBli0}) the fiber product is over $M$ while in r.h.s of (\ref{EBli}) it is over $F_i=\bP(E_i)$. The exceptional divisor $\EBl_i^w$ is the image of $\Hom(\tau_i,\pi_i^*E_{<i})\times_{\bP(E_i)}\Hom(\tau_i,\pi_i^*E_{>j})$ under the chart map to $\bP(E)\setminus \bP(E_i')\times_M \bP(E)\setminus\bP(E_i')$. One need only observe then that
 \[\Hom(\tau_i,\pi_i^*E_{<i})\simeq \bP(E_{\leq i})\setminus \bP(E_{<i})=S(F_i),
 \]
 and
 \[ \Hom(\tau_i,\pi_i^*E_{<i})\simeq \bP(E_{\geq i})\setminus \bP(E_{>i})=U(F_i)
 \]
 via (\ref{eqch2}).
 \end{proof}
 This intermediary step gives a hint on how to proceed.  By working in the chart (\ref{eqch2}), every analytic subset $A\subset \bP(E)\setminus \bP(E_i')$ has a weighted strict transform with respect to the weighted blow-up of $S(F_i)$ inside $\bP(E_i)\setminus \bP(E_i')$. The affine cone over this weighted strict transform $(G^A_1)_i$ lives in $(\bP(E)\setminus \bP(E_i'))\times_M(\bP(E)\setminus \bP(E_i'))$ while its exceptional divisor is by definition:
 \[ (EG^A_1)_i:=(G^A_1)_i\cap  \EBl_i^w.
 \]
 
 For the next result we will assume that $s(M)$ is not contained in any of the stable manifolds $S(F_i)$. This particular case is disposed of in Corollary \ref{cormt3}. The fact that $s(M)\not\subset S(F_i)$ implies that the weighted strict transform of $s(M)$ when we take the weighted blow-up of $S(F_i)$ is well-defined.
 \begin{theorem} \label{Th95} The strict transform $\tilde{S}$ of $\bP^1\times s(M)$ in the blow-up with respect to the sheaf of ideals (\ref{wtf}) intersected with the open and dense (in $C_i^{\infty}$) set $S(F_i)\times_{F_i}U(F_i)$  coincides with the exceptional divisor of the affine cone of the  weighted strict transform of $A=s(M)\cap \bP(E)\setminus \bP(E_i')$ in the weighted blow-up of $S(F_i)$ inside $\bP(E)\setminus \bP(E_i')$, in other words
 \[\tilde{S}_i^{\infty}\cap (\bP(E)\setminus \bP(E_i')\times_{M}\bP(E)\setminus \bP(E_i'))=(EG^A_1)_i
 \] 
 \end{theorem}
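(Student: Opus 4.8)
The plan is to follow the pattern of the proof of Proposition \ref{Pr}, with the weighted blow-up of $S(F_i)$ inside $\bP(E)\setminus\bP(E_i')$ — equivalently, of the invariant subbundle $\Hom(\tau_i,\pi_i^*E_{<i})$ inside $\tilde E_i:=\Hom(\tau_i,\pi_i^*E_{<i}\oplus\pi_i^*E_{>i})$ — playing the role that the blow-up of the zero section played there.

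First I would localize to the invariant chart $\bP(E)\setminus\bP(E_i')$. This open set is $\bC^*$-invariant, and a point of $\bP(E_i')$ together with all of its limits remains in $\bP(E_i')$. Moreover $\tilde S_j^{\infty}$ lies in $C_j^{\infty}\subset\bP(E_{\leq j})\times_M\bP(E_{\geq j})$, and for $j<i$ the first factor is forced into $\bP(E_{<i})\subset\bP(E_i')$ while for $j>i$ the second factor is forced into $\bP(E_{>i})\subset\bP(E_i')$; hence none of the components $\tilde S_j^{\infty}$ with $j\neq i$ meets $(\bP(E)\setminus\bP(E_i'))\times_M(\bP(E)\setminus\bP(E_i'))$, and over this open set $C_i^{\infty}$ is exactly $S(F_i)\times_{F_i}U(F_i)=\EBl_i^w$. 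Setting $M_i:=s^{-1}(\bP(E)\setminus\bP(E_i'))$ and $s_i:=s|_{M_i}$, I would argue, exactly as in the last paragraph of the proof of Proposition \ref{Pr}, that inside this open set $\tilde S$ agrees with the closure of $\{(\lambda,s_i(m),\lambda*s_i(m))~|~m\in M_i,\ \lambda\in\bC^*\}$, so that the slice of that closure over $[0:1]$, pushed forward by $\pi_{2,3}$, is precisely $\tilde S_i^{\infty}\cap\big((\bP(E)\setminus\bP(E_i'))\times_M(\bP(E)\setminus\bP(E_i'))\big)$.

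Next I would transport the picture to $\tilde E_i$ through the chart (\ref{eqch2}); there the action is linear with weights $\beta_j-\beta_i$, negative on $\Hom(\tau_i,\pi_i^*E_{<i})\cong S(F_i)$ and positive on $\Hom(\tau_i,\pi_i^*E_{>i})\cong U(F_i)$, and $A:=s_i(M_i)$ becomes an analytic subset which, by the standing hypothesis $s(M)\not\subset S(F_i)$, is not contained in the centre $\Hom(\tau_i,\pi_i^*E_{<i})$, so that its weighted strict transform and the fibrewise affine cone $(G^A_1)_i$ over it are well defined. The core of the argument is then a weighted, fibre-bundle version of Lemma \ref{Lemga}: the projection to the last two factors of the closure, in $\bP^1\times\tilde E_i\times_{F_i}\tilde E_i$, of $\{(\lambda,a,\lambda*a)~|~a\in A,\ \lambda\in\bC^*\}$ coincides with $(G^A_1)_i$, and the slice of the former over $[0:1]$ equals the exceptional divisor $(EG^A_1)_i$ of the latter. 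As for Lemma \ref{Lemga}, I would check this in the standard affine charts: along an orbit through a generic $a\in A$ the $E_{<i}$-coordinates of $\lambda*a$ carry negative powers of $\lambda$ and tend to $0$ as $\lambda\to\infty$, so the limit lies in $\Hom(\tau_i,\pi_i^*E_{>i})$, whereas the $E_{>i}$-coordinates carry positive powers of $\lambda$, so a finite nonzero limit forces the $E_{>i}$-part of $a$ to approach $0$ at the compensating weighted rate — which is exactly a point of the affine cone over the weighted blow-up along $\{v_{>i}=0\}$. Pulling this back through (\ref{eqch2}) and combining it with the previous step yields the claimed equality.

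The step I expect to be the main obstacle is the mixed-sign weight situation in the chart $\tilde E_i$: one has to verify that it is the invariant subbundle $\Hom(\tau_i,\pi_i^*E_{<i})$, blown up with the positive weights $\beta_{>i}$ on its normal directions, and \emph{not} the zero section, that resolves the $\lambda\to\infty$ limit — the negative-weight directions contributing nothing to the exceptional locus since they merely collapse to $0$ — and, relatedly, that under the hypothesis that $A$ avoids the centre the weighted strict transform and its affine cone are reduced of the expected pure dimension $n$ and the various chart computations glue consistently. These are the weighted analogues of the elementary properties of blow-ups, cones and strict transforms collected in Appendix \ref{Ap1}.
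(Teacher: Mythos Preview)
Your overall strategy—localizing to the chart $\bP(E)\setminus\bP(E_i')$, observing that only the $i$-th component $\tilde S_i^{\infty}$ survives there, restricting to $s_i:M_i\to\bP(E)\setminus\bP(E_i')$, and transporting through (\ref{eqch2}) to the vector bundle $\tilde E_i=\tilde E^-\oplus\tilde E^+$ over $\bP(E_i)$ with mixed-sign weights—matches the paper's proof exactly. The reduction to a fibrewise vector-space statement and the final passage back through the chart are also what the paper does.

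The gap is in your ``core'' step, the claimed weighted version of Lemma \ref{Lemga}. That lemma was proved for a $\bC^*$-action extendable to $\bC$, i.e.\ with non-negative weights, where the closure of the graph in $\bP^1\times V\times V$ is directly the affine cone over the weighted blow-up of the origin. Here the weights on $\tilde E^-$ are \emph{negative}, so the action does not extend across $\lambda=\infty$, and the closure of the graph of the mixed-sign action at $\mu=0$ is not a priori the affine cone of the blow-up of $\{v_{>i}=0\}$ with positive weights $\beta_{>i}$. Your orbitwise heuristic (negative-weight coordinates collapse, positive-weight coordinates determine a point of the weighted cone) is suggestive but does not control the simultaneous limits in $(\lambda,a)$ that populate the closure, nor does it establish that the two analytic spaces coincide as complex spaces.

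The paper fills this gap in its Step~1 by introducing an auxiliary (non-action) map $\varphi:\bC^*\times V\to V$ with $\varphi^-\equiv0$ and $\varphi^+(\lambda,v^-,v^+)=\lambda^{\gamma}v^+$, and then constructing an explicit pair of holomorphic maps $\bC^*\times V\times V\to\bC^*\times V\times V$ (one sending $w^-\mapsto\lambda^{-\alpha}v^-$, the other $w^-\mapsto0$) that extend holomorphically to $\lambda=\infty$ and are mutual inverses on the two closures there. This proves $EA^{\varphi}_{\infty}=E(A)_{\infty}$, i.e.\ the negative-weight directions contribute nothing to the exceptional locus at $\infty$. Step~2 then applies the Lemma \ref{Lemga} argument to $\varphi$, which now involves only the positive weights $\gamma$, and this yields exactly the affine cone of the weighted strict transform along $\{v^+=0\}$. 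You correctly flagged the mixed-sign issue as the main obstacle; the missing ingredient is this explicit biholomorphism that decouples the negative-weight part from the $\lambda\to\infty$ closure.
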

 \begin{proof} Let $\tilde{E}^-:=\Hom(\tau_i,\pi_i^*E_{<i})$ and $\tilde{E}^+:=\Hom(\tau_i,\pi_i^*E_{>i})$ be bundles over $\bP(E_i)$. Then $\bC^*$ acts linearly in every fiber with negative respectively positive weights on $\tilde{E}^-$ and on $\tilde{E}^+$ respectively.  Let us consider the  vector space counterpart of the problem we want to solve. 

 Suppose $V=V^-\oplus V^+$ is a vector space on which $\bC^*$ acts with weights $-\alpha_1>-\alpha_2>\ldots >-\alpha_{k^-}$ on $V^-$ and with weights $\gamma_1<\ldots<\gamma_{k^+}$ on $V^+$ where $\alpha_i,\;\gamma_j$ are positive integers. There are several related actions one can consider with different closures
 \begin{itemize} 
 \item[(a)] $\bC^*\times V\ra V$ with closure in $\bP^1\times V\times V$;
 \item[(b)] $\bC^*\times V\ra \bP(\bC\oplus V)$ (just embed the codomain from (a) into $\bP(\bC\oplus V)$) with closure in $\bP^1\times V\times \bP(\bC\oplus V)$, 
 \item[(c)] $\bC^*\times \bP(\bC\oplus V)\ra \bP(\bC\oplus V)$ (projectivize the resulting action on $\bC\oplus V$ obtained by putting together (a)  with the trivial action on $\bC$) with closure in $\bP^1\times \bP(\bC\oplus V)\times\bP(\bC\oplus V)$; .
 \end{itemize}
 We are familiar with (c) from the previous section. Notice that the action on $\bP(\bC\oplus V)$ can alternatively be described as an action with the following $k^-+k^++1$ non-negative weights:
 \[0=\alpha_{k^-}-\alpha_{k^-},\;\;\;\;\alpha_{k^-}-\alpha_{k^--1}\;\;\;\ldots\;\;\;\; \alpha_{k^-}-\alpha_{1},\;\;\;\;\; \alpha_{k^-},\;\;\;\;\;\gamma_1+\alpha_{k^-}\;\;\;\; \ldots\;\;\; \;\;\gamma_{k^+}+\alpha_{k^-}
 \]
 with the weight $\alpha_{k^-}$ corresponding to the action on $\bC$. Hence the closure at (c) can be described as a blow-up of a sheaf of ideals in $\bP^1\times \bP(\bC\oplus V)$. Then the closure at (b) is the same blow-up restricted to the preimage of the open set $\bP^1\times V$ while the closure at (a) is simply the intersection of the closure of (b) with $\bP^1\times V\times V$.
 
  Suppose $A\subset V$ is a reduced analytic set. Then the closure  of the graph of the action $\bC^*\times A\ra \bP(\bC\oplus V)$ is an analytic subset  of $\bP^1\times V\times \bP(\bC\oplus V)$. We restrict it to the open set $\bP^1\times V\times V$ and denote this object $B(A)$. The projection of $B(A)$ onto $\bP^1\times V$ is a birational map with exceptional divisor (see Remark \ref{Remexcdiv}) denoted $E(A)$ contained in 
 \[ (\{[0:1]\}\times V\times V)\cup (\{[1:0]\}\times V\times V)
 \]
 One can be even more precise. The exceptional divisor  is in fact contained in
 \[ (\{[0:1]\}\times S(\bP(\bC))\times U(\bP(\bC)))\cup (\{[1:0]\}\times U(\bP(\bC))\times S(\bP(\bC)))=
 \]
 \[ =\{[0:1]\}\times V^-\times V^+\cup \{[1:0]\}\times V^+\times V^-.
 \]
 In other words, when intersecting with $\{\mu=0\}\times V\times V$ only one component of the exceptional divisor in $\bP^1\times \bP(\bC\oplus V)\times \bP(\bC\oplus V)$ out of the $k^-+k^++1$ possible survive the intersection, namely the one corresponding to the fixed point $\bP(\bC)$. For the other $C_i^{\infty}$'s (using notations (\ref{Vieq}), (\ref{Aieq})), either $\bP(V_i^+)\subset \bP(V)$ or $\bP(V_i^-)\subset \bP(V)$.  
 
 Denote 
 \[ E(A)_{\infty}:=E(A)\cap \{[0:1]\}\times V^-\times V^+
 \]
 The  main claim is that $E(A)_{\infty}$ does not depend on the choice of weights $\alpha_1,\ldots,\alpha_k$ and in fact coincides with the exceptional divisor of the affine cone of the weighted strict transform of $A$ under the weighted blow-up of $V^-$ inside $V$. The weighted blow-up of $V^-$ in $V$ is the analytic space:
 \[ \{(v_-,v_+,[w])\in (V^-\oplus V^+)\times \bP^{\gamma}(V^+)~|~v^+\in [w]\}
 \]
 The affine cone over the weighted strict transform of $A$ is the closure of 
 \[\{(v_-,v_+,\lambda^{\gamma}v_+\footnote{Notation: $\lambda^{\gamma}v^+:=(\lambda^{\gamma_1}v^+_1,\lambda^{\gamma_2}v^+_2,\ldots)$})~|~v=(v^-,v^+)\in A,\; \lambda\in \bC^*\} \mbox{ in } V\times V^+\subset V\times V.\] The exceptional divisor is obtained by intersecting with  $\{v_+=0\}$.
 
 We will prove the claim in two steps.
 
 \noindent
 \emph{Step 1.} Consider the following map:
 \[\varphi:\bC^*\times V\ra V,\qquad \varphi:=(\varphi^-,\varphi^+),\;\;\; \varphi^-\equiv0, \;\;\;\varphi^+(\lambda,v^-,v^+):=\lambda^{\gamma}v^+
 \] 
 This is not an action. However for every reduced closed subset $A\subset V$, the closure of the graph of $\varphi\bigr|_{\bC^*\times A}$ in $\bP^1\times V\times V$ is an analytic subset.  This can be seen by considering, equivalently, the closure of the graph of $\varphi^+\bigr|_{\bC^*\times A}$ in $\bP^1\times V\times V^+$. Notice that $\varphi^+$ has an extension to a map $\bC^*\times \bP(\bC\oplus V)\ra \bP(\bC\oplus V^+)$
 and the closure of the graph of this map in $\bP^1\times \bP(\bC\oplus V)\times \bP(\bC\oplus V^+)$ is the blow-up of a bihomogeneous ideal in $\bP^1\times \bP(\bC\oplus V)$, namely
 \[ \mathcal{J}=\langle \mu^{\gamma_{k^+}} \theta, \mu^{\gamma_{k^+}-\gamma_1}\lambda^{\gamma_1}w_1,\ldots\mu^{\gamma_{k^+}-\gamma_{k^+-1}}\lambda^{\gamma_{k^+-1}}w_{k-1},\lambda^{\gamma_{k^+}}w_k\rangle
 \]
 where $([\mu:\lambda],[\theta:v_1:\ldots v_{k^-}:w_1:\ldots w_{k^+}])$ are the coordinates of a point in $\bP^1\times \bP(\bC\oplus V)$. We have an analogous relation between the closures as in (a)-(c) above.
 
 Denote the closure of the graph of  $\varphi\bigr|_{\bC^*\times A}$ in $\bP^1\times V\times V$ by $A^{\varphi}$. It has an exceptional divisor $EA^{\varphi}$ which  has two distinct components: one denoted $EA^{\varphi}_{\infty}$ and lying in $\{[0:1]\}\times V\times V$   and another one in $\{[1:0]\}\times V\times V$. Let us prove that
 \begin{equation} \label{EA} EA^{\varphi}_{\infty}=E(A)_{\infty}
 \end{equation}
 Consider the following maps:
 \[ \bC^*\times  V\times V\ra \bC^*\times V\times V,\qquad (\lambda,v^-,v^+,w^-,w^+)\ra(\lambda,v^-,v^+,\lambda^{-\alpha}v^-,w^+)
 \]
 \[ \bC^*\times  V\times V\ra \bC^*\times V\times V,\qquad (\lambda,v^-,v^+,w^-,w^+)\ra(\lambda,v^-,v^+,0,w^+)
 \]
 Clearly both maps extend holomorphically at $\lambda=\infty$. The two extensions at $\lambda=\infty$ are inverse to each other when taking the restrictions:
 \begin{itemize}
 \item[-] of the first one to the closure of the graph of $\varphi$ (but away from $\{\lambda=0\}\times V\times V$) 
 \item[-] of  the second one to the closure of the graph of the original action on $V$ (also away from $\{\lambda=0\}\times V\times V$) 
 \end{itemize}
 Clearly the first holomorphic map takes the graph of $\varphi\bigr|_{\bC^*\times A}$ to the graph of the action on $A$ and hence it will take the closure at $\infty$ to the closure at $\infty$. Notice also that at $\infty$ the two maps become identity when restricted to $V\times V^+$. Therefore (\ref{EA}) holds. \vspace{0.2cm}
 
\noindent
\emph{Step 2:} We claim that $A^{\varphi}$ is isomorphic  via the projection $\bP^1\times V\times V\ra V\times V$ with the affine cone over the weighted strict transform of $A$. This will of course imply that $EA^{\varphi}_{\infty}$ is the exceptional divisor of the affine cone and finish the proof of the main claim. However, this statement is similar to Lemma \ref{Lemga}. \vspace{0.2cm}

 Just like Proposition \ref{Pr} has a vector space counterpart from which it can be adapted, the same is true here. In our case if we translate the main claim to the bundle $\tilde{E}=\tilde{E}^-\oplus \tilde{E}^+$ where the reduced analytic set $A$ is the preimage of  $s(M)$ via the chart $\tilde{E}\ra \bP(E)$, we get the claim of the Theorem.  A similar observation as at the end of the proof of Proposition \ref{Pr} applies here as well, i.e. one works  with $s\bigr|_{s^{-1}(\bP(E_i)\setminus \bP(E_i'))}$ and notices that 
$ \tilde{S}_i^{\infty}\cap (\bP(E)\setminus \bP(E_i')\times_{M}\bP(E)\setminus \bP(E_i'))$ is entirely determined by this restriction.
   \end{proof}

\section{Applications in the weighted  case}
\subsection{Bott-Chern formula}\label{BCtheorem}

As a first application we prove Bott-Chern \cite{BC} Proposition 1.5. For a holomorphic vector bundle $E\ra M$, let $\hat{c}_t(E)$ represent the total Chern class of $E$ in the Bott-Chern cohomology groups. 

\begin{prop} \begin{itemize}
\item[(i)] If $h_0$ and $h_1$ are two Hermitian metrics over $E$ then 
\[ c_t(\nabla^{h_0})-c_t(\nabla^{h_1})=\partial\bar{\partial}\omega
\]
for some smooth form $\omega$ where $\nabla^{h_0}$ and $\nabla^{h_1}$ are the Chern connections.
\item[(ii)] Let $0\ra E'\ra E\ra E''\ra 0$ be an exact sequence of holomorphic vector bundles over $M$. Then in $H^{*,*}_{BC}(M)$ the following holds
\[ \hat{c}_t(E)=\hat{c}_t(E'\oplus E'')=\hat{c}_t(E)\hat{c}_t(E'')
\]
\end{itemize}
\end{prop}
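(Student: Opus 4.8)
The plan is to deduce both parts from the machinery already built, specializing the weighted $\bC^*$-action picture to a very particular section. For part (i), I would run the argument of Theorem \ref{MT}/Theorem \ref{mTt} in the trivial case: if $h_0$ and $h_1$ are two Hermitian metrics on $E$, pick an auxiliary isomorphism realizing $E$ as a subbundle of itself in a trivial extension, or more directly apply Theorem \ref{mTt} (the ``twisting by a line bundle'' version) to the identity morphism $s = \id : E \to E$ viewed with two different metrics on source and target. Since $s^{-1}(0) = \emptyset$, the last sentence of Theorem \ref{mTt} (or equivalently Corollary \ref{corm1} for non-vanishing $s$) gives the equality of Chern forms modulo $\partial\bar\partial$ of a smooth form. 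Concretely, $L = L_s$ are then isomorphic (indeed $\id$ is the isomorphism), carrying possibly non-isometric metrics, and formula (\ref{E22}) with empty zero locus degenerates to $c_t(\nabla^{h_0}) - c_t(\nabla^{h_1}) = \partial\bar\partial\omega$. This is precisely the content already flagged in the discussion after Theorem \ref{mTt} and in the Baum–Bott-style remarks.

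For part (ii), the strategy is to use the weighted-action transgression of Corollary \ref{cormt3} applied to the Grassmannian bundle, as set up in Proposition \ref{grpr} and Proposition \ref{Fixpeq}. Given the exact sequence $0 \to E' \to E \to E'' \to 0$, choose a smooth (not necessarily holomorphic) splitting, so $E \cong E' \oplus E''$ as $C^\infty$ bundles, and let $s : M \to \Hom(E'', E')$ be the holomorphic section measuring the failure of the holomorphic splitting — equivalently, the extension class realized as a section whose graph is the holomorphic subbundle $E' \subset E' \oplus E''$. Running the $\bC^*$-action $\lambda * (-)$ on $\Gr_{k'}(E' \oplus E'')$ with $k' = \rank E'$, the section $s$ corresponds to the constant-rank situation $\dim \Ker s(p) = 0$ for all $p$ if $s$ is everywhere the graph of a fixed-rank map, so by the smoothness clause of Proposition \ref{grpr} the limit $\lim_{\lambda\to\infty}(\lambda s)^*\omega$ is a smooth form; and as $\lambda \to \infty$ the graph degenerates to a point in the fixed-point component $F_{k'} = \Gr_{k'}(E') \times \Gr_0(E'') \cong \Gr_{k'}(E') $, i.e. the splitting $E' \oplus E''$. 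Taking $\omega$ to be the universal total Chern form $c_t(\nabla^Q)$ for the tautological quotient $Q$ on the Grassmannian bundle (which is $\partial$ and $\bar\partial$ closed, being a Chern–Weil form), the pullback $s^*\omega$ computes $\hat c_t(E)$ while the limiting pullback computes $\hat c_t(E' \oplus E'') = \hat c_t(E')\hat c_t(E'')$, and Corollary \ref{cormt3} produces $s^*\omega - \lim_{\lambda\to\infty}(\lambda s)^*\omega = \partial\bar\partial T(s,\omega)$ with $T(s,\omega)$ smooth. This is the equality claimed, after observing that $c_t$ is multiplicative on a genuine direct sum already at the level of forms via a block-diagonal connection.

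The main obstacle I anticipate is pinning down precisely which universal form on $\Gr_{k'}(E' \oplus E'')$ restricts correctly at the two ends of the flow: one needs that the evaluation of $c_t(\nabla^Q)$ along the ``zero section'' of the action (the graph of $s$, i.e. the copy of $E \hookrightarrow \Gr_{k'}$) reproduces the Chern form of $E$ with its given metric, and that its restriction to the fixed locus $F_{k'}$ reproduces the Chern form of $E' \oplus E''$ with the split metric — this is the analogue of Remark \ref{Imprem} in the Grassmannian setting, and requires identifying $Q$ restricted to $F_{k'}$ with the appropriate direct sum of tautological quotient bundles, as connections. Quillen's extension theorem (invoked for Theorem \ref{T002}) is exactly the tool that guarantees such a universal form exists with the right restriction properties. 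A secondary technical point is handling the case where the extension class $s$ does not have locally constant rank as a section of $\Hom$ (it need not, even though $E' \hookrightarrow E$ is everywhere a subbundle); but in the extension setup the relevant section is the graph embedding, which always lands in $S(F_{k'})$ (the stable manifold of the top fixed component), so the smoothness clause of Proposition \ref{grpr} applies and no singular currents appear. Once these identifications are in place the rest is a direct citation of Corollary \ref{cormt3} together with the elementary multiplicativity $c_t(\nabla^{E'} \oplus \nabla^{E''}) = c_t(\nabla^{E'})c_t(\nabla^{E''})$.
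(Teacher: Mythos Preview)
Your overall strategy---use the weighted $\bC^*$-action on a Grassmannian bundle and invoke Corollary \ref{cormt3}/Proposition \ref{grpr}---is indeed the paper's strategy, but both parts of your implementation have genuine gaps.

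For part (i), Theorem \ref{mTt} concerns morphisms $s:L\to E$ with $L$ a \emph{line} bundle; formula (\ref{E22}) only controls powers of $c_1$ of line bundles. Applying it to $\id:E\to E$ only makes sense when $\rank E=1$, so your argument does not handle higher rank. The paper instead works on $\Gr_k(E\oplus E)$, $k=\rank E$, with the metric $\pi^*h_0\oplus\pi^*h_1$ on $\pi^*E\oplus\pi^*E$ inducing the metric on $\tau$. The section is the graph of $\id_E$ (the diagonal), which is holomorphic; the $\bC^*$-flow sends it to $E\oplus 0$ and $0\oplus E$ at $\lambda\to 0$ and $\lambda\to\infty$, where $(\tau,\nabla^\tau)$ restricts to $(E,\nabla^{h_0})$ and $(E,\nabla^{h_1})$ respectively. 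Then Proposition \ref{grpr} applied to $\omega=c_t(\nabla^\tau)$ gives the result.

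For part (ii), the section you propose does not exist: the extension class of $0\to E'\to E\to E''\to 0$ lives in $H^1(M,\Hom(E'',E'))$, not in $H^0$, so there is no holomorphic section $s:M\to\Hom(E'',E')$ ``measuring the failure of the holomorphic splitting''. (After a smooth splitting the holomorphic structure is encoded by a $\bar\partial$-closed $(0,1)$-form, not a holomorphic section, and the machinery of Corollary \ref{cormt3} needs the section to be holomorphic.) There is also a mismatch: the graph of a map $E''\to E'$ has rank $\rank E''$, not $\rank E'$, so it does not define a point of $\Gr_{k'}(E'\oplus E'')$. The paper's fix is to change the ambient bundle: use $\Gr_k(E'\oplus E)$ with $k=\rank E'$, and take the section to be the graph of the \emph{holomorphic} inclusion $\iota:E'\hookrightarrow E$. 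The universal form is $c_t(\nabla^Q)$ for the tautological quotient $Q$. At $\lambda\to 0$ one lands on $E'\oplus 0$, where $Q\simeq E$; at $\lambda\to\infty$ one lands on $0\oplus\Imag\iota\in\Gr_k(E)$, where $Q\simeq E'\oplus(E/\Imag\iota)\simeq E'\oplus E''$. Both limits are smooth since $\iota$ has constant rank, and the transgression gives $c_t(E)=c_t(E'\oplus E'')$ in $H^{*,*}_{BC}(M)$.
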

\begin{proof} (i) Suppose the rank of $E$ is $k$ and consider $\Gr_k(E\oplus E)$, the Grassmannian fiber bundle of $k$-linear subspaces in $E\oplus E$.  Let $\tau\subset \pi^*E\oplus \pi^* E\ra \Gr_k(E\oplus E)$ be the tautological bundle. On $\pi^*E\oplus \pi^*E$ put the Hermitian metric $\pi^*h_0\oplus \pi^*h_1$. This induces a metric $h^{\tau}$  on $\tau$. Consider the Chern connection $\nabla^{\tau}$ on $\tau$ induced by the metric $h^{\tau}$. 

Notice that $\Gr_k(E\oplus E)\ra M$ comes with two other distinguished sections, namely the one represented by the $k$-dimensional subspaces $E\oplus 0$ denoted $\iota_0$ and $0\oplus E$  denoted $\iota_{\infty}$, respectively. One sees easily that  $(\tau,\nabla^{\tau})\bigr|_{\iota_0(M)}$ is isomorphic with $(E,\nabla^{h_0})$ while $(\tau,\nabla^{\tau})\bigr|_{\iota_{\infty}(M)}\simeq (E,\nabla^{h_1})$.

Consider the action of $\bC^*$ on $\Hom(E,E)$, $A\ra \lambda A$. This extends to an action on $\Gr_k(E\oplus E)$ which in fact can be seen as the restriction via the Plucker embedding of a weighted action on $\bP(\Lambda^k(E\oplus E))$. The flow of the diagonal (i.e. graph of $\id_E$ considered as a holomorphic section of $\pi:\Gr_k(E\oplus E)\ra M$) equals $\iota_0(M)$ when $\lambda\ra 0$, and equals $\iota_{\infty}(M)$ when $\lambda\ra \infty$. By Proposition \ref{grpr}  it follows that
\[ c_t(\nabla^{h_1})-c_t(\nabla^{h_0})=\partial\bar{\partial}T
\]
and this proves the metric invariance of the Bott-Chern classes. 

For (ii) we use the same idea, only slightly different. Suppose $E'$ and $E$ come endowed with Hermitian metrics $h^{E'}$ and $h^{E}$. Let $k$ be the rank of $E'$ and consider $\Gr_{k}(E'\oplus E)$ which comes with a holomorphic section given by the graph of the injective map $\iota:E'\hookrightarrow E$ of the exact sequence
We consider again the complex flow induced by rescaling on $\Hom (E',E)$.  Now let $Q\ra \Gr_k(E'\oplus E)$  be the universal quotient bundle, with the induced metric, namely the one obtained by identifying $Q$ with the orthogonal complement of $\tau$ in $\pi^* (E'\oplus E)$. We will use the total Chern form $c_t(\nabla^Q)$. 

  There are again two natural sections of $\Gr_k(E'\oplus E)$, namely $E'\oplus 0$  and $0\oplus \Imag \iota\in\Gr_k(E)$ denoted again $\iota_0$ and $\iota_{\infty}$. It is not hard to check that as currents
\[ \lim_{\lambda\ra 0}\Gamma_{\iota}=\iota_0(M),\qquad \lim_{\lambda\ra \infty}\Gamma_{\iota}=\iota_{\infty}(M)
\]
Notice that $(Q,\nabla^Q)\bigr|_{\iota_0(M)}\simeq (E,\nabla^{h^E})$ while $(Q,\nabla^Q)\bigr|_{\iota_{\infty}(M)}\simeq (E',\nabla^{h^{E'}})\oplus (Q_0,\nabla^{Q_0})\bigr|_{\iota_{\infty}{(M)}}$ where $Q_0\subset \pi^*E$ is the universal quotient bundle over $\Gr_{k}(E)$ with the induced Hermitian metric. Then $\iota_{\infty}^*Q_0\simeq E''$ because $\iota_{\infty}^*\tau=\Imag\iota$ seen as a subbundle of $E$. If we endow $E''$ with the quotient metric then the isomorphism is of holomorphic bundles with Hermitian metrics. 

Applying the theory to the form $\omega=c_t(\nabla^Q)\in \Omega^*\Gr_k(E'\oplus E)$ we get the double transgression relation that justifies (ii).

\end{proof}

\subsection{Quillen's question} Let $E^+\ra M\leftarrow E^-$ be two holomorphic and Hermitian vector bundles over $M$. Let $\nabla:=\nabla^+\oplus \nabla^-$ be the Chern connection on $E:=E^+\oplus E^-$. Let $\tilde{A}:E^+\ra E^-$ be a holomorphic morphism  and let $A_z:=\left(\begin{array}{cc} 0& \overline{z}\tilde{A}^*\\
 z\tilde{A}& 0\end{array}\right)$ act on $E$ where $z\in \bC$.  Consider the following family of superconnections (see \cite{Q0}):
\begin{equation}\label{spcon} \bA_z:=\nabla+ A_z,
\end{equation}
i.e. odd, first order differential operators on $\Gamma(\Lambda^*T^*M\otimes E)$ that verify the Leibniz rule:
\[ \bA(\omega\otimes s)=d\omega \otimes s+(-1)^{\deg\omega}\omega\wedge \bA s
\] Note that $A_z(\omega\otimes s):=(-1)^{\deg \omega}\omega\otimes A_zs$.

Motivated by the computation of the Chern character of the relative $K$-theory class determined by $A_1$, Quillen asked for a description of 
\[\lim_{z\ra \infty}\ch(\bA_z)
\]
where $\ch(\bA_z)=\str(e^{F(\bA_z)})\in \Gamma(\Lambda^{\even}T^*M)$ is the Chern supercharacter form determined by $\bA_z$.

When $M$ is a smooth manifold, $t$ is real and $A$ satisfies certain transversality conditions, we were able to give a quite explicit answer in \cite{CiHl3}. We explain why these results can be extended to the context of Bott-Chern currential homology. 

Let us return to the holomorphic context. We need the following.

\begin{prop}\label{delclos} The forms $\ch(\bA_z)$ belong to $\bigoplus_{p} \Omega^{p,p}(M)$, are $d$-closed and therefore $\partial$ and $\bar{\partial}$-closed.
\end{prop}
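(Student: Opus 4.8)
The plan is to verify the three assertions --- the $(p,p)$-type property, $d$-closedness, and the consequent $\partial$- and $\bar\partial$-closedness --- in that order, exploiting the holomorphy of all the data ($E^\pm$, the Chern connections, and $\tilde A$) and the fact that the superconnection $\bA_z$ is holomorphic in $z$.

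First I would recall that for any superconnection $\bA$ on a $\bZ/2$-graded bundle with curvature $F(\bA) = \bA^2 \in \Gamma(\Lambda^*T^*M \otimes \End(E))$, the Chern--Weil transgression argument gives $d\,\str(e^{F(\bA)}) = 0$; this is the standard Quillen computation (the key identity being $d\,\str(\alpha) = \str([\bA,\alpha])$ for $\alpha$ an $\End(E)$-valued form, combined with $[\bA, e^{F(\bA)}]=0$ since $\bA$ commutes with $\bA^2$). So $d$-closedness of $\ch(\bA_z)$ is automatic once we know $\ch(\bA_z)$ is a well-defined even form, and needs no holomorphy. I would state this explicitly and cite \cite{Q0}.

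The substantive point is the $(p,p)$-type statement. Here the plan is to decompose the Chern connection as $\nabla = \nabla^{1,0} + \bar\partial$ on each of $E^\pm$, and to write $\bA_z = \nabla^{1,0} + \bar\partial + A_z$ where $A_z = z\tilde A + \bar z \tilde A^*$. Since $\tilde A : E^+ \to E^-$ is holomorphic, $\tilde A \in \Gamma(\Hom(E^+,E^-))$ is a $(0,0)$-form and its adjoint $\tilde A^*$ is likewise a $(0,0)$-form (with respect to the Hermitian structure). Thus $A_z$ has bidegree $(0,0)$. The curvature $F(\bA_z) = \bA_z^2$ then decomposes according to bidegree: the $(2,0)$ part is $(\nabla^{1,0})^2$, the $(0,2)$ part is $\bar\partial^2 = 0$, and the $(1,1)$ part collects the mixed terms $[\nabla^{1,0},\bar\partial]$ (the curvature of the Chern connections, which is of type $(1,1)$) together with $[\nabla^{1,0}, A_z] + [\bar\partial, A_z]$ and $A_z^2$; the last of these is a $(0,0)$-form, and $[\nabla^{1,0},A_z]$ is of type $(1,0)$, $[\bar\partial, A_z]$ of type $(0,1)$. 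The crucial observation is that, because $\tilde A$ is holomorphic, $[\bar\partial, A_z]$ reduces to $\bar z[\bar\partial,\tilde A^*]$ --- the $\bar\partial\tilde A$ term vanishes --- and, dually, $[\nabla^{1,0},A_z] = z[\nabla^{1,0},\tilde A]$ is controlled by $\partial$ of a holomorphic object in the appropriate sense. I would then invoke the standard fact (as in \cite{Bi}, \cite{BiGiS}, or Quillen's original setup) that for a holomorphic superconnection of this shape the supertrace $\str(e^{F(\bA_z)})$ lies in $\bigoplus_p \Omega^{p,p}(M)$: one argues that $\str$ kills the "off-diagonal" bidegree contributions because each monomial in the expansion of $e^{F(\bA_z)}$ that survives the supertrace must, by a parity/grading count combined with the holomorphy of $\tilde A$, carry equal numbers of $dz_i$'s and $d\bar z_j$'s. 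Concretely, I would introduce the auxiliary operator $\bA_z' := \nabla^{1,0} + A_z^{1,0}$ and $\bA_z'' := \bar\partial + A_z^{0,1}$ (where $A_z^{1,0}$ and $A_z^{0,1}$ are chosen so that the total is $A_z$, e.g. $A_z^{0,1} = \bar z\tilde A^*$ acting as a $(0,1)$-shift after contraction --- in fact in Quillen's picture one takes $\bA_z'' = \bar\partial^E + z\tilde A$ as the holomorphic superconnection, whose square is $\bar\partial^E(z\tilde A) = 0$), and note that $(\bA_z'')^2 = 0$ by holomorphy of $\tilde A$; then a Chern--Weil argument relative to this splitting forces the bidegree balance.

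The hard part, and the main obstacle, will be pinning down precisely the bidegree bookkeeping so that it is rigorous rather than a hand-wave: one needs to be careful that $A_z^*$ involves complex conjugation, hence $\bar z$, so that the family $z \mapsto \ch(\bA_z)$ is not holomorphic in $z$ but only real-analytic --- yet the \emph{bidegree} of each fixed $\ch(\bA_z)$ is still $(p,p)$ for the reason above. I expect to handle this by appealing to the vanishing $(\bar\partial + z\tilde A)^2 = 0$, which identifies $\bar\partial + z\tilde A$ with a genuine holomorphic superconnection in the sense of Bismut (\cite{Bi}), so that the Chern character of the pair $(\nabla, z\tilde A)$ --- equivalently $\bA_z$ up to the adjoint term, which only affects the choice of Hermitian structure and not the bidegree --- is represented by a form in $\bigoplus_p \Omega^{p,p}(M)$. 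Once the $(p,p)$-type statement and $d$-closedness are in hand, the last clause is immediate: a form of pure type $(p,p)$ with $d\alpha = 0$ satisfies $\partial\alpha + \bar\partial\alpha = 0$ with $\partial\alpha \in \Omega^{p+1,p}(M)$ and $\bar\partial\alpha \in \Omega^{p,p+1}(M)$ living in complementary bidegrees, hence both vanish separately. I would close by remarking that this is exactly the input needed to feed $\ch(\bA_z)$ into the machinery of Corollary \ref{cormt3} (via Proposition \ref{grpr} and Quillen's extension theorem \cite{Q1}), yielding Theorem \ref{T002}.
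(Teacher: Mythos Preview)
Your treatment of $d$-closedness is fine and matches the paper's one-line dismissal. You also correctly isolate the two key facts driving the $(p,p)$-type claim: that $[\nabla,z\tilde A]\in\Omega^{1,0}(M;\Hom(E^+,E^-))$ (since $\bar\partial\tilde A=0$) and $[\nabla,\bar z\tilde A^*]\in\Omega^{0,1}(M;\Hom(E^-,E^+))$ (by the metric-compatibility identity $\nabla_X(\tilde A^*)=(\nabla_{\bar X}\tilde A)^*$). Where your argument stalls is exactly where you say it does: the ``parity/grading count'' is the right intuition but you never make it precise, and the detour through an auxiliary holomorphic superconnection $\bA_z''=\bar\partial+z\tilde A$ with $(\bA_z'')^2=0$ is both underdeveloped and unnecessary. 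Your attempt to split $A_z$ into $(1,0)$ and $(0,1)$ parts is confused --- $A_z$ is a form of degree $0$, so it has no such splitting; the bidegree asymmetry lives only in $[\nabla,A_z]$, not in $A_z$ itself.

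The paper closes the gap with a clean bookkeeping device you should adopt. Set $\Omega^*_i:=\bigoplus_p\Omega^{p,p+i}$. The curvature $F(\bA_z)$, written as a $2\times 2$ block matrix with respect to $E=E^+\oplus E^-$, has diagonal blocks $F(\nabla^\pm)+|z|^2(\cdot)\in\Omega^*_0$, upper-right block $[\nabla,\bar z\tilde A^*]\in\Omega^*_1$, and lower-left block $[\nabla,z\tilde A]\in\Omega^*_{-1}$. A one-line induction on $k$ (just multiply two such block matrices and check each entry) shows that $F(\bA_z)^k$ has the \emph{same} block pattern: diagonal in $\Omega^*_0$, upper-right in $\Omega^*_1$, lower-left in $\Omega^*_{-1}$. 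Since $\str$ reads only the diagonal blocks, $\str(e^{F(\bA_z)})\in\Omega^*_0=\bigoplus_p\Omega^{p,p}$. This is exactly your ``equal numbers of $B$'s and $C$'s'' heuristic made rigorous, without any appeal to \cite{Bi} or an auxiliary superconnection; drop that part of your plan and just run the block-matrix induction.
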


\begin{proof} The fact that $\ch(\bA_z)$ is $d$-closed is standard. We need only prove that its components belong to $\Omega^{p,p}(M)$. For fixed integer $i$ let $\Omega^{*}_i(M):=\bigoplus_{p}\Omega^{p,p+i}(M)$.

 First we look at the curvature $F(\bA_z)$:
\[F(\bA_z):=\left(\begin{array}{cc} F(\nabla^+)+|z|^2\tilde{A}^*\tilde{A} & [\nabla, (z\tilde{A})^*] \\
         \;[\nabla,z\tilde{A}] &   F(\nabla^-)+|z|^2\tilde{A}\tilde{A}^*
\end{array}\right)
\]
By definition 
\[[\nabla,z\tilde{A}](s):=\nabla^-(\tilde{A}s)-\tilde{A}(\nabla^+s).\]
and one checks easily that the connection on $\Hom(E^+,E^-)$ is compatible with the holomorphic structure and the Hermitian metric. Now if $s$ is a  holomorphic section and $\nabla$ is the Chern connection it follows that $\nabla s\in \Omega^{1,0}(M;E)$. Similarly $[\nabla,z\tilde{A}]\in \Omega^{1,0}(M,\Hom(E^+,E^-))$. This implies that $[\nabla,\bar{z}\tilde{A}^*]\in \Omega^{0,1}(M,\Hom(E^+,E^-))$. Indeed, the metric compatibility, generally given for real vector fields $X\in \Gamma(TM)$ has the following expression for general complex vector fields $X\in \Gamma(TM\otimes_{\bR}\bC)$:
\[ X\langle s_1,s_2\rangle=\langle \nabla_Xs_1,s_2\rangle +\langle s_1, \nabla_{\overline{X}}s_2\rangle
\]
Then one gets immediately that
\[\nabla_{X} (\tilde{A}^*)=(\nabla_{\overline{X}}\tilde{A})^*
\]
and from here the claim. It follows then by a quick induction that the block components of $F(\bA)^k$ are of type:
\[ \left(\begin{array}{cc} A & B\\
  C&D \end{array}\right)
\] where $A,D\in \Omega^{*}_0(M)$, $B\in \Omega^{*}_1(M)$, $C\in\Omega^{*}_{-1}(M)$. Clearly $\str(e^{F(\bA_z)})\in\Omega^{*}_0(M)$ and this finishes the proof.
\end{proof}

The set-up for applying the theory is as follows. The holomorphic fiber bundle with projective fiber over $M$ will be $\Gr_{k^+}(E^+\oplus E^-)$ where $k^+$ is the rank of $E^+$ and the holomorphic sections is $\Gamma_A$, which associates to $m\in M$ the linear graph $\Gamma_{A_m}$. We need a universal Chern supercharacter form on $\Gr_{k^+}(E^+\oplus E^-)$ which is $\partial$ and $\bar{\partial}$ closed. Over the dense, open set $\Hom (E^+,E^-)$ we have such a form. Indeed let $\pi:\Gr_{k^+}(E^+\oplus E^-)\ra M$ and let $\pi^*E^+$ and $\pi^*E^-$ be the corresponding pull-backs. Then $\pi^*E^+$ and $\pi^*E^-$ come with Chern connections and over $\Hom(E^+,E^-)$ there exists a tautological section $\tilde{s}^{\tau}$ of $\Hom(\pi^*E^+,\pi^*E^-)$, and hence over $\Hom(E^+,E^-)$ we have a superconnection:
\[ \bA^{\tau}=\pi^*\nabla^+\oplus\pi^* \nabla^-+s^{\tau}
\] 
where $s^{\tau}$ and $\tilde{s}^{\tau}$ share the same relation as $A_1$ and $\tilde{A}$.
Quillen showed in \cite{Q1} how to extend $\ch(\bA^{\tau})=\str(e^{-F(\bA^{\tau})})$ to be defined everywhere on $\Gr_{k^+}(E^+\oplus E^-)$ by using the Cayley transform. We will keep the notation $\ch(\bA^{\tau})$ for the extension. It follows from Proposition \ref{delclos} that the extension is $\partial$ and ${\bar{\partial}}$
closed.

If $\tilde{A}:M\ra \Hom(E^+,E^-)$ is a section (not necessarily holomorphic) then
\[(z\tilde{A})^*\ch(\bA^{\tau})=\ch(\bA_z)
\]

Now we can use Proposition \ref{grpr} to deduce:
\begin{theorem} Let $E:=E^+\oplus E^-\ra M$ be a holomorphic super vector bundle with a compatible Hermitian metric and corresponding Chern connection $\nabla$. Let $\tilde{A}:M\ra\Hom(E^+,E^-)$ be a holomorphic section and $\bA_z:=\nabla+A_z$ be the corresponding family of superconnections on $E$ as in (\ref{spcon}). Then there exists a double transgression formula
\[ \ch(\nabla)-\lim_{z\ra\infty}\ch(\bA_z)=\partial{\bar{\partial}} T(\tilde{A})
\]
where $\lim_{z\ra\infty}\ch(\bA_z)$ is a sum of flat currents, each  supported on $\{p\in M~|~\dim{\Ker\tilde{A}}= i\}$, where $1\leq i\leq k^+$.
\end{theorem}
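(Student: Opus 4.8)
The plan is to deduce the formula from an application of Corollary~\ref{cormt3} on the Grassmannian bundle, the "universal" form being Quillen's extension of the tautological Chern supercharacter.

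First I would set up the universal object. Let $G:=\Gr_{k^+}(E^+\oplus E^-)\ra M$ with projection $\pi$; over the dense open set $\Hom(E^+,E^-)\subset G$ one has the holomorphic tautological section $\tilde s^{\tau}$ of $\Hom(\pi^*E^+,\pi^*E^-)$ and the superconnection $\bA^{\tau}=\pi^*\nabla^+\oplus\pi^*\nabla^-+s^{\tau}$, and Quillen's Cayley-transform construction \cite{Q1} extends $\ch(\bA^{\tau})$ to a smooth form $\omega$ on all of $G$. Since $G$ is a complex manifold on which $\pi^*E^{\pm}$ are holomorphic and Hermitian and $\tilde s^{\tau}$ is holomorphic, the curvature computation in the proof of Proposition~\ref{delclos} applies verbatim on $\Hom(E^+,E^-)$, and together with the bidegree behaviour of the Cayley extension it shows that $\omega$ is $d$-closed with components in $\bigoplus_p\Omega^{p,p}(G)$, hence $\partial$- and $\bar{\partial}$-closed. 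The section $\tilde A\colon M\ra\Hom(E^+,E^-)\hookrightarrow G$ is holomorphic, and by construction of $\bA^{\tau}$ and of the natural rescaling action one has $(z\tilde A)^*\omega=\ch(\bA_z)$ for every $z\in\bC$; in particular $\tilde A^*\omega=\ch(\bA_1)$, while, since $\bA_z$ depends smoothly on $z$ with $\bA_0=\nabla$, $(z\tilde A)^*\omega\ra\ch(\nabla)$ in $C^\infty$, hence in the flat topology, as $z\ra 0$.

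Next I would invoke the machinery. The action of $\bC^*$ on $\Hom(E^+,E^-)$ extends to $G$ and, through the Pl\"ucker embedding, is the restriction of a weighted homogeneous action to the flow-invariant regular subbundle $G$, so Corollary~\ref{cormt3} applies (Remark~\ref{Gr19}) to $s=\tilde A$ and the $\partial$- and $\bar{\partial}$-closed form $\omega$, furnishing flat currents $T_1,T_2$ with
\[
\tilde A^*\omega-\lim_{z\ra\infty}(z\tilde A)^*\omega=\partial\bar{\partial}T_1,\qquad
\tilde A^*\omega-\lim_{z\ra 0}(z\tilde A)^*\omega=\partial\bar{\partial}T_2 .
\]
Subtracting and inserting $\lim_{z\ra 0}(z\tilde A)^*\omega=\ch(\nabla)$ and $\lim_{z\ra\infty}(z\tilde A)^*\omega=\lim_{z\ra\infty}\ch(\bA_z)$ gives
\[
\ch(\nabla)-\lim_{z\ra\infty}\ch(\bA_z)=\partial\bar{\partial}(T_2-T_1)=:\partial\bar{\partial}T(\tilde A).
\]
For the support statement I would read off from Proposition~\ref{grpr} (the Grassmannian specialization of Corollary~\ref{cormt3}) that $\lim_{z\ra\infty}\ch(\bA_z)=\sum_{i=0}^{k^+}(\pi_1)_*(\pi_2^*\omega\wedge\tilde S_i^{\infty})$, each summand a flat current supported on $\tilde A^{-1}(\Sigma_i)=\{p\in M\mid\dim\Ker\tilde A_p=i\}$, using that $\dim(\Gamma_{\tilde A_p}\cap(E^+_p\oplus 0))=\dim\Ker\tilde A_p$; the $i=0$ term is carried by the closure of the locus where $\tilde A$ is injective and is smooth there, while the terms with $1\le i\le k^+$ are the genuinely singular pieces in the statement.

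The point needing real care — the main obstacle — is that the \emph{extended} form $\omega$ must be $\partial$- and $\bar{\partial}$-closed, not merely $d$-closed: Corollary~\ref{cormt3} requires this bidegree-refined closedness, and without it one would obtain only a $d$-exact transgression term, losing the Bott-Chern content. This is exactly what Proposition~\ref{delclos}, combined with the fact that Quillen's Cayley transform preserves membership in $\bigoplus_p\Omega^{p,p}$, delivers. The remaining verifications — the identity $(z\tilde A)^*\omega=\ch(\bA_z)$ and the two limit evaluations — are routine consequences of the definitions of $\bA^{\tau}$ and of the $\bC^*$-action.
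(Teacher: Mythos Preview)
Your approach is essentially the paper's: build Quillen's extended universal supercharacter $\omega=\ch(\bA^{\tau})$ on $\Gr_{k^+}(E^+\oplus E^-)$, check it is $\partial$- and $\bar\partial$-closed via Proposition~\ref{delclos}, and feed the holomorphic section $\tilde A$ and the weighted $\bC^*$-action into Corollary~\ref{cormt3}/Proposition~\ref{grpr}. Your detour through both \eqref{Feq3} and \eqref{Feq4} and then subtracting is fine; note that Theorem~\ref{ths5} actually lets you set $[\mu:\lambda]=[1:0]$ directly in \eqref{Fmu1} to obtain $\lim_{z\to 0}-\lim_{z\to\infty}=\partial\bar\partial T$ in one step, since $\tilde A(M)\subset\Hom(E^+,E^-)=U(F_0)$ and the $\lambda\to 0$ limit is smooth and equal to $\ch(\nabla)$.

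There is, however, a genuine gap in your treatment of the support statement. You write that ``the $i=0$ term is carried by the closure of the locus where $\tilde A$ is injective and is smooth there,'' but the theorem asserts that the range is $1\le i\le k^+$, i.e.\ the $i=0$ summand must \emph{vanish}, not merely be smooth. Smoothness alone does not give this: a priori $\lim_{z\to\infty}\ch(\bA_z)$ restricted to the open set $\{\dim\Ker\tilde A=0\}$ is a smooth form, possibly nonzero. What forces it to be zero is a specific property of Quillen's extended form: the restriction of $\ch(\bA^{\tau})$ to the fixed-point component $F_0=\Gr_0(E^+)\times\Gr_{k^+}(E^-)$ (equivalently, to subspaces $L\subset 0\oplus E^-$) vanishes. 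This is the content of (3.12) in \cite{Q1} and Section~4 of \cite{Q0}, and is precisely what the paper invokes at this point. Without it, your decomposition yields $k^++1$ pieces, not $k^+$, and the theorem as stated is not proved.
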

\begin{proof} The reason for the vanishing of the component of the limit supported on \linebreak $\{p\in M~|~\dim{\Ker\tilde{A}}= 0\}$ is a certain vanishing property of the extended form $\ch^{\bA^{\tau}}$ (see (3.12) in \cite{Q1} and also  Section 4 in \cite{Q0}). 
\end{proof}
\begin{rem} Again, under transversality conditions, explicit computations of $\lim_{z\ra \infty}\ch(\bA_z)$ can be performed.
\end{rem}

\section{Addition of linear correspondences} 
Let $E$ and $F$ be two vector spaces of dimensions $p$ and $q$. In this section we will deal with the question of whether the addition operation on  $\Hom(E,F)$ can be compactified to a holomorphic operation
\[\Gr_p(E\oplus F)\times \Gr_p(E\oplus F)\ra \Gr_p(E\oplus F).
\] 

One sees immediately for $p=q=1$ that this is too optimistic since clearly on $\bP^1\times \bP^1$ the map:
\[ [\lambda:v], [\mu:w]\ra [\lambda \mu:\mu v+\lambda w ]
\]
which extends $([1:v],[1:w])\ra [1:v+w]$ does not make sense when $\lambda=\mu=0$. On the positive side, the operation is holomorphic on $\bP^1\times \bP^1\setminus\{([0:1],[0:1])\}$ which is larger than  the original domain of the operation. We would like to find the Grassmannian analogue of this extension. 

For that we will consider the following open conditions for pairs $(L_1,L_2)\in \Gr_p(E\oplus F)\times \Gr_p(E\oplus F)$:
\begin{align}\label{op1} \pi^E(L_1)+\pi^E(L_2)=&E\\
                    \label{op2}       L_1\cap L_2\cap F=\;&\{0\}\quad
\end{align}
where $\pi^E$ is the projection relative $E\oplus F$. They can be rephrased as
\begin{itemize}
\item[(i)] $\pi^E\bigr|_{L_1+L_2}$ is surjective;
\item[(ii)] $\pi^E\bigr|_{L_1\cap L_2}$ is injective.
\end{itemize}
In the case when $E$ and $F$ are endowed with Hermitian metrics then (\ref{op1}) becomes
 \begin{equation}\label{op11}
   L_1^{\perp}\cap L_2^{\perp}\cap E=\{0\}
   \end{equation} by using the fact that $\pi^E(L_1)^{\perp}=L_1^{\perp}\cap E$ where the orthogonal complement $\pi^E(L_1)^{\perp}$ is taken in $E$. This makes it  clearer that this condition is open. For ease of presentation we will assume that $E$ and $F$ have fixed metrics although the extension of the addition map will not depend on this. 

Denote by $\Gr^+_{p,p}(E\oplus F)$ the space of pairs $(L_1,L_2)\in \Gr_p(E\oplus F)\times \Gr_p(E\oplus F)$ that satisfy (\ref{op1}) and (\ref{op2}). 
\begin{example} If  $L_1$ (or $L_2$)  is in $\Hom(E,F)$ then (\ref{op11}) and (\ref{op2}) are satisfied automatically since $L_1^{\perp}\cap E=\{0\}$ and $L_1\cap F=\{0\}$. 

Hence for $E=\bC=F$ we are getting indeed $\bP^1\times \bP^1\setminus\{([0:1],[0:1])\}$.
\end{example}
\begin{example} There exists an interesting algebraic, closed set contained in $\Gr^+_{p,p}(E\oplus F)$. This is the set of of pairs $(L_1,L_2)$ satisfying:
\begin{align} \label{op10} L_1\cap E\supset \pi^E(L_2^{\perp})\\
                   \label{op20}  L_2^{\perp}\cap F\supset \pi^F(L_1) 
\end{align}
To see that (\ref{op10}) implies (\ref{op1}) notice that $L_2^{\perp}\cap E \subset \pi^E(L_2^{\perp})$ and hence $L_2^{\perp}\cap E \subset L_1\cap E$ but $L_1\cap E\cap L_1^{\perp}=\{0\}$. Similarly, (\ref{op20}) implies (\ref{op2}).

More concretely, if $L_1=\Gamma_{A}$ and $L_2=\Gamma_{B}$ then $L_2^{\perp}=\Gamma_{-B^*}$ and $\pi^E(L_2^{\perp})=\Imag B^*$ while $L_1\cap E=\Ker{A}$, hence (\ref{op10}) translates to
\[ AB^*=0
\]
while (\ref{op20}) says that
\[ B^*A=0.
\]
\end{example}

We will define $L_1*L_2$ by composition of three holomorphic maps:
\begin{itemize}
\item[(a)] The first  map is  $\oplus:\Gr^+_{p,p}(E\oplus F)\ra \Gr_{2p}(E\oplus F\oplus E\oplus F)$: 
\[ (L_1,L_2)\ra L_1\oplus L_2;
\]
\item[(b)] Let $G:=E\oplus F\oplus E\oplus F$, $E_1:=E\oplus E\subset G$, $F_1:=F\oplus F\subset G$, $\Delta^{E_1}$ the diagonal in $E\oplus E$ and $\Delta^{F_1}$ the diagonal in $F_1$. 

Denote by $\Gr_{2p}^{E_1,F_1}(G)\subset\Gr_{2p}(G)$ the open subset of subspaces $V$ that satisfy
\begin{align} \label{op3} V^{\perp}\cap \epsilon^{E_1} \Delta^{E_1}=\{0\}\\
                      \label{op4} V\cap \epsilon^{F_1}\Delta^{F_1} =\{0\}               
\end{align} 
where  $\epsilon^{E_1} \Delta^{E_1}$ and $\epsilon^{F_1} \Delta^{F_1}$ are the antidiagonals in $E_1$ and $F_1$. Then it is easy to check that $\oplus$ takes pairs $(L_1,L_2)$ that satisfy (\ref{op11}) and (\ref{op2}) to $L_1\oplus L_2\in \Gr_{2p}^{E_1,F_1}(G)$. 

Define the second map
\[ \cdot\cap (\Delta^{E_1}\oplus F_1):\Gr_{2p}^{E_1,F_1}(G)\ra \Gr_{p}(\Delta^{E_1}\oplus F_1),\qquad V\ra V\cap (\Delta^{E_1}\oplus F_1)
\]
To see that the map is well-defined notice that $[V\cap (\Delta^{E_1}\oplus F)]^{\perp}=V^{\perp}+\epsilon^{E_1}\Delta^{E_1}$ and the later has dimension $2q+p$ due to (\ref{op3}). Hence $V\cap (\Delta^{E_1}\oplus F)$ has dimension $p$. The fact that this map is holomorphic under these conditions is standard.
\item[(c)] Consider the map 
\[ \theta:\Delta^{E_1}\times F_1\ra E\oplus F,\qquad (v,w_1,v,w_2)\ra (v,w_1+w_2),\qquad v\in E, \;w_1,w_2\in F
\]
The kernel of this map is $\epsilon\Delta^{F_1}$. Hence we get an induced well-defined map:
\[ \Theta:\Gr^{\Ker\theta}_p(\Delta^{E_1}\times F_1)\ra \Gr_{p}(E\oplus F),\qquad \Theta(W):=\theta(W)
\]
where $\Gr^{\Ker\theta}_p(\Delta^{E_1}\times F_1)$ is the open set of subspaces $W$ satisfying $W\cap \Ker\theta=\{0\}$. The fact that the image of $\cdot \cap (\Delta^{E_1}\oplus F_1)$ lands in $\Gr^{\Ker\theta}_p(\Delta^{E_1}\times F_1)$ is a consequence of (\ref{op4}).
\end{itemize}

\begin{prop} If $L_1=\Gamma_{A}$ and $L_2=\Gamma_B$ with $A,B\in\Hom(E,F)$ then
\[ L_1*L_2=\Gamma_{A+B}.
\]
\end{prop}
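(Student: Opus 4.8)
The plan is to verify the claimed identity $L_1*L_2=\Gamma_{A+B}$ by simply tracking the subspace through the three-step composition (a), (b), (c) defined above, and checking at each step that the relevant open condition is met automatically (so that $*$ is genuinely defined on this pair) and that the subspace we get is the one predicted. First I would record the concrete linear-algebra descriptions: for $T\in\Hom(E,F)$ one has $\Gamma_T=\{(v,Tv)\mid v\in E\}\subset E\oplus F$, so $\Gamma_T^{\perp}=\Gamma_{-T^*}=\{(-T^*w,w)\mid w\in F\}$, and $\pi^E(\Gamma_T)=E$, $\Gamma_T\cap F=\{0\}$. Hence both defining conditions \eqref{op1}/\eqref{op11} and \eqref{op2} of $\Gr^+_{p,p}(E\oplus F)$ hold, as already observed in the Example, so the pair $(\Gamma_A,\Gamma_B)$ lies in the domain of $*$.

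Next I would push $(\Gamma_A,\Gamma_B)$ through step (a): $\oplus(\Gamma_A,\Gamma_B)=\Gamma_A\oplus\Gamma_B=\{(v_1,Av_1,v_2,Bv_2)\mid v_1,v_2\in E\}\subset G$. One checks this lies in $\Gr_{2p}^{E_1,F_1}(G)$ (again automatic from the Example-style computation). Then step (b): intersect with $\Delta^{E_1}\oplus F_1=\{(v,w_1,v,w_2)\mid v\in E,\ w_1,w_2\in F\}$. A vector $(v_1,Av_1,v_2,Bv_2)$ lies in this set iff $v_1=v_2=:v$, so $(\Gamma_A\oplus\Gamma_B)\cap(\Delta^{E_1}\oplus F_1)=\{(v,Av,v,Bv)\mid v\in E\}$, a $p$-dimensional subspace of $\Delta^{E_1}\times F_1$ — consistent with the dimension count given in (b). Finally step (c): apply $\theta(v,w_1,v,w_2)=(v,w_1+w_2)$ to get $\Theta$ of this subspace $=\{(v,Av+Bv)\mid v\in E\}=\Gamma_{A+B}$. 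That is exactly the asserted value, so $L_1*L_2=\Gamma_{A+B}$.

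Along the way I would also note that $W:=\{(v,Av,v,Bv)\}$ meets $\Ker\theta=\epsilon^{F_1}\Delta^{F_1}=\{(0,w,0,-w)\}$ only in $0$ (since membership forces $v=0$ and then $Av=Bv=0$), so $\Theta$ is defined on $W$; this is the instance of condition \eqref{op4} needed here. Thus each of the three maps is applied within its stated domain, and no genuine closure/limiting argument is required — the proposition is a direct unwinding of the definitions. The only mild care needed is the bookkeeping of which copy of $E$ or $F$ each coordinate lives in and the identification $\Delta^{E_1}\cong E$, $F_1\cong F\oplus F$; I do not anticipate a real obstacle, this is purely a consistency check that the elaborate construction of $*$ restricts to ordinary addition on the open dense stratum $\Hom(E,F)$.
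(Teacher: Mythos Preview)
Your proof is correct and follows exactly the same approach as the paper: compute $(L_1\oplus L_2)\cap(\Delta^{E_1}\oplus F_1)=\{(v,Av,v,Bv)\mid v\in E\}$ and then apply $\Theta$ to obtain $\Gamma_{A+B}$. The paper's proof is simply a terser version of yours, omitting the explicit verification of the open conditions (which it had already noted in the preceding Example).
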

\begin{proof} Notice that
\[ (L_1\oplus L_2)\cap  (\Delta^{E_1}\oplus F_1)=\{(v,Av,v,Bv)~|~v\in E\}
\]
Therefore 
\[ \Theta\left((L_1\oplus L_2)\cap  (\Delta^{E_1}\oplus F_1)\right)=\Gamma_{A+B}.
\]
\end{proof}
\begin{rem}
We deduce that the map $\cdot*\cdot:\Gr^+_{p,p}(E\oplus F)\ra \Gr_p(E\oplus F)$ is a well-defined holomorphic map that extends the addition operation. From an algebraic point of view this operation does not seem that interesting even if it is commutative and there exists a "neutral element", namely $L=E$. For example $L*L$ is only defined if $L$ is a graph. 
\end{rem}

We   look now at extending the operation
 \[\Hom(E,F)\times\Hom(F,E)\ra \Hom(E,F),\quad (A,B)\ra A+B^*.\]
Since  $B\ra B^*$ is an operation that extends to a real analytic bijection $\Gr_q(E\oplus F)\ra \Gr_{p}(E\oplus F)$ we just need to trace what happens with the extension already discussed after composing with this bijection. We state the facts directly. On the open subset of pairs $(L_1,L_2)\in \Gr_p(E\oplus F)\times\Gr_q(E\oplus F)$ that satisfy the conditions:
\begin{align}\label{op110} L_1^{\perp}\cap L_2\cap E=\{0\}\\
  \label{op220} L_1\cap L_2^{\perp}\cap F=\{0\}
\end{align}
we define an operation $L_1\diamond L_2$ by composition of the maps:
\[ (L_1,L_2)\ra L_1\oplus L_2^{\perp}\in \Gr_{2p}^{\diamond}(G)
\]
\[\Gr_{2p}^{\diamond}(G)\ni V\ra V\cap (\Delta^{E_1}\oplus F_1)\in\Gr_p(\Delta^{E_1}\oplus F_1)
\]
and the map $\Gr_p^{\ker\theta^-}(\Delta^{E_1}\oplus F_1)\ra \Gr_p(E\oplus F)$ induced by
\[{\theta}^-: \Delta^{E_1}\oplus F_1\ra E\oplus F,\qquad (v,w_1,v,w_2)\ra (v,w_1-w_2).
\]
The open $\Gr^{\diamond}_{2p}(G)\subset \Gr_{2p}(G)$ consists of $V$ such that $V^{\perp}\cap \epsilon\Delta^{E_1}=\{0\}$ and $V\cap \Delta^{F_1}=\{0\}$. The operation $\diamond$ extends real analytically the  operation $A+B^*$.
\begin{example} The closure in $\Gr_{p}(E\oplus F)\times\Gr_q(E\oplus F)$ of the algebraic set in $\Hom(E,F)\times \Hom(F,E)$ defined by the "chain" equations:
\[ A\circ B=0\qquad B\circ A=0
\]
is the set of pairs $(L_1,L_2)$ satisfying 
\begin{align}\label{op111} L_1\cap E\supset\pi^F(L_2) \\
 \label{op222}L_2\cap F\supset \pi^E(L_1)
\end{align} 
Notice that (\ref{op111}) implies (\ref{op110}) and (\ref{op222}) implies (\ref{op220}).
\end{example}
\vspace{0.2cm}
\subsection{The Bismut-Gillet-Soul\'e double transgression}
We now come to the main application of this digression.

Let  $E$ and $F$ be holomorphic vector bundles over $M$ of ranks $p$ and $q$ endowed with Hermitian metrics and Chern connections $\nabla^E$ and $\nabla^F$.  Let  $v:=\left(\begin{array}{cc} 0& B\\
  A& 0\end{array}\right)\in \End^-(E\oplus F)$ be an odd vector bundle morphism such that $B\circ A=0$ and $A\circ B=0$. Look at the family of superconnections parametrized by $\lambda\in \bC$:
  \begin{equation}\label{balambda} \bA_{\lambda}:=\nabla^E\oplus \nabla^F +(\lambda v+\bar{\lambda}v^*)
  \end{equation}
  and take their Chern supercharacters $\str(e^{-F(\bA_{\lambda})})$. 
  
  We see quickly that $\str(e^{-F(\bA_{\lambda})})$ is the pull-back via the section $\lambda v$ of a universal form that lives on $\End^-(E\oplus F)$ described as follows. Let
   \[\pi:\End^-(E\oplus F)\ra M\]  be the projection and take $\pi^*(\End^-(E\oplus F))=\End^-(E\oplus F)\times_M\End^-(E\oplus F)$. This is the vector bundle  of odd morphisms on $\pi^*E\oplus \pi^*F$ that lives over $\End^-(E\oplus F)$. It has a tautological section 
   \[\hat{s}^{\tau}=(s_E^{\tau},s_F^{\tau}):\End^-(E\oplus F)\ra \Hom(\pi^*E,\pi^*F)\times_M\Hom(\pi^*F,\pi^*E)\] 
  induced by the diagonal embedding $\End^-(E\oplus F)\hookrightarrow\End^-(E\oplus F)\times_M\End^-(E\oplus F)$. 
   
   Moreover  $\pi^*E\oplus \pi^*F$ has a natural connection and one can form the  superconnection:
  \begin{equation}\label{Ahat} \hat{\bA}^{\tau}:= \pi^*\nabla^E\oplus \pi^*\nabla^F +(\hat{s}^{\tau}+(\hat{s}^{\tau})^*)=\left(\begin{array}{cc}\pi^*\nabla^E & s_F^{\tau}+(s_E^{\tau})^* \\ s_E^{\tau}+(s_F^{\tau})^*& \pi^*\nabla^F\end{array}\right)
  \end{equation}
The form $\str(e^{-F(\hat{\bA}^{\tau})})$ lives on $\End^-(E\oplus F)$. We have  the obvious
\[ (\lambda v)^*\str(e^{-F(\hat{\bA}^{\tau})})=\str(e^{-F(\bA_{\lambda})}).\]
 Since we need to take $\lambda\ra \infty$ we have to pass to a compactification of $\End^-(E\oplus F)=\Hom(E,F)\oplus \Hom(F,E)$. Clearly, the natural compactification is $\Gr_p(E\oplus F)\times_M \Gr_q(E\oplus F)$. 
 
 Denote by $Z\subset \Gr_p(E\oplus F)\times_M \Gr_q(E\oplus F)$ the closure of the set of "chain equations" $A\circ B=0$ and $B\circ A=0$. As we mentioned, every fiber $Z_m$ with  $m\in M$ can be described by the following incidence relations:
 \[ Z_m=\{(L_1,L_2)\in \Gr_p(E_m\oplus F_m)\times \Gr_q(E_m\oplus F_m) ~|~L_1\cap E_m\supset \pi^{E_m}(L_2),\;\;L_2\cap F_m\supset \pi^{F_m}(L_1)\}
 \]
 Incidentally, this is actually an old acquaintance. We use the notation of Proposition \ref{Fixpeq}.
 \begin{prop} The set $Z$ coincides with $\bigcup_{i=0}^k\overline{S(F_i)\times_{F_i}U(F_i)}$.
 \end{prop}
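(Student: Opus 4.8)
The statement is fiberwise, so it suffices to prove the corresponding equality of sets in a single fiber $E_m\oplus F_m$; I shall drop the subscript $m$ and write $E,F$ for vector spaces of dimensions $p,q$. On one side we have
\[
Z=\overline{\{(L_1,L_2)\in\Gr_p(E\oplus F)\times\Gr_q(E\oplus F)\mid A\circ B=0,\ B\circ A=0\}},
\]
where $L_1=\Gamma_A$, $L_2=\Gamma_{-B^*}$ ranges over graphs; on the other side we have $\bigcup_{i=0}^{k}\overline{S(F_i)\times_{F_i}U(F_i)}$ for the $\bC^*$-action on $\Gr_p(E\oplus F)$ (viewed via the Pl\"ucker embedding as the restriction of a weighted homogeneous action) whose fixed-point components are $F_i=\Gr_i(E)\times\Gr_{p-i}(F)$, with $S(F_i)=\Sigma_i$ and $U(F_i)=\Upsilon_i$ as in Proposition \ref{Fixpeq}. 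The plan is to first identify the \emph{closed incidence description} of both sides, then match them stratum by stratum.

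\textbf{Step 1: the incidence description of $Z$.} I would show that the closure of the chain locus $\{A\circ B=0,\ B\circ A=0\}$ is exactly the algebraic set described earlier in the section by
\[
\widetilde Z=\{(L_1,L_2)\mid L_1\cap E\supset\pi^E(L_2^{\perp}),\ \ L_2^{\perp}\cap F\supset\pi^F(L_1)\}
\]
after the real-analytic substitution $L_2\mapsto L_2^{\perp}$ that turns $\Hom(F,E)$-graphs into $\Gr_q(E\oplus F)$-graphs; equivalently, writing things directly in $\Gr_p\times\Gr_q$, the conditions $L_1\cap E\supset\pi^E(L_2)$ and $L_2\cap F\supset\pi^F(L_1)$ from \eqref{op111}--\eqref{op222}. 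One inclusion is immediate: on the open chart of graphs $(\Gamma_A,\Gamma_{-B^*})$, the equations $AB^*\cdot$ (after the substitution, $A\circ B$) $=0$ and $B^*A$ (resp. $B\circ A$) $=0$ are precisely the incidence relations, and since the incidence relations are Zariski-closed, $Z\subseteq\widetilde Z$. For the reverse inclusion I would check that the graph locus is dense in $\widetilde Z$: given $(L_1,L_2)\in\widetilde Z$, one produces an explicit one-parameter family of graphs degenerating to it — this is the routine ``a Schubert-type incidence variety is the closure of its graph points'' argument, which I would carry out by choosing adapted bases so that $L_1\cap E$, $\pi^F(L_1)$, etc., are coordinate subspaces.

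\textbf{Step 2: matching with $\bigcup\overline{S(F_i)\times_{F_i}U(F_i)}$.} By the last statement of Proposition \ref{Pn} (applied in the fiber, through the Pl\"ucker embedding), the intersection of the blow-up $G_k$ with $\{\mu=0\}$ is exactly $\bigcup_{i=0}^{k}\{[0:1]\}\times C_i^{\infty}$ with $C_i^{\infty}=\overline{S(F_i)\times_{F_i}U(F_i)}$, each with multiplicity one; and the same union is the support of the limiting kernel $\lim_{\lambda\to\infty}\Gamma_{\lambda*(\cdot)}$. So I must show $\widetilde Z=\bigcup_i\overline{S(F_i)\times_{F_i}U(F_i)}$. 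Here the key observation is that for $L\in\Sigma_i=S(F_i)$ one has $\dim(L\cap E)=i$ and $\pi^F(L)$ has dimension $p-i$, while $\lim_{\lambda\to\infty}\lambda* L=(L\cap E)\oplus\pi^F(L)\in F_i$; dually for $L'\in\Upsilon_i=U(F_i)$. A pair $(L,L')$ lies in $S(F_i)\times_{F_i}U(F_i)$ iff $L\in\Sigma_i$, $L'\in\Upsilon_i$, $L\cap E=L'\cap E$ (both equal to the $E$-component of the common fixed point) and $\pi^F(L)=\pi^F(L')$. I would then verify by linear algebra that these four conditions, unioned over $i$, are equivalent to the two incidence relations $L_1\cap E\supset\pi^E(L_2)$ (here after reinstating the orthogonal substitution) and $L_2\cap F\supset\pi^F(L_1)$: the incidence relations force $\dim(L_1\cap E)\geq\dim\pi^E(L_2)=p-\dim(L_2\cap F)$ and symmetrically, so that $\dim(L_1\cap E)+\dim(L_2\cap F)\geq p$, which together with $\dim(L_1\cap E)\leq p$, $\dim(L_2\cap F)\leq q$ pins $L_1$ into some $\Sigma_i$ and $L_2$ into the matching $\Upsilon_i$; conversely each $\overline{S(F_i)\times_{F_i}U(F_i)}$ visibly satisfies the incidence relations on its dense graph part, hence everywhere. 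Finally I would take closures: $\widetilde Z$ is already closed, and the finite union of the closed sets $\overline{S(F_i)\times_{F_i}U(F_i)}$ is closed, so the set equality of Step 1 upgrades to the claimed identity.

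\textbf{Main obstacle.} The genuinely delicate point is the bookkeeping in Step 2 at the non-generic pairs — where $\dim(L_1\cap E)+\dim(L_2\cap F)$ is strictly larger than $p$, or where $L_1$ fails to be a graph — to be sure that \emph{no spurious components} appear and that the incidence variety $\widetilde Z$ does not acquire pieces outside $\bigcup_i\overline{S(F_i)\times_{F_i}U(F_i)}$ (and vice versa). This amounts to a careful dimension count on each stratum of the stable/unstable decomposition, using that $\overline{S(F_i)}=$ (in the graph chart) $\{L_1\cap E\supseteq$ a fixed subspace of dimension $i\}$ and that these Schubert cells are irreducible of the expected dimension; I would organize it by induction on $k$, exactly paralleling the inductive argument already used in the proof of Proposition \ref{Pn}. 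Everything else is formal: the reduction to a fiber, the density of graph points, and the invariance of all constructions under the real-analytic orthogonal-complement substitution.
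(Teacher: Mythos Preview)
Your approach is the same as the paper's: work in a single fiber, use the incidence description of $Z_m$ (which the paper records in the display immediately \emph{before} the Proposition, so your Step~1 is redundant here), and then identify $S(F_i)\times_{F_i}U(F_i)$ with the locus where the two containments become equalities. The paper dispatches this in a single sentence, simply reading off from Proposition~\ref{Fixpeq} that membership in the fiber product is equivalent to $L_1\cap E_m=\pi^{E_m}(L_2)$ and $L_2\cap F_m=\pi^{F_m}(L_1)$, and leaving the routine passage to closures implicit.

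Two corrections to your write-up. First, the fiber-product condition you state is wrong: you write $L\cap E=L'\cap E$ and $\pi^F(L)=\pi^F(L')$, but Proposition~\ref{Fixpeq} gives the stable projection as $L_1\mapsto(L_1\cap E,\pi^F(L_1))$ and the unstable projection as $L_2\mapsto(\pi^E(L_2),L_2\cap F)$, so the correct matching conditions are $L_1\cap E=\pi^E(L_2)$ and $\pi^F(L_1)=L_2\cap F$ --- precisely the paper's two equalities. You in fact switch to the correct relations a few lines later, so this is a local slip rather than a gap. Second, the ``orthogonal-complement substitution'' you repeatedly invoke belongs to the earlier discussion of the $\diamond$-operation and has no role in this Proposition: here $L_2\in\Gr_q(E\oplus F)$ is directly the graph of $B\colon F\to E$, no adjoint is taken, and the incidences \eqref{op111}--\eqref{op222} are already the right ones. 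The dimension counts, Schubert-cell arguments, and induction on $k$ that you outline for your ``main obstacle'' are not carried out in the paper's proof either; it treats the passage from the equality stratification to the full closure as evident once the characterization above is in hand.
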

 \begin{proof} The points in a fiber of $S(F_i)\times_{F_i}U(F_i)$ are described by the incidence relations:
 \[ L_1\cap E_m=\pi^{E_m}(L_2),\qquad L_2\cap {F_m}=\pi^{F_m}(L_1).
 \]
 \end{proof}
 
 The next result is what allows the extension of the Bismut-Gillet-Soul\'e double transgression theory using the tools of this article.
 \begin{prop}\label{BGSQext} The form $\str(e^{-F(\hat{\bA}^{\tau})})$ extends to a smooth form in an open neighborhood of the set $Z\subset \Gr_p(E\oplus F)\times_M\Gr_q(E\oplus F)$.
 \end{prop}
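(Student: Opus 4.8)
The strategy is to use the explicit three-step factorization of the addition operation $\diamond$ established above: the operation $A+B^* = $ (via $B \mapsto B^*$ applied to the second factor) is the composition of $(L_1,L_2)\mapsto L_1\oplus L_2^\perp$, then intersection with $\Delta^{E_1}\oplus F_1$, then the map $\Theta^-$ induced by $\theta^-$. All three maps are holomorphic on the relevant open loci $\Gr_{p,p}^+$, $\Gr_{2p}^\diamond(G)$ and $\Gr_p^{\ker\theta^-}(\Delta^{E_1}\oplus F_1)$ respectively, and the composite is defined on an open subset $U \subset \Gr_p(E\oplus F)\times_M\Gr_q(E\oplus F)$ cut out by conditions \eqref{op110} and \eqref{op220}. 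The first thing to do is therefore to verify that $Z\subset U$, i.e. that every pair $(L_1,L_2)$ satisfying the chain-incidence relations \eqref{op111} and \eqref{op222} actually satisfies \eqref{op110} and \eqref{op220}; this was already noted after \eqref{op222}, so the inclusion is immediate. Passing to the fiber-bundle version, one obtains an open neighborhood $\mathcal{U}$ of $Z$ inside $\Gr_p(E\oplus F)\times_M\Gr_q(E\oplus F)$ together with a holomorphic "sum" bundle map $\mathcal{U}\to\Gr_p(E\oplus F)$.

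Next I would pull back the Quillen-extended universal Chern supercharacter. Over $\Hom(\pi^*E,\pi^*F)\subset\Gr_p(E\oplus F)$ (the pull-back of the image of the chart $\Hom(E,F)\hookrightarrow\Gr_p(E\oplus F)$) one has the superconnection and its Chern supercharacter form; by Quillen's Extension Theorem \cite{Q1}, invoked exactly as in the previous section, the form $\ch(\bA^\tau)$ extends to a globally defined smooth form on all of $\Gr_p(E\oplus F)$, and by Proposition \ref{delclos} it is $\partial$ and $\bar\partial$-closed. The universal form $\str(e^{-F(\hat{\bA}^\tau)})$ on $\End^-(E\oplus F)$ is precisely the pull-back of this extended form along the restriction to $\End^-(E\oplus F)$ of the composite "addition" bundle map: indeed over the dense open locus where both components are honest morphisms, $\hat{\bA}^\tau$ is the superconnection associated to the single morphism $s_E^\tau+(s_F^\tau)^\ast$, which under the factorization maps to $\Gamma_{A+B^\ast}$. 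Hence $\str(e^{-F(\hat{\bA}^\tau)})$ agrees on $\End^-(E\oplus F)$ with the pull-back $\Sigma^*\ch(\bA^\tau)$ of a globally smooth form along the holomorphic map $\Sigma:\mathcal{U}\to\Gr_p(E\oplus F)$, and this pull-back is a smooth form on the open neighborhood $\mathcal{U}\supset Z$, giving the desired extension.

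The one point requiring care — and the main obstacle — is the identification of $\str(e^{-F(\hat{\bA}^\tau)})$ with $\Sigma^*\ch(\bA^\tau)$ \emph{a priori only on a dense open set}, and the conclusion that this identification propagates to show the left-hand side extends smoothly. Two smooth forms that agree on a dense open subset of $\End^-(E\oplus F)$ need not a priori agree off it, but here the left-hand side $\str(e^{-F(\hat{\bA}^\tau)})$ is manifestly smooth on all of $\End^-(E\oplus F)$ (it is a polynomial expression in the curvature of a smooth superconnection), and $\Sigma^*\ch(\bA^\tau)$ is smooth on $\mathcal{U}$, an open set \emph{strictly larger} than $\End^-(E\oplus F)$ near $Z$; since they agree on the open dense $\End^-(E\oplus F)\cap\mathcal{U}$, continuity forces $\str(e^{-F(\hat{\bA}^\tau)})=\Sigma^*\ch(\bA^\tau)$ on $\End^-(E\oplus F)$, and the right-hand side then furnishes the extension past $\partial\End^-(E\oplus F)$ into $\mathcal{U}$. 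Making the chart bookkeeping precise — relating the tautological section $\hat{s}^\tau$ on $\End^-(E\oplus F)$ to the tautological section $s^\tau$ on $\Hom(\pi^*E,\pi^*F)\subset\Gr_p(E\oplus F)$ through the composite $\Sigma$, and checking the Chern connections match up under these identifications (as in Lemma \ref{ndy} and the constructions of Section \ref{BCtheorem}) — is routine but is where the work lies; Quillen's theorem does the genuinely hard analytic part of extending across the singular locus.
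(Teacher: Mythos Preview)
Your approach is essentially the same as the paper's: both factor $\str(e^{-F(\hat{\bA}^{\tau})})$ through the extended addition map $\gamma:(A,B)\mapsto A+B^*$ on the open set $U$ defined by \eqref{op110}--\eqref{op220}, note $Z\subset U$, and pull back Quillen's globally extended $\ch(\bA^{\tau})$ on $\Gr_p(E\oplus F)$. One correction: the extended map $\diamond$ is only \emph{real analytic}, not holomorphic, because the step $(L_1,L_2)\mapsto L_1\oplus L_2^{\perp}$ involves the orthogonal complement (equivalently, $B\mapsto B^*$ is conjugate-linear); this is harmless since only smoothness is required, but you should not call it holomorphic. Also, your density discussion is unnecessary: the identity $\gamma^*\bA^{\tau}=\hat{\bA}^{\tau}$ holds as an equality of superconnections on all of $\End^-(E\oplus F)$ (just compare \eqref{Ahat} with \eqref{Ataudef} and note $\gamma^*s^\tau = s_E^\tau+(s_F^\tau)^*$), so the corresponding Chern supercharacter forms agree everywhere there without any limiting argument.
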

 \begin{proof} We use  the map $(A,B)\ra A+B^*$. We know that this map extends to a real analytic map from the open set $U$ of pairs $(L_1,L_2)\in\Gr_p(E\oplus F)\times_M\Gr_q(E\oplus F)$ satisfying (\ref{op110}) and (\ref{op220}) to $\Gr_p(E\oplus F)$. Moreover $U\supset Z$. 
 
 We mentioned in the previous section that Quillen showed how the Chern supercharacter of the superconnection associated to the superbundle $\pi^*E\oplus\pi^*F\ra\Hom(E,F)$
 \begin{equation}\label{Ataudef} \bA^{\tau}=\left(\begin{array}{cc} \pi^*\nabla^E& 0\\
 0& \pi^*\nabla^F\end{array}\right)+\left(\begin{array}{cc} 0& (s^{\tau})^*\\
  s^{\tau}& 0\end{array}\right)
 \end{equation}
extends to a smooth form $\ch(\bA^{\tau})$ on $\Gr_p(E\oplus F)$. By comparing (\ref{Ahat}) and (\ref{Ataudef}) we conclude that
\[ \gamma^*\bA^{\tau}=\hat{\bA}^{\tau}
\]
 where $\gamma:\Hom(E,F)\times \Hom(F,E)\ra \Hom(E,F)$ is $\gamma(A,B)=A+B^*$. Since $\gamma$ extends smoothly to $U$ and $\ch(\bA^{\tau})$ extends as well the proof is complete.
  \end{proof}

 The formalism of Bismut and Bismut-Gillet-Soul\'e from \cite{Bi,BiGiS,BiGiS2} uses a finite sequence of holomorphic and Hermitian vector bundles 
\[ E_i\ra M,\qquad i=0,\ldots, m
\]
with morphisms $v_i:E_i\ra E_{i-1}$, that satisfy the chain condition $v_{i-1}\circ v_i=0$ such that the induced complex of sheaves of holomorphic sections is acyclic away from a complex submanifold $M'$, possibly disconnected. More precisely there exists a holomorphic and Hermitian vector bundle $\eta\ra M'$ and a restriction map  $r:E_0\bigr|_{M'}\ra \eta$ such that
\[ 0\ra\mathcal{O}_M(E_m)\ra \ldots \ra \mathcal{O}_M(E_0)\ra\iota_*\mathcal{O}_{M'}(\eta)\ra0
\]
is exact where $\iota:M'\ra M$ is the inclusion.

Let $E^+:=\oplus_{k} E_{2k}$, $E^-:=\oplus E_{2k+1}$, $E=E^+\oplus E^-$, $v=v^+\oplus v^-\in \End^-(E)$ be the natural, holomorphic, odd morphism induced by the entire collection of $v_i$'s and $v^*$ be its adjoint. Notice that
\[ v^+\circ v^-=0,\qquad v^-\circ v^+=0
\] 

Together with the (direct sum of) Chern connections on $E^+$ and $E^-$ one gets a family of super-connections on $E$:
\begin{equation}\label{bAageq1}\bA_{u}:=\left(\begin{array}{cc}\nabla^{E^+}&0\\ 0& \nabla^{E^-}\end{array}\right)+\sqrt{u}\left(\begin{array}{cc}0& v^-+(v^+)^*\\
 v^++(v^-)^*&0\end{array}\right)=\nabla^E+\sqrt{u}(v+v^*)
\end{equation}
Under a rather technical condition on the Hermitian metrics, called condition (A), Bismut-Gillet-Soul\'e computed the limit $\lim_{u\ra \infty} \ch(\bA_{u})$ explicitly and derived a double transgression formula in Theorem 2.5 of \cite{BiGiS}.  We come now to the main result of this section generalizing the existence part of Bismut-Gillet-Soul\'e Theorem.

\begin{theorem}\label{mainc10} Let $v_i:E_i\ra E_{i-1}$ be a finite sequence of holomorphic chain morphisms between holomorphic and Hermitian vector bundles and for $\lambda\in\bC$ let $\bA_{\lambda}$ be the superconnection obtained from $\bA_u$  in  (\ref{bAageq1}) by replacing the morphism part with $\lambda v+\bar{\lambda}v^*$. Then there exists a double transgression formula:
\[ \lim_{\lambda\ra \infty} \ch(\bA_{\lambda})-\ch(\bA_1)=\partial\bar{\partial} T(v)
\]
\end{theorem}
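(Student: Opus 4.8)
The plan is to reduce Theorem \ref{mainc10} to the machinery already developed in the previous sections, essentially via Proposition \ref{grpr} applied to the appropriate Grassmannian bundle, once we have identified the right universal closed form and the right holomorphic section. First I would set up the geometric stage exactly as in the discussion preceding the statement: let $E=E^+\oplus E^-$ with the induced Hermitian metrics and Chern connection $\nabla^E=\nabla^{E^+}\oplus\nabla^{E^-}$, let $v=v^++v^-\in\End^-(E)$ be the odd morphism coming from the collection of $v_i$'s (so that $v^+\circ v^-=0$ and $v^-\circ v^+=0$), and consider the compactification $\Gr_p(E^+\oplus E^-)\times_M\Gr_q(E^+\oplus E^-)$ of $\End^-(E)=\Hom(E^+,E^-)\oplus\Hom(E^-,E^+)$, where $p=\rank E^+$, $q=\rank E^-$. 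The section of interest is $\lambda v:M\ra\End^-(E)$, and $\ch(\bA_\lambda)=(\lambda v)^*\str(e^{-F(\hat{\bA}^\tau)})$ with $\hat{\bA}^\tau$ the universal superconnection of \eqref{Ahat}. By Proposition \ref{delclos} (applied componentwise to $E^+,E^-$) the form $\str(e^{-F(\hat{\bA}^\tau)})$ is $d$-closed with components of type $(p,p)$, hence $\partial$- and $\bar\partial$-closed, on the open set $\End^-(E)$.

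The crucial point, and the place where the section on addition of linear correspondences is needed, is that the universal form extends $\partial$- and $\bar\partial$-closedly to a neighborhood of the closure of the chain locus. Concretely, I would invoke Proposition \ref{BGSQext}: the form $\str(e^{-F(\hat{\bA}^\tau))}$ extends smoothly to an open neighborhood $U$ of the closed analytic set $Z=\bigcup_{i=0}^k\overline{S(F_i)\times_{F_i}U(F_i)}\subset\Gr_p(E^+\oplus E^-)\times_M\Gr_q(E^+\oplus E^-)$, because the map $(A,B)\mapsto A+B^*$ extends real-analytically to the open set defined by \eqref{op110}–\eqref{op220} which contains $Z$, and Quillen's Cayley-transform extension of $\ch(\bA^\tau)$ is defined on all of $\Gr_p(E^+\oplus E^-)$. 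Since $\partial$ and $\bar\partial$-closedness is a closed condition, it is inherited by the extension on $U$. Because $v$ satisfies the chain equations, the graph $\Gamma_{\lambda v}$ of $\lambda v$ has closure contained in $Z\subset U$ for all $\lambda$, and more importantly all limit currents $\lim_{\lambda\to\infty}\Gamma_{\lambda v}$ produced by the weighted-action machinery are supported in $Z$. Therefore $\pi_3^*\omega$ (with $\omega$ the extended form, pulled back from $\Gr_p(E^+\oplus E^-)$ via the appropriate weighted action — or, if one prefers, one works directly inside the Plücker-embedded weighted projective action as in Remark \ref{Gr19}) can legitimately be wedged with the relevant analytic currents along $Z$, and Leibniz applies because $\omega$ is $\partial\bar\partial$-closed near $Z$.

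With these preliminaries in place, the conclusion follows by the same mechanism as Theorem \ref{T002}: apply Proposition \ref{grpr}, or rather its variant adapted to the double Grassmannian bundle $\Gr_p(E^+\oplus E^-)\times_M\Gr_q(E^+\oplus E^-)$ with the $\bC^*$-action $\lambda*(A,B)=(\lambda A,\lambda B)$ (equivalently the rescaling action restricted via Plücker to a weighted homogeneous action on a projective bundle), to the section $\Gamma_v$ and the closed form $\omega=\str(e^{-F(\hat{\bA}^\tau)})$. The general double transgression of Corollary \ref{cormt3} (via Theorem \ref{ths5}) then yields
\[
\Gamma_v^*\omega-\lim_{\lambda\to\infty}(\lambda*\Gamma_v)^*\omega=\partial\bar\partial\, T(v),
\]
which, after unwinding the pullbacks, reads $\ch(\bA_1)-\lim_{\lambda\to\infty}\ch(\bA_\lambda)=-\partial\bar\partial T(v)$, i.e. the asserted formula $\lim_{\lambda\to\infty}\ch(\bA_\lambda)-\ch(\bA_1)=\partial\bar\partial T(v)$ with $T(v)$ a flat current. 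One must also note that the limit $\lim_{\lambda\to\infty}\ch(\bA_\lambda)$ exists in the flat topology as a consequence of Hardt's Slicing Theorem, exactly as in the proof of Corollary \ref{cormt3}, and that the decomposition into $k+1$ pieces supported on the strata $s^{-1}(\Sigma_i)$ comes from Proposition \ref{Pn} and Theorem \ref{Th95}.

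The main obstacle I expect is precisely the justification that the universal form $\str(e^{-F(\hat{\bA}^\tau)})$ extends $\partial\bar\partial$-closedly over a neighborhood of the limit cycles $\tilde S_i^\infty$, and that these cycles genuinely land in the open set $U$ where the extension is defined — this is not automatic because the weighted-action limits $\tilde S_i^\infty$ live in the various $\overline{S(F_i)\times_{F_i}U(F_i)}$, and one must check that $Z=\bigcup_i\overline{S(F_i)\times_{F_i}U(F_i)}$ is exactly where they sit, using the chain condition $v^+v^-=v^-v^+=0$. Once Proposition \ref{BGSQext} is granted this is bookkeeping, but a careful reader will want the compatibility between the $\partial$ and $\bar\partial$-closedness on $\End^-(E)$ (Proposition \ref{delclos}) and the smooth extension (Proposition \ref{BGSQext}) spelled out: the extended form is a priori only smooth, and one needs that $d$-closedness — hence $\partial$- and $\bar\partial$-closedness — persists, which holds because $d\omega$ is continuous and vanishes on the dense open subset $\End^-(E)\cap U$. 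A secondary technical point is the usual one about wedging flat currents of locally finite mass with the smooth form $\omega$ near a possibly singular analytic set and commuting $\partial\bar\partial$ with pushforward, which is handled by passing to an embedded resolution exactly as in Corollary \ref{cormt1}.
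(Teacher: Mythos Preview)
Your proposal is correct and follows essentially the same approach as the paper: compactify $\End^-(E)$ as the product Grassmannian $\Gr_p(E^+\oplus E^-)\times_M\Gr_q(E^-\oplus E^+)$ (embedded via Pl\"ucker as a flow-invariant subbundle), apply the weighted-action machinery of Corollary~\ref{cormt3}/Remark~\ref{Gr19} to the section $v$ and the universal Chern supercharacter form, and use Proposition~\ref{BGSQext} together with the chain condition to ensure that the extended form is defined on a neighborhood of $Z$ containing all limit kernels. Your discussion of why $\partial$- and $\bar\partial$-closedness persists to the extension (by continuity of $d\omega$ and density of $\End^-(E)$) is a point the paper leaves implicit, so it is a welcome clarification rather than a deviation.
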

\begin{proof} Let rank $E^+$ be $p$ and rank of $E^-$ be $q$. The compactification of 
\[\End^-(E)=\Hom(E^+,E^-)\oplus\Hom(E^-,E^+)\] we will consider is 
\[\Gr_{p,q}(E):=\Gr_p(E^+\oplus E^-)\times_M \Gr_q(E^-\oplus E^+)\] which is easily seen to be the closure in $\Gr_{p+q}(E\oplus E)$ of the graph embedding $\End^{-}(E)\hookrightarrow \Gr_{p+q}(E\oplus E)$. The rescaling operation on $\End^-(E)$ extends to an action of $\bC^*$ on the compactification which is simply the diagonal action resulting from the  by now standard, complex flows on $\Gr_p(E^+\oplus E^-)$ and $\Gr_q(E^-\oplus E^+)$ respectively.

Take the holomorphic section $v:M\ra \End^-(E)\hookrightarrow\Gr_{p,q}(E)\hookrightarrow \Gr_{p+q}(E\oplus E)$. By using the Pl\"ucker embedding and proceeding as in Corollary \ref{cormt3} (see also Remark \ref{Gr19}) we deduce that if $\omega$ is a $\partial$ and $\bar{\partial}$ closed form on $\Gr_{p,q}(E)$ then we get a double transgression:
\[ \lim_{\lambda\ra \infty} (\lambda v)^*\omega-\omega=\partial\bar{\partial}T(\omega).
\]
We will use for $\omega$ the extension of  Chern supercharacter $\str(e^{-F(\hat{\bA}^{\tau})})$ of Proposition \ref{BGSQext}. Now, this form is not defined everywhere on $\Gr_{p,q}(E)$, but is defined on an open neighborhood of the closure $Z$ in $\Gr_{p,q}(E)$ of the "chain morphism equations" $A\circ B=0$ and $B\circ A=0$. Since the limiting current $\lambda v$ has support in $Z$ this is enough for the theory to work.
\end{proof}

\begin{rem} Notice that several strong hypothesis are removed from  Theorem 2.5 of \cite{BiGiS} in the statement of Theorem \ref{mainc10}. In particular we do not assume  the fact that the complex is acyclic away from a submanifold, which among other things implies that rank $E^+$ equals rank of $E^-$. Moreover the metric compatibility condition (A) is  removed and the limit parameter is complex.
\end{rem}

 \appendix
  \section{Cones and blow-ups}
  \label{Ap1}
  
   The material here is standard, in the analytical category one can find most of it for example in \cite{Fi} but we included it in order to help with the lecture of this article.
 
 \subsection{Blow-ups} 
  We start by reviewing the process of blow-up. Given an  analytic, closed subvariety $Z$ of an ambient variety $Y$ such that $Z$ is analytically rare\footnote{For simplicity, take  $Y$ to be reduced or even smooth while $Z$ to be any closed subspace which does not contain any irreducible component of $Y$, but can itself be non-reduced.} (\cite{Fi}, Section 0.43) defined by a coherent sheaf of ideals $\mathcal{I}_Z\subset\mathcal{O}_Y$,  the sheaf of Rees algebras
  \[ R(\mathcal{I}_Z):=\bigoplus_{k\geq 0} \mathcal{I}_Z^k
  \]
  defines a complex space,  which is an (affine) cone over $Y$. Locally, over an open $U\subset Y$, a cone is isomorphic with a subspace of $U\times \bC^l$ invariant under multiplication by scalars (see \cite{Fi} page 44). Taking $\bP(
 \cdot)$ of this cone (procedure described in \cite{Fi} Section 1.3) one gets the blow-up space $\Bl_Z(Y)$.  To understand better, notice that locally, if $U\subset Y$ is isomorphic to a local model of an complex subspace in $\bC^n$ with (global) ring $R$ and $I:=\mathcal{I}_Z(U)$ is generated by $P_1,\ldots,P_l$ in $R$, then the Rees algebra is the image of the morphism of  \emph{graded rings} 
  \begin{equation}\label{Req1} R[y_1,\ldots y_l]\ra R[T],\qquad y_i\ra P_i T.
  \end{equation}
  The kernel of (\ref{Req1}) is a homogeneous ideal and by a theorem of Cartan (see \cite{Fi} Section 1.2) all cones over $U$ are induced by such homogeneous ideals. 
  
\begin{rem}\label{beforetric}  It turns out that when $Z$ is \emph{globally} defined by a section $s$ \emph{non identically zero on each irreducible component of $Y$} of a locally free sheaf (vector bundle) $\pi:E\ra Y$, the construction of $\Bl_Z(Y)$ can be explained in more geometric terms. Namely, let $Z:=s^{-1}(0)\subset Y$ and
  \[  \bP({s}):Y\setminus Z\ra \bP(E),\qquad p\ra [s(p)]
  \]
  be the induced (partial) section of $\bP(E)$. Then, $\Bl_Z(Y)$ is \emph{the closure of the image} of $\bP({s})$ in $\bP(E)$ and this will be our definition. 
  \end{rem}

  \begin{rem}\label{tricky} One should be careful  to consider the closure of the graph of \emph{any} holomorphic map $\hat{s}:M\setminus N\ra \bP(V)$ in $M\times \bP(B)$ where $M$ is a manifold, $N$ is a closed analytic set and $V$ a vector space. Take for example $M=\bC$, $N=\{0\}$, $V=\bC^2$ and $\hat{s}(z)=[1:e^{1/z}]$. This clearly is no blow-up. It is fundamental that $N$ is zero locus of  the same map from $M$ to $V$ that induces $\hat{s}$. 
  \end{rem}
  
  We notice it is enough to look locally on $Y$ around points $p\in Z$ where $E$ trivializes. Then in such a neighborhood $U$, the closure of the graph of the map  $\bP(s):(Y\setminus Z)\cap U\ra \bP(\bC^l)$ satisfies the (universal) properties of the blow-up as one can see \cite{Fi} Section 4.1\footnote{for the algebraic counterpart see \cite{EH}, Prop 4.22}.
  
  Notice that the $\Bl_Z(Y)$ comes with a natural projection to $Y$ given by the restriction of $\pi:\bP(E)\ra Y$. This  \emph{blow-down map} is  
    \begin{itemize}
  \item[(i)]  proper, since $\pi:\bP(E)\ra Y$ is proper and 
  \item[(ii)] a biholomorphism away from $Z$ since $\bP(s)$ is biholomorphic onto its image.
  \end{itemize}
  \begin{example} We look at the blow-up of $Y$ in $E$ as the zero section. The section $s$ is now the tautological section $s^{\tau}:E\ra \pi^*E=E\times_YE$ given by the diagonal $p\ra (p,p)$. In other words, we are looking at the closure in $\pi^*(\bP(E))=E\times_Y\bP(E)$ of the image of the map which is  $(p,v)\ra (p, v,[v])$ for every $p\in M$ and $v\in E_p$. Hence we are taking the fiberwise blow-up of the origin in $E_p$, for all $p$ and therefore get
  \[ \Bl_{[0]}(E)=\{(v,[w])\in E\times_Y\bP(E)~|~v\wedge w=0\}
  \] 
  where $\wedge$ denotes colinearity and the relation is obviously considered fiberwise.
  \end{example}
  
  \begin{example} Suppose $Y$ is an affine scheme modelled on the ring $R= \bC[x_1,\ldots, x_m]/\mathcal{I}_Y$ for some ideal $\mathcal{I}_Y$ of polynomials and $E\simeq \underline{\bC}^l$ is the trivial bundle. Therefore the ideal $\mathcal{I}_Z$ has naturally $l$ generators, the components $s_1,\ldots,s_l$ of the section $s$. Then the Rees algebra is the quotient of the polynomial ring $R[y_1,\ldots,y_l]$ by the homogeneous ideal $\mathcal{I}^h(V)$ generated by \emph{all} homogeneous polynomials in the $y$ variables that vanish on the set of points $V:=\{y_1-s_1=0,\; y_2-s_2,\;\ldots,y_l-s_l=0\}.$ But $V$ is exactly the set of points that lie in the image of $\bP(s)$ and the closure of $V$ in $\bP(R[y_1,\ldots,y_l])=Y\times \bP(\bC^l)$ is generated by such an ideal.  
  
  Now $\mathcal{I}^h(V)$ is closely related, but not isomorphic in general with $S^2(\mathcal{I}_Z)$ which is the homogeneous ideal in $R[y_1,\ldots,y_l]$ generated by the $2\times 2$ minors of
  \[\left(\begin{array}{cccc}
  y_1 & y_2 & \ldots & y_n\\
  s_1& s_2 & \ldots & s_n
  \end{array}\right)
  \]
 Clearly each of the $2\times 2$ minors is an element of $\mathcal{I}^h(V)$ and in the case of a complete intersection we have that $S^2(\mathcal{I}_Z)=\mathcal{I}^h(V)$\footnote{For a more general situation when the Rees algebra is isomorphic with $\Sym(I)$ see \cite{Hu}.}.
  \end{example}
  \vspace{0.3cm}

We will use the word strict transform as a synonym for "subblow-up" as explained now. If $\iota:S\hookrightarrow Y$ is another closed subvariety, this time not necessarily given by a section of $E$,  then we can consider the restriction $\iota^*E\ra S$ and $\iota^*s:=s\circ \iota$ becomes a section of  $\iota^*$ whose zero locus determines the subvariety $S\cap Z$. Taking the closure of the image of $\bP(\iota^*s)$ in $\iota^*\bP(E)$ gives the ( sub)blow-up space $\Bl_{S\cap Z}(S)$. Now $\iota^*\bP(E)$ is embedded in $\bP(E)$.  One can look at the closure of the image of $\bP({s})\bigr|_{S\setminus S\cap Z}$ inside $\bP(E)$. This by definition is the strict transform of $S$ in $\Bl_{Z}(Y)$. This closure obviously coincides, at least at  a "topological" level with $\Bl_{S\cap Z}(S)$ via the embedding $\iota^*\bP(E)\ra \bP(E)$. One can check this is also the case as complex spaces (see \cite{EH} Proposition IV-21 in the algebraic case) and we will identify the two spaces inside $\bP(E)$.

It should be clear that $\Bl_Z(Y)$  depends only on the isomorphism type of the pair $(Y,Z)$. In other words if $\varphi:Y\ra Y'$ is an isomorphism such that $\varphi\bigr|_{Z}$ restricts to an isomorphism onto $Z'\subset Y'$, then $\Bl_Z(Y)\simeq \Bl_{Z'}(Y')$. We check this in a particular case.

 Let $s:Y\ra E$ be a section. Then the projection $\pi:E\ra Y$ induces an isomorphism of pairs of analytic spaces $(s(Y),s(s^{-1}(0)))\simeq (Y,s^{-1}(0))$. We will abuse notation and write $s^{-1}(0)$ also for $s(s^{-1}(0))$ and denote both spaces by $X$. Then $\Bl_X(s(Y))$ is the strict transform of $s(Y)$ inside $\Bl_{[0]}(E)$ and hence an analytic subvariety of $E\times_Y\bP(E)$ where the blow-down map is the restriction of  the projection $\pi_1:E\times_Y\bP(E)$ to $\Bl_X(s(Y))$. At a closer inspection we see that $\Bl_X(s(Y))$ is an analytic subvariety of $s(Y)\times_Y\bP(E)$. Now the projection
 \[ \pi_2:E\times_Y\bP(E)\ra \bP(E)
 \]
 induces an isomorphism $s(Y)\times_Y\bP(E)\simeq \bP(E)$ which makes the next diagram commutative
 \begin{equation}\label{blid}\xymatrix{s(Y)\times_Y\bP(E) \ar[rr]^{\pi_2} \ar[dr]^{\pi\circ \pi_1} & & \bP(E)\ar[dl]^{\pi}\\
   &Y&}
 \end{equation}
 where we again abused notation and called $\pi$ both the projection $E\ra Y$ and $\bP(E)\ra Y$. It is not hard to see that $\pi_2$ induces an isomorphism 
 \[\Bl_{X}(s(Y))\simeq \Bl_X(Y)\]
 that commutes with the blow-down maps, here the vertical arrows. 
 \vspace{0.5cm}

\subsection{Cones}  
   Let $V$ be a complex vector space. A closed  analytic space\footnote{not necessarily reduced nor irreducible} $A\subset \bP(V)$ of dimension $k$ is defined by a finite number of homogeneous equations over $V$. Then the projective cone $CA\subset \bP(\bC\oplus V)$ over $A$ is defined by the exactly the same equations, hence it has the same codimension. 
   
 There are some alternative geometric descriptions.
 
For the first one consider  the affine cone 
 \[ C^aA\subset V,\qquad C^aA:=\pi^{-1}A\cup \{0\}
 \]
Then considering $V$  an open subset of $\bP(\bC\oplus V)$ one defines $CA$ as the closure of $C^aA$, i.e. the "smallest" complex analytic space that contains $C^aA$.
 The affine cone $C^aA$ is then  intersection $C^aA=CA\cap V$.
 
  The  direct sum of two cones $C_1\subset V$, $C_2\subset W$ is a well-defined cone $C_1\oplus C_2\subset V\oplus W$. Therefore, in order to obtain $CA$,  instead of taking the closure of $C^aA$ in $\bP(\bC\oplus V)$ one might as well "projectivize" the cone $\bC\oplus C^aA$, i.e. take the image under the  projection $\bC\oplus V\setminus \{0\}\ra \bP(\bC\oplus V)$.

Another description that avoids mentioning the affine cone   is  as follows:   
\[ CA=\overline{\pi^{-1}(A)}=\pi^{-1}(A)\cup\{[1:0]\}\subset \bP(\bC\oplus V)
\]
where $\pi:\bP(\bC\oplus V)\setminus \{[1:0]\}\ra \bP(V)$ is the "stereographic" projection
\[ [w] \ra [ (\bC w+\bC(1,0))\cap V].
\]

Yet another description is contained in the following.

\begin{prop}\label{P1} Let $A\subset \bP(V)$ be an analytic set. Then the cone $CA$ coincides with the exceptional divisor of the blow-up  $\Bl_{\infty\times 0}(\bP^1\times C^aA)$.
\end{prop}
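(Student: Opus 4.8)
The statement is local over the base point, and since both the cone construction $CA$ and the blow-up $\Bl_{\infty\times 0}(\bP^1\times C^aA)$ are constructed from homogeneous equations over $V$, the whole computation reduces to an explicit comparison of ideals in affine charts. First I would recall the explicit model for the blow-up: writing $[\mu:\lambda]$ for the coordinate on $\bP^1$ and letting $[0]\in C^aA$ denote the origin (it is the vertex of the affine cone, hence genuinely a point of $C^aA$), the subvariety $\infty\times[0]=\{[0:1]\}\times\{0\}$ is analytically rare in $\bP^1\times C^aA$ precisely because $C^aA$ has positive dimension and no irreducible component contained in $\{0\}$. Its blow-up embeds in $\bP^1\times C^aA\times_{\{pt\}}\bP(\bC\oplus V)$, exactly as in the description of $\Bl_{\infty\times[0]}(\bP^1\times E)$ in \eqref{blex}, by the incidence relation $(\mu,\lambda v)\wedge(\theta:w)=0$ restricted to $v\in C^aA$. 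The exceptional divisor is the fiber over $\infty\times[0]$, i.e. the locus $\{\mu=0,\ v=0\}$ inside this incidence variety, which by the same relation degenerates to all of $\{[0:1]\}\times\{0\}\times\bP(\bC\oplus V)$ a priori — so I need the closure condition, namely one must approach $\{\mu=0,v=0\}$ along points of the blow-up with $v\in C^aA\setminus\{0\}$, and record which $[\theta:w]$ arise.

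Second, I would carry out this closure computation. Along a point $([\mu:\lambda],v,[\theta:w])$ of the blow-up with $v\ne 0$ the relation forces $[\theta:w]=[\mu:\lambda v]$; letting $\mu\to 0$ while $v$ ranges over $C^aA\setminus\{0\}$ gives $[\theta:w]=[0:v]$, i.e. $w\in C^aA\setminus\{0\}$, $\theta=0$. Taking the analytic closure inside $\bP(\bC\oplus V)$ of $\{[0:v]\mid v\in C^aA\setminus\{0\}\}$ produces exactly $\pi^{-1}(A)\cup\{[1:0]\}=\overline{\pi^{-1}(A)}$, which is one of the stated descriptions of $CA$ in the paragraph preceding the proposition. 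This gives the set-theoretic identification; it is essentially the same argument as decomposition \eqref{21} and the identification of $\tilde S\cap\Ex$ with a fiberwise cone that appears in the proof of Proposition \ref{Psp1}, specialized to the trivial base and to the strict transform being all of $\bP^1\times C^aA$.

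Third, I would upgrade the set-theoretic statement to an equality of complex spaces (with their, possibly non-reduced, scheme structures). Here the cleanest route is to pass to affine charts: in the chart $\mu\ne 0$ of $\bP^1$ and a standard affine chart $w_i\ne 0$ of $\bP(\bC\oplus V)$, the blow-up is cut out — by Cartan's theorem on cones, as invoked in Appendix A around \eqref{Req1} — by the homogeneous ideal of relations among the generators $P_j$ of the ideal of $A$ lifted to $\bC\oplus V$, together with the $\bP^1$ coordinate; restricting to the exceptional divisor $\{\mu=0,v=0\}$ kills exactly the extra variable and returns, up to the obvious relabeling, the homogeneous ideal defining $CA\subset\bP(\bC\oplus V)$ (same equations, one extra homogeneous variable that is now free). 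Conversely the ideal defining $CA$ is generated by the homogeneous equations of $A$ in the $V$-variables with the $\bC$-coordinate free, and these are visibly the restrictions of the blow-up equations. Matching the two finite sets of generators chart by chart and checking they glue gives the scheme-theoretic equality.

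\textbf{Main obstacle.} The genuinely delicate point is not the set-theoretic limit computation (which is routine once one writes the incidence relation) but the verification that the \emph{non-reduced} structures agree: one must make sure the sheaf of ideals of the exceptional divisor, obtained by adding the equation $\mu=0$ to the Rees-algebra presentation of $\Bl_{\infty\times 0}(\bP^1\times C^aA)$, is exactly the homogenization ideal defining $CA$, with no embedded components or multiplicity discrepancies introduced by the blow-up. I expect this to follow cleanly from the fact that $C^aA\subset V$ is invariant under scaling so that $\bP^1\times C^aA$ is itself a cone and the blow-up of a point in a cone is computed by the Rees algebra of the maximal ideal at the vertex, whose associated graded is the cone again; but pinning down this identification carefully — rather than merely topologically — is the step that requires the most care, and it is the one I would write out in full detail while treating the chart computations as routine.
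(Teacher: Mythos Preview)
Your limit computation in the second paragraph is wrong, and this is a genuine gap, not a detail. Letting $\mu\to 0$ while keeping $v\in C^aA\setminus\{0\}$ fixed lands you on points $([0:1],v,[0:v])$ with $v\neq 0$; these are \emph{not} in the exceptional divisor at all, but in the other component of the fiber $F_{[0:1]}$ (the analogue of $\Bl_{[0]}(E)$ in \eqref{21}). The exceptional divisor sits over $\{\mu=0,\,v=0\}$, so you must let $\mu$ and $v$ tend to $0$ \emph{together}. Writing $(\mu,v)=(t\mu_0,\,t v_0)$ with $v_0\in C^aA\setminus\{0\}$ (legal since $C^aA$ is a cone) gives $[\theta:w]=[\mu_0:v_0]$, and as $(\mu_0,v_0)$ ranges over $\bC\times(C^aA\setminus\{0\})$ these limits sweep out $\bP(\bC\oplus C^aA)=CA$. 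Your stated closure is also simply false: the set $\{[0:v]\mid v\in C^aA\setminus\{0\}\}$ equals $A\subset\bP(V)\subset\bP(\bC\oplus V)$, which is already closed; its closure is $A$, not $\pi^{-1}(A)\cup\{[1:0]\}$.

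The paper's proof avoids all of this by a one-line conceptual observation you essentially stumble onto in your final paragraph but misclassify as a technicality about scheme structure: in the affine chart $\lambda\neq 0$ around $\infty$, blowing up $\infty\times 0$ in $\bP^1\times C^aA$ is the same as blowing up the origin in $\bC\times C^aA=\bC\oplus C^aA$, which is a cone in $\bC\oplus V$. For any cone $C\subset W$, the exceptional divisor of $\Bl_0(C)$ (equivalently, of the strict transform of $C$ in $\Bl_0(W)$) is $\bP(C)$; applying this with $C=\bC\oplus C^aA$ gives the exceptional divisor $\bP(\bC\oplus C^aA)$, which is one of the descriptions of $CA$ given just before the proposition. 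That is the entire argument, and it delivers the scheme-theoretic equality directly, without any separate chart-by-chart ideal matching.
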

\begin{proof} The exceptional divisor of the strict transform of a  cone $C\subset V$ in $\Bl_0(V)$ is the projectivization of the cone $\bP(C)$. The exceptional divisor of the blow-up $\Bl_{\infty\times 0}(\bP^1\times C^aA)$ is the same as the exceptional divisor of $\Bl_{0\times 0} (\bC\oplus C^a{A})$. Therefore this is
\[\bP(\bC\oplus C^a{A})\subset \bP(\bC\oplus V)
\] 
one possible description of $CA$.
\end{proof}
\vspace{0.5cm}

If $F\ra M$ is a vector bundle and $A\subset \bP(F)$ is a complex analytic space, then the fiberwise cone $C^fA\subset \bP(\bC\oplus F)$ is obtained  as follows:
\begin{itemize} 
\item[(a)] take the closure $C^{fa}A$ of $\tilde{\pi}^{-1}(A)$ in $F$ where $\tilde{\pi}:F\setminus \{0\}\ra \bP(F)$ is the projection; this is the fiberwise affine cone.
\item[(b)] take the closure $C^fA$ of $C^{fa}A$ in $\bP(\bC\oplus F)$ by regarding $F$ as an open subset of $\bP(\bC\oplus F)$. 
\end{itemize}

Alternatively, notice that $C^{fa}A$ is a cone over $\pi(A)$ (where $\pi:\bP(F)\ra M$), a well-defined analytic space. Then one can take the direct sum of cones $\bC\oplus C^{fa}A$ and finally $\bP(\bC\oplus C^{fa}A)$. It  easily  seen that this produces an analytic subspace of $\bP(\bC\oplus F)$.

The equations of $C^fA$ are locally the same as the equations of $A$.

\vspace{0.5cm} Let $\pi:E\ra M$ be another holomorphic vector bundle. Then $E\times_M\bP(E)$ is in fact the bundle $\pi^*\bP(E)\ra E$. Hence, for any analytic space $A\subset E\times_M\bP(E)$ one has a well-defined fiberwise cone $C^fA\subset \pi^*\bP(\bC\oplus E)=E\times_M\bP(\bC\oplus E)$  and a corresponding affine cone $C^{fa}A:=C^fA\cap (E\times_ME)$.

\vspace{0.5cm}

One useful example of a complex set $A\subset E\times_M\bP(E)$ is  $A=\Ex_{[0]\cap B}(B)$ where $B$ is an analytic variety in $E$ and $\Ex_{[0]\cap B}(B)$ is the exceptional divisor of the strict transform of $B$ with respect to the blow-up of the zero section. Notice that in this situation $A\subset [0]\times_{M}\bP(E)\simeq \bP(E)$ and one might as well think of $A$ as an analytic subvariety of $\bP(E)$. The fiberwise affine cone is then $C^{fa}A\subset E$.

 We have a fiberwise analogue of Proposition \ref{P1}.
\begin{prop}\label{P2} Suppose $A\subset [0]\times_M\bP(E)$.  Then the fiberwise cone $C^fA$ coincides with the exceptional divisor of the blow-up $\Bl_{\infty\times ([0]\cap C^{fa}A)}(\bP^1\times C^{fa}A)$\footnote{also the exceptional divisor of the strict transform of $\bP^1\times C^{fa}{A}$ inside $\Bl_{\infty\times[0]}(\bP^1\times E)$.}  where $C^{fa}A$ is considered inside $E$.
\end{prop}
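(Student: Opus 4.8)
The plan is to deduce the statement from a fiberwise application of Proposition \ref{P1}, combined with the identification of strict transforms with sub-blow-ups recorded earlier in this Appendix. All the spaces entering the statement ($\bP^1\times E$, the zero section $[0]$, the subspace $C^{fa}A$, the blow-up $\Bl_{\infty\times([0]\cap C^{fa}A)}(\bP^1\times C^{fa}A)$ and its exceptional divisor) are cut out by equations local on $M$, and every construction involved commutes with restriction of $E$ to an open set $U\subset M$; so first I would fix a trivialization $E|_U\simeq U\times V$. Then $A|_U\subset U\times\bP(V)$ has, for each $m\in\pi(A)\cap U$, a projective analytic fiber $A_m\subset\bP(V)$, and by construction $C^{fa}A|_U$ is the fiberwise affine cone, i.e. the analytic subspace of $U\times V$ whose fiber over $m$ is the affine cone $C^aA_m\subset V$ (a cone over $\pi(A)\cap U$ in the sense of the subsection above). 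Since $A_m$ is nonempty for every such $m$, the intersection $[0]\cap C^{fa}A$ meets each fiber only in the origin, so it is analytically rare in $C^{fa}A$ and the blow-up and strict transforms below are legitimate.

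Next I would observe that $\Bl_{\infty\times([0]\cap C^{fa}A)}(\bP^1\times C^{fa}A)$, being defined as the closure of the graph of the relevant projection, is compatible with the bundle structure: its fiber over $m\in\pi(A)\cap U$ is $\Bl_{\infty\times 0}(\bP^1\times C^aA_m)$, and its exceptional divisor has, as fiber over $m$, the exceptional divisor of that local blow-up. By Proposition \ref{P1} the latter equals the projective cone $CA_m\subset\bP(\bC\oplus V)$. Since the fiberwise cone $C^fA$ is by definition $\bP(\bC\oplus C^{fa}A)$, which fiberwise is $\bP(\bC\oplus C^aA_m)=CA_m$, taking the union over $m\in\pi(A)\cap U$ gives $C^fA=\Ex_{\infty\times([0]\cap C^{fa}A)}(\bP^1\times C^{fa}A)$ over $U$; as $U$ was arbitrary this holds on all of $M$. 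Just as in the proof of Proposition \ref{P1}, fiberwise one may equally replace $\Bl_{\infty\times 0}(\bP^1\times C^aA_m)$ by $\Bl_{0\times 0}(\bC\oplus C^aA_m)$, which makes the identification with $\bP(\bC\oplus C^aA_m)$ transparent and matches the global description once one remembers that $C^{fa}A$ is a cone over $\pi(A)$.

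It then remains to identify this with the other description in the footnote, namely the exceptional divisor of the strict transform of $\bP^1\times C^{fa}A$ inside $\Bl_{\infty\times[0]}(\bP^1\times E)$. This is an instance of the general principle recalled in this Appendix (and in \cite{EH}, Prop. IV-21): for a closed subvariety $S\hookrightarrow Y$ and a blow-up $\Bl_Z(Y)$, the strict transform of $S$ inside $\Bl_Z(Y)$ coincides, as a complex space, with $\Bl_{S\cap Z}(S)$, and its exceptional divisor with $\Ex_{S\cap Z}(S)$. I would apply this with $Y=\bP^1\times E$, $Z=\infty\times[0]$, $S=\bP^1\times C^{fa}A$, using that $S\cap Z=\infty\times([0]\cap C^{fa}A)$, which closes the loop.

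The part I expect to be the main obstacle is the bookkeeping needed to turn these "fiberwise" assertions into statements about complex spaces rather than merely about underlying point sets: one must verify that forming the Rees algebra of the ideal sheaf cutting out $\infty\times([0]\cap C^{fa}A)$, and hence the blow-up and its exceptional divisor, commutes with base change along the fibers of $E\ra M$ and with the chosen trivialization, and one must carry the possibly non-reduced structures of $A$ and $C^{fa}A$ through the whole argument. Granting that blow-ups, strict transforms and exceptional divisors here are all defined by equations local on the base and restrict correctly to fibers — which is built into the definitions given in this Appendix via closures of graphs — the proof reduces to a routine fiberwise transcription of Proposition \ref{P1}.
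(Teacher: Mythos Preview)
Your proposal is correct and takes essentially the same approach as the paper, which simply states that one has ``directly a fiberwise version of the argument in the proof of Proposition \ref{P1}.'' You have unpacked this one-line proof in detail, including the local trivialization, the fiberwise application of Proposition \ref{P1}, and the strict-transform identification for the footnote via Proposition IV-21 of \cite{EH}; the paper leaves all of this implicit.
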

\begin{proof} One has directly a fiberwise version of the argument in the proof of Proposition \ref{P1}.
\end{proof}
The case that interests us is the following.
\begin{cor} \label{C1} Let $A=\Ex_{0\cap B}(B)\subset E\times_M\bP(E)$ for some analytic subspace $B\subset E$. Then $C^fA$ coincides with the exceptional divisor of $\Bl_{\infty\times (0\cap B)}(\bP^1\times B)$.
\end{cor}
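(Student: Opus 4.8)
The plan is to reduce everything to Proposition \ref{P2}. Since $A=\Ex_{0\cap B}(B)$ is by construction contained in $[0]\times_M\bP(E)\simeq\bP(E)$, that proposition applies verbatim and identifies $C^fA$ with the exceptional divisor of $\Bl_{\infty\times([0]\cap C^{fa}A)}(\bP^1\times C^{fa}A)$, where $C^{fa}A$ is the fiberwise affine cone of $A$ viewed inside $E$. The whole problem thus becomes the computation of $C^{fa}A$ together with the comparison of this blow-up with $\Bl_{\infty\times(0\cap B)}(\bP^1\times B)$.

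First I would unwind the definitions: $\Ex_{0\cap B}(B)$, as a closed analytic subspace of $\bP(E)$, is exactly the projectivized normal cone $\bP(C_{0\cap B}B)$ — this is the defining property of the blow-up, with $C_{0\cap B}B$ sitting inside $E$ via the canonical identification of the normal bundle of the zero section of $E$ with $E$ itself. Hence $\tilde\pi^{-1}(A)$ (with $\tilde\pi:E\setminus\{0\}\ra\bP(E)$) is $C_{0\cap B}B$ with its zero section removed, whose closure in $E$ is $C_{0\cap B}B$; so $C^{fa}A=C_{0\cap B}B$, and in particular $[0]\cap C^{fa}A=0\cap B$. Feeding this back into Proposition \ref{P2} shows that $C^fA$ is the exceptional divisor of $\Bl_{\infty\times(0\cap B)}(\bP^1\times C_{0\cap B}B)$.

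The remaining step — which I expect to require the most care — is to see that this last exceptional divisor coincides with that of $\Bl_{\infty\times(0\cap B)}(\bP^1\times B)$. Via Proposition IV-21 of \cite{EH} (invoked just as in the proof of Proposition \ref{Psp1}) both may be realized inside $\Bl_{\infty\times[0]}(\bP^1\times E)$ as exceptional divisors of the strict transforms of $\bP^1\times B$ and of $\bP^1\times C_{0\cap B}B$; and the exceptional divisor of a blow-up is the projectivization of the normal cone of its center. Working in an affine chart of $\bP^1$ around $\infty$ and using the product formula for normal cones, $C_{\infty\times(0\cap B)}(\bP^1\times B)=\bC\times C_{0\cap B}B$, whereas $C_{\infty\times(0\cap B)}(\bP^1\times C_{0\cap B}B)=\bC\times C_{0\cap B}B$ as well, since the normal cone of a cone along its zero section is the cone itself. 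The two normal cones agree, hence so do the two exceptional divisors, which gives the corollary. The subtle points to watch throughout are the precise meaning of ``exceptional divisor'' in each occurrence (intersection with the exceptional locus of the ambient blow-up versus the exceptional divisor of an abstract blow-up), the identification $[0]\times_M\bP(E)\simeq\bP(E)$, and the fact that all these equalities must hold as complex spaces — $B$ and the normal cone being possibly non-reduced — so the cone computations should be phrased via Rees algebras rather than naive topological closures.
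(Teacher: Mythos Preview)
Your proposal is correct and follows the same route as the paper: apply Proposition~\ref{P2}, then identify the exceptional divisor of $\Bl_{\infty\times(0\cap B)}(\bP^1\times B)$ with that of $\Bl_{\infty\times(0\cap C^{fa}A)}(\bP^1\times C^{fa}A)$ via the equality of the underlying normal cones. The paper compresses your steps into the single phrase ``the exceptional divisors of $\Bl_0(B)$ and $\Bl_0(C^{fa}A)$ coincide,'' whereas you spell out that $C^{fa}A=C_{0\cap B}B$ and then use the product formula together with the fact that the normal cone of a cone along its zero section is the cone itself---this is exactly the content of that phrase, so the two arguments are the same.
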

\begin{proof} Use Proposition \ref{P2} and the fact that the exceptional divisors of $\Bl_0(B)$ and $\Bl_0(C^{fa}{A})$ coincide.
\end{proof}

Let $s:M\ra E$ be a holomorphic section and let $X:=s^{-1}(0)$ with the complex analytic structure induced by $s$ itself in local coordinates. We will abuse the notation and make no distinction between $X$ and $s(X)\subset [0]$.

Let $\Bl_{X}(s(M))$ be the strict transform of $s(M)$ of the blow-up of $0$ in $E$. Notice that $\Bl_{X}(s(M))\subset E\times_{M}\bP(E)$. It is well-known (see \cite{EH}) that $\Bl_{X}(s(M))$ can be described as a $\sigma$-process in its own right, namely as the blow-up of the complex analytic intersection $[0]\wedge s(M)=s(X)$ inside the complex manifold $s(M)$. Therefore $\Bl_{X}(s(M))$ is a complex analytic set of dimension $n$ with an exceptional divisor $\Ex_{X}(s(M))\subset [0]\times_M \bP(E)$ a complex analytic space of dimension $n-1$. 

\begin{prop}\label{fcprop} The fiberwise cone $C^f\Ex_X(s(M))\subset [0]\times_M \bP(\bC\oplus E)$ is a complex analytic space of dimension $n$ and is naturally isomorphic to $\bP(\bC\oplus C_{X}M)$, where $C_{X}M$ is the affine normal cone of $X$ in $M$.

Moreover, via this isomorphism, the natural projection $\bP(\bC\oplus C_{X}M)\ra X$ gets identified with the restriction of the projection $\pi_1:[0]\times_M \bP(\bC\oplus E)\ra [0]=M$ to $C^f\Ex_X(s(M))$.
\end{prop}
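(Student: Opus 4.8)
The plan is to reduce the statement to two facts: first, that the exceptional divisor $\Ex_X(s(M))$ is, as a complex space, the projectivized affine normal cone $\bP(C_XM)$; and second, that the fiberwise cone of a fiberwise projectivized cone recovers the cone itself. The assertion about projections will then follow by tracing which base these constructions live over. (If $X=\emptyset$ all objects in sight are empty and there is nothing to prove, so assume $X\neq\emptyset$.)

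First I would use the identification established just above (around \eqref{blid}): the projection $\pi_2$ induces an isomorphism $\Bl_X(s(M))\simeq\Bl_X(M)$ compatible with the blow-down maps, hence an isomorphism of exceptional divisors inside $[0]\times_M\bP(E)\simeq\bP(E)$, namely $\Ex_X(s(M))\simeq\Ex_X(M)$, intertwining the projections to $X$. Next I would invoke the standard description of the exceptional divisor of a blow-up as the projectivized normal cone. Working in a local model, $M\subset\bC^n$ with $E$ trivial, $s=(s_1,\dots,s_k)$, and $I:=(s_1,\dots,s_k)\subset R$ the ideal with $X=V(I)$, one has $\Bl_X(M)=\Proj R[It]$ with $\Ex_X(M)=\Proj\big(\bigoplus_{m\geq0}I^m/I^{m+1}\big)$, which by the definition of the affine normal cone $C_XM=\Spec\big(\bigoplus_{m\geq0}I^m/I^{m+1}\big)$ (a cone over $X$ inside $E|_X\cong X\times\bC^k$) is precisely $\bP(C_XM)\subset\bP(E)|_X$; concretely, if $J\subset R[w_1,\dots,w_k]$ is the kernel of $w_i\mapsto s_it$ then $\Ex_X(M)$ is cut out by the image $\bar J$ of $J$ modulo $I$, and $C_XM$ by the same $\bar J$ read inside $(R/I)[w_1,\dots,w_k]$.

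Then I would apply the definition of the fiberwise cone from Appendix \ref{Ap1}: for $A\subset\bP(E)$ one has $C^fA=\bP(\bC\oplus C^{fa}A)$ with $C^{fa}A=\overline{\tilde\pi^{-1}(A)}\subset E$ (here $\tilde\pi\colon E\setminus\{0\}\to\bP(E)$), and the local equations of $C^fA$ are those of $A$. With $A=\Ex_X(s(M))\cong\bP(C_XM)$: since $C_XM$ is a cone over $X$ which is pure of dimension $n=\dim M$ (a standard property of normal cones over a smooth space, see \cite{F} Appendix B or \cite{Fi}), no irreducible component of $C_XM$ lies in the zero section of $E|_X$ (that section has dimension $\dim X<n$), so $\tilde\pi^{-1}(\bP(C_XM))=C_XM\setminus\{0\text{-section}\}$ has analytic closure $C_XM$; combined with the local-equations convention — which matches exactly the presentation of $C_XM$ by $\bar J$ — this gives $C^{fa}\Ex_X(s(M))=C_XM$ as complex spaces, hence $C^f\Ex_X(s(M))=\bP(\bC\oplus C_XM)$, a complex subspace of $\bP(\bC\oplus E)$ of dimension $\dim(\bC\oplus C_XM)-1=n$. (Alternatively one could deduce the same from Corollary \ref{C1} applied to $B=s(M)$, together with the product rule $C_{\{0\}\times X}(\bC\times M)\cong\bC\times C_XM$ for normal cones, but the direct route above is cleaner.) The projection statement is then bookkeeping: $\Ex_X(s(M))$ is the fiber of the blow-down $\Bl_X(s(M))\to M$ over $X$, so it lies over $X$; forming $C^{fa}$ and then $\bP(\bC\oplus-)$ are base-preserving operations, so the isomorphism $C^f\Ex_X(s(M))\cong\bP(\bC\oplus C_XM)$ intertwines the restriction of $\pi_1$ with the structural projection $\bP(\bC\oplus C_XM)\to X$.

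I expect the main obstacle to be the scheme-theoretic bookkeeping: one must ensure all identifications are of (possibly non-reduced) complex spaces, so that the multiplicities and embedded components of $C_XM$ agree with those of $C^f\Ex_X(s(M))$ — precisely what is needed later for the computation of $Z(s,\omega)$ and of the Samuel multiplicities via King's Fibering Theorem. Concretely this reduces to checking that the "same equations" convention for the fiberwise cone is exactly compatible with the Rees/associated-graded presentation of the normal cone, and that the isomorphism $\Bl_X(s(M))\simeq\Bl_X(M)$ holds scheme-theoretically (which is \cite{EH}, Prop. IV-21, already used in the appendix).
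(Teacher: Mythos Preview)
Your proof is correct and is in fact more detailed than the paper's own argument. The paper proceeds via the route you mention parenthetically as an alternative: it invokes Corollary~\ref{C1} with $B=s(M)$ to identify $C^f\Ex_X(s(M))$ with the exceptional divisor of $\Bl_{\infty\times X}(\bP^1\times s(M))$, which is then recognized as $\bP(\bC\oplus C_XM)$ (via the product rule for normal cones and the identification $s(M)\simeq M$), and appeals to diagram~\eqref{blid} for the projection statement. You instead unwind the definitions directly: identify $\Ex_X(s(M))\simeq\bP(C_XM)$, then compute the fiberwise affine cone of $\bP(C_XM)$ as $C_XM$ itself, and projectivize. Your approach is more self-contained and makes the scheme-theoretic matching of equations explicit, which is indeed the point that matters downstream; the paper's approach has the virtue of reusing Corollary~\ref{C1}, keeping the proof to two lines. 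Both are sound.
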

\begin{proof}  By the previous corollary, the fiberwise cone $C^f\Ex_X(s(M))$  coincides with the exceptional divisor of the blow-up $\Bl_{\infty\times X}(\bP^1\times s(M))$.  The other statements are  consequences of the commutative diagram (\ref{blid}).
\end{proof}

\appendix


\begin{thebibliography}{99}
\bibitem{A} A. Aeppli, \emph{On the cohomology structure of Stein manifolds},  Proceedings of a Conference on Complex Analysis, Minneapolis, MN, 1964 (Springer, Berlin, 1965), 58-70.
\bibitem{An} M. Andersson, \emph{A generalized Poincar\'e-Lelong formula}, Math. Scand. {\bf 101}, (2007), 195-218.
\bibitem{Ang} D. Angella, \emph{Cohomological Aspects in Complex Non-K\"{a}hler geometry}, Springer, Lect. Notes in Math. {\bf 2095}, 2014.
\bibitem{AngK} D. Angella, H. Kasuya, \emph{Bott-chern cohomology of solvmanifolds}, Ann. Glob. An. and  Geom., {\bf 52}  (2017),  363-411.
\bibitem{AngT} D. Angella, A. Tomassini, \emph{On the $\partial\bar{\partial}$-Lemma and Bott-Chern cohomology} Invent. Math. {\bf 192} (2013), 71-81. 
\bibitem{BaBo} P. Baum, R. Bott, \emph{Singularities of holomorphic foliations}, J. Diff. Geom., {\bf 7} (1972), 279-342.
\bibitem{BC} R. Bott, S.S. Chern, \emph{Hermitian vector bundles and the equidistribution of the zeros of their holomorphic sections}, Acta Math. {\bf 114}, (1965) 71-112.
\bibitem{Big} B. Bigolin, \emph{Osservazzioni sulla coomologia del $\partial\bar{\partial}$}, Ann. S.N.S. Pisa, {\bf 24}, (1970),  571-583.
\bibitem{Bi} J.-M. Bismut, \emph{Superconnections currents and complex immersions},  Invent. Math., {\bf 99}, (1990), 59-113.
\bibitem{Bi00} J.-M. Bismut, \emph{Hypoelliptic Laplacian and Bott-Chern cohomology},  Progress in Math.,  {\bf 305},  Birkh\"auser, 2013.
\bibitem{BiGiS} J.-M. Bismut, H. Gillet, C. Soul\'e, \emph{Bott-Chern currents and complex immersions}, Duke Math. J. {\bf 60}, (1990), 255-284.
\bibitem{BiGiS2} J.-M. Bismut, H. Gillet, C. Soul\'e, \emph{Complex Immersions and Arakelov Geometry}, vol. 1 of \emph{The Grothendieck Festschrift, a collection of articles written in Honor of the $60$th Birthday of Alexander Grothendieck}, Birkh\"auser, 2007.
\bibitem{BZ} J.-M. Bismut and W. Zhang. \emph{An extension of a theorem by Cheeger and M\"uller. With an appendix by F. Laudenbach}, Ast\'erisque, {\bf 205} 1992.
\bibitem{Ch} E. M. Chirka, \emph{Complex analytic sets}, Kluwer Acad. Publ., Math. and its Appl., 1989.
\bibitem{CiAdvances} D. Cibotaru, \emph{Chern-Gauss-Bonnet and Lefschetz Duality from a currential point of view}, Adv. in Math., {\bf 317} (2017), 718-757.
\bibitem{CiHl3} D. Cibotaru, \emph{Vertical Morse-Bott-Smale flows and characteristic forms}, Indiana Univ. Math. J.
{\bf 65} (4) (2016) 1089-1135.
\bibitem{Ci2} D. Cibotaru, \emph{Vertical flows and a general currential homotopy formula}, Indiana Univ. Math. J.
{\bf 65} (1) (2016) 93-169.
\bibitem{DGMS} P. Deligne, P. Griffiths, J. Morgan, D. Sullivan, \emph{Real homotopy theory of K\"ahler manifolds}, Invent. Math. {\bf 29} (1975), 245-274.
\bibitem{Dem} J.-P. Demailly, \emph{Complex analytic and differential Geometry},  http://www-fourier. ujf-grenoble.fr/~demailly/manuscripts/agbook.pdf, 2012, Version of Thursday June 21, 2012.
\bibitem{Dol} I. Dolgachev, \emph{Weighted projective varieties}, in  {Group actions and Vector Fields}, Springer,  Lect. Notes in Math. {\bf 956}  (1981), 34-71.
\bibitem{EH} D. Eisenbud, J. Harris, \emph{The geometry of schemes}, Springer, GTM {\bf 197}, 2000.
\bibitem{Fe} H. Federer, \emph{Geometric Measure Theory}, Springer-Verlag, 1969.
\bibitem{Fi} G. Fischer, \emph{Complex Analytic Geometry}, Springer, Lect. Notes in Math. {\bf 538}, 1976.
\bibitem{F} W. Fulton, \emph{Intersection Theory}, Sec. Ed., Springer-Verlag, 1997.
\bibitem{GaSan} L. Garcia, S. Sankaran, \emph{Green forms and the arithmetic Siegel-Weil formula}, Inv. Math. {\bf 215} (2019), 863-975.
\bibitem{GiS} H. Gillet, C. Soul\'e, \emph{Arithmetic Intersection Theory},  Publ. Math. de l'IH\'ES, {\bf 72} (1990), 93-174. 
\bibitem{GH} P. Griffiths, J. Harris, \emph{Principles of Algebraic Geometry}, John Wiley \& Sons, 1978.
\bibitem{GK} P. Griffiths, J. King, \emph{Nevanlinna theory and holomorphic mappings between algebraic varieties}, Acta Math. {\bf 130} (1973), 146-220.
\bibitem{Gu} R. Gunning, \emph{Introduction to holomorphic functions of several variables}, vol. 1. \emph{Function theory}, Wadsworth\& Brooks/Cole, 1990.
\bibitem{Har} R. Hardt, \emph{Slicing and intersection theory for chains associated with real analytic varieties}, Acta Math. {\bf 129} (1972), 75-136.
\bibitem{Ha} R. Hartshorne, \emph{Algebraic Geometry}, Springer, 1977.
\bibitem{HL1} R. Harvey, B. Lawson, \emph{Finite Volume Flows and Morse Theory},  Ann. of Math. {\bf 153} (2001), no.1, 1-25.
\bibitem{HL2} R. Harvey, B. Lawson, \emph{Singularities and Chern-Weil Theory I, The Local MacPherson Formula}, Asian
J. Math. {\bf 4} (2000), no. 1, 71-96.
\bibitem{HL2ii} R. Harvey, B. Lawson, \emph{Singularities and Chern-Weil Theory, II: Geometric Atomicity}, Duke Math. J. {\bf 119} (2003), no.1, 119 - 158.
\bibitem{HL3} R. Harvey, B. Lawson Jr., \emph{Geometric Residue Theorems}, Amer. J. Math., {\bf 117} (1995), no. 4, 829-873.
\bibitem{Ha} G. Hein, \emph{Computing Green currents via heat kernels}, J.  reine und angew. Math., {\bf 540}, (2001), 87-104.
\bibitem{Hu} C. Huneke, \emph{On the symmetric and Rees Algebra of an ideal generated by a $d$-sequence}, J. Algebra, {\bf 62}, 1980, 268-275.
\bibitem{K} J. King, \emph{The currents defined by analytic varieties}, Acta Math. {\bf 127}, (1971), 185-220.
\bibitem{Lat} J. Latschev, \emph{Gradient flows of Morse-Bott functions}, Math. Ann. {\bf 318}, (2000) 731-759. 
\bibitem{Mi}  G. Minervini, \emph{A current approach to Morse and Novikov Theories}, Rend. Mat.  {\bf 37} (2015), 95-195.
\bibitem{Q0} D. Quillen, \emph{Superconnections and the Chern character}, Topology {\bf 24} (1985), no. 1, 89-95.
\bibitem{Q1} D. Quillen, \emph{Superconnection character forms and the Cayley transform}, Topology {\bf 27} (1988), no. 2, 211-
238.
\bibitem{Sa} P. Samuel, \emph{La notion de multiplicit\'e en Alg\`ebre et en G\'eom\'etrie Alg\'ebrique}, J. de math. pure et app., {\bf 30} (1951), 159-274.
\bibitem{Se} J.P, Serre, \emph{G\'eom\'etrie alg\'ebrique et g\'eom\'etrie analytique}, Ann. de l'Inst. Fourier, {\bf 6} (1956), 1-42.
\bibitem{TY} J.H. Teh, C.J. Yang, \emph{Bott-Chern homology, Bott-Chern differential cohomology and the
Hodge conjecture}, arXiv:1910.01780.
\end{thebibliography}
\end{document}